\providecommand{\U}[1]{\protect\rule{.1in}{.1in}}
\newtheorem{theorem}{Theorem}[section]
\newtheorem{corollary}[theorem]{Corollary}
\newtheorem{definition}[theorem]{Definition}
\newtheorem{example}[theorem]{Example}
\newtheorem{lemma}[theorem]{Lemma}
\newtheorem{notation}[theorem]{Notation}
\newtheorem{proposition}[theorem]{Proposition}
\newtheorem{remark}[theorem]{Remark}
\newtheorem{ass}{Assumption}
\def \pt{/\!/}
\numberwithin{equation}{section}
\newcommand{\GraphicsDirectory}{./}
\newcommand{\executeiffilenewer}[3]{%
\ifnum\pdfstrcmp{\pdffilemoddate{#1}}%
{\pdffilemoddate{#2}}>0%
{\immediate\write18{#3}}\fi%
}
\newcommand{%
\executeiffilenewer{\GraphicsDirectory.svg}{\GraphicsDirectory.pdf}%
{inkscape -z -D --file=\GraphicsDirectory.svg --export-pdf=\GraphicsDirectory.pdf --export-latex}%
\input{\GraphicsDirectory.pdf_tex}%
}[1]{%
\executeiffilenewer{\GraphicsDirectory#1.svg}{\GraphicsDirectory#1.pdf}%
{inkscape -z -D --file=\GraphicsDirectory#1.svg --export-pdf=\GraphicsDirectory#1.pdf --export-latex}%
\input{\GraphicsDirectory#1.pdf_tex}%
}
\def\svgwidth{2in}
\newcommand{\psize}[1]{\def\svgwidth{#1}}
\begin{document}
\title{Global Existence of Geometric Rough Flows}
\date{\today}
\author{Bruce K. Driver}

\begin{abstract}
In this paper we consider rough differential equations on a smooth manifold
$\left(  M\right)  .$ The main result of this paper gives sufficient
conditions on the driving vector-fields so that the rough ODE's have global
(in time) solutions. The sufficient conditions involve the existence of a
complete Riemannian metric $\left(  g\right)  $ on $M$ such that the covariant
derivatives of the driving fields and their commutators to a certain order
(depending on the roughness of the driving path) are bounded. Many of the
results of this paper are generalizations to manifolds of the fundamental
results in \cite{Bailleul2015a}.

\end{abstract}
\subjclass[2010]{34F05 (primary), 60H10, 34C40 }
\keywords{Rough paths, Rough flows, Riemannian manifolds}
\maketitle
\tableofcontents

\section{Introduction\label{sec.10}}

\subsection{Overview\label{sec.10.1}}

If $\left[  0,T\right]  \times\mathbb{R}^{N}\ni\left(  t,y\right)  \rightarrow
Y_{t}\left(  y\right)  \in\mathbb{R}^{N}$ is a smooth time dependent vector
field on $\mathbb{R}^{N},$ it is natural to consider solving for $y:\left[
0,T\right]  \rightarrow\mathbb{R}^{N}$ and $\left(  t_{0},y_{0}\right)
\in\left[  0,T\right]  \times\mathbb{R}^{N}$ the ordinary differential
equation,%
\begin{equation}
\dot{y}\left(  t\right)  =\dot{Y}_{t}\left(  y\left(  t\right)  \right)
\text{ with }y\left(  t_{0}\right)  =y_{0}, \label{e.10.1}%
\end{equation}
where the dot indicates derivatives in $t.$ The following theorem is then well
known and easy to prove.

\begin{theorem}
\label{thm.10.1}If there exists $a,b\in\left(  0,\infty\right)  $ such that
\[
\left\vert \dot{Y}_{t}\left(  y\right)  \right\vert \leq a+c\left\vert
y\right\vert \text{ }\forall~\left(  t,y\right)  \in\left[  0,T\right]
\times\mathbb{R}^{N},
\]
then Eq. (\ref{e.10.1}) has a unique solution.
\end{theorem}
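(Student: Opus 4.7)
The plan is to combine a standard local existence-uniqueness argument (Picard--Lindel\"of) with an a priori bound obtained from Gr\"onwall's inequality, the latter ruling out finite-time blow-up.

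First I would invoke the classical local Cauchy--Lipschitz theorem. Since $Y$ is smooth in $y$, the map $y\mapsto \dot{Y}_t(y)$ is locally Lipschitz uniformly in $t$ on compact subsets of $[0,T]\times \mathbb{R}^N$, so Picard iteration produces a unique maximal solution $y:(\alpha,\beta)\to\mathbb{R}^N$ with $t_0\in(\alpha,\beta)\subset[0,T]$. I would take for granted the standard fact that if $\beta<T$ (respectively $\alpha>0$), then $\limsup_{t\uparrow\beta}|y(t)|=\infty$ (resp.\ $\limsup_{t\downarrow\alpha}|y(t)|=\infty$), i.e.\ solutions can only fail to extend by escaping every compact set.

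Next I would derive the a priori bound. On any compact subinterval $[s_0,s_1]\subset(\alpha,\beta)$ containing $t_0$, integrating the ODE and applying the hypothesis yields
\[
|y(t)| \;\le\; |y_0| + \Bigl|\int_{t_0}^{t}\!\bigl(a + c\,|y(s)|\bigr)\,ds\Bigr|
\;\le\; |y_0| + aT + c\,\Bigl|\int_{t_0}^{t}|y(s)|\,ds\Bigr|.
\]
Gr\"onwall's inequality then gives $|y(t)|\le (|y_0|+aT)\,e^{cT}$, a bound independent of the subinterval.

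Finally I would combine the two: the uniform bound contradicts the escape dichotomy unless $(\alpha,\beta)=[0,T]$, so the maximal solution is defined on all of $[0,T]$, and uniqueness is inherited from the local theory. The only ``obstacle'' is really just the standard escape lemma for maximal solutions; once that is quoted the argument is routine. (I note the statement has a minor typo: the constants should read $a,c\in(0,\infty)$ rather than $a,b$.)
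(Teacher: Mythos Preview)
Your argument is correct and is exactly the standard one: local existence and uniqueness from the smooth (hence locally Lipschitz) dependence on $y$, a Gr\"onwall-based a priori bound from the linear-growth hypothesis, and then the escape criterion for maximal solutions to conclude global existence on $[0,T]$. You also correctly flagged the typo in the constants.

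There is nothing to compare here: the paper does not supply a proof of this theorem, merely noting that it is ``well known and easy to prove'' and using it as motivation for the rough-path results that follow. Your write-up is a perfectly acceptable proof of the stated result.
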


In the setting of stochastic differential equations one wishes to solve
(\ref{e.10.1}) in the case where $Y_{t}$ is a random vector field which
typically is rough in $t$ and in particular no longer differentiable in $t.$
Such equations have been under active investigation ever since It\^{o}'s
pioneering work \cite{Ito1944,Ito1946} and use the random structure of $Y$ in
order to give meaning to Eq. (\ref{e.10.1}). More recently in his pioneering
work on \textbf{rough paths}, Terry Lyons
\cite{Lyons1994,Lyons1995,Lyons1998a} was able to show that one may use the
statistical properties of $Y$ in order to \textquotedblleft
enhance\textquotedblright\ $Y$ with added information (see Example
\ref{ex.10.22} below) where the degree of enhancement needed increases with
the lack of differentiability of $t\rightarrow Y_{t}.$ Once the enhanced $Y$
is found, Lyons was able to dispense with the randomness and again make
deterministic sense of Eq. (\ref{e.10.1}). In particular he was able to prove
the analogue of Theorem \ref{thm.10.1} when $c=0.$ The main goal of this paper
is to give a version of Lyons' rough differential equation (RDE for short)
existence theorem when $\mathbb{R}^{N}$ is replaced by a smooth manifold $M$
and the size of $\dot{Y}_{t}$ is now measured by a Riemannian metric $\left(
g\right)  $ on $M.$ Along the way we will also see that boundedness condition
on $Y$ (i.e. $c=0)$ in Lyons existence theorem may be considerably weakened.
For some history and other results on relaxing the $c=0$ condition, see
\cite{Lejay2012}.

This paper was inspired and highly influenced by Bailleul's paper
\cite{Bailleul2015a}. In fact, this paper grew out of an unsuccessful attempt
to understand the unbounded vector-field existence result stated in
\cite[Theorem 4.1.]{Bailleul2015a}. After finishing the first draft of this
paper the author discovered \cite{Bailleul2018} which contains similar results
to those in this paper when $M$ is a Euclidean or more generally a Banach
space. More recently I have been alerted to the work of Brault and Lejay
\cite{brault:hal-01716945,brault:hal-01839202} and Martin Weidner
\cite{Weidner2018}. In principle Brault and Lejay develop a more general
version of the results in Section \ref{sec.11} of this paper. However, there
seems to be a mistaken assertion in Eq. (21) on page 11 of
\cite{brault:hal-01716945} which will likely require a rewriting of their
results. [I do suspect most of the results in \cite{brault:hal-01716945} are
morally true.] The existence results in \cite{Weidner2018} are quite analogous
to those given in this paper including the idea of using a conformal change of
the metric to extend the theory away from bounded vector fields, compare
\cite[Proposition 5.2]{Weidner2018} with Proposition \ref{pro.15.10}. However,
some of the details of the proofs are different. In particular, Weidner makes
use of a number of localization arguments in order exploit known RDE results
on $\mathbb{R}^{N}$ while in this paper all of the proofs are intrinsic.
Moreover, the proofs given here follow Bailleul's method in
\cite{Bailleul2015a} which does not require knowing previous RDE results on
$\mathbb{R}^{N}.$ Other than using some ordinary differential equation
estimates on manifolds developed in \cite{Driver2018} (some of these same
estimates are also in \cite{Weidner2018}), this paper is essentially self-contained.

Before describing the results in this paper precisely, let us first rephrase
(see Theorem \ref{thm.10.6}) what it means to solve Eq. (\ref{e.10.1}) in such
a way that easily generalizes to the rough path setting. We begin by fixing
some notation.

\begin{notation}
\label{not.10.2}Throughout this paper, we let $M$ be a smooth finite
dimensional manifold and $\Gamma\left(  TM\right)  $ be the linear space of
smooth vector fields on $M.$ Typically $g$ will denote a Riemannian metric on
$M,$ $\nabla=\nabla^{g}$ be the associated Levi-Civita covariant derivative,
$R=R^{g}$ be the curvature tensor of $\nabla,$ and $d\left(  \cdot
,\cdot\right)  =d_{g}\left(  \cdot,\cdot\right)  $ be the length metric
associated to $g.$
\end{notation}

We will also abuse notation and use $d$ to denote a fixed integer in
$\mathbb{N}.$ This should not cause confusion as distance function will always
come with arguments.

\begin{definition}
\label{def.10.3}A $d$\textbf{-dimensional dynamical system} on $M$ is a linear
map, $\mathbb{R}^{d}\ni w\rightarrow V_{w}\in\Gamma\left(  TM\right)  .$
\end{definition}

A $d$-dimensional dynamical system on $M$ is completely determined by knowing
$\left\{  V_{e_{j}}\right\}  _{j=1}^{d}\subset\Gamma\left(  TM\right)  $ where
$\left\{  e_{j}\right\}  _{j=1}^{d}$ is the standard basis for $\mathbb{R}%
^{d}.$

The object of this paper is to describe necessary conditions for a
\textquotedblleft rough ordinary differential equations\textquotedblright%
\ associated to $V$ (RODE for short) to have global (in time) solutions. These
conditions will be in the form of the existence of a Riemannian metric, $g,$
on $M$ which is complete and \textquotedblleft controls\textquotedblright\ the
size of $V$ \textquotedblleft appropriately.\textquotedblright\ The following
two theorems serves as warm-up to the general results to be stated in
subsection \ref{sec.10.3} below.

\begin{notation}
\label{not.10.4}Given a dynamical system, $V,$ and Riemannian metric, $g,$ on
$M,$ let
\begin{equation}
\left\vert V\right\vert _{M}:=\sup_{w\in\mathbb{R}^{d}:\left\vert w\right\vert
=1}\sup_{m\in M}\left\vert V_{w}\left(  m\right)  \right\vert _{g}
\label{e.10.2}%
\end{equation}
and
\begin{equation}
\left\vert \nabla V\right\vert _{M}:=\sup_{w\in\mathbb{R}^{d}:\left\vert
w\right\vert =1}\sup_{v\in TM:\left\vert v\right\vert =1}\left\vert \nabla
_{v}V_{w}\right\vert _{g}. \label{e.10.3}%
\end{equation}

\end{notation}

\begin{theorem}
\label{thm.10.5}Suppose that $V$ is a dynamical system and $g$ is a complete
Riemannian metric on $M$ such that $\left\vert \nabla V\right\vert _{M}%
<\infty,$ then for every $x\in C^{1}\left(  \left[  0,T\right]  ,\mathbb{R}%
^{d}\right)  ,$ $s\in\left[  0,T\right]  ,$ and $m\in M,$ there exists
$\sigma\in C^{1}\left(  \left[  0,T\right]  ,M\right)  $ such that
\[
\dot{\sigma}\left(  t\right)  =V_{\dot{x}\left(  t\right)  }\left(
\sigma\left(  t\right)  \right)  \text{ with }\sigma\left(  s\right)  =m.
\]

\end{theorem}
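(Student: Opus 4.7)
The plan is to recognize that the equation
\[
\dot\sigma(t)=V_{\dot x(t)}(\sigma(t)),\qquad \sigma(s)=m,
\]
is just a classical non-autonomous smooth ODE on $M$ with driver $Y_t(m):=V_{\dot x(t)}(m)$, which is smooth in $m$ and continuous in $t$ since $x\in C^1$. Standard local ODE theory on manifolds therefore produces a unique maximal $C^1$ solution on an open interval $(\tau_-,\tau_+)\ni s$. The only real task is to rule out explosion, i.e. to show $\tau_+=T$ and $\tau_-=0$; for this I will use the bound $|\nabla V|_M<\infty$ and completeness of $g$.

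The first key step is a linear growth bound for $V$ itself. Fix any base point $m_0\in M$. Since $w\mapsto V_w(m_0)$ is a linear map from the finite-dimensional space $\mathbb{R}^d$ into $T_{m_0}M$, the quantity $C_0:=\sup_{|w|=1}|V_w(m_0)|_g$ is finite. Given $m\in M$, Hopf--Rinow (applicable because $g$ is complete) gives a minimizing unit-speed geodesic $\gamma:[0,L]\to M$ from $m_0$ to $m$ with $L=d(m,m_0)$. For each unit $w\in\mathbb{R}^d$ the smooth vector field $t\mapsto V_w(\gamma(t))$ along $\gamma$ satisfies
\[
\left|\tfrac{d}{dt}|V_w(\gamma(t))|_g\right|\le \left|\tfrac{D}{dt}V_w(\gamma(t))\right|_g=\left|\nabla_{\dot\gamma(t)}V_w\right|_g\le |\nabla V|_M,
\]
so integrating from $0$ to $L$ yields
\[
|V_w(m)|_g\le C_0+|\nabla V|_M\,d(m,m_0),
\]
and therefore, by linearity in $w$,
\[
|V_{\dot x(t)}(m)|_g\le |\dot x(t)|\bigl(C_0+|\nabla V|_M\,d(m,m_0)\bigr).
\]

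Second, I apply this to the maximal solution $\sigma$. Let $r(t):=d(\sigma(t),m_0)$; the triangle inequality gives that $r$ is locally Lipschitz on $(\tau_-,\tau_+)$, and at every differentiable point
\[
|\dot r(t)|\le |\dot\sigma(t)|_g=|V_{\dot x(t)}(\sigma(t))|_g\le |\dot x(t)|\bigl(C_0+|\nabla V|_M\,r(t)\bigr).
\]
Since $\dot x\in C([0,T])\subset L^1([0,T])$, Grönwall's inequality gives a finite bound $r(t)\le R$ for all $t\in(\tau_-,\tau_+)\cap[0,T]$ depending only on $C_0$, $|\nabla V|_M$, $\|\dot x\|_{L^1}$, and $d(m,m_0)$. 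Thus $\sigma$ stays inside the closed metric ball $\bar B_g(m_0,R)$, which is compact by Hopf--Rinow.

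Finally, the standard escape argument for ODEs on manifolds shows that if $\tau_+<T$, the solution must leave every compact set as $t\uparrow\tau_+$; the previous paragraph contradicts this, so $\tau_+=T$, and the analogous estimate running backwards in time gives $\tau_-=0$. Hence $\sigma$ is defined on all of $[0,T]$. The main (small) obstacle is really just the linear growth estimate for $V_w$ in terms of $d(\cdot,m_0)$, but it is immediate from integrating $|\nabla V|_M$ along a minimizing geodesic; everything else is classical ODE non-explosion combined with Hopf--Rinow.
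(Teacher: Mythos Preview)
Your proof is correct and is essentially a self-contained version of the argument the paper defers to references: the paper cites \cite[Corollary 2.12]{Driver2018} and its own Lemma~\ref{lem.15.3} (with Example~\ref{ex.15.4} for the Gronwall case), whose content is precisely the linear growth bound $|V_w(m)|\le |w|(C_0+|\nabla V|_M\,d(m,m_0))$ obtained by integrating along paths (the paper's Lemma~\ref{lem.10.29}), followed by a Gronwall estimate on $d(o,\sigma(t))$ and Hopf--Rinow to conclude non-explosion. The only cosmetic difference is that the paper's Lemma~\ref{lem.15.1} and Lemma~\ref{lem.10.29} avoid invoking a minimizing geodesic by working with arbitrary absolutely continuous paths and taking an infimum, whereas you use Hopf--Rinow up front to get the geodesic; either way works here since $g$ is assumed complete.
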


\begin{proof}
For a proof of this classical theorem the reader may refer to \cite[Corollary
2.12]{Driver2018} with $Y_{t}:=V_{\dot{x}\left(  t\right)  }.$ A more general
form of this theorem may also be found in Lemma \ref{lem.15.3} of the Appendix
\ref{sec.15}.
\end{proof}

If $V$ is a dynamical system satisfying the conclusions of Theorem
\ref{thm.10.5}, let $\left[  0,T\right]  ^{2}\times M\ni\left(  t,s,m\right)
\rightarrow\varphi_{t,s}^{x}\left(  m\right)  \in M$ be the flow map defined
by requiring,%
\begin{equation}
\frac{d}{dt}\varphi_{t,s}^{x}\left(  m\right)  =V_{\dot{x}\left(  t\right)
}\left(  \varphi_{t,s}^{x}\left(  m\right)  \right)  \text{ with }%
\varphi_{s,s}\left(  m\right)  =m. \label{e.10.4}%
\end{equation}
We will usually abbreviate the previous equation by writing%
\begin{equation}
\frac{d}{dt}\varphi_{t,s}^{x}=V_{\dot{x}\left(  t\right)  }\circ\varphi
_{t,s}^{x}\text{ with }\varphi_{s,s}^{x}=Id_{M}. \label{e.10.5}%
\end{equation}

In the next theorem, we will give an alternate characterization of the flow
$\varphi_{t,s}^{x}$ which is suitable for defining $\varphi^{x}$ where $x\in
C^{1}\left(  \left[  0,T\right]  ,\mathbb{R}^{d}\right)  $ is replaced by much
rougher paths. In the hypothesis of Theorem \ref{thm.10.6} we will not only
require that Eq. (\ref{e.10.3}) holds but that also if $\left\vert
V\right\vert _{M}<\infty.$Later in the introduction we will (based on the
results of Appendix \ref{sec.15}) see that by replacing $g$ by an appropriate
conformally equivalent metric we may remove this added restriction on $V.$

\begin{theorem}
\label{thm.10.6}Suppose that $V$ is a dynamical system and $g$ is a complete
Riemannian metric on $M$ such that both Eqs. (\ref{e.10.2}) and (\ref{e.10.3})
are satisfied. Then for every $x\in C^{1}\left(  \left[  0,T\right]
,\mathbb{R}^{d}\right)  ,$ $\varphi_{t,s}^{x}\in\mathrm{Diff}\left(  M\right)
,$ $\varphi^{x}$ is \textbf{multiplicative }(i.e. $\varphi_{t,s}^{x}%
\circ\varphi_{s,u}^{x}=\varphi_{t,u}^{x}$ for all $s,t,u\in\left[  0,T\right]
),$ and there exists $\mathcal{K<}\infty$ such that
\begin{equation}
d\left(  \varphi_{t,s}^{x}\left(  m\right)  ,e^{V_{x_{s,t}}}\left(  m\right)
\right)  \leq\mathcal{K}\left\vert t-s\right\vert ^{2}\text{ }\forall
~s,t\in\left[  0,T\right]  \text{ and }m\in M, \label{e.10.6}%
\end{equation}
where
\[
x_{s,t}:=x\left(  t\right)  -x\left(  s\right)  \text{ }\forall~s,t\in\left[
0,T\right]  .
\]
Conversely, if $\left\{  \varphi_{t,s}\right\}  _{t,s\in\left[  0,T\right]
}\subset\mathrm{Diff}\left(  M\right)  $ is a multiplicative and there exists
$\mathcal{K<}\infty$ such that Eq. (\ref{e.10.6}) holds, then $\varphi
_{t,s}=\varphi_{t,s}^{x}.$
\end{theorem}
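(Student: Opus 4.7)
The plan is to address the forward direction (existence/diffeomorphism/multiplicativity/quadratic bound) and then the uniqueness/converse direction.

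For the forward direction, the first three assertions are essentially soft consequences of the classical ODE theory already assembled. Applying Theorem \ref{thm.10.5} to the time-dependent vector field $Y_u := V_{\dot x(u)}$ (which still satisfies $|\nabla Y_u|_M \le |\nabla V|_M \cdot |\dot x|_\infty < \infty$) gives global existence of the flow. The same theorem applied backwards in time gives the inverse, so $\varphi_{t,s}^x \in \mathrm{Diff}(M)$. Multiplicativity is ODE uniqueness: both $t \mapsto \varphi_{t,s}^x\circ\varphi_{s,u}^x$ and $t \mapsto \varphi_{t,u}^x$ solve $\dot y(t)=V_{\dot x(t)}\circ y(t)$ with common value $\varphi_{s,u}^x$ at $t=s$.

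For the quadratic estimate (\ref{e.10.6}), the idea is a uniform two-term Taylor comparison with the ``frozen'' vector field $\bar V := (t-s)^{-1}V_{x_{s,t}}=V_{\bar x}$ where $\bar x := x_{s,t}/(t-s)$ is the time average of $\dot x$ on $[s,t]$. Parameterizing both endpoint curves by $u\in[s,t]$, let $\gamma(u):=\varphi_{u,s}^x(m)$ and $\tilde\gamma(u):=e^{(u-s)\bar V}(m)$, both starting at $m$. Using $|V|_M<\infty$ we obtain $d(\gamma(u),m)\vee d(\tilde\gamma(u),m)\le |V|_M\, |\dot x|_\infty\,(u-s)$; using $|\nabla V|_M<\infty$ (which provides a uniform Lipschitz bound along the segments from $m$ to $\gamma(u)$, $\tilde\gamma(u)$, after parallel transport) one gets
\[
\gamma(t) = \mathrm{Exp}_m\!\Bigl(V_{x_{s,t}}(m)\Bigr) + O\!\bigl((t-s)^2\bigr),\qquad \tilde\gamma(t) = \mathrm{Exp}_m\!\Bigl(V_{x_{s,t}}(m)\Bigr) + O\!\bigl((t-s)^2\bigr),
\]
with the $O$-constants depending only on $|V|_M$, $|\nabla V|_M$, $|\dot x|_\infty$, and curvature data along the flow. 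Here the key cancellation for $\gamma$ is
\[
\int_s^t V_{\dot x(u)}(m)\,du = V_{x_{s,t}}(m),
\]
and the remainder is the Lipschitz correction $\int_s^t [V_{\dot x(u)}(\gamma(u))-V_{\dot x(u)}(m)]du=O((t-s)^2)$; for $\tilde\gamma$ the expansion is the standard second-order expansion of the exponential of a vector field of size $|x_{s,t}|=O(t-s)$. The triangle inequality then yields (\ref{e.10.6}). I expect this is the one genuinely technical step; to keep it intrinsic I would invoke the manifold ODE estimates of Lemma \ref{lem.15.3} (i.e.\ the Driver 2018 estimates quoted after Theorem \ref{thm.10.5}) rather than chase coordinate charts, since these already produce a uniform-in-$m$ constant $\mathcal{K}$ from the global bounds on $V$ and $\nabla V$.

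For the converse, suppose $\{\varphi_{t,s}\}\subset\mathrm{Diff}(M)$ is multiplicative and satisfies (\ref{e.10.6}). Multiplicativity gives, for any fixed $(s,m)$ and any $t\in[0,T]$,
\[
\varphi_{t+h,s}(m) = \varphi_{t+h,t}\bigl(\varphi_{t,s}(m)\bigr).
\]
Applying (\ref{e.10.6}) to the increment $\varphi_{t+h,t}(\varphi_{t,s}(m))$ and comparing with $e^{V_{x_{t,t+h}}}(\varphi_{t,s}(m))$ shows
\[
\varphi_{t+h,s}(m)=e^{V_{x_{t,t+h}}}\!\bigl(\varphi_{t,s}(m)\bigr)+O(h^2),
\]
and since $x\in C^1$ gives $x_{t,t+h}=h\dot x(t)+o(h)$, the standard expansion of the exponential map of $V_{x_{t,t+h}}$ yields
\[
\frac{d}{dh}\Big|_{h=0}\varphi_{t+h,s}(m)=V_{\dot x(t)}\bigl(\varphi_{t,s}(m)\bigr).
\]
Thus $t\mapsto\varphi_{t,s}(m)$ solves the same ODE as $\varphi_{t,s}^x(m)$ with the same initial condition at $t=s$; uniqueness (from Theorem \ref{thm.10.5}) forces $\varphi_{t,s}=\varphi_{t,s}^x$. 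The only subtlety I anticipate here is making sure the $O(h^2)$ remainder in the $h$-derivative is uniform enough to extract the $h$-derivative; this is handled because $\mathcal{K}$ in (\ref{e.10.6}) is fixed and the estimate holds pointwise in $m$.
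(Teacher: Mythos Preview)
Your proposal is correct and follows essentially the same route as the paper. Two minor remarks are worth making.

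First, for the forward quadratic estimate the paper simply cites \cite[Theorem 4.11]{Driver2018} with $\kappa=1$, which packages exactly the Lipschitz/Taylor comparison you sketch; your reference to Lemma~\ref{lem.15.3} is misplaced (that lemma is about completeness, not approximation), and the constant $\mathcal{K}$ depends only on $|V|_M$, $|\nabla V|_M$, and $|\dot x|_\infty$ --- no curvature data is needed at this order, as the paper later emphasizes in Theorem~\ref{thm.13.1}.

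Second, your converse argument is the same as the paper's but organized in reverse order: you apply multiplicativity first and then differentiate at a general $t$, whereas the paper first differentiates at $t=s$ and then invokes multiplicativity. The paper also passes through test functions $f\in C^\infty(M)$, writing
\[
\bigl|f(\varphi_{t,s}(m)) - f(m) - V_{x_{s,t}}f(m)\bigr| \le C|t-s|^2,
\]
which is a cleaner way to extract the derivative on a manifold than your ``$+\,O(h^2)$'' notation (which strictly speaking has no meaning without a chart or an exponential map); you should adopt that device when writing this up.
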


\begin{proof}
Suppose that $\varphi_{t,s}^{x}$ is defined as in Eq. (\ref{e.10.5}). The
estimate in Eq. (\ref{e.10.6}) is now a direct consequence of \cite[Theorem
4.11]{Driver2018} applied with $\kappa=1.$The fact that $\varphi_{t,s}^{x}\in\mathrm{Diff}\left(  M\right)  $ and
$\varphi^{x}$ is multiplicative is a standard property of smooth flows, see
for example, \cite[Theorem 2.14]{Driver2018} for a more detailed summary of
such flows.

Conversely, if $\varphi_{t,s}\in\mathrm{Diff}\left(  M\right)  $ is
multiplicative and satisfies the estimate in Eq. (\ref{e.10.6}) and $f\in
C^{\infty}\left(  M\right)  ,$ then%
\[
\left\vert f\left(  \varphi_{t,s}\left(  m\right)  \right)  -\left[  f\left(
m\right)  +V_{x_{s,t}}f\left(  m\right)  \right]  \right\vert \leq C\left\vert
t-s\right\vert ^{2}.
\]
Dividing this estimate by $\left\vert t-s\right\vert $ implies,%
\[
\left\vert \frac{f\left(  \varphi_{t,s}\left(  m\right)  \right)  -f\left(
m\right)  }{\left(  t-s\right)  }-V_{\frac{x_{s,t}}{t-s}}f\left(  m\right)
\right\vert \leq C\left\vert t-s\right\vert
\]
and upon letting $t\rightarrow s,$ gives%
\[
\frac{d}{dt}\left\vert _{t=s}\right\vert f\left(  \varphi_{t,s}\left(
m\right)  \right)  =\left(  V_{\dot{x}\left(  s\right)  }f\right)  \left(
m\right)  .
\]
Since the previous equality is valid for all $f\in C^{\infty}\left(  M\right)
,$ we conclude that $\frac{d}{dt}_{t=s}\varphi_{t,s}\left(  m\right)  $ exists
and
\[
\frac{d}{dt}_{t=s}\varphi_{t,s}\left(  m\right)  =V_{\dot{x}\left(  s\right)
}\left(  m\right)  .
\]
Then using the multiplicative property of $\varphi_{t,s}$ it follows that%
\[
\frac{d}{dt}\varphi_{t,s}\left(  m\right)  =\frac{d}{d\varepsilon}|_{0}%
\varphi_{t+\varepsilon,t}\circ\varphi_{t,s}\left(  m\right)  =V_{\dot
{x}\left(  t\right)  }\left(  \varphi_{t,s}\left(  m\right)  \right)  ,
\]
i.e. $\varphi_{t,s}$ satisfies flow ODE in Eq. (\ref{e.10.4}).
\end{proof}

A basic idea of rough paths and Ballieul \cite{Bailleul2015a} is to generalize
Eq. (\ref{e.10.6}) so as to allow for much rougher paths, $x,$ see Theorem
\ref{thm.10.27}. The rest of this introduction is devoted to stating the main
results of this paper. As usual in rough path theory, the rougher $x$ becomes
the more extra information one must enhance $x$ with in order to give meaning
to Eq. (\ref{e.10.4}). The next subsection introduces the basic rough path
language we will need in this paper. The summary of the main theorems of the
paper will follow in subsection \ref{sec.10.3}.

\subsection{Basic Notations\label{sec.10.2}}

Although our presentation here will be self-contained, the reader's wishing
for more background on rough paths may consult the monographs
\cite{Lyons2002,LCT2007,Friz2010,Friz2014} and the survey article of
\cite{Lejay2003}. The reader may also wish to consult
\cite{Cass2016,Cass2015,Cass2012a,Driver2017b} for the beginnings of general
theory of rough paths on manifolds. For a more detailed description of the
algebra presented here, see \cite[Subsection 1.2]{Driver2018}.

\begin{definition}
[Tensor Algebras]\label{def.10.7}Let $T\left(  \mathbb{R}^{d}\right)
:=\oplus_{k=0}^{\infty}\left[  \mathbb{R}^{d}\right]  ^{\otimes k}$ be the
tensor algebra over $\mathbb{R}^{d}$ so the general element of $\omega\in
T\left(  \mathbb{R}^{d}\right)  $ is of the form
\[
\omega=\sum_{k=0}^{\infty}\omega_{k}\text{ with }\omega_{k}\in\left(
\mathbb{R}^{d}\right)  ^{\otimes k}\text{ for }k\in\mathbb{N}_{0}%
\]
where we assume $\omega_{k}=0$ for all but finitely many $k.$ Multiplication
is the tensor product and associated to this multiplication is the Lie
bracket,%
\begin{equation}
\left[  A,B\right]  _{\otimes}:=A\otimes B-B\otimes A\text{ for all }A,B\in
T\left(  \mathbb{R}^{d}\right)  . \label{e.10.7}%
\end{equation}

\end{definition}

A good reference for nilpotent Lie algebras and related material is
\cite{ReutenauerBook} although everything we need will be described here and
explained in more detail when needed in Section \ref{sec.12} below.

\begin{definition}
[Free Lie Algebra]\label{not.10.8}The \textbf{free Lie algebra over
}$\mathbb{R}^{d}$ will be taken to be the Lie-subalgebra, $F\left(
\mathbb{R}^{d}\right)  ,$ of $\left(  T\left(  \mathbb{R}^{d}\right)  ,\left[
\cdot,\cdot\right]  _{\otimes}\right)  $ generated by $\mathbb{R}^{d}.$
\end{definition}

\begin{remark}
\label{rem.10.9}If $\left(  \mathfrak{g},\left[  \cdot,\cdot\right]  \right)
$ is a Lie algebra and $V\subset\mathfrak{g}$ is a subspace, then using
Jacobi's identity one easily shows that Lie sub-algebra $\left(
\operatorname*{Lie}\left(  V\right)  \right)  $ of $\mathfrak{g}$ generated by
$V$ may be described as;%
\[
\operatorname*{Lie}\left(  V\right)  =\operatorname*{span}\cup_{k=1}^{\infty
}\left\{  \operatorname{ad}_{v_{1}}\dots\operatorname{ad}_{v_{k-1}}v_{k}%
:v_{1},\dots,v_{k}\in V\right\}  ,
\]
where $\operatorname{ad}_{A}B:=\left[  A,B\right]  $ for all $A,B\in
\mathfrak{g}.$ As a consequence of this remark it follows that $F\left(
\mathbb{R}^{d}\right)  $ is a $\mathbb{N}_{0}$-graded Lie algebra with
\[
F\left(  \mathbb{R}^{d}\right)  =\oplus_{k=0}^{\infty}F_{k}\left(
\mathbb{R}^{d}\right)  \text{ where }F_{k}\left(  \mathbb{R}^{d}\right)
=F\left(  \mathbb{R}^{d}\right)  \cap\left[  \mathbb{R}^{d}\right]  ^{\otimes
k}\subset F\left(  \mathbb{R}^{d}\right)  .
\]
According to this grading, if $A\in F\left(  \mathbb{R}^{d}\right)  $ we let
$A_{k}\in F_{k}\left(  \mathbb{R}^{d}\right)  $ denote the projection of $A$
into $F_{k}\left(  \mathbb{R}^{d}\right)  .$
\end{remark}

The spaces $T\left(  \mathbb{R}^{d}\right)  $ and $F\left(  \mathbb{R}%
^{d}\right)  $ are infinite dimensional. We are going to be most interested in
the finite dimensional truncated versions of these algebras.

\begin{definition}
[Truncated Tensor Algebras]\label{def.10.10}Given $\kappa\in\mathbb{N},$ let
\[
T^{\left(  \kappa\right)  }\left(  \mathbb{R}^{d}\right)  :=\oplus
_{k=0}^{\kappa}\left[  \mathbb{R}^{d}\right]  ^{\otimes k}\subset T\left(
\mathbb{R}^{d}\right)
\]
which is algebra under the multiplication rule,%
\[
AB=\sum_{k=0}^{\kappa}\left(  AB\right)  _{k}=\sum_{k=0}^{\kappa}\sum
_{j=0}^{k}A_{j}\otimes B_{k-j}~\text{ }\forall~A,B\in T^{\left(
\kappa\right)  }\left(  \mathbb{R}^{d}\right)
\]
and a Lie algebra under the bracket operation, $\left[  A,B\right]  :=AB-BA$
for all $A,B\in T^{\left(  \kappa\right)  }\left(  \mathbb{R}^{d}\right)  .$
\end{definition}

\begin{notation}
\label{not.10.11}Let $\pi_{\leq\kappa}:T\left(  \mathbb{R}^{d}\right)
\rightarrow T^{\left(  \kappa\right)  }\left(  \mathbb{R}^{d}\right)  $ and
$\pi_{>\kappa}:=I_{T\left(  \mathbb{R}^{d}\right)  }-\pi_{\leq\kappa}:T\left(
\mathbb{R}^{d}\right)  \rightarrow\oplus_{k=\kappa+1}^{\infty}\left[
\mathbb{R}^{d}\right]  ^{\otimes k}$ be the projections associated to the
direct sum decomposition,
\[
T\left(  \mathbb{R}^{d}\right)  =T^{\left(  \kappa\right)  }\left(
\mathbb{R}^{d}\right)  \oplus\left(  \oplus_{k=\kappa+1}^{\infty}\left[
\mathbb{R}^{d}\right]  ^{\otimes k}\right)  .
\]
Further let
\begin{equation}
\mathfrak{g}^{\left(  \kappa\right)  }=\oplus_{k=1}^{\kappa}\left[
\mathbb{R}^{d}\right]  ^{\otimes k} \label{e.10.9}%
\end{equation}
which is a two sided ideal as well as a Lie sub-algebra of $T^{\left(
\kappa\right)  }\left(  \mathbb{R}^{d}\right)  .$
\end{notation}

With this notation the multiplication and Lie bracket on $T^{\left(
\kappa\right)  }\left(  \mathbb{R}^{d}\right)  $ may be described as,%
\[
AB=\pi_{\leq\kappa}\left(  A\otimes B\right)  \text{ and }\left[  A,B\right]
=\pi_{\leq\kappa}\left[  A,B\right]  _{\otimes}.
\]

\begin{notation}
[Induced Inner product]The usual dot product on $\mathbb{R}^{d}$ induces an
inner product, $\left\langle \cdot,\cdot,\right\rangle $ on $T^{\left(
\kappa\right)  }\left(  \mathbb{R}^{d}\right)  $ uniquely determined by
requiring $T^{\left(  \kappa\right)  }\left(  \mathbb{R}^{d}\right)
:=\oplus_{k=0}^{\kappa}\left[  \mathbb{R}^{d}\right]  ^{\otimes k}$ to be an
orthogonal direct sum decomposition, $\left\langle 1,1\right\rangle =1$ for
$1\in\left[  \mathbb{R}^{d}\right]  ^{\otimes0},$ and
\[
\left\langle v_{1}v_{2}\dots v_{k},w_{1}w_{2}\dots w_{k}\right\rangle
=\left\langle v_{1},w_{1}\right\rangle \left\langle v_{2},w_{2}\right\rangle
\dots\left\langle v_{k},w_{k}\right\rangle
\]
for any $v_{j},w_{j}\in\mathbb{R}^{d}$ and $1\leq k\leq\kappa.$ We let
$\left\vert A\right\vert :=\sqrt{\left\langle A,A\right\rangle }$ denote the
associated Hilbertian norm of $A\in T^{\left(  \kappa\right)  }\left(
\mathbb{R}^{d}\right)  .$
\end{notation}

It turns out to often be more convenient to measure the size of $A\in
\mathfrak{g}^{\left(  \kappa\right)  }$ using the following \textquotedblleft
homogeneous norms.\textquotedblright\

\begin{definition}
[Homogeneous norms]\label{def.10.13}For $A\in\mathfrak{g}^{\left(
\kappa\right)  }\subset T^{\left(  \kappa\right)  }\left(  \mathbb{R}%
^{d}\right)  ,$ let
\[
N\left(  A\right)  :=\max_{1\leq k\leq\kappa}\left\vert A_{k}\right\vert
^{1/k}.
\]

\end{definition}

\begin{definition}
[Free Nilpotent Lie Algebra]\label{not.10.14}The \textbf{step }$\kappa
$\textbf{ free Nilpotent Lie algebra} on $\mathbb{R}^{d}$ may then be realized
as the Lie sub-algebra, $F^{\left(  \kappa\right)  }\left(  \mathbb{R}%
^{d}\right)  ,$ of $\left(  T^{\left(  \kappa\right)  }\left(  \mathbb{R}%
^{d}\right)  ,\left[  \cdot,\cdot\right]  \right)  $ generated by
$\mathbb{R}^{d}\subset T^{\left(  \kappa\right)  }\left(  \mathbb{R}%
^{d}\right)  .$
\end{definition}

Again, a simple consequence of Remark \ref{rem.10.9} is that, as vector
spaces, $F^{\left(  \kappa\right)  }\left(  \mathbb{R}^{d}\right)  =\pi
_{\leq\kappa}\left(  F\left(  \mathbb{R}^{d}\right)  \right)  $ and
$F^{\left(  \kappa\right)  }\left(  \mathbb{R}^{d}\right)  $ is graded as%
\[
F^{\left(  \kappa\right)  }\left(  \mathbb{R}^{d}\right)  =\oplus
_{k=0}^{\kappa}F_{k}^{\left(  \kappa\right)  }\left(  \mathbb{R}^{d}\right)
\]
where
\[
F_{k}^{\left(  \kappa\right)  }\left(  \mathbb{R}^{d}\right)  :=F^{\left(
\kappa\right)  }\left(  \mathbb{R}^{d}\right)  \cap\left[  \mathbb{R}%
^{d}\right]  ^{\otimes k}\subset F^{\left(  \kappa\right)  }\left(
\mathbb{R}^{d}\right)  \text{ for }1\leq k\leq\kappa.
\]
It is not difficult to show (see see \cite[Section 1.2]{Driver2018}) using the
universal properties of the tensor algebra that the following notation is well defined.

\begin{notation}
\label{not.10.15}Given a dynamical system, $V:\mathbb{R}^{d}\rightarrow
\Gamma\left(  TM\right)  ,$ let $V^{\left(  \kappa\right)  }:F^{\left(
\kappa\right)  }\left(  \mathbb{R}^{d}\right)  \rightarrow\Gamma\left(
TM\right)  $ be the unique dynamical system such that
\[
V_{A}^{\left(  \kappa\right)  }:=L_{V_{w_{1}}}\dots L_{V_{w_{j-1}}}V_{w_{j}}%
\]
whenever $A=\operatorname{ad}_{w_{1}}\dots\operatorname{ad}_{w_{j-1}}w_{j}$
with $1\leq j\leq\kappa$ and $w_{i}\in\mathbb{R}^{d}.$ [For $A\in F^{\left(
\kappa\right)  }\left(  \mathbb{R}^{d}\right)  $ we will usually simply write
$V_{A}$ for $V_{A}^{\left(  \kappa\right)  }.]$
\end{notation}

\begin{remark}
\label{rem.10.16}It is \textbf{not} in general true that $V^{\left(
\kappa\right)  }:=V|_{F^{\left(  \kappa\right)  }\left(  \mathbb{R}%
^{d}\right)  }:F^{\left(  \kappa\right)  }\left(  \mathbb{R}^{d}\right)
\rightarrow\Gamma\left(  TM\right)  $ is a Lie algebra homomorphism. In order
for this to be true we must require that $L_{V_{a_{\kappa}}}\dots L_{V_{a_{1}%
}}V_{a_{0}}=0$ for all $\left\{  a_{j}\right\}  _{j=0}^{\kappa}\subset
\mathbb{R}^{d},$ i.e. $\left\{  V_{a}:a\in\mathbb{R}^{d}\right\}  $ should
generate a step-$\kappa$ nilpotent Lie sub-algebra of $\Gamma\left(
TM\right)  .$
\end{remark}

We now need to introduce a number of semi-norms on vector fields and dynamical
systems on $M.$

\begin{notation}
[Tensor Norms]\label{not.10.17}If $X\in\Gamma\left(  TM\right)  $ and $m\in
M,$ let
\begin{align*}
\left\vert X\right\vert _{m}  &  :=\left\vert X\left(  m\right)  \right\vert
_{g}\\
\left\vert \nabla X\right\vert _{m}  &  :=\sup_{\left\vert v_{m}\right\vert
=1}\left\vert \nabla_{v_{m}}X\right\vert _{g},\\
\left\vert \nabla^{2}X\right\vert _{m}  &  =\sup_{\left\vert v_{m}\right\vert
=1=\left\vert w_{m}\right\vert }\left\vert \nabla_{v_{m}\otimes w_{m}}%
^{2}X\right\vert _{g},\\
\left\vert R\left(  X,\cdot\right)  \right\vert _{m}  &  :=\sup_{\left\vert
v_{m}\right\vert =1=\left\vert w_{m}\right\vert }\left\vert R\left(  X\left(
m\right)  ,v_{m}\right)  w_{m}\right\vert _{g},\text{ and}\\
H_{m}\left(  X\right)   &  :=\left\vert \nabla^{2}X\right\vert _{m}+\left\vert
R\left(  X,\bullet\right)  \right\vert _{m}.
\end{align*}
We further let $\left\vert X\right\vert _{M}=\sup_{m\in M}\left\vert
X\right\vert _{m},\dots,H_{M}\left(  X\right)  :=\sup_{m\in M}H_{m}\left(
X\right)  .$
\end{notation}

\begin{definition}
[Dynamical system semi-norms]\label{def.10.18}When $V:\mathbb{R}%
^{d}\rightarrow\Gamma\left(  TM\right)  $ is a dynamical system of $\kappa
\in\mathbb{N},$ let%
\begin{align}
\left\vert V^{\left(  \kappa\right)  }\right\vert _{M}  &  :=\left\{
\left\vert V_{A}\right\vert _{M}:A\in F^{\left(  \kappa\right)  }\left(
\mathbb{R}^{d}\right)  \text{ with }\left\vert A\right\vert =1\right\}
,\label{e.10.10}\\
\left\vert \nabla V^{\left(  \kappa\right)  }\right\vert _{M}  &  :=\left\{
\left\vert \nabla V_{A}\right\vert _{M}:A\in F^{\left(  \kappa\right)
}\left(  \mathbb{R}^{d}\right)  \text{ with }\left\vert A\right\vert
=1\right\}  ,\text{ and}\label{e.10.11}\\
H_{M}\left(  V^{\left(  \kappa\right)  }\right)   &  :=\left\{  H_{M}\left(
V_{A}\right)  :A\in F^{\left(  \kappa\right)  }\left(  \mathbb{R}^{d}\right)
\text{ with }\left\vert A\right\vert =1\right\}  . \label{e.10.12}%
\end{align}
Any of these expressions are allowed to take plus infinity as a value.
\end{definition}

In order to introduce rough paths we need to go from Lie algebras to Lie
groups. Let $G^{\left(  \kappa\right)  }\left(  \mathbb{R}^{d}\right)
:=1+\mathfrak{g}^{\left(  \kappa\right)  }\subset T^{\left(  \kappa\right)
}\left(  \mathbb{R}^{d}\right)  $ which forms a group under the multiplication
rule of $T^{\left(  \kappa\right)  }\left(  \mathbb{R}^{d}\right)  .$ In fact,
$G^{\left(  \kappa\right)  }$ is a Lie group with Lie algebra,
$\operatorname*{Lie}\left(  G^{\left(  \kappa\right)  }\right)  =\mathfrak{g}%
^{\left(  \kappa\right)  }$ and the exponential map,
\[
\operatorname*{Lie}\left(  G^{\left(  \kappa\right)  }\right)  \ni
\xi\rightarrow e^{\xi}=\sum_{k=0}^{\kappa}\frac{\xi^{k}}{k!}\in G^{\left(
\kappa\right)  }\left(  \mathbb{R}^{d}\right)
\]
is a diffeomorphism where $\xi^{k}:=\pi_{\leq\kappa}\left(  \xi^{\otimes
k}\right)  $ inside of $T^{\left(  \kappa\right)  }\left(  \mathbb{R}%
^{d}\right)  .$ We will mostly only use the following subgroup of $G^{\left(
\kappa\right)  }\left(  \mathbb{R}^{d}\right)  .$

\begin{definition}
[Free Nilpotent Lie Groups]\label{not.10.19}For $\kappa\in\mathbb{N},$ let
$G_{\text{geo}}^{\left(  \kappa\right)  }\left(  \mathbb{R}^{d}\right)
\subset G^{\left(  \kappa\right)  }$ be the simply connected Lie subgroup of
$G^{\left(  \kappa\right)  }=1\oplus_{k=1}^{\kappa}\left[  \mathbb{R}%
^{d}\right]  ^{\otimes k}$ whose Lie algebra is $F^{\left(  \kappa\right)
}\left(  \mathbb{R}^{d}\right)  .$ This subgroup is a step-$\kappa$ (free)
nilpotent Lie group which we refer to as the \textbf{geometric sub-group }of
$G^{\left(  \kappa\right)  }.$
\end{definition}

It is well known as a consequence of the Baker-Campel-Dynken-Hausdorff formula
(see for example \cite[Proposition 3.12]{Driver2018}) that the exponential
map,%
\[
F^{\left(  \kappa\right)  }\left(  \mathbb{R}^{d}\right)  \ni\xi\rightarrow
e^{\xi}=\sum_{k=0}^{\kappa}\frac{\xi^{k}}{k!}\in G_{\text{geo}}^{\left(
\kappa\right)  }\left(  \mathbb{R}^{d}\right)  ,
\]
is a diffeomorphism. Furthermore that inverse, $\log,$ of this diffeomorphism
may be computed using%
\[
\log\left(  1+\xi\right)  =\sum_{k=1}^{\kappa}\frac{\left(  -1\right)  ^{k+1}%
}{k}\xi^{k}.
\]

\begin{definition}
[H\"{o}lder geometric rough paths]\label{def.10.20}For any $T\in\left(
0,\infty\right)  ,$ let%
\[
\Delta_{T}:=\left\{  \left(  s,t\right)  \in\left[  0,T\right]  ^{2}:0\leq
s\leq t\leq T\right\}  .
\]
Given $\alpha\in(\frac{1}{\kappa+1},\frac{1}{\kappa}],$ $X\in C\left(
\Delta_{T},G_{\text{geo}}^{\left(  \kappa\right)  }\left(  \mathbb{R}%
^{d}\right)  \right)  $ is an\textbf{ }$\alpha$\textbf{-H\"{o}lder geometric
rough path} if;
\end{definition}

\begin{enumerate}
\item $X_{s,s}=1$ for all $s\in\left[  0,T\right]  ,$

\item $X_{st}X_{tu}=X_{su}$ for all $0\leq s\leq t\leq u\leq T,$ and

\item there is a constant $C<\infty$ such that
\[
N\left(  X_{s,t}\right)  \leq C\left\vert t-s\right\vert ^{\alpha}\text{ for
all }\left(  s,t\right)  \in\Delta_{T}.
\]

\end{enumerate}

\begin{example}
[Smooth Rough paths]\label{ex.10.21}If $x\in C^{1}\left(  \left[  0,T\right]
,\mathbb{R}^{d}\right)  ,$ $X_{st}^{0}:=1,$ and for $1\leq k\leq\kappa,$%
\[
X_{st}^{k}:=\int_{s\leq\sigma_{1}\leq\sigma_{2}\leq\dots\leq\sigma_{k}\leq
t}dx\left(  \sigma_{1}\right)  \otimes dx\left(  \sigma_{2}\right)
\otimes\dots\otimes dx\left(  \sigma_{k}\right)
\]
for all $\left(  s,t\right)  \in\Delta_{T},$ then $X_{st}:=\sum_{k=0}^{\kappa
}X_{st}^{k}\in G_{geo}^{\left(  \kappa\right)  }$ is an $1$-H\"{o}lder
geometric rough path. In fact if $C=\max_{s\in\left[  0,T\right]  }\left\vert
\dot{x}\left(  t\right)  \right\vert ,$ then
\begin{align*}
\left\vert X_{st}^{k}\right\vert \leq &  \int_{s\leq\sigma_{1}\leq\sigma
_{2}\leq\dots\leq\sigma_{k}\leq t}\left\vert dx\left(  \sigma_{1}\right)
\right\vert \left\vert dx\left(  \sigma_{2}\right)  \right\vert \dots
\left\vert dx\left(  \sigma_{k}\right)  \right\vert \\
&  =\frac{1}{k!}\left(  \int_{s}^{t}\left\vert dx\left(  \sigma\right)
\right\vert \right)  ^{k}\leq\frac{C^{k}}{k!}\left\vert t-s\right\vert ^{k}.
\end{align*}
It is known that every $\alpha$-H\"{o}lder rough path is certain limit of a
sequence of such smooth rough paths, see \cite[Section 8.6]{Friz2010}.
\end{example}

\begin{example}
[Enhanced Brownian Motion]\label{ex.10.22}If $\left\{  B_{t}\right\}
_{t\geq0}$ is an $\mathbb{R}^{d}$ -valued Brownian motion, let $B_{s,t}%
:=B_{t}-B_{s},$ and
\[
B_{s,t}^{2}:=\int_{s}^{t}\left(  B_{\sigma}-B_{s}\right)  \otimes\delta
B_{\sigma}%
\]
where $\delta B$ denote the Stratonovich differential of $B.$ Then%
\[
\mathbf{B}_{s,t}:=1+B_{s,t}+B_{s,t}^{2}\in G_{geo}^{\left(  2\right)  }%
\]
is almost surely a $\alpha$-H\"{o}lder geometric rough path for any
$0<\alpha<\frac{1}{2}.$ For this example and other Gaussian process examples,
see \cite[Section 13.2 and Chapter 15]{Friz2010} and the references therein.
\end{example}

\begin{notation}
\label{not.10.23}Given a geometric rough path as in Definition \ref{def.10.20}%
, we extend $X_{s,t}$ to all $\left(  s,t\right)  \in\left[  0,T\right]  ^{2}$
by
\[
X_{s,t}:=X_{t,s}^{-1}\text{ when }0\leq t\leq s\leq T.
\]

\end{notation}

\begin{remark}
\label{rem.10.24}From items 1. and 2. of Definition \ref{def.10.20}, if we let
$g_{t}:=X_{0,t},$ then $g_{0}=1\in G_{\text{geo}}^{\left(  \kappa\right)
}\left(  \mathbb{R}^{d}\right)  $ and for $0\leq s\leq t\leq T,$
\[
g_{t}=X_{0,t}=X_{0,s}X_{st}=g_{s}X_{st}\implies X_{st}=g_{s}^{-1}g_{t}.
\]
Thus we see it is natural to extend the definition of $X$ using
\begin{equation}
X_{s,t}:=g_{s}^{-1}g_{t}\text{ for all }\left(  s,t\right)  \in\left[
0,T\right]  ^{2}. \label{e.10.13}%
\end{equation}
This extension satisfies, $X_{s,t}=X_{t,s}^{-1}$ for all $s,t\in\left[
0,T\right]  $ and hence is consistent with Notation \ref{not.10.23}. From Eq.
(\ref{e.10.13}), it is simple to verify $X_{s,t}X_{t,u}=X_{su}$ for all
$s,t,u\in\left[  0,T\right]  .$ Moreover, as a consequence of Corollary
\ref{cor.16.4}, we also have
\[
\left\vert X_{s,t}^{k}\right\vert =\left\vert X_{t,s}^{k}\right\vert \leq
C\left\vert t-s\right\vert ^{\alpha k}\text{ for }1\leq k\leq\kappa\text{ and
}\left(  s,t\right)  \in\left[  0,T\right]  ^{2}.
\]

\end{remark}

\begin{ass}
\label{ass.2}Throughout this paper we assume that $0<\alpha\leq1$ and
$\kappa\in\mathbb{N}$ always satisfy,
\begin{equation}
\theta:=\alpha\left(  \kappa+1\right)  >1. \label{e.10.14}%
\end{equation}

\end{ass}

\subsection{Statement of the Main Results\label{sec.10.3}}

For the rest of this paper let $X$ be a H\"{o}lder rough path as in Definition
\ref{def.10.20} and $V$ be a $d$-dimensional dynamical system on $M$ as in
Definition \ref{def.10.3}. When $d\left(  \cdot,\cdot\right)  $ the Riemannian
distance function associated to a Riemannian metric, $g,$ on $M$ and
$f,g:M\rightarrow M$ are two maps and $U$ is a subset of $M,$ we let%
\begin{equation}
d_{U}\left(  f,g\right)  :=\sup_{m\in U}d\left(  f\left(  m\right)  ,g\left(
m\right)  \right)  . \label{e.10.15}%
\end{equation}
We will mostly use this definition with $U=M.$

\begin{ass}
\label{ass.3}Throughout this paper we assume that $V_{A}\in\Gamma\left(
TM\right)  $ is complete for all $A\in F^{\left(  \kappa\right)  }\left(
\mathbb{R}^{d}\right)  .$
\end{ass}

A standard condition that guarantees a vector field, $Y\in\Gamma\left(
TM\right)  ,$ is complete is to assume there is a complete metric $g$ on $M$
such that
\begin{equation}
\sup_{m\in M}\frac{\left\vert Y\left(  m\right)  \right\vert _{g}}{1+d\left(
o,m\right)  }<\infty\label{e.10.16}%
\end{equation}
where $o$ is a fixed point in $M$ and $d$ is the length metric associated to
$g.$ See Lemma \ref{lem.15.3} and Examples \ref{ex.15.4} and \ref{ex.15.5} of
Appendix \ref{sec.15} for a review of this fact along with some extensions to
this type of result. It should also be noted that if $\left\vert \nabla
Y\right\vert _{M}<\infty,$ then Eq. (\ref{e.10.16}) holds, see for example
\cite[Lemma 2.10]{Driver2018}.

\begin{definition}
[Approximate flows]\label{def.10.25}If $X$ is an $\alpha$-H\"{o}lder rough
path and $V:\mathbb{R}^{d}\rightarrow\Gamma\left(  TM\right)  $ is dynamical
system satisfying Assumption \ref{ass.3}, let
\begin{equation}
\mu_{t,s}=\mu_{t,s}^{X}:=e^{V_{\log\left(  X_{st}\right)  }^{\left(
\kappa\right)  }}\in\mathrm{Diff}\left(  M\right)  \text{ }\forall~\left(
s,t\right)  \in\left[  0,T\right]  ^{2}. \label{e.10.17}%
\end{equation}

\end{definition}

The next simple proposition indicates the importance of $\mu_{t,s}.$

\begin{proposition}
[Nilpotent Flows]\label{pro.10.26}If $\left\{  V_{a}:a\in\mathbb{R}%
^{d}\right\}  \subset\Gamma\left(  TM\right)  $ generates a step-$\kappa$
Nilpotent Lie sub-algebra then $V^{\left(  \kappa\right)  }:F^{\left(
\kappa\right)  }\left(  \mathbb{R}^{d}\right)  \rightarrow\Gamma\left(
TM\right)  $ is a Lie-algebra homomorphism. Moreover, $\mu_{t,s}$ is
multiplicative (has the flow property),%
\begin{equation}
\mu_{t,s}\circ\mu_{s,r}=\mu_{t,r}\text{ }\forall~\left(  s,t\right)
\in\left[  0,T\right]  ^{2}. \label{e.10.18}%
\end{equation}

\end{proposition}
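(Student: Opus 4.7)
The plan is to reduce both assertions to standard facts about the Baker--Campbell--Hausdorff (BCH) formula in a nilpotent setting. For the first assertion, I would invoke the universal property of the free Lie algebra $F(\mathbb{R}^d)$: the linear map $\mathbb{R}^d \ni w \mapsto V_w \in \Gamma(TM)$ extends uniquely to a Lie algebra homomorphism $\widetilde{V} : F(\mathbb{R}^d) \to \Gamma(TM)$. By the description in Remark \ref{rem.10.9}, the hypothesis that $\{V_a : a \in \mathbb{R}^d\}$ generates a step-$\kappa$ nilpotent Lie subalgebra is exactly the statement that $\widetilde V$ annihilates every iterated bracket of length $>\kappa$, so $\widetilde{V}|_{F_k(\mathbb{R}^d)}=0$ for $k>\kappa$. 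Since $F^{(\kappa)}(\mathbb{R}^d)=\bigoplus_{k=1}^{\kappa}F_{k}(\mathbb{R}^d)\subset F(\mathbb{R}^d)$, one reads from Notation \ref{not.10.15} that $V^{(\kappa)}=\widetilde V|_{F^{(\kappa)}(\mathbb{R}^d)}$. As the bracket on $F^{(\kappa)}$ is $[A,B]_{F^{(\kappa)}}=\pi_{\leq\kappa}[A,B]_\otimes$ and $\widetilde V$ kills the discarded component, for $A,B\in F^{(\kappa)}(\mathbb{R}^d)$ we obtain
\[
V^{(\kappa)}\!\left([A,B]_{F^{(\kappa)}}\right)=\widetilde V\!\left([A,B]_\otimes\right)=[\widetilde V(A),\widetilde V(B)]=[V^{(\kappa)}(A),V^{(\kappa)}(B)],
\]
which is the desired homomorphism property.

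For the second assertion, the image of $V^{(\kappa)}$ now lies in a step-$\kappa$ nilpotent subalgebra $\mathfrak h \subset \Gamma(TM)$ of complete vector fields (Assumption \ref{ass.3}). The BCH series for any two elements of $\mathfrak h$ therefore terminates after at most $\kappa$ brackets, and Palais's integrability theorem produces a smooth action of the simply connected nilpotent Lie group $H$ with $\mathrm{Lie}(H)=\mathfrak h$ on $M$ under which $\exp_H(X)$ acts as the time-one flow $e^X$. Expressed in terms of flows this yields the identity
\[
e^{V_A}\circ e^{V_B}=e^{V_{\mathrm{BCH}(B,A)}},\qquad A,B\in F^{(\kappa)}(\mathbb{R}^d),
\]
where $\mathrm{BCH}(B,A)=\log(e^{B}e^{A})$ is computed by the polynomial BCH series inside $F^{(\kappa)}(\mathbb{R}^d)$ (the reversal of arguments reflects the fact that flow composition on $M$ corresponds to a right action of $H$). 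Taking $A=\log X_{s,t}$, $B=\log X_{r,s}$ and using the multiplicativity $X_{r,s}X_{s,t}=X_{r,t}$ of the rough path, we conclude
\[
\mu_{t,s}\circ\mu_{s,r}=e^{V_{\log X_{s,t}}}\circ e^{V_{\log X_{r,s}}}=e^{V_{\log(X_{r,s}X_{s,t})}}=e^{V_{\log X_{r,t}}}=\mu_{t,r}.
\]

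The hardest step is justifying the flow BCH identity above, since the ambient space of diffeomorphisms is not a finite-dimensional Lie group. The cleanest route is Palais's theorem as indicated, after which the identity reduces to the group law $e^{B}e^{A}=e^{\mathrm{BCH}(B,A)}$ in $H$ pulled back through $V^{(\kappa)}$. Alternatively one can fix $m\in M$ and verify the identity directly by comparing the ODEs satisfied by the two sides as functions of a single interpolating parameter; step-$\kappa$ nilpotency makes this a finite, purely algebraic verification, bypassing any infinite-dimensional group theory.
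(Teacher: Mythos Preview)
Your argument is correct and follows the same overall structure as the paper: first observe that $V^{(\kappa)}$ is a Lie algebra homomorphism, then invoke the flow--BCH identity $e^{V_B}\circ e^{V_A}=e^{V_{\log(e^Ae^B)}}$ for $A,B\in F^{(\kappa)}(\mathbb{R}^d)$, and finally specialise to $A=\log X_{r,s}$, $B=\log X_{s,t}$ together with $X_{r,s}X_{s,t}=X_{r,t}$.

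The only genuine difference is in how the flow--BCH identity is justified. The paper simply quotes it as \cite[Corollary 4.7]{Driver2018}, where it is proved directly by ODE comparison using the nilpotency hypothesis. You instead route through Palais's integrability theorem to manufacture a finite-dimensional nilpotent group $H$ acting on $M$, after which the identity is a tautology in $H$. Your approach has the virtue of making the conceptual content transparent (the identity is really the group law in $H$), at the cost of invoking a somewhat heavier external theorem; the paper's route via the companion reference is more hands-on but less illuminating about why the identity holds. Your fallback suggestion of verifying the identity by direct ODE comparison is in fact exactly what \cite[Corollary 4.7]{Driver2018} does, so that alternative coincides with the paper's method.
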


\begin{proof}
This follows from basic Lie theoretic considerations. Here is a proof based on
\cite[Corollary 4.7]{Driver2018} which under the given assumptions states that%
\[
e^{V_{B}^{\left(  \kappa\right)  }}\circ e^{V_{A}^{\left(  \kappa\right)  }%
}=e^{V_{\log\left(  e^{A}e^{B}\right)  }^{\left(  \kappa\right)  }}\text{ for
all }A,B\in F^{\left(  \kappa\right)  }\left(  \mathbb{R}^{d}\right)  .
\]
Taking $A=\log\left(  X_{r,s}\right)  $ and $B=\log\left(  X_{s,t}\right)  $
in this identity while using
\[
\log\left(  e^{A}e^{B}\right)  =\log\left(  X_{r,s}X_{s,t}\right)
=\log\left(  X_{r,t}\right)
\]
gives the flow identity in Eq. (\ref{e.10.18}).
\end{proof}

When $\left\{  V_{a}:a\in\mathbb{R}^{d}\right\}  $ does not generate a
step-$\kappa$ Nilpotent Lie sub-algebra, the flow property in Eq.
(\ref{e.10.18}) will no longer hold. Nevertheless, the following main theorem
of this paper gives necessary conditions on $V$ so that there is a unique
\textquotedblleft flow\textquotedblright\ on $M$ close to $\mu_{t,s}.$

\begin{theorem}
[Global Existence]\label{thm.10.27}Suppose that $V:\mathbb{R}^{d}%
\rightarrow\Gamma\left(  TM\right)  $ is a dynamical system and $g$ is a
complete metric on $M$ such that $\left\vert V^{\left(  \kappa\right)
}\right\vert _{M}+\left\vert \nabla V^{\left(  \kappa\right)  }\right\vert
_{M}<\infty.$ Then Assumption \ref{ass.3} holds (so $\mu_{t,s}$ in Definition
\ref{def.10.25} is well defined) and there exists a unique function
$\varphi\in C\left(  \left[  0,T\right]  ^{2}\times M,M\right)  $ such that;

\begin{enumerate}
\item $\varphi_{t,t}=Id$ for all $t\in\left[  0,T\right]  ,$

\item $\varphi_{t,s}\circ\varphi_{s,r}=\varphi_{t,r}$ $\forall~\left(
s,t\right)  \in\left[  0,T\right]  ^{2},$ and

\item there exists a constant $C<\infty$ such that
\begin{equation}
d_{M}\left(  \varphi_{t,s},\mu_{t,s}\right)  \leq C\left\vert t-s\right\vert
^{\theta}\text{ ~}\forall~\left(  s,t\right)  \in\left[  0,T\right]  ^{2},
\label{e.10.19}%
\end{equation}
where $\theta=\left(  \kappa+1\right)  \alpha>1$ as in Assumption \ref{ass.2}.
Moreover, $C\left(  K\right)  :=\sup_{\left(  s,t\right)  \in\left[
0,T\right]  ^{2}}\operatorname{Lip}_{K}\left(  \varphi_{t,s}\right)  <\infty$
for all compact subsets, $K,$ of $M.$\footnote{Items 1. and 2. imply that
$\varphi_{t,s}$ is invertible with inverse given by $\varphi_{s,t}$ for all
$\left(  s,t\right)  \in\left[  0,T\right]  ^{2}.$ Thus each $\varphi
_{t,s}:M\rightarrow M$ is a locally Lipshitz homeomorphism for all
$s,t\in\left[  0,T\right]  .$}
\end{enumerate}
\end{theorem}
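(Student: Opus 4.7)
The plan is to follow the Bailleul-style flow-sewing strategy in its intrinsic (manifold) form. I would carry out the proof in four steps.

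\textbf{Step 1 (Verify Assumption \ref{ass.3} and set up $\mu_{t,s}$).} Since $|V^{(\kappa)}|_M+|\nabla V^{(\kappa)}|_M<\infty$ and $g$ is complete, each $V_A$ with $A\in F^{(\kappa)}(\mathbb{R}^d)$ is Lipschitz (hence complete) by Lemma \ref{lem.15.3}. So $\mu_{t,s}=e^{V_{\log X_{s,t}}^{(\kappa)}}\in\mathrm{Diff}(M)$ is well defined and, using the flow estimates of \cite{Driver2018} applied to the vector field $V_{\log X_{s,t}}^{(\kappa)}$ whose $g$-norm is $O(|t-s|^\alpha)$ and whose covariant derivative is $O(1)$, one gets the one-step bounds
\[
d_M(\mu_{t,s},\mathrm{Id})\leq C|t-s|^\alpha,\qquad\mathrm{Lip}_M(\mu_{t,s})\leq 1+C|t-s|^\alpha.
\]

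\textbf{Step 2 (Approximate multiplicativity of $\mu$).} This is the heart of the argument: show that there is $C<\infty$ with
\[
d_M\bigl(\mu_{t,s}\circ\mu_{s,r},\ \mu_{t,r}\bigr)\leq C|t-r|^\theta\qquad\forall\,0\le r\le s\le t\le T.
\]
In the nilpotent case, Proposition \ref{pro.10.26} and the identity $e^{V_B}\circ e^{V_A}=e^{V_{\log(e^A e^B)}^{(\kappa)}}$ make the left side vanish because $\log(X_{r,s}X_{s,t})=\log X_{r,t}$. In general the defect is produced purely by Lie brackets $[V_{w_1},\dots,[V_{w_k}]\dots]$ of length $k\ge\kappa+1$, which in the nilpotent model would cancel. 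Expanding $e^{V_B}\circ e^{V_A}$ against $e^{V_{\log(e^A e^B)}^{(\kappa)}}$ using the manifold Taylor expansion from \cite{Driver2018} to order $\kappa+1$, and using the homogeneous-norm bound $N(X_{u,v})\le C|v-u|^\alpha$ (so $|A|^{i}|B|^{j}\lesssim |t-r|^{\alpha(i+j)}$), every residual term has homogeneity $\ge\alpha(\kappa+1)=\theta>1$, proving the bound.

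\textbf{Step 3 (Sewing to produce $\varphi$).} For a partition $\Pi=\{s=t_0<\dots<t_n=t\}$ put
\[
\mu_\Pi:=\mu_{t_n,t_{n-1}}\circ\cdots\circ\mu_{t_1,t_0}.
\]
Fix a compact $K\subset M$ and enlarge $K$ to $\tilde K$ containing the orbits of all $\mu_{v,u}$ with $(u,v)\in[0,T]^2$ starting from $K$ (possible since $d_M(\mu_{t,s},\mathrm{Id})\le C|t-s|^\alpha$). By Step 1 the one-step Lipschitz constants multiply to $\mathrm{Lip}_K(\mu_\Pi)\le\exp(C\sum_i|t_i-t_{i-1}|^\alpha)\le e^{CT^\alpha}$, uniformly in $\Pi$. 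Inserting a single intermediate point $t'\in(t_{i-1},t_i)$ changes $\mu_\Pi$ on $K$ by at most $e^{CT^\alpha}\cdot d_M(\mu_{t_i,t_{i-1}},\mu_{t_i,t'}\circ\mu_{t',t_{i-1}})\le C(K)|t_i-t_{i-1}|^\theta$ via Step 2, so passing from $\Pi$ to any refinement $\Pi'$ costs at most $C(K)\sum_i|t_i-t_{i-1}|^\theta\le C(K)|t-s|\cdot|\Pi|^{\theta-1}$. Since $\theta>1$ this is Cauchy as $|\Pi|\to0$ in the sup metric on $K$, and we define $\varphi_{t,s}:=\lim_{|\Pi|\to0}\mu_\Pi$. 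Items 1 and 2 follow because concatenation of partitions realizes the flow property under the limit, and item 3 follows by taking $\Pi=\{s,t\}$, so $\mu_\Pi=\mu_{t,s}$ and the above refinement bound yields $d_K(\varphi_{t,s},\mu_{t,s})\le C(K)|t-s|^\theta$. The Lipschitz bound on $\varphi_{t,s}$ is inherited from that on $\mu_\Pi$.

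\textbf{Step 4 (Uniqueness).} If $\tilde\varphi$ also satisfies items 1-3, then by the multiplicative property both $\varphi_{t,s}$ and $\tilde\varphi_{t,s}$ admit the telescoping $\varphi_{t,s}=\varphi_{t_n,t_{n-1}}\circ\cdots\circ\varphi_{t_1,t_0}$ along any partition $\Pi$. Using the Lipschitz control $C(K)$, iteration of the triangle inequality and the common $C|t_i-t_{i-1}|^\theta$ closeness of both $\varphi_{t_i,t_{i-1}}$ and $\tilde\varphi_{t_i,t_{i-1}}$ to $\mu_{t_i,t_{i-1}}$ yields $d_K(\varphi_{t,s},\tilde\varphi_{t,s})\le C(K)|t-s|\cdot|\Pi|^{\theta-1}\to 0$.

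The main obstacle is Step 2: proving the intrinsic approximate-flow inequality with exponent $\theta$ on a manifold, without the Euclidean localisation arguments used in \cite{Weidner2018}. Steps 3 and 4 are then a standard Cauchy/telescoping sewing argument, simplified by the a priori compact-set stability of $\mu_{t,s}$ that follows from $|V^{(\kappa)}|_M<\infty$.
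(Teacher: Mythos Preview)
Your Steps 1, 2, and 4 are essentially the paper's Theorem \ref{thm.13.1} and Subsection \ref{sec.13.2}, and are fine. The real gap is in Step 3, specifically the claimed uniform Lipschitz bound
\[
\operatorname{Lip}_K(\mu_\Pi)\le\exp\Bigl(C\sum_i|t_i-t_{i-1}|^\alpha\Bigr)\le e^{CT^\alpha}.
\]
The second inequality is false whenever $\alpha<1$: for a uniform partition into $n$ pieces, $\sum_i|t_i-t_{i-1}|^\alpha=n^{1-\alpha}|t-s|^\alpha\to\infty$ as $n\to\infty$. So the naive product of one-step Lipschitz constants blows up, and with it your Cauchy estimate for $\mu_\Pi$ under refinement (which multiplies the almost-multiplicativity defect by exactly this Lipschitz factor). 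The paper flags this as ``the most challenging aspect of the proof'' just before Theorem \ref{thm.13.4}.

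The paper's fix is not to multiply Lipschitz constants but to run the same $\varepsilon$-special-partition induction (Lemma \ref{lem.11.17}) one level up, on the tangent map $\mu_{t,s\ast}$ in the metric $d^{TM}$: see Theorem \ref{thm.13.4} and Corollary \ref{cor.13.5}. This gives $d_M^{TM}(\mu_{t,s\ast}^\pi,\mu_{t,s\ast})\le L|t-s|^\theta$ for $|t-s|$ small, from which Corollary \ref{cor.13.6} reads off a uniform bound on $\operatorname{Lip}(\mu_{t,s}^\pi)$. But this tangent-level argument needs control on \emph{second} covariant derivatives and curvature, i.e.\ $H_M(V^{(\kappa)})<\infty$, which is \emph{not} part of the hypotheses of Theorem \ref{thm.10.27}. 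The paper therefore inserts a localization step (Theorem \ref{thm.13.3}): on each compact $K$ one replaces $V$ by a compactly supported $\tilde V$ with $\tilde\mu_{t,s}^n=\mu_{t,s}^n$ on $K$, and for $\tilde V$ the quantity $H_M(\tilde V^{(\kappa)})$ is automatically finite. The existence of $\varphi$ then comes from Theorem \ref{thm.11.25} applied to $\tilde\mu$, together with a compatibility argument as $K\uparrow M$; the global estimate $d_M(\varphi_{t,s},\mu_{t,s})\le C|t-s|^\theta$ follows separately from Lemma \ref{lem.11.17} and Theorem \ref{thm.11.18}, which need only the first-order bounds. Your outline is missing both the tangent-bundle Lipschitz control and the localization that makes it applicable; you also misidentify Step 2 as the main obstacle, whereas that step is handled directly by Theorem \ref{thm.12.3}.
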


Stochastic variants (of one kind or another) of the estimate in Eq.
(\ref{e.10.19}) occur frequently, for example see
\cite{BenArous1989,BenArous1988,Takanobu1988,Takanobu1990,Castell1993,Castell1996,Inahama2010a,Inahama2017}
for a few examples. Theorem \ref{thm.10.27} gives a notion of solution to
rough differential equations which is championed by Bailleul, see for example
\cite{Bailleul2014a,Bailleul2015a,Bailleul2018a,Bailleul2018}. However, it
should be noted that a precursor to this approximate flow notion of solution
already appeared in Davie in \cite{Davie2007} which gives a similar definition
at the level of paths. The next proposition shows that the notion of solution
to rough differential equations used in Theorem \ref{thm.10.27} gives a
solution in the sense of Davie. For further information in this direction the
reader is referred to \cite{Cass2016a} and \cite{Bailleul2018a}.

\begin{proposition}
[Path characterization of solutions]\label{pro.10.28}If we fix $m_{0}\in M$
and let $y_{t}:=\varphi_{t,0}\left(  m_{0}\right)  ,$ then $y$ satisfies
\[
\left\vert f\left(  y_{t}\right)  -\left(  V_{X_{s,t}}f\right)  \left(
y_{s}\right)  \right\vert \leq C\left(  f\right)  \left\vert t-s\right\vert
^{\theta}%
\]
where $f$ is a smooth function on $M,$ $\theta=\left(  \kappa+1\right)
\alpha>1$ as in Assumption \ref{ass.2}, and $C\left(  f\right)  $ depends on
the bounds on $f$ and its derivatives to order $2\kappa$ over a compact
neighborhood of $y_{\left[  0,T\right]  }.$
\end{proposition}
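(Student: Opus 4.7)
My plan is to transfer the flow-level estimate of Theorem~\ref{thm.10.27} into a Taylor-type estimate for $f$, and then identify the resulting Taylor polynomial with $V_{X_{s,t}}f$ modulo an error of size $|t-s|^\theta$. Multiplicativity of $\varphi$ together with $\varphi_{s,0}(m_0)=y_s$ gives $y_t = \varphi_{t,s}(y_s)$, and applying Eq.~(\ref{e.10.19}) pointwise yields
\[
d\bigl(y_t,\mu_{t,s}(y_s)\bigr) \leq d_M(\varphi_{t,s},\mu_{t,s}) \leq C\,|t-s|^\theta.
\]
Since $y_{[0,T]}$ is compact and, for $|t-s|$ bounded, the points $\mu_{t,s}(y_s)$ remain in a fixed compact neighborhood $K$ of $y_{[0,T]}$, the mean value inequality gives $|f(y_t)-f(\mu_{t,s}(y_s))| \leq \|df\|_K\cdot C|t-s|^\theta$. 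The problem therefore reduces to the deterministic estimate
\[
\bigl|f(\mu_{t,s}(m)) - (V_{X_{s,t}}f)(m)\bigr| \leq C(f)\,|t-s|^\theta, \qquad m=y_s.
\]

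For the deterministic estimate, set $A:=\log X_{s,t}\in F^{(\kappa)}(\mathbb{R}^d)$ and $Y:=V_A^{(\kappa)}$, so that $\mu_{t,s}=e^Y$. The classical Taylor expansion of $\sigma\mapsto f(e^{\sigma Y}(m))$ at order $\kappa$ gives
\[
f(\mu_{t,s}(m)) = \sum_{j=0}^{\kappa}\frac{1}{j!}(Y^j f)(m) + R_{\kappa+1}(m),
\]
with $|R_{\kappa+1}(m)|\leq \frac{1}{(\kappa+1)!}\sup_{0\le\sigma\le 1}|Y^{\kappa+1}f|\bigl(e^{\sigma Y}(m)\bigr)$. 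The Hölder bound $|A_k|\lesssim|t-s|^{\alpha k}$ (inherited from $N(X_{s,t})\leq C|t-s|^\alpha$ via the $\log$ expansion) forces each summand $V_{A_{k_1}}\cdots V_{A_{k_{\kappa+1}}}$ of $Y^{\kappa+1}$ to carry a prefactor $\lesssim|t-s|^{\alpha(k_1+\cdots+k_{\kappa+1})}\leq|t-s|^{\alpha(\kappa+1)}=|t-s|^\theta$, hence $|R_{\kappa+1}|\leq C(f)|t-s|^\theta$ on $K$ using bounds on $f$ and on covariant derivatives of $V^{(\kappa)}$. To match the Taylor polynomial with $V_{X_{s,t}}f$, I would extend $V$ uniquely to an algebra homomorphism $V:T(\mathbb{R}^d)\to\mathcal D(M)$ by $V_{a\otimes b}:=V_a\circ V_b$; then $Y^j f = V_{A^{\otimes j}}f$ and
\[
\sum_{j=0}^{\kappa}\frac{1}{j!}Y^j f = V_E f, \qquad E:=\sum_{j=0}^{\kappa}\frac{A^{\otimes j}}{j!}.
\]
Because $A\in\mathfrak{g}^{(\kappa)}$ has no degree-$0$ component, $A^{\otimes j}$ is supported in degrees $\geq j$, so $\pi_{\leq\kappa}(A^{\otimes j})=0$ for $j\geq\kappa+1$ and consequently $\pi_{\leq\kappa}E=\pi_{\leq\kappa}(e^A)=X_{s,t}$. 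Thus $V_E f - V_{X_{s,t}}f = V_{\pi_{>\kappa}E}f$, and every degree-$k$ component of $\pi_{>\kappa}E$ with $k\geq\kappa+1$ has norm $\lesssim|t-s|^{\alpha k}\leq C|t-s|^\theta$ for bounded $|t-s|$, so this correction is also at most $C(f)|t-s|^\theta$ on $K$. Adding the three errors closes the estimate.

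The main obstacle is the bookkeeping in the second paragraph: one must carefully pair the total tensor degree $k_1+\cdots+k_j$ of each monomial $V_{A_{k_1}}\cdots V_{A_{k_j}}$ (which controls its prefactor via the rough-path Hölder estimate) against its differential order on $f$ (which selects which derivative of $f$ is consumed). Because monomials surviving in the tail $\pi_{>\kappa}E$ and in $Y^{\kappa+1}$ involve compositions of up to $\kappa+1$ first-order operators acting on $f$, compounded with the higher-derivative cost of expressing each $V_{A_k}$ as an iterated Lie bracket of the $V_{e_j}$'s, one ultimately needs $C^{2\kappa}$-bounds on $f$ on a compact neighborhood of $y_{[0,T]}$ — precisely the dependence asserted in the footnote.
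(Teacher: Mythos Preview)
Your proposal is correct and follows essentially the same route as the paper's proof: use multiplicativity plus Eq.~(\ref{e.10.19}) to replace $f(y_t)$ by $f(\mu_{t,s}(y_s))$ up to an $O(|t-s|^\theta)$ error, Taylor expand $f\circ e^{V_A}$ with $A=\log X_{s,t}$ to order $\kappa$, and then identify the resulting polynomial $\sum_{j=0}^\kappa \frac{1}{j!}V_A^j f = V_E f$ with $V_{X_{s,t}}f$ via $\pi_{\leq\kappa}E = e^A = X_{s,t}$, the tail $\pi_{>\kappa}E$ being controlled by the H\"older bound on $A$. Your write-up is in fact slightly more explicit about the bookkeeping (the algebra-homomorphism extension of $V$, the degree count for $\pi_{>\kappa}E$) than the paper's own sketch.
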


\begin{proof}
Let us shows $y$ satisfies the above estimate. To see this we observe that
$y_{t}=\varphi_{t,s}\left(  y_{s}\right)  $ and hence
\begin{align*}
f\left(  y_{t}\right)   &  =f\circ\varphi_{t,s}\left(  y_{s}\right)
=f\circ\mu_{t,s}\left(  y_{s}\right)  +O\left(  \left\vert t-s\right\vert
^{\theta}\right) \\
&  =f\circ e^{V_{\log\left(  X_{st}\right)  }^{\left(  \kappa\right)  }%
}\left(  y_{s}\right)  +O\left(  \left\vert t-s\right\vert ^{\theta}\right)
\end{align*}
and
\begin{align*}
f\circ e^{V_{\log\left(  X_{st}\right)  }^{\left(  \kappa\right)  }}\left(
y_{s}\right)   &  =\sum_{k=0}^{\kappa}\frac{1}{k!}\left(  V_{\log\left(
X_{st}\right)  }^{\left(  \kappa\right)  k}f\right)  \left(  y_{s}\right)
+O\left(  \left\vert t-s\right\vert ^{\theta}\right) \\
&  =\left(  \sum_{k=0}^{\kappa}\frac{1}{k!}V_{\log\left(  X_{st}\right)
}^{\left(  \kappa\right)  k}f\right)  \left(  y_{s}\right)  +O\left(
\left\vert t-s\right\vert ^{\theta}\right)  .
\end{align*}
Finally, $\sum_{k=0}^{\kappa}\frac{1}{k!}V_{\log\left(  X_{st}\right)  }%
^{k}=V_{B}$ where
\begin{align*}
B  &  =\sum_{k=0}^{\kappa}\frac{1}{k!}\left[  \log\left(  X_{st}\right)
\right]  ^{\otimes k}=\pi_{\leq\kappa}\sum_{k=0}^{\kappa}\frac{1}{k!}\left[
\log\left(  X_{st}\right)  \right]  ^{\otimes k}+O\left(  \left\vert
t-s\right\vert ^{\theta}\right) \\
&  =e^{\log\left(  X_{st}\right)  }+O\left(  \left\vert t-s\right\vert
^{\theta}\right)  =X_{st}+O\left(  \left\vert t-s\right\vert ^{\theta}\right)
\end{align*}
where $O\left(  \left\vert t-s\right\vert ^{\theta}\right)  \in\oplus
_{k=\kappa+1}^{2\kappa}\left[  \mathbb{R}^{d}\right]  ^{\otimes k}.$ Combining
these estimates shows,%
\[
f\left(  y_{t}\right)  =\left(  V_{X_{s,t}}f\right)  \left(  y_{s}\right)
+O\left(  \left\vert t-s\right\vert ^{\theta}\right)  .
\]

\end{proof}

Our next goal is to remove the hypothesis in Theorem \ref{thm.10.27} that
$\left\vert V^{\left(  \kappa\right)  }\right\vert _{M}<\infty.$ We begin with
the following simple lemma.

\begin{lemma}
\label{lem.10.29}Suppose $\left(  M,g,o\right)  $ is a pointed Riemannian
manifold, $\kappa\in\mathbb{N},$ and $V:\mathbb{R}^{d}\rightarrow\Gamma\left(
TM\right)  $ is a dynamical system on $M.$ If $\left\vert \nabla V^{\left(
\kappa\right)  }\right\vert _{M}<\infty$ and $C_{\kappa}\left(  V,o\right)
:=\max\left(  \left\vert \nabla V^{\left(  \kappa\right)  }\right\vert
_{M},\left\vert V^{\left(  \kappa\right)  }\left(  o\right)  \right\vert
\right)  <\infty,$ then%
\begin{align}
\left\vert V_{A}\left(  m\right)  \right\vert  &  \leq\left(  \left\vert
V^{\left(  \kappa\right)  }\left(  o\right)  \right\vert +d\left(  o,m\right)
\left\vert \nabla V^{\left(  \kappa\right)  }\right\vert _{M}\right)
\left\vert A\right\vert \nonumber\\
&  \leq C_{\kappa}\left(  V,o\right)  \left\vert A\right\vert \left(
1+d\left(  o,m\right)  \right)  \label{e.10.20}%
\end{align}
for all $A\in F^{\left(  \kappa\right)  }\left(  \mathbb{R}^{d}\right)  $ and
$m\in M.$
\end{lemma}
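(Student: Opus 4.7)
The plan is to reduce to the unit-norm case and then compare $V_A(m)$ to $V_A(o)$ along a curve using parallel transport and the fundamental theorem of calculus.

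First I would reduce to the case $|A|=1$. Since $A \mapsto V_A$ is linear, we have $V_A = |A|\cdot V_{A/|A|}$ for $A\ne 0$, so once the inequality is established for unit-norm elements it follows for all $A \in F^{(\kappa)}(\mathbb{R}^d)$ by homogeneity. Fix such an $A$ with $|A|=1$ and fix $m \in M$. By the definition of the length metric $d=d_g$, for every $\varepsilon>0$ there exists a piecewise smooth curve $\sigma \colon [0,1]\to M$ with $\sigma(0)=o$, $\sigma(1)=m$, and length $L(\sigma)\le d(o,m)+\varepsilon$.

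The main step is a standard parallel-transport comparison. Let $P_t \colon T_o M \to T_{\sigma(t)}M$ denote parallel transport along $\sigma$, which is an isometry for each $t$. Define $f(t) := P_t^{-1} V_A(\sigma(t)) \in T_o M$, so that
\[
f'(t) = P_t^{-1} \nabla_{\dot\sigma(t)} V_A.
\]
Since parallel transport is an isometry and $|\nabla_{\dot\sigma(t)} V_A|_g \le |\dot\sigma(t)|_g \cdot |\nabla V_A|_M$, integrating and applying the triangle inequality yields
\[
|V_A(m)|_g = |f(1)|_g \le |f(0)|_g + \int_0^1 |f'(t)|_g\, dt \le |V_A(o)|_g + L(\sigma)\, |\nabla V_A|_M.
\]
Sending $\varepsilon \downarrow 0$ gives $|V_A(m)|_g \le |V_A(o)|_g + d(o,m)\, |\nabla V_A|_M$.

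Finally I would invoke the definitions of the dynamical-system seminorms. Since $|A|=1$, we have $|V_A(o)|_g \le |V^{(\kappa)}(o)|$ and $|\nabla V_A|_M \le |\nabla V^{(\kappa)}|_M$, hence
\[
|V_A(m)| \le |V^{(\kappa)}(o)| + d(o,m)\, |\nabla V^{(\kappa)}|_M,
\]
which, after rescaling to arbitrary $A$ and bounding both prefactors by $C_\kappa(V,o)$, gives the two inequalities in (\ref{e.10.20}). There is no real obstacle here; the only subtlety is ensuring that a minimizing geodesic is not needed (completeness of $g$ has not yet been assumed in this lemma), which is why I would work with piecewise smooth curves of length arbitrarily close to $d(o,m)$ rather than with a single geodesic.
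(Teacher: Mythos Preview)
Your proof is correct and follows essentially the same approach as the paper: the paper invokes \cite[Lemma 2.10]{Driver2018}, which is precisely the parallel-transport comparison $\bigl||V_A(m)|-|V_A(o)|\bigr|\le |\nabla V_A|_M\, d(o,m)$ that you derive directly (indeed the same argument appears verbatim in the proof of Proposition~\ref{pro.15.10}). Your care in using approximating curves rather than a minimizing geodesic, since completeness is not assumed here, is appropriate.
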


\begin{proof}
From \cite[Lemma 2.10]{Driver2018} with $X\left(  m\right)  :=V_{A}\left(
m\right)  $ and $p=o$ states that
\[
\left\vert \left\vert V_{A}\left(  m\right)  \right\vert -\left\vert
V_{A}\left(  o\right)  \right\vert \right\vert \leq\left\vert \nabla
V_{A}\right\vert _{M}d\left(  o,m\right)  .
\]
The estimates in Eq. (\ref{e.10.20}) are now an elementary consequence of the
previous inequality.
\end{proof}

\begin{notation}
\label{not.10.30}Let $\left(  M,o\right)  $ be a pointed manifold. To each
Riemannian metric, $g,$ on $M,$ let $\bar{g}$ be the continuous Riemannian
metric defined by%
\[
\bar{g}\left(  v,w\right)  =\left(  \frac{1}{1+d\left(  o,m\right)  }\right)
^{2}g\left(  v,w\right)  \text{ }\forall~v,w\in T_{m}M\text{ and }m\in M
\]
and let $\bar{d}$ be the length metric associated to the continuous metric.
\end{notation}

\begin{corollary}
\label{cor.10.31}Suppose that $V:\mathbb{R}^{d}\rightarrow\Gamma\left(
TM\right)  $ is a dynamical system satisfying Assumption \ref{ass.3} and there
exists a complete metric, $g,$ on $M$ such that $\left\vert \nabla V^{\left(
\kappa\right)  }\right\vert _{M}<\infty.$\footnote{From Lemma \ref{lem.10.29},
the assumption that $\left\vert \nabla V^{\left(  n\right)  }\right\vert
_{M}<\infty$ implies $V_{A}\left(  m\right)  $ has at most linear growth in
$m$ for all $A\in F^{\left(  n\right)  }\left(  \mathbb{R}^{d}\right)  .$}
Then there exists a unique function $\varphi\in C\left(  \left[  0,T\right]
^{2}\times M,M\right)  $ such that;

\begin{enumerate}
\item $\varphi_{t,t}=Id$ for all $t\in\left[  0,T\right]  ,$

\item $\varphi_{t,s}\circ\varphi_{s,r}=\varphi_{t,r}$ $\forall~0\leq r\leq
s\leq t\leq T,$ and

\item there exists a constant $C<\infty$ such that
\begin{align*}
\bar{d}_{M}\left(  \varphi_{t,s},\mu_{t,s}\right)   &  =\sup_{m\in M}\bar
{d}\left(  \varphi_{t,s}\left(  m\right)  ,\mu_{t,s}\left(  m\right)  \right)
\\
&  \leq C\left\vert t-s\right\vert ^{\theta}\text{ ~}\forall~\left(
s,t\right)  \in\left[  0,T\right]  ^{2},
\end{align*}
where $\bar{d}$ is as in Notation \ref{not.10.30}.
\end{enumerate}
\end{corollary}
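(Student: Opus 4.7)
The plan is to reduce the statement to Theorem~\ref{thm.10.27} by means of a conformal change of the Riemannian metric that absorbs the linear growth of the $V_A$. By Lemma~\ref{lem.10.29}, the hypothesis $\left|\nabla V^{(\kappa)}\right|_M<\infty$ (measured with respect to $g$) forces
\begin{equation*}
\left|V_A(m)\right|_g \leq C_\kappa(V,o)\,(1+d_g(o,m))\,\left|A\right|
\end{equation*}
for every $A\in F^{(\kappa)}(\mathbb{R}^d)$ and $m\in M$. This is precisely the growth rate that the rescaled metric $\bar g$ of Notation~\ref{not.10.30} is designed to damp, so $\bar g$ is the natural candidate. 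Since $\bar g$ is only continuous, however, Theorem~\ref{thm.10.27} does not apply to it directly, and I would instead invoke Proposition~\ref{pro.15.10} from Appendix~\ref{sec.15} to produce a \emph{smooth} complete Riemannian metric $\tilde g$, conformally equivalent to $g$ and comparable to $\bar g$ in both directions (so that $c\,\bar g \leq \tilde g \leq C\,\bar g$ pointwise), and such that the quantitative bounds on $V^{(\kappa)}$ pass through the smoothing procedure.

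With such a $\tilde g$ in hand, I next verify the full hypothesis of Theorem~\ref{thm.10.27} for $(M,\tilde g, V)$. The bound $\left|V^{(\kappa)}\right|_M<\infty$ with respect to $\tilde g$ is immediate from the comparison $\left|\cdot\right|_{\tilde g}\asymp \left|\cdot\right|_g/(1+d_g(o,\cdot))$ combined with the linear growth above. The more delicate step is to control $\left|\nabla^{\tilde g} V^{(\kappa)}\right|_M$: the Levi-Civita connections of $g$ and the conformally rescaled metric differ by an explicit tensorial correction built from the differential of the conformal factor, and applying this transformation formula to each $V_A$ and using the hypothesis $\left|\nabla V^{(\kappa)}\right|_M<\infty$, the linear growth of $V_A$, and the fact that $m\mapsto 1+d_g(o,m)$ is $1$-Lipschitz should yield the desired uniform bound. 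Proposition~\ref{pro.15.10} is designed precisely so that the smoothing does not destroy these quantitative estimates, and completeness of $\tilde g$ will also be built into its conclusion.

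Once these hypotheses are established, Theorem~\ref{thm.10.27} produces a unique $\varphi\in C([0,T]^2\times M,M)$ satisfying items~1 and~2 and
\begin{equation*}
\tilde d_M(\varphi_{t,s},\mu_{t,s}) \leq C\left|t-s\right|^\theta, \qquad (s,t)\in[0,T]^2.
\end{equation*}
The two-sided comparison of $\tilde g$ and $\bar g$ upgrades to a two-sided comparison $c\,\bar d \leq \tilde d \leq C\,\bar d$ of the associated length metrics, which both converts the $\tilde d$-bound into the $\bar d$-bound asserted in item~3 and transfers uniqueness: any $\varphi$ satisfying the $\bar d$-version of item~3 automatically satisfies a $\tilde d$-version of the same form, so uniqueness is inherited from Theorem~\ref{thm.10.27}. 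The principal obstacle in this plan is the conformal-change bookkeeping for $\nabla V^{(\kappa)}$, where one must carefully combine Lemma~\ref{lem.10.29} with the transformation law for the Levi-Civita connection and check that the (unavoidable) smoothing step of Appendix~\ref{sec.15} can be arranged so that both the comparability of $\tilde g$ with $\bar g$ and the finiteness of the covariant derivative bounds survive.
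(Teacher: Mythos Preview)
Your proposal is correct and follows essentially the same route as the paper: the paper's proof invokes Corollary~\ref{cor.15.9} (which constructs the smooth conformal metric via the Lipschitz approximation $h$ of $d(o,\cdot)$ and verifies completeness and the two-sided comparison with the continuous $\bar g$ of Notation~\ref{not.10.30}) together with Proposition~\ref{pro.15.10} (which handles exactly the conformal-change bookkeeping for $\nabla V^{(\kappa)}$ that you flag as the principal obstacle), and then applies Theorem~\ref{thm.10.27}. The only minor attribution slip is that you cite Proposition~\ref{pro.15.10} as producing the smooth metric, whereas in the paper that is the role of Corollary~\ref{cor.15.9}; Proposition~\ref{pro.15.10} is precisely the transformation-law computation you describe in your second paragraph.
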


\begin{proof}
This is an immediate consequence of Theorem \ref{thm.10.27} in conjunction
with Corollary \ref{cor.15.9} and Proposition \ref{pro.15.10} of Appendix
\ref{sec.15} below.
\end{proof}

\subsection{Examples\label{sec.10.4}}

\begin{example}
\label{ex.10.33}Let $M=G$ be a Lie group with $\mathfrak{g}:=T_{e}G$ being the
Lie algebra and define $V_{\xi}=\tilde{\xi}\in\Gamma\left(  TG\right)  $ for
all $\xi\in\mathfrak{g.}$ Then $V$ satisfies the hypothesis of Theorem
\ref{thm.10.27} relative to any left invariant Riemannian metric on $G.$
\end{example}

\begin{example}
\label{ex.10.34}Let $M=\mathbb{R}^{D}$ with the standard Euclidean Riemannian
metric. If $V_{i}\in\Gamma\left(  T\mathbb{R}^{D}\right)  $ are bounded vector
fields on $\mathbb{R}^{D}$ with bounded derivative to sufficiently high order,
then $V$ satisfies the hypothesis of Theorem \ref{thm.10.27}.
\end{example}

The proofs of the next proposition and corollary may be found in Section
\ref{sec.14} at the end of the paper.

\begin{proposition}
\label{pro.10.35}Suppose that $\left(  M^{d},g\right)  $ is a complete
Riemannian manifold, $TM$ is parallelizable, and $V:\mathbb{R}^{d}%
\rightarrow\Gamma\left(  TM\right)  $ is a dynamical system such that%
\begin{equation}
g\left(  V_{a}\left(  m\right)  ,V_{b}\left(  m\right)  \right)  =a\cdot
b\text{ for all }a,b\in\mathbb{R}^{d}\text{ and }m\in M. \label{e.10.21}%
\end{equation}
Further let $Q\left(  a,b\right)  \in C^{\infty}\left(  M,\mathbb{R}%
^{d}\right)  $ is determined by
\begin{equation}
\left[  V_{a},V_{b}\right]  =V_{Q\left(  a,b\right)  }\text{ for all }%
a,b\in\mathbb{R}^{d}. \label{e.10.22}%
\end{equation}
[Notice that $Q:\mathbb{R}^{d}\times\mathbb{R}^{d}\rightarrow C^{\infty
}\left(  M,\mathbb{R}^{d}\right)  $ is a skew-symmetric bilinear map.] If
$V_{a_{1}}\dots V_{a_{k}}Q\left(  a,b\right)  \in C^{\infty}\left(
M,\mathbb{R}^{d}\right)  $ is bounded for all $1\leq k\leq\kappa-1,$
$a,b\in\mathbb{R}^{d},$ and $a_{j}\in\mathbb{R}^{d},$ then $V$ satisfies the
hypothesis of Theorem \ref{thm.10.27}.
\end{proposition}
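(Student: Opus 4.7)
The plan is to exploit the orthonormal-frame hypothesis Eq.~(\ref{e.10.21}): since $\{V_{e_1},\dots,V_{e_d}\}$ is a global orthonormal frame on $M$, every smooth map $\phi: M \to \mathbb{R}^d$ defines a vector field $V_\phi := \sum_j \phi^j V_{e_j} \in \Gamma(TM)$ satisfying $|V_\phi(m)|_g = |\phi(m)|$. A direct computation using Eq.~(\ref{e.10.22}) and the Leibniz rule for Lie brackets yields the key identity
\[
[V_c,\, V_\phi] \;=\; V_{\,V_c\phi\,+\,Q(c,\phi(\cdot))\,}\qquad \forall~c\in\mathbb{R}^d,\ \phi\in C^\infty(M,\mathbb{R}^d),
\]
where $Q(c,\phi(\cdot))$ denotes the pointwise function $m\mapsto Q(c,\phi(m))$.

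Using this identity I would establish by induction on the grade $k$ that for every $A\in F_k^{(\kappa)}(\mathbb{R}^d)$, $V_A = V_{\phi_A}$ where $\phi_A:M\to\mathbb{R}^d$ is a universal polynomial expression in $Q$ and iterated $V$-derivatives of $Q$ of total order at most $k-2$, with base cases $\phi_a=a$ (constant) for $a\in\mathbb{R}^d$ and $\phi_{[a,b]}=Q(a,b)$. The inductive step is precisely the key identity: if $V_A = V_{\phi_A}$ then $V_{[c,A]} = [V_c, V_A] = V_{V_c\phi_A + Q(c,\phi_A)}$. Since $k\leq\kappa$, the assumed boundedness of $Q$ and its iterated $V$-derivatives guarantees $\phi_A$ is uniformly bounded on $M$, whence $|V_A|_g = |\phi_A| \leq C$ and $|V^{(\kappa)}|_M<\infty$.

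For the bound on $|\nabla V^{(\kappa)}|_M$, I would apply the Koszul formula to $\nabla_{V_a}V_b$: since $g(V_a,V_b)=a\cdot b$ is constant, the three directional-derivative terms vanish and
\[
2\,g(\nabla_{V_a}V_b,\,V_c) \;=\; Q(a,b)\cdot c \,-\, Q(a,c)\cdot b \,-\, Q(b,c)\cdot a,
\]
which shows $\nabla_{V_a}V_b = V_{\tilde Q(a,b)}$ for some smooth $\tilde Q(a,b):M\to\mathbb{R}^d$ linear in $Q$, hence bounded. Combining with the Leibniz rule $\nabla_{V_c}V_{\phi_A} = V_{V_c\phi_A} + \sum_j \phi_A^j\,\nabla_{V_c}V_{e_j}$ reduces the bound on $|\nabla V_A|$ to bounds on $V_c\phi_A$ (which introduces one additional $V$-derivative of $Q$, pushing the total order to at most $k-1\leq\kappa-1$), on $\phi_A$ itself, and on $\nabla_{V_c}V_{e_j}$ — all covered by hypothesis. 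Since $\{V_{e_j}(m)\}$ is an orthonormal basis of $T_mM$, every unit $v_m\in TM$ equals $V_c(m)$ for some unit $c\in\mathbb{R}^d$, so the supremum in Eq.~(\ref{e.10.3}) is finite and Theorem~\ref{thm.10.27} applies.

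The main obstacle is the careful bookkeeping in the induction: verifying that $\phi_A$ only ever involves $V$-derivatives of $Q$ up to the claimed order, and extending the identity from right-nested brackets $[c, A]$ to general bracket combinations $[A, B]$ via the analogous formula $[V_\phi, V_\psi] = V_{V_\phi\psi - V_\psi\phi + Q(\phi,\psi)}$. Once this is done systematically, the final derivative order never exceeds $\kappa - 1$, matching the hypothesis precisely.
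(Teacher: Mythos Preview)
Your proposal is correct and follows essentially the same route as the paper: the paper first records (via Koszul) that $\nabla_{V_a}V_b=\tfrac12 V_{Q(a,b)-Q_a^{\mathrm{tr}}b-Q_b^{\mathrm{tr}}a}$, then uses exactly your key identity $[V_a,V_f]=V_{V_af+Q(a,f)}$ together with the Leibniz rule $\nabla_{V_a}V_f=V_{V_af}+\tfrac12 V_{Q(a,f)-Q_a^{\mathrm{tr}}f-Q_f^{\mathrm{tr}}a}$ to run the same induction on the number of nested brackets. Your final worry about extending from right-nested brackets $[c,A]$ to arbitrary $[A,B]$ is unnecessary: by Remark~\ref{rem.10.9}, $F^{(\kappa)}(\mathbb{R}^d)$ is spanned by elements of the form $\operatorname{ad}_{v_1}\cdots\operatorname{ad}_{v_{k-1}}v_k$, so right-nested brackets already suffice.
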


We end the introduction with some necessary conditions that the rough version
of Cartan's rolling map has global in time solutions. Let $\left(
M^{d},g\right)  $ be a Riemannian manifold of dimension $d,$ $O\left(
d\right)  $ be the Lie group of orthogonal $d\times d$ matrices, and
$so\left(  d\right)  $ be the Lie algebra of $O\left(  d\right)  $ consisting
of recall skew symmetric $d\times d$ - matrices. In order to describe Cartan's
rolling map we need to recall the following orthogonal frame bundle notations.

\begin{notation}
[Orthogonal frame bundle]\label{not.10.36}Let $\pi:O\left(  M\right)
\rightarrow M$ be the principal bundle of orthogonal frames, i.e.
\[
O\left(  M\right)  =\cup_{m\in M}O_{m}\left(  M\right)  =\cup_{m\in M}\left[
\pi^{-1}\left(  \left\{  m\right\}  \right)  \right]
\]
where for each $m\in M$ , $O_{m}\left(  M\right)  $ is the set of isometries,
$u:\mathbb{R}^{d}\rightarrow T_{m}M.$ To each $A\in so\left(  d\right)  ,$ let
$A^{\ast}\in\Gamma\left(  TO\left(  M\right)  \right)  $ be the
\textbf{vertical }vector field defined by%
\[
A^{\ast}\left(  u\right)  :=\frac{d}{dt}|_{0}ue^{tA}\text{ for all }u\in
O\left(  M\right)
\]
and to each $a\in\mathbb{R}^{d}$ let $B_{a}$ be the \textbf{horizontal vector
field }defined by%
\[
B_{a}\left(  u\right)  :=\frac{d}{dt}|_{0}\pt_{t}^{\nabla}\left(
\sigma\right)  u\in T_{u}O\left(  M\right)
\]
where $\pt_{t}^{\nabla}\left(  \sigma\right)  $ denote parallel translation
along any smooth path, $\sigma\left(  t\right)  \in M,$ such that $\dot
{\sigma}\left(  0\right)  =ua\in T_{m}M.$
\end{notation}

Given a smooth path $x\left(  t\right)  \in\mathbb{R}^{d}$ and $u_{o}\in
O_{m_{o}}\left(  M\right)  ,$ Cartan's rolling of $x$ onto $M$ is the path
$\sigma\left(  t\right)  =\pi\left(  u\left(  t\right)  \right)  \in M$ where
$u\left(  t\right)  \in O\left(  M\right)  $ satisfies the ODE
\begin{equation}
\dot{u}\left(  t\right)  =B_{\dot{x}\left(  t\right)  }\left(  u\left(
t\right)  \right)  \text{ with }u\left(  0\right)  =u_{o}\in O\left(
M\right)  . \label{e.10.23}%
\end{equation}
Thus we want to consider the existence of solutions to the rough version of
Eq. (\ref{e.10.23}). We will in fact consider the more general ODE for
$u\left(  t\right)  \in O\left(  M\right)  ;$
\begin{equation}
\dot{u}\left(  t\right)  =B_{\dot{x}\left(  t\right)  }\left(  u\left(
t\right)  \right)  +A^{\ast}\left(  t\right)  \left(  u\left(  t\right)
\right)  \text{ with }u\left(  0\right)  =u_{o}\in O\left(  M\right)  .
\label{e.10.24}%
\end{equation}
where now $\left(  x\left(  t\right)  ,A\left(  t\right)  \right)
\in\mathbb{R}^{d}\times so\left(  d\right)  $ is a given path.

\begin{corollary}
\label{cor.10.37}Suppose that $\left(  M,g\right)  $ is a complete Riemannian
manifold with bounded geometry to order $\kappa-1,$ i.e. $\nabla^{k}R$ is
bounded for $0\leq k\leq\kappa-1,$ then for any $\alpha$ - H\"{o}lder rough
path, $\mathbf{X}_{s,t}\in G_{geo}^{\left(  \kappa\right)  }\left(
\mathbb{R}^{d}\times so\left(  d\right)  \right)  $ with $\theta=\alpha\left(
\kappa+1\right)  >1,$ the rough version of Eq. (\ref{e.10.24}) has a unique
solution defined for all $t\in\left[  0,T\right]  .$
\end{corollary}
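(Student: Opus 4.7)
The plan is to lift the rolling ODE (\ref{e.10.24}) from $M$ to the orthogonal frame bundle $O(M)$, reinterpret it as a rough ODE on $O(M)$ driven by the dynamical system
\[
V \colon \mathbb{R}^d \oplus so(d) \to \Gamma(TO(M)), \qquad V_{(a,A)} := B_a + A^{\ast},
\]
and then apply Theorem \ref{thm.10.27} directly on $O(M)$. Thus I need to (i) construct a complete Riemannian metric $\bar g$ on $O(M)$, and (ii) verify $|V^{(\kappa)}|_{O(M)} + |\nabla V^{(\kappa)}|_{O(M)} < \infty$ under the bounded-geometry hypothesis.

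For (i), I would take the Sasaki-type metric on $O(M)$: declare the Levi-Civita horizontal distribution orthogonal to the vertical distribution, pull $g$ back via $\pi_{\ast}$ on the horizontal part, and use a fixed bi-invariant inner product on $so(d)$ on the vertical part. Since $g$ is complete on $M$ and the structure group $O(d)$ is compact, $(O(M), \bar g)$ is a complete Riemannian manifold. Moreover $|B_a(u)|_{\bar g} = |a|$ and $|A^{\ast}(u)|_{\bar g} = |A|_{so(d)}$ for every $u \in O(M)$, so $|V|_{O(M)} < \infty$ is immediate.

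For (ii), I would prove by induction on bracket degree $j \le \kappa$ that for every $A \in F_j^{(\kappa)}(\mathbb{R}^d \oplus so(d))$, the vector field $V_A \in \Gamma(TO(M))$ decomposes into horizontal and vertical parts whose frame-components are universal polynomial expressions in the pullbacks of $R, \nabla R, \ldots, \nabla^{j-2} R$ (interpreted as $0$ for $j \le 1$). The base cases use the standard identities $[A^{\ast}, A'^{\ast}] = [A,A']^{\ast}$, $[A^{\ast}, B_a] = B_{Aa}$, and $[B_a, B_b](u) = -(u^{-1}R(ua, ub)u)^{\ast}$. The inductive step uses that $B_c$ acts on equivariant frame-valued functions as $\nabla^g$ on $M$, raising the derivative order of $R$ by one, while $A^{\ast}$ acts algebraically through the $O(d)$-representation and does not differentiate $R$. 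The hypothesis $|\nabla^k R|_g < \infty$ for $0 \le k \le \kappa - 2$ then yields $|V^{(\kappa)}|_{O(M)} < \infty$. One further covariant derivative $\nabla^{\bar g} V_A$, computed using O'Neill's formulas relating $\nabla^{\bar g}$ to $\nabla^g$ and $R$, introduces at worst one additional horizontal derivative, i.e. $\nabla^{\kappa-1} R$, together with products of lower-order curvature terms. All of these are bounded by hypothesis, so $|\nabla V^{(\kappa)}|_{O(M)} < \infty$.

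With both bounds verified, Theorem \ref{thm.10.27} applies to $(O(M), \bar g, V)$ and produces a unique continuous flow $\varphi \in C([0,T]^2 \times O(M), O(M))$ satisfying the multiplicative property and the estimate $\bar d_{O(M)}(\varphi_{t,s}, \mu_{t,s}) \le C|t-s|^\theta$; this $\varphi$ is precisely the required rough solution of Eq. (\ref{e.10.24}), defined for all $t \in [0,T]$. The main obstacle is the combinatorial bookkeeping in step (ii): one must track carefully how iterated brackets of mixed horizontal and vertical fields distribute covariant derivatives of $R$, and verify that the quadratic-in-$R$ correction terms in the Sasaki-Levi-Civita connection do not push the required differentiation order of $R$ past $\kappa - 1$. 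This is technical but standard frame-bundle geometry, and once the inductive bound is in hand, Theorem \ref{thm.10.27} closes the argument.
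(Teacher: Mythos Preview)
Your approach is essentially the same as the paper's: work on $O(M)$ with the dynamical system $V_{(a,A)}=B_a+A^{\ast}$, equip $O(M)$ with the metric making this frame orthonormal, use the bracket relations $[A^{\ast},C^{\ast}]=[A,C]^{\ast}$, $[A^{\ast},B_a]=B_{Aa}$, $[B_a,B_c]=-\tilde R_{a,c}^{\ast}$, and then invoke Theorem~\ref{thm.10.27}. The derivative count you give agrees with the paper's.

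The one place where the paper is more economical is in your step~(ii) for $|\nabla V^{(\kappa)}|$. Because $\{V_{(a,A)}\}$ is a \emph{global orthonormal frame}, the Levi--Civita derivative $\nabla_{V_a}V_b$ is given purely by the Koszul formula in terms of the structure functions $Q$ defined by $[V_a,V_b]=V_{Q(a,b)}$ (this is Corollary~\ref{cor.14.1} in the paper). Hence there is no need to compute the Sasaki connection via O'Neill or to worry about ``quadratic-in-$R$ correction terms'': once you know that $Q$ and $V_{a_1}\cdots V_{a_k}Q$ are bounded for $0\le k\le\kappa-1$ (which follows from the identities $A^{\ast}\tilde G=-\operatorname{ad}_A\tilde G+\tilde G\mathcal D_A$ and $B_{(\cdot)}\tilde G=\widetilde{\nabla G}$ applied to the curvature), both $|V^{(\kappa)}|$ and $|\nabla V^{(\kappa)}|$ follow automatically. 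The paper packages this implication as Proposition~\ref{pro.10.35}, and Corollary~\ref{cor.10.37} is then just a verification of its hypothesis. Your direct inductive bookkeeping would work, but routing through the structure functions $Q$ eliminates the obstacle you flagged.
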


The proof of this corollary will amount to showing that the dynamical system,
$V:\mathbb{R}^{d}\times so\left(  d\right)  \rightarrow\Gamma\left(  TO\left(
M\right)  \right)  ,$ defined by
\begin{equation}
V_{\left(  a,A\right)  }\left(  u\right)  :=B_{a}\left(  u\right)  +A^{\ast
}\left(  u\right)  \label{e.10.25}%
\end{equation}
satisfies the hypothesis of Proposition \ref{pro.10.35} under the given
bounded geometry assumptions.

\subsection{Acknowledgments\label{sec.10.5}}

The author thanks Fabrice Baudoin and Thomas Cass for alerting him to the
papers of Brault and Lejay \cite{brault:hal-01716945,brault:hal-01839202} and
Weidner \cite{Weidner2018} respectively. I am also very grateful to Masha
Gordina for many illuminating conversations on this work and to her
hospitality and to that of the mathematics department at the University of
Connecticut where this work was started while the author was on sabbatical in
the Fall of 2017.

\section{A metric space almost multiplicative function theorem\label{sec.11}}

This section is devoted to a version of the Lyons' almost multiplicative
function theorem in the context of Bi-Lipschitz maps on an abstract complete
metric space which generalizes those in \cite[Theorem 2.1]{Bailleul2015a},
\cite{Feyel2008}, \cite{Feyel2006}, \cite{Gubinelli2004}, and \cite{Lyons1994}
in reverse chronological order. As mentioned in the introduction this topic is
also taken up in \cite{brault:hal-01716945,brault:hal-01839202} and
\cite{Weidner2018}. Ideas of this form for smooth function of time are already
prevalent in numerical and functional analysis literature, see for example the
review article \cite{Chorin1978}.

\subsection{Function Space Metrics\label{sec.11.1}}

\begin{remark}
\label{rem.11.1}Suppose that $\left(  X,\tau\right)  $ is a topological space,
$\left(  M,d\right)  $ is a complete metric space, and $\left\{
f_{n}\right\}  _{n=1}^{\infty}\subset C\left(  X,M\right)  $ satisfy
\[
\lim_{m,n\rightarrow\infty}\sup_{x\in X}d\left(  f_{n}\left(  x\right)
,f_{m}\left(  x\right)  \right)  =0.
\]
Then, as is well known, there exists $f\in C\left(  X,M\right)  $ such that%
\[
\lim_{n\rightarrow\infty}\sup_{x\in X}d\left(  f\left(  x\right)
,f_{n}\left(  x\right)  \right)  =0.
\]

\end{remark}

\begin{notation}
\label{not.11.2}When $\left(  M,d\right)  $ is a metric space let
$\operatorname{Homeo}\left(  M\right)  $ denote those $f\in C\left(
M,M\right)  $ which are homeomorphisms. Also for $f,g\in C\left(  M,M\right)
,$ let $d_{M}\left(  f,g\right)  :=\sup_{m\in M}d\left(  f\left(  m\right)
,g\left(  m\right)  \right)  $ as in Eq. (\ref{e.10.15}).
\end{notation}

\begin{definition}
\label{def.11.3}We say $f\in C\left(  M,M\right)  $ is \textbf{Lipschitz} if
there exists $K=K\left(  f\right)  <\infty$ such that
\[
d\left(  f\left(  m\right)  ,f\left(  m^{\prime}\right)  \right)  \leq
Kd\left(  m,m^{\prime}\right)  \text{ }\forall~m,m^{\prime}\in M.
\]
We denote the best such constant by $\operatorname{Lip}\left(  f\right)  ,$
i.e.
\[
\operatorname{Lip}\left(  f\right)  :=\sup_{m\neq m^{\prime}}\frac{d\left(
f\left(  m\right)  ,f\left(  m^{\prime}\right)  \right)  }{d\left(
m,m^{\prime}\right)  }.
\]
We will write $\operatorname{Lip}\left(  f\right)  =\infty$ if $f$ is not Lipschitz.
\end{definition}

\begin{remark}
\label{rem.11.4}If $f,g,g_{1},g_{2}\in C\left(  M,M\right)  ,$ then
\[
d_{M}\left(  g_{1}\circ f,g_{2}\circ f\right)  \leq d_{M}\left(  g_{1}%
,g_{2}\right)
\]
while for $m\in M,$%
\[
d\left(  f\circ g_{1}\left(  m\right)  ,f\circ g_{2}\left(  m\right)  \right)
\leq\operatorname{Lip}\left(  f\right)  d\left(  g_{1}\left(  m\right)
,g_{2}\left(  m\right)  \right)  \leq\operatorname{Lip}\left(  f\right)
d_{M}\left(  g_{1},g_{2}\right)
\]
and hence%
\[
d_{M}\left(  f\circ g_{1},f\circ g_{2}\right)  \leq\operatorname{Lip}\left(
f\right)  \cdot d_{M}\left(  g_{1},g_{2}\right)  .
\]
Also for $m,m^{\prime}\in M,$
\[
d\left(  f\circ g\left(  m\right)  ,f\circ g\left(  m^{\prime}\right)
\right)  \leq\operatorname{Lip}\left(  f\right)  d\left(  g\left(  m\right)
,g\left(  m^{\prime}\right)  \right)  \leq\operatorname{Lip}\left(  f\right)
\operatorname{Lip}\left(  g\right)  d\left(  m,m^{\prime}\right)
\]
from which it follows that
\[
\operatorname{Lip}\left(  f\circ g\right)  \leq\operatorname{Lip}\left(
f\right)  \operatorname{Lip}\left(  g\right)  .
\]

\end{remark}

More generally we have the following extensions of these results.

\begin{proposition}
\label{pro.11.5}Suppose that $f_{j},g_{j}\in C\left(  M,M\right)  $ for $1\leq
j\leq n,$ then%
\begin{align*}
d_{M}  &  \left(  f_{n}\circ\dots\circ f_{1},g_{n}\circ\dots\circ g_{1}\right)
\\
&  \leq\sum_{k=1}^{n}\left[  \operatorname{Lip}\left(  f_{n}\circ\dots\circ
f_{k+1}\right)  \wedge\operatorname{Lip}\left(  g_{n}\circ\dots\circ
g_{k+1}\right)  \right]  d_{M}\left(  f_{k},g_{k}\right)
\end{align*}
and in particular,
\[
d_{M}\left(  f_{1}\circ\dots\circ f_{n},g_{1}\circ\dots\circ g_{n}\right)
\leq\sum_{k=1}^{n}\left[  \operatorname{Lip}\left(  f_{n}\circ\dots\circ
f_{k+1}\right)  \right]  d_{M}\left(  f_{k},g_{k}\right)  .
\]
The convention used above is that $\operatorname{Lip}\left(  f_{n}\circ
\dots\circ f_{k+1}\right)  \equiv1$ when $k=n.${}
\end{proposition}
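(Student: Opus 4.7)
The only tool needed is Remark~\ref{rem.11.4}: for any $F,G,h_1,h_2\in C(M,M)$ one has $d_M(F\circ h_1\circ G,F\circ h_2\circ G)\le\operatorname{Lip}(F)\,d_M(h_1,h_2)$. I will apply this to a sequence of intermediate compositions that interpolate between $F_{n:1}:=f_n\circ\dots\circ f_1$ and $G_{n:1}:=g_n\circ\dots\circ g_1$ by replacing one slot at a time. The subtlety in the stated bound is that the per-term Lipschitz constant is the minimum of $\operatorname{Lip}(f_n\circ\dots\circ f_{k+1})$ and $\operatorname{Lip}(g_n\circ\dots\circ g_{k+1})$, so the order in which slots are swapped must depend on which of the two products is smaller at each index, and a single pure-$f$ or pure-$g$ telescope will not suffice.

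Given any subset $S\subseteq\{1,\dots,n\}$, my first step is to construct a permutation $\sigma$ of $\{1,\dots,n\}$ with the property that $\sigma^{-1}(j)<\sigma^{-1}(i)$ whenever $j\in S$ and $i>j$, and $\sigma^{-1}(j)>\sigma^{-1}(i)$ whenever $j\in S^{c}$ and $i>j$. This is done greedily by processing indices $j=n,n-1,\dots,1$ and inserting each $j$ at the front of the order of already-placed indices if $j\in S$ or at the back if $j\in S^{c}$. Next, define intermediate maps $\pi_0,\pi_1,\dots,\pi_n$ by letting slot $i$ of $\pi_t$ equal $g_i$ when $\sigma^{-1}(i)\le t$ and $f_i$ otherwise; then $\pi_0=F_{n:1}$, $\pi_n=G_{n:1}$, and $\pi_{t-1},\pi_t$ differ only in slot $j:=\sigma(t)$. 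Writing $\pi_{t-1}=L_j\circ f_j\circ R_j$ and $\pi_t=L_j\circ g_j\circ R_j$, the two conditions on $\sigma$ guarantee that $L_j=f_n\circ\dots\circ f_{j+1}$ when $j\in S$ and $L_j=g_n\circ\dots\circ g_{j+1}$ when $j\in S^{c}$. Remark~\ref{rem.11.4} then yields $d_M(\pi_{t-1},\pi_t)\le\operatorname{Lip}(L_j)\,d_M(f_j,g_j)$.

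Summing these estimates via the triangle inequality along the chain $\pi_0,\dots,\pi_n$ and specializing to $S=\{j:\operatorname{Lip}(f_n\circ\dots\circ f_{j+1})\le\operatorname{Lip}(g_n\circ\dots\circ g_{j+1})\}$ produces exactly the bound of the proposition, with the minimum realized term by term. The ``in particular'' form follows from the choice $S=\{1,\dots,n\}$ (equivalently, by simply dropping the $\wedge$). The main obstacle is the combinatorial point: verifying that a single permutation $\sigma$ can simultaneously realize the ``all $f$'s above slot $j$'' configuration at every $j\in S$ and the ``all $g$'s above slot $j$'' configuration at every $j\in S^{c}$. The greedy insertion rule described above has precisely this property, and once $\sigma$ is in hand the rest of the argument is just iterated triangle inequality plus a single application of Remark~\ref{rem.11.4} at each step.
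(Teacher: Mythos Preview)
Your proof is correct, but it takes a more elaborate route than the paper's. The paper proceeds by a straightforward induction on $n$: peel off the innermost slot by telescoping through $f_n\circ\dots\circ f_2\circ g_1$ to get
\[
d_M(f_n\circ\dots\circ f_1,\,g_n\circ\dots\circ g_1)\le \operatorname{Lip}(f_n\circ\dots\circ f_2)\,d_M(f_1,g_1)+d_M(f_n\circ\dots\circ f_2,\,g_n\circ\dots\circ g_2),
\]
then observe that the symmetric telescoping (through $g_n\circ\dots\circ g_2\circ f_1$) leaves the \emph{same} residual term $d_M(f_n\circ\dots\circ f_2,\,g_n\circ\dots\circ g_2)$, so one may take the minimum of the two Lipschitz prefactors and recurse. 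Your permutation construction is in effect an unrolling of this recursion: the greedy rule picks, at each depth, which of the two telescopes to use, and the resulting $\sigma$ encodes the entire chain of intermediate maps at once. The advantage of the paper's argument is brevity---no combinatorics is needed because the key observation is that both one-step telescopes reduce to the identical subproblem. Your approach has the merit of making the interpolating chain $\pi_0,\dots,\pi_n$ completely explicit, which could be useful if one wanted finer control over the intermediate objects, but for the bare inequality it is more work than necessary.
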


\begin{proof}
The proof is by induction on $n$ where in the case $n=1,$ there is nothing to
prove. The induction step is as follows;%
\begin{align*}
d_{M}  &  \left(  f_{n}\circ\dots\circ f_{1},g_{n}\circ\dots\circ g_{1}\right)
\\
&  \leq d_{M}\left(  f_{n}\circ\dots\circ f_{2}\circ f_{1},f_{n}\circ
\dots\circ f_{2}\circ g_{1}\right) \\
&  \qquad\qquad+d_{M}\left(  f_{n}\circ\dots\circ f_{2}\circ g_{1},g_{n}%
\circ\dots\circ g_{2}\circ g_{1}\right) \\
&  \leq\operatorname{Lip}\left(  f_{n}\circ\dots\circ f_{2}\right)
d_{M}\left(  f_{1},g_{1}\right)  +d_{M}\left(  f_{n}\circ\dots\circ
f_{2},g_{n}\circ\dots\circ g_{2}\right)  .
\end{align*}
By interchanging the roles of $f$ and $g$ we further can show,%
\begin{align*}
d_{M}\left(  f_{n}\circ\dots\circ f_{1},g_{n}\circ\dots\circ g_{1}\right)
\leq &  \operatorname{Lip}\left(  g_{n}\circ\dots\circ g_{2}\right)
d_{M}\left(  f_{1},g_{1}\right) \\
&  +d_{M}\left(  f_{n}\circ\dots\circ f_{2},g_{n}\circ\dots\circ g_{2}\right)
\end{align*}
which combined with the previous inequality allows us to conclude that
\begin{align*}
d_{M}  &  \left(  f_{n}\circ\dots\circ f_{1},g_{n}\circ\dots\circ g_{1}\right)
\\
&  \leq\operatorname{Lip}\left(  f_{n}\circ\dots\circ f_{2}\right)
\wedge\operatorname{Lip}\left(  g_{n}\circ\dots\circ g_{2}\right)
d_{M}\left(  f_{1},g_{1}\right) \\
&  \qquad\qquad+d_{M}\left(  f_{n}\circ\dots\circ f_{2},g_{n}\circ\dots\circ
g_{2}\right)
\end{align*}
and the induction step is complete.
\end{proof}

\subsection{Oriented Partitions\label{sec.11.2}}

For $s,t\in\left[  0,T\right]  ,$ let
\[
J\left(  s,t\right)  :=\left[  \min\left(  s,t\right)  ,\max\left(
s,t\right)  \right]
\]
be the interval between $s$ and $t.$

\begin{definition}
[Oriented Partitions]\label{def.11.7}If $\left(  s,t\right)  \in\left[
0,T\right]  ^{2}$ with $s\neq t,$ let $\mathcal{P}\left(  s,t\right)  $ denote
the \textbf{oriented partitions}, $\pi,$ of $J\left(  s,t\right)  $ where
$\pi\in\mathcal{P}\left(  s,t\right)  $ iff $\pi=\left(  s_{k}\right)
_{k=0}^{n}=\left(  s_{0},\dots,s_{n}\right)  $ is an ordered subset of
$J\left(  s,t\right)  $ such that%
\begin{align}
s  &  =s_{0}<s_{1}<\dots<s_{n}=t\text{ when }t>s\text{ and}\nonumber\\
t  &  =s_{n}<s_{n-1}<\dots<s_{0}=s\text{ when }s>t. \label{e.11.1}%
\end{align}
We further let $\left\{  \pi\right\}  :=\left\{  s_{k}:0\leq k\leq n\right\}
$ be the unordered points in $\pi$ and $\#\left(  \pi\right)  =n$ which is the
same as the number of connected components of $J\left(  s,t\right)
\setminus\left\{  \pi\right\}  .$ [We will often abuse notation and simply
refer to an element, $\pi\in\mathcal{P}\left(  s,t\right)  ,$ as an oriented
partition of $J\left(  s,t\right)  $ or more simply as a partition of
$J\left(  s,t\right)  .$]
\end{definition}

\begin{notation}
\label{not.11.8}For $\varepsilon\in(0,1/2]$ and $s,t\in\left[  0,T\right]  ,$
let $J_{\varepsilon}\left(  s,t\right)  $ be the \textbf{middle }%
$1-2\varepsilon$\textbf{ subinterval} of $J\left(  s,t\right)  $ defined by
\[
J_{\varepsilon}\left(  s,t\right)  :=J\left(  s+\varepsilon\left(  t-s\right)
,t-\varepsilon\left(  t-s\right)  \right)  .
\]
[If we let $J_{\varepsilon}=J_{\varepsilon}\left(  0,1\right)  =\left[
\varepsilon,1-\varepsilon\right]  ,$ then we may express $J_{\varepsilon
}\left(  s,t\right)  $ as $J_{\varepsilon}\left(  s,t\right)  :=s+\left(
t-s\right)  J_{\varepsilon}.$]\begin{figure}[ptbh]
\centering
\par
\psize{3.5in} %
\executeiffilenewer{\GraphicsDirectorymideps.svg}{\GraphicsDirectory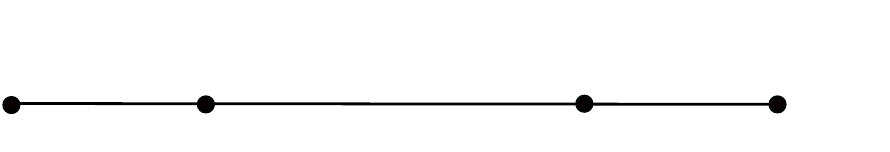}%
{inkscape -z -D --file=\GraphicsDirectorymideps.svg --export-pdf=\GraphicsDirectorymideps.pdf --export-latex}%
\input{\GraphicsDirectorymideps.pdf_tex}%
 \caption{The middle $\left(  1-2\varepsilon
\right)  $ fraction subinterval when $s<t.$ The figure for $s>t$ is similar.}%
\label{fig.1}%
\end{figure}
\end{notation}

\begin{definition}
[$\varepsilon$-special partitions]\label{def.11.9}Let $s,t\in\left[
0,T\right]  $ and $\varepsilon\in(0,1/2]$, $\pi\in\mathcal{P}\left(
s,t\right)  $ be a partition of $J\left(  s,t\right)  .$ We say a point
$u\in\pi$ is $\varepsilon$\textbf{-special }if $u\in J_{\varepsilon}\left(
s,t\right)  \cap\pi$ and then we define the notion of $\pi$ being an
$\varepsilon$\textbf{-special} partition inductively on $n=\#\left(
\pi\right)  \geq1$ as follows.

\begin{enumerate}
\item If $n=1$ then $\pi$ is $\varepsilon$ special for any choice of
$0<\varepsilon\leq1/2.$

\item Assuming that $\varepsilon$\textbf{-special }has been defined for
partitions with $\#\left(  \pi\right)  =n\ $for some $n\geq2$ and suppose
$\pi=\left(  s_{0},s_{1},\dots,s_{n+1}\right)  \in\mathcal{P}\left(
s,t\right)  $ with $\#\left(  \pi\right)  =n+1,$ then $\pi$ is $\varepsilon
$\textbf{-special }if there exists a $1\leq p\leq n$ such that: 1)
$u:=s_{p}\in\pi\cap J_{\varepsilon}\left(  s,t\right)  ,$ 2) $\pi_{\leq
u}:=\left(  s_{0},s_{1},\dots,s_{p}\right)  $ is an $\varepsilon$-special
partition of $J\left(  s,u\right)  ,$ and 3) $\pi_{\geq u}:=\left(
s_{p},s_{p+1},\dots,s_{n+1}\right)  $ is an $\varepsilon$-special partitions
of $J\left(  u,t\right)  .$
\end{enumerate}
\end{definition}

\begin{notation}
[Uniform Partitions]\label{not.11.10}For $\left(  s,t\right)  \in\left[
0,T\right]  ^{2}$ and $n\in\mathbb{N},$ let $\pi^{n}\left(  s,t\right)
=\left(  s_{0},\dots,s_{n}\right)  \in\mathcal{P}\left(  s,t\right)  $ where%
\begin{equation}
s_{i}:=s+\frac{i}{n}\left(  t-s\right)  \text{ for }0\leq i\leq n
\label{e.11.2}%
\end{equation}
be the uniform partition of $J\left(  s,t\right)  $ with $n$ equal
subdivisions. We also let $\pi^{\left(  n\right)  }\left(  s,t\right)
:=\pi^{2^{n}}\left(  s,t\right)  =\left(  s_{0},\dots,s_{2^{n}}\right)
\in\mathcal{P}\left(  s,t\right)  $ where
\begin{equation}
s_{i}:=s+i2^{-n}\left(  t-s\right)  \text{ for }0\leq i\leq2^{n}
\label{e.11.3}%
\end{equation}
so that $\pi^{\left(  n\right)  }\left(  s,t\right)  $ is the uniform
partition of $J\left(  s,t\right)  $ with $2^{n}$-subdivisions.
\end{notation}

\begin{example}
\label{ex.11.11}For $\left(  s,t\right)  \in\left[  0,T\right]  ^{2}$ and
$n\in\mathbb{N}$ the partition $\pi^{\left(  n\right)  }\left(  s,t\right)  $
in Eq. (\ref{e.11.3}) $\varepsilon=1/2$-special. There are also many
sub-partitions of $\pi^{\left(  n\right)  }\left(  s,t\right)  $ which are
still $1/2$-special. For example if $n\geq2,$ then
\[
\pi=\left\{  s\right\}  \cup\left\{  s+\left(  t-s\right)  k2^{-n}:2^{\left(
n-1\right)  }\leq k\leq2^{n}\right\}
\]
is still $1/2$ uniform.
\end{example}

\begin{lemma}
\label{lem.11.12}For $\left(  s,t\right)  \in\left[  0,T\right]  ^{2}$ and
$n\in\mathbb{N},$ the uniform partition, $\pi^{n}=\pi^{n}\left(  s,t\right)
,$ of $J\left(  s,t\right)  $ is $\varepsilon=1/3$ -special for any
$n\in\mathbb{N}.$
\end{lemma}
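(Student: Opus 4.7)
The plan is to induct on $n = \#(\pi^n(s,t))$. The base case $n=1$ is immediate from item 1 of Definition \ref{def.11.9}, since any one-subinterval partition is automatically $\varepsilon$-special.

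For the inductive step, assume the result holds for all uniform partitions with fewer than $n$ subintervals, where $n\geq 2$. Writing $\pi^n(s,t) = (s_0,\dots,s_n)$ with $s_i = s + \tfrac{i}{n}(t-s)$, I need to exhibit an index $p\in\{1,\dots,n-1\}$ such that $u:=s_p\in J_{1/3}(s,t)$, and such that the two halves $\pi^n_{\leq u} = (s_0,\dots,s_p)$ and $\pi^n_{\geq u} = (s_p,\dots,s_n)$ are $1/3$-special partitions of $J(s,u)$ and $J(u,t)$ respectively. The point is that $\pi^n_{\leq u} = \pi^p(s,s_p)$ and $\pi^n_{\geq u} = \pi^{n-p}(s_p,t)$ are themselves uniform partitions (with $p$ and $n-p$ equal subdivisions respectively), so both are covered by the inductive hypothesis provided $1\leq p\leq n-1$.

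The arithmetic reduces to showing that the interval $[n/3,\,2n/3]$ contains an integer in $\{1,\dots,n-1\}$, since $s_p\in J_{1/3}(s,t)$ is equivalent to $p/n\in[1/3,2/3]$. For $n=2$, the choice $p=1$ satisfies $1\in[2/3,4/3]$. For $n\geq 3$, the interval $[n/3,2n/3]$ has length $n/3\geq 1$ and hence contains an integer; moreover, $n/3\geq 1$ and $2n/3\leq n-1$, so any such integer lies in $\{1,\dots,n-1\}$.

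No step here is genuinely hard — the whole argument is a recursive bisection together with the observation that the ``middle third'' always admits a lattice point for uniform partitions. The only thing one has to be slightly careful about is the stratification of cases $n=2$ versus $n\geq 3$, because the generic length estimate $n/3\geq 1$ fails for $n=2$ and has to be handled by hand.
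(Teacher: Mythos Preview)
Your proof is correct and follows essentially the same approach as the paper's: both argue by induction on $n$, using that a uniform partition split at any of its nodes yields two smaller uniform partitions, and both verify that some node lies in $J_{1/3}(s,t)$. The only cosmetic difference is that the paper explicitly selects the midpoint (or the node closest to it, depending on parity), whereas you argue existence of a suitable node via the length estimate $|[n/3,2n/3]|\geq 1$ for $n\geq 3$; your bookkeeping is arguably cleaner.
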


\begin{proof}
The proof is by induction on $n=\#\left(  \pi\right)  .$ For $n=1,$ $\pi
^{1}=\left\{  s,t\right\}  $ and there is nothing to prove. For $n=2$ the
interior point of $\pi^{2}$ is $u=s+\left(  t-s\right)  /2$ is the mid-point
of $\pi^{2}$ and hence $\pi^{2}$ is $1/2$-special. For $n=3,$ we let
$u=s+\left(  t-s\right)  /3\in J_{1/3}\left(  s,t\right)  $ so that $u$ is
$1/3$-special point and so by $n=1$ and $n=2$ cases already discussed,
$\pi^{3}$ is $1/3$-special. For the induction step, suppose we have shown, for
some $n\in\mathbb{N},$ that every uniform partition, $\pi$ of any compact
interval $\#\left(  \pi\right)  \leq n$ is $1/3$-special. Then for $\pi
=\pi^{n+1}\left(  s,t\right)  \in\mathcal{P}\left(  s,t\right)  ,$ we let%
\[
u=\left\{
\begin{array}
[c]{ccc}%
s+\left(  t-s\right)  \frac{1}{2} & \text{if} & n\text{ is odd}\\
s+\left(  t-s\right)  \frac{n/2}{n+1} & \text{if} & n\text{ is even}%
\end{array}
\right.  \in\pi^{n+1}.
\]
So when $n$ is odd $u$ is $1/2$-special and when $n$ is even, $u$ is
$\varepsilon_{n}:=\frac{1}{2}\frac{n}{n+1}$-special. As $\frac{d}{dn}\frac
{n}{n+1}=\frac{1}{\left(  n+1\right)  ^{2}}>0$ it follows that $\varepsilon
_{n}\geq\varepsilon_{2}=1/3$ so that in all cases $u$ is at least
$1/3$-special. The result now follows by applying the induction hypothesis
applied to remaining uniform partitions of $J\left(  s,u\right)  $ and
$J\left(  u,t\right)  $ respectively.
\end{proof}

\begin{definition}
\label{def.11.13}For $\theta>1$ and $\varepsilon\in(0,1/2],$ let%
\[
\gamma\left(  \varepsilon,\theta\right)  :=\max_{x\in J_{\varepsilon}\left(
0,1\right)  }\left[  x^{\theta}+\left(  1-x\right)  ^{\theta}\right]
=\varepsilon^{\theta}+\left(  1-\varepsilon\right)  ^{\theta}<1.
\]

\end{definition}

The point of $\varepsilon$-special partitions is that in the arguments below
we will often arrive at an estimate for a quantity, $Q,$ of the form
\[
Q\leq k\left[  \left\vert t-u\right\vert ^{\theta}+\left\vert u-s\right\vert
^{\theta}\right]  =k\left[  \beta^{\theta}+\left(  1-\beta\right)  ^{\theta
}\right]  \left\vert t-s\right\vert ^{\theta}\text{ }%
\]
where $u\in\pi\setminus\left\{  s,t\right\}  $ and $\beta:=\left\vert
t-u\right\vert /\left\vert t-s\right\vert .$ If we now further assume that
$u\in\pi\cap J_{\varepsilon}\left(  s,t\right)  $ then we know $\beta\in
J_{\varepsilon}\left(  0,1\right)  $ and we will have
\[
Q\leq k\max_{\beta\in J_{\varepsilon}\left(  0,1\right)  }\left[
\beta^{\theta}+\left(  1-\beta\right)  ^{\theta}\right]  \left\vert
t-s\right\vert ^{\theta}=k\gamma\left(  \varepsilon,\theta\right)  \left\vert
t-s\right\vert ^{\theta}.
\]
The pre-factor, $\gamma\left(  \varepsilon,\theta\right)  ,$ being less than
one will play a crucial role in the arguments to follow.

\subsection{Approximate Flows\label{sec.11.3}}

\begin{definition}
[Pre-flows]\label{def.11.14}A \textbf{pre-flow }is a function, $\left(
s,t,m\right)  \rightarrow\mu_{t,s}\left(  m\right)  $ in $C\left(  \left[
0,T\right]  ^{2}\times M,M\right)  ,$ such that $\mu_{t,s}\in
\operatorname{Homeo}\left(  M\right)  ,$ $\mu_{s,t}=\mu_{t,s}^{-1},$ and
$\mu_{t,t}=Id_{M}$ for all $s,t\in\left[  0,T\right]  .$
\end{definition}

\begin{notation}
\label{not.11.15}Suppose that $\mu\in C\left(  \left[  0,T\right]  ^{2}\times
M,M\right)  ,$ $s,t\in\left[  0,T\right]  ,$ and $\pi=\left(  s_{0}%
,\dots,s_{n}\right)  \in\mathcal{P}\left(  s,t\right)  $ is an (oriented)
partition $J\left(  s,t\right)  .$ For any $0\leq l<k\leq n,$ let
\begin{equation}
\mu_{s_{k},s_{l}}^{\pi}:=\mu_{s_{k},s_{kn-1}}\circ\dots\circ\mu_{s_{l+2}%
,s_{l+1}}\circ\mu_{s_{l+1},s_{l}} \label{e.11.4}%
\end{equation}
and in particular,%
\begin{equation}
\mu_{t,s}^{\pi}=\mu_{s_{n},s_{n-1}}\circ\dots\circ\mu_{s_{2},s_{1}}\circ
\mu_{s_{1},s_{0}}. \label{e.11.5}%
\end{equation}

\end{notation}

\begin{notation}
\label{not.11.16}For $n\in\mathbb{N}$ and $\left(  s,t\right)  \in\left[
0,T\right]  ^{2}$ let
\[
\mu_{t,s}^{n}:=\mu_{t,s}^{\pi^{n}\left(  s,t\right)  }\text{ and }\mu
_{t,s}^{\left(  n\right)  }=\mu^{\pi^{\left(  n\right)  }\left(  s,t\right)  }%
\]
where $\pi^{n}\left(  s,t\right)  $ and $\pi^{\left(  n\right)  }\left(
s,t\right)  =\pi^{2^{n}}\left(  s,t\right)  $ are the uniform partition of
$J\left(  s,t\right)  $ as in Notation \ref{not.11.10}.
\end{notation}

The next lemma is a fairly direct extension of Lemma 2.4 in
\cite{Bailleul2015a}.

\begin{lemma}
[Local Trotter bounds]\label{lem.11.17}Let $\mu\in C\left(  \left[
0,T\right]  ^{2}\times M,M\right)  $ be a pre-flow as in Definition
\ref{def.11.14}. Assume there exists $\theta>1,$ $c<\infty,$ a continuous
increasing function, $k:\left[  -T,T\right]  \rightarrow\lbrack0,\infty)$ such
that $k\left(  0\right)  :=\lim_{t\rightarrow0}k\left(  t\right)  =0$ and for
all $\left(  s,t\right)  \in\left[  0,T\right]  ^{2},$
\begin{equation}
\operatorname{Lip}\left(  \mu_{t,s}\right)  \leq\left(  1+k\left(  t-s\right)
\right)  \text{ and }\sup_{u\in J\left(  s,t\right)  }d_{M}\left(  \mu
_{tu}\circ\mu_{us},\mu_{ts}\right)  \leq c\left\vert t-s\right\vert ^{\theta}.
\label{e.11.6}%
\end{equation}
Let $\varepsilon\in(0,1/2]$ be given and for $\delta\in(0,T],$ let $k^{\ast
}\left(  \delta\right)  :=\max\left(  k\left(  \delta\right)  ,k\left(
-\delta\right)  \right)  .$ If $\delta>0$ is chosen so that%
\begin{equation}
p\left(  \delta\right)  :=\gamma\left(  \theta,\varepsilon\right)  +k^{\ast
}\left(  \delta\right)  <1 \label{e.11.7}%
\end{equation}
and $L\geq\frac{c}{1-p\left(  \delta\right)  },$ then for all $s,t\in\left[
0,T\right]  $ with $\left\vert t-s\right\vert \leq\delta$ and any
$\varepsilon$-special partition $\left(  \pi\right)  $ of $J\left(
s,t\right)  ,$
\begin{equation}
d_{M}\left(  \mu_{t,s}^{\pi},\mu_{t,s}\right)  \leq L\left\vert t-s\right\vert
^{\theta}. \label{e.11.8}%
\end{equation}

\end{lemma}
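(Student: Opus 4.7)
The plan is to prove the bound by induction on $n = \#(\pi)$. The base case $n = 1$ is trivial since $\pi = \{s,t\}$ forces $\mu^\pi_{t,s} = \mu_{t,s}$. For the inductive step, let $\pi \in \mathcal{P}(s,t)$ be $\varepsilon$-special with $\#(\pi) = n+1 \geq 2$, and choose $u \in \pi \cap J_\varepsilon(s,t)$ as in Definition \ref{def.11.9} so that $\pi_{\leq u}$ and $\pi_{\geq u}$ are $\varepsilon$-special partitions of $J(s,u)$ and $J(u,t)$ with strictly fewer than $n+1$ points. By construction we have the factorization $\mu_{t,s}^\pi = \mu_{t,u}^{\pi_{\geq u}} \circ \mu_{u,s}^{\pi_{\leq u}}$, so the triangle inequality gives
\[
d_M(\mu^\pi_{t,s}, \mu_{t,s}) \leq d_M\!\left(\mu_{t,u}^{\pi_{\geq u}} \circ \mu_{u,s}^{\pi_{\leq u}},\; \mu_{t,u} \circ \mu_{u,s}\right) + d_M(\mu_{t,u} \circ \mu_{u,s}, \mu_{t,s}).
\]
The second term is $\leq c|t-s|^\theta$ by the standing hypothesis (\ref{e.11.6}).

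For the first term, I would apply Proposition \ref{pro.11.5} with $n = 2$, using the bound $\operatorname{Lip}(f_2) \wedge \operatorname{Lip}(g_2) \leq \operatorname{Lip}(\mu_{t,u}) \leq 1 + k(t-u) \leq 1 + k^\ast(\delta)$, the inductive hypothesis $d_M(\mu_{u,s}^{\pi_{\leq u}}, \mu_{u,s}) \leq L|u-s|^\theta$, and $d_M(\mu_{t,u}^{\pi_{\geq u}}, \mu_{t,u}) \leq L|t-u|^\theta$. This yields
\[
d_M\!\left(\mu_{t,u}^{\pi_{\geq u}} \circ \mu_{u,s}^{\pi_{\leq u}},\; \mu_{t,u} \circ \mu_{u,s}\right) \leq (1 + k^\ast(\delta)) L|u-s|^\theta + L|t-u|^\theta.
\]

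The crucial geometric observation is that $u \in J_\varepsilon(s,t)$ forces $\beta := |u-s|/|t-s| \in [\varepsilon, 1-\varepsilon]$, so Definition \ref{def.11.13} yields $|u-s|^\theta + |t-u|^\theta \leq \gamma(\varepsilon,\theta)|t-s|^\theta$. Combining this with the extra $k^\ast(\delta)|u-s|^\theta \leq k^\ast(\delta)|t-s|^\theta$ term gives
\[
(1+k^\ast(\delta))|u-s|^\theta + |t-u|^\theta \leq \bigl(\gamma(\varepsilon,\theta) + k^\ast(\delta)\bigr)|t-s|^\theta = p(\delta) |t-s|^\theta.
\]
Collecting everything,
\[
d_M(\mu^\pi_{t,s},\mu_{t,s}) \leq L\,p(\delta)|t-s|^\theta + c|t-s|^\theta = \bigl(L\,p(\delta) + c\bigr)|t-s|^\theta,
\]
and the hypothesis $L \geq c/(1-p(\delta))$ together with $p(\delta) < 1$ gives $L\,p(\delta) + c \leq L$, closing the induction. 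The main subtlety is picking the right summand in Proposition \ref{pro.11.5} so that the Lipschitz-defect factor $(1+k^\ast(\delta))$ attaches to $|u-s|^\theta$ rather than $|t-u|^\theta$, and then absorbing that defect into the $k^\ast(\delta)$ contribution of $p(\delta)$; once one notices that the relative position $u \in J_\varepsilon(s,t)$ is what makes $\gamma(\varepsilon,\theta) < 1$ usable, the contraction closing the induction falls out.
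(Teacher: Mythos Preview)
Your proof is correct and follows essentially the same route as the paper's: induction on $\#(\pi)$, splitting at the $\varepsilon$-special point $u$, and closing the induction via the contraction $p(\delta)L+c\leq L$. The only cosmetic difference is that the paper writes out the three-term triangle inequality
\[
d_M(\mu_{t,u}^{\pi}\circ\mu_{u,s}^{\pi},\mu_{t,u}\circ\mu_{u,s}^{\pi})+d_M(\mu_{t,u}\circ\mu_{u,s}^{\pi},\mu_{t,u}\circ\mu_{u,s})+d_M(\mu_{t,u}\circ\mu_{u,s},\mu_{t,s})
\]
explicitly, whereas you invoke Proposition~\ref{pro.11.5} with $n=2$ (and exploit the $\wedge$ to land on $\operatorname{Lip}(\mu_{t,u})$ rather than $\operatorname{Lip}(\mu_{t,u}^{\pi_{\geq u}})$, which is the right choice since the latter is not yet controlled).
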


\begin{proof}
The proof is by induction on $r\left(  \pi\right)  =\#\left(  \pi\right)  -1.$
When $r=0,$ $\mu_{t,s}^{\pi}=\mu_{t,s}$ and there is nothing to prove. If
$r=1,$ then $\pi=\left(  s,u,t\right)  $ if $s<t$ or $\pi=\left(
t,u,s\right)  $ if $t<s$ for some $u\in J_{\varepsilon}\left(  s,t\right)  .$
It then follows by assumption that
\[
d_{M}\left(  \mu_{t,s}^{\pi},\mu_{t,s}\right)  =d_{M}\left(  \mu_{t,u}\circ
\mu_{u,s},\mu_{t,s}\right)  \leq c\left\vert t-s\right\vert ^{\theta}%
\]
from which it follows that we are going to need to choose $L\geq c.$

Suppose that the estimate in Eq. (\ref{e.11.8}) is known to hold for all
$s,t\in\left[  0,T\right]  $ and all $\varepsilon$-special partitions $\pi
\in\mathcal{P}\left(  s,t\right)  $ with $r\left(  \pi\right)  \leq r_{0}$ for
some $r_{0}\in\mathbb{N}$ and suppose that $\pi$ is an $\varepsilon$-special
partition of $J\left(  s,t\right)  $ with $\#\left(  \pi\right)  =r_{0}+1.$ By
definition of $\pi$ being $\varepsilon$-special, there exists $u\in\pi\cap
J_{\varepsilon}\left(  s,t\right)  $ such that $\pi\cap J\left(  s,u\right)  $
and $\pi\cap J\left(  u,t\right)  $ are $\varepsilon$-special partitions of
$J\left(  s,u\right)  $ and $J\left(  u,t\right)  $ respectively. Using
$\mu_{t,s}^{\pi}=\mu_{t,u}^{\pi}\circ\mu_{u,s}^{\pi},$ the triangle
inequality, the assumptions, and the induction hypothesis it follows that
\begin{align}
d_{M}\left(  \mu_{t,s}^{\pi},\mu_{t,s}\right)  =  &  d_{M}\left(  \mu
_{t,u}^{\pi}\circ\mu_{u,s}^{\pi},\mu_{t,s}\right) \nonumber\\
\leq &  d_{M}\left(  \mu_{t,u}^{\pi}\circ\mu_{u,s}^{\pi},\mu_{t,u}\circ
\mu_{u,s}^{\pi}\right)  +d_{M}\left(  \mu_{t,u}\circ\mu_{u,s}^{\pi},\mu
_{t,u}\circ\mu_{u,s}\right) \nonumber\\
&  \qquad+d_{M}\left(  \mu_{t,u}\circ\mu_{u,s},\mu_{t,s}\right)  .\nonumber\\
\leq &  d_{M}\left(  \mu_{t,u}^{\pi},\mu_{t,u}\right)  +\operatorname{Lip}%
\left(  \mu_{t,u}\right)  d_{M}\left(  \mu_{u,s}^{\pi},\mu_{u,s}\right)
\nonumber\\
&  \qquad+d_{M}\left(  \mu_{t,u}\circ\mu_{u,s},\mu_{t,s}\right) \nonumber\\
\leq &  L\left\vert t-u\right\vert ^{\theta}+\left(  1+k\left(  t-u\right)
\right)  L\left\vert u-s\right\vert ^{\theta}+c\left\vert t-s\right\vert
^{\theta}\label{e.11.9}\\
&  =\left[  \left\vert t-u\right\vert ^{\theta}+\left\vert u-s\right\vert
^{\theta}\right]  L+k\left(  t-u\right)  \left\vert u-s\right\vert ^{\theta
}L+c\left\vert t-s\right\vert ^{\theta}\nonumber\\
&  \leq\left[  \gamma\left(  \theta,\varepsilon\right)  +k\left(  t-s\right)
\right]  L\left\vert t-s\right\vert ^{\theta}+c\left\vert t-s\right\vert
^{\theta}.\nonumber
\end{align}
We now choose $\delta>0$ so that $p\left(  \delta\right)  :=\gamma\left(
\theta,\varepsilon\right)  +k^{\ast}\left(  \delta\right)  <1$ and then
choosing $L$ sufficiently large (i.e. $L\geq\frac{c}{1-p\left(  \delta\right)
}),$ such that $p\left(  \delta\right)  L+c\leq L,$ it will follow for
$\left(  s,t\right)  \in\left[  0,T\right]  ^{2}$ with $\left\vert
t-s\right\vert <\delta,$ that%
\[
d_{M}\left(  \mu_{t,s}^{\pi},\mu_{t,s}\right)  \leq\left(  p\left(
\delta\right)  L+c\right)  \left\vert t-s\right\vert ^{\theta}\leq L\left\vert
t-s\right\vert ^{\theta}%
\]
which completes the inductive step.
\end{proof}

The next theorem extends the previous lemma by removing the restriction on $s$
and $t$ being close to one another.

\begin{theorem}
[Global product bounds]\label{thm.11.18}Let $\mu\in C\left(  \left[
0,T\right]  ^{2}\times M,M\right)  $ be as in Lemma \ref{lem.11.17}, i.e.
$\mu$ satisfies the estimates in Eq. (\ref{e.11.6}). Then for all
$\varepsilon\in(0,1/2]$ there exists $L_{\varepsilon}<\infty$ such that%
\begin{equation}
d_{M}\left(  \mu_{t,s}^{\pi},\mu_{t,s}\right)  \leq L_{\varepsilon}\left\vert
t-s\right\vert ^{\theta}\text{ for all }s,t\in\left[  0,T\right]  .
\label{e.11.10}%
\end{equation}

\end{theorem}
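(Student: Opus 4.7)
The plan is to remove the smallness restriction $|t-s|\leq\delta$ in Lemma~\ref{lem.11.17} by unfolding the recursive structure of $\varepsilon$-special partitions until every interval produced is short enough for the local bound to apply. Fix $\varepsilon\in(0,1/2]$, let $\delta$ and $L$ be the constants from Lemma~\ref{lem.11.17}, and set $K:=1+k^{\ast}(T)$, which dominates $\operatorname{Lip}(\mu_{t,s})$ uniformly in $s,t\in[0,T]$. Write $F(s,t,\pi):=d_M(\mu_{t,s}^\pi,\mu_{t,s})$. Since the case $|t-s|\leq\delta$ is already handled by Lemma~\ref{lem.11.17}, I may assume $|t-s|>\delta$.

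The key input is a single-step recursion. If $\pi\in\mathcal{P}(s,t)$ is $\varepsilon$-special with $\#\pi\geq 2$, the $\varepsilon$-special property supplies $u\in\pi\cap J_\varepsilon(s,t)$ with $\pi_{\leq u}$ and $\pi_{\geq u}$ both $\varepsilon$-special, and exactly as in the induction step of Lemma~\ref{lem.11.17} (triangle inequality plus the two hypotheses in Eq.~(\ref{e.11.6})) one obtains
\begin{equation*}
F(s,t,\pi) \leq F(u,t,\pi_{\geq u}) + K\cdot F(s,u,\pi_{\leq u}) + c|t-s|^\theta,
\end{equation*}
while $|u-s|,|t-u|\leq(1-\varepsilon)|t-s|$.

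I then unfold this recursion to depth $n_0:=\lceil\log(T/\delta)/\log(1/(1-\varepsilon))\rceil$, producing a binary tree whose nodes are labeled by sub-intervals of $J(s,t)$ together with $\varepsilon$-special sub-partitions of $\pi$. Each leaf is either \emph{trivial} (sub-partition equal to the pair of endpoints, contributing $F=0$) or sits at depth exactly $n_0$; in the latter case, recursive halving by factor at most $1-\varepsilon$ gives $|I_{\text{leaf}}|\leq(1-\varepsilon)^{n_0}|t-s|\leq\delta$, so Lemma~\ref{lem.11.17} yields $F_{\text{leaf}}\leq L|I_{\text{leaf}}|^\theta$. Each node carries a coefficient which is a product of $1$'s and $K$'s along its root-path, hence bounded by $K^{n_0}$; moreover, at each depth $l$ the associated intervals are disjoint subsets of $J(s,t)$ with each $|I_v|\leq(1-\varepsilon)^l|t-s|$, so a Hölder-type estimate gives $\sum_{v\text{ at depth }l}|I_v|^\theta\leq(1-\varepsilon)^{l(\theta-1)}|t-s|^\theta$.

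Summing contributions across leaves and internal nodes yields
\begin{equation*}
F(s,t,\pi)\leq K^{n_0}L(1-\varepsilon)^{n_0(\theta-1)}|t-s|^\theta + c|t-s|^\theta\sum_{l=0}^{n_0-1}[K(1-\varepsilon)^{\theta-1}]^l,
\end{equation*}
and since $n_0$ depends only on $T,\delta,\varepsilon$, the right-hand side has the form $L_\varepsilon|t-s|^\theta$ with $L_\varepsilon$ depending on $T,\delta,\varepsilon,\theta,c,L,K$. I expect the main technical point to be the tree bookkeeping---verifying that branches terminating early as trivial partitions contribute nothing, and that the coefficients and interval-length sums interact as claimed to preserve the $|t-s|^\theta$-scaling---but no analytic input beyond Lemma~\ref{lem.11.17} is required.
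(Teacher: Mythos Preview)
Your argument is correct and rests on the same recursive ingredient as the paper's proof: the single-step estimate
\[
d_M(\mu_{t,s}^\pi,\mu_{t,s})\leq d_M(\mu_{t,u}^{\pi_{\geq u}},\mu_{t,u})+\operatorname{Lip}(\mu_{t,u})\,d_M(\mu_{u,s}^{\pi_{\leq u}},\mu_{u,s})+c|t-s|^\theta
\]
together with the shrinkage $|u-s|,|t-u|\leq(1-\varepsilon)|t-s|$ guaranteed by the $\varepsilon$-special structure. The difference is purely organizational. The paper argues by induction on the time scale: having the bound with constant $L_n$ on intervals of length at most $\delta_n=(1-\varepsilon)^{-n}\delta$, one split at the special point $u$ reduces both halves to length at most $\delta_n$, yielding the bound on $|t-s|\leq\delta_{n+1}$ with updated constant $L_{n+1}=(2+K)L_n+c$; finitely many steps reach $\delta_n\geq T$. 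You instead unfold the full binary tree to depth $n_0$ in one pass and sum level by level. Your approach makes the dependence of $L_\varepsilon$ on the data more explicit (via the geometric sums), at the cost of heavier bookkeeping on coefficients and interval lengths; the paper's scale-by-scale iteration is shorter and avoids tracking the tree, but hides the constant inside the recursion for $L_n$. Neither requires anything beyond Lemma~\ref{lem.11.17} and the hypotheses in Eq.~(\ref{e.11.6}).
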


\begin{proof}
Let $K:=\max_{\left\vert t\right\vert \leq T}k\left(  t\right)  ,$
$\delta=\delta\left(  \varepsilon\right)  $ be as in Lemma \ref{lem.11.17},
$s,t\in\left[  0,T\right]  $ with $\alpha:=t-s\neq0,$ and suppose that $\pi$
is an $\varepsilon$-special partition of $J\left(  s,t\right)  .$ Since $\pi$
is $\varepsilon$-special, there exists $u\in J_{\varepsilon}\left(
s,t\right)  \cap\pi,$ i.e. $u\in\pi$ and $u$ lies between $s+\varepsilon
\alpha$ and $t-\varepsilon\alpha$ and therefore,%
\begin{align*}
\left\vert t-u\right\vert  &  \leq\left\vert t-\left(  s+\varepsilon
\alpha\right)  \right\vert =\left\vert \alpha\right\vert \left(
1-\varepsilon\right)  \text{ and}\\
\left\vert s-u\right\vert  &  \leq\left\vert s-\left(  t-\varepsilon
\alpha\right)  \right\vert =\left\vert \alpha\right\vert \left(
1-\varepsilon\right)  .
\end{align*}

Let us now assume that $\left\vert t-s\right\vert \left(  1-\varepsilon
\right)  \leq\delta,$ i.e.
\[
\left\vert \alpha\right\vert =\left\vert t-s\right\vert \leq\delta
_{1}:=\left(  1-\varepsilon\right)  ^{-1}\delta.
\]
Then for $u\in J_{\varepsilon}\left(  s,t\right)  \cap\pi$ as above we will
have $\left\vert t-u\right\vert \leq\delta$ and $\left\vert s-u\right\vert
\leq\delta$ and hence by the triangle inequality and Lemma \ref{lem.11.17},
\begin{align*}
d\left(  \mu_{t,s}^{\pi},\mu_{t,s}\right)   &  =d\left(  \mu_{t,u}^{\pi}%
\circ\mu_{u,s}^{\pi},\mu_{t,s}\right) \\
&  \leq d\left(  \mu_{t,u}^{\pi}\circ\mu_{u,s}^{\pi},\mu_{t,u}\circ\mu
_{u,s}\right)  +d\left(  \mu_{t,u}\circ\mu_{u,s},\mu_{t,s}\right) \\
&  \leq\operatorname{Lip}\left(  \mu_{t,u}\right)  d\left(  \mu_{u,s}^{\pi
},\mu_{u,s}\right)  +d\left(  \mu_{t,u}^{\pi},\mu_{t,u}\right)  +d\left(
\mu_{t,u}\circ\mu_{u,s},\mu_{t,s}\right) \\
&  \leq\left(  1+K\right)  L\left\vert u-s\right\vert ^{\theta}+L\left\vert
t-u\right\vert ^{\theta}+c\left\vert t-s\right\vert ^{\theta}\leq
L_{1}\left\vert t-s\right\vert ^{\theta}%
\end{align*}
where $L_{1}=\left(  2+K\right)  L+c.$

We may now repeat this same argument with $\delta$ replaced by $\delta_{1}$ to
find, for $s,t\in\left[  0,T\right]  $ with
\[
\left\vert t-s\right\vert \leq\delta_{2}:=\left(  1-\varepsilon\right)
^{-1}\delta_{2}=\left(  1-\varepsilon\right)  ^{-2}\delta,
\]
that%
\begin{align*}
d\left(  \mu_{t,s}^{\pi},\mu_{t,s}\right)   &  \leq\operatorname{Lip}\left(
\mu_{t,u}\right)  d\left(  \mu_{u,s}^{\pi},\mu_{u,s}\right)  +d\left(
\mu_{t,u}^{\pi},\mu_{t,u}\right)  +d\left(  \mu_{t,u}\circ\mu_{u,s},\mu
_{t,s}\right) \\
&  \leq\left(  1+K\right)  L_{1}\left\vert u-s\right\vert ^{\theta}%
+L_{1}\left\vert t-u\right\vert ^{\theta}+c\left\vert t-s\right\vert ^{\theta
}\leq L_{2}\left\vert t-s\right\vert ^{\theta}%
\end{align*}
where $L_{2}:=\left(  2+K\right)  L_{1}+c.$ Hence, by induction, if
$s,t\in\left[  0,T\right]  $ with $\left\vert t-s\right\vert \leq\delta
_{n}=\left(  1-\varepsilon\right)  ^{-n}\delta,$ then
\[
d\left(  \mu_{t,s}^{\pi},\mu_{t,s}\right)  \leq L_{n}\left\vert t-s\right\vert
^{\theta}%
\]
where $L_{n}$ is defined inductively by $L_{n}=\left(  2+K\right)  L_{n-1}+c$
with $L_{0}=L.$ To complete the proof we need only use this estimate with $n$
sufficiently large so that $\delta_{n}=\left(  1-\varepsilon\right)
^{-n}\delta\geq T.$
\end{proof}

\begin{lemma}
\label{lem.11.19}Suppose $\mu\in C\left(  \left[  0,T\right]  ^{2}\times
M,M\right)  $ is a pre-semi group and for some $\varepsilon\in(0,1/2],$ there
exists a $\delta>0$ and $C<\infty$ such that $\operatorname{Lip}\left(
\mu_{t,s}^{\pi}\right)  \leq C$ for all $\left(  s,t\right)  \in\left[
0,T\right]  ^{2}$ with $\left\vert t-s\right\vert \leq\delta$ and
$\varepsilon$-special partitions, $\pi,$ of $J\left(  s,t\right)  .$ Then
there exists a constant $C^{\prime}<\infty$ such that $\operatorname{Lip}%
\left(  \mu_{t,s}^{\pi}\right)  \leq C^{\prime}$ for all $\left(  s,t\right)
\in\left[  0,T\right]  ^{2}$ and all $\varepsilon$-special partitions, $\pi,$
of $J\left(  s,t\right)  ,$ i.e. we may drop the restriction that $\left\vert
t-s\right\vert \leq\delta.$
\end{lemma}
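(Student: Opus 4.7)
The plan is to do an induction on the scale of $|t-s|$, exactly parallel to the doubling argument at the end of the proof of Theorem \ref{thm.11.18}, but now for the multiplicative Lipschitz bound instead of the additive distance bound. The key observation is that the definition of an $\varepsilon$-special partition is recursive: any $\varepsilon$-special partition $\pi$ of $J(s,t)$ with $\#(\pi)\geq 2$ splits at some $u\in\pi\cap J_\varepsilon(s,t)$ into $\varepsilon$-special sub-partitions $\pi_{\leq u}$ and $\pi_{\geq u}$ of $J(s,u)$ and $J(u,t)$, and by construction
\[
\mu_{t,s}^\pi = \mu_{t,u}^{\pi_{\geq u}}\circ \mu_{u,s}^{\pi_{\leq u}}.
\]
Combined with Remark \ref{rem.11.4}'s inequality $\operatorname{Lip}(f\circ g)\leq\operatorname{Lip}(f)\operatorname{Lip}(g)$, this gives the sub-multiplicative bound
\[
\operatorname{Lip}(\mu_{t,s}^\pi) \;\leq\; \operatorname{Lip}(\mu_{t,u}^{\pi_{\geq u}})\cdot\operatorname{Lip}(\mu_{u,s}^{\pi_{\leq u}}).
\]

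Since $u\in J_\varepsilon(s,t)$, we have $|t-u|\leq(1-\varepsilon)|t-s|$ and $|u-s|\leq(1-\varepsilon)|t-s|$. This is the geometric-reduction fact that drives the induction. Define a sequence $C_0:=C$ and $C_{n+1}:=C_n^2$, and claim: for every $n\geq 0$, every $(s,t)\in[0,T]^2$ with $|t-s|\leq(1-\varepsilon)^{-n}\delta$, and every $\varepsilon$-special partition $\pi$ of $J(s,t)$, one has $\operatorname{Lip}(\mu_{t,s}^\pi)\leq C_n$. The base case $n=0$ is the standing hypothesis. For the inductive step, the special point $u\in\pi\cap J_\varepsilon(s,t)$ produces two $\varepsilon$-special partitions on intervals of length at most $(1-\varepsilon)^{-n}\delta$, so the inductive hypothesis applies to each factor and the sub-multiplicative inequality above yields the bound $C_n^2=C_{n+1}$.

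To finish, choose $N\in\mathbb{N}$ with $(1-\varepsilon)^{-N}\delta\geq T$ (which depends only on $\varepsilon$, $\delta$, and $T$) and set $C' := C_N$. Then for any $(s,t)\in[0,T]^2$, certainly $|t-s|\leq T\leq(1-\varepsilon)^{-N}\delta$, so the claim at level $N$ gives $\operatorname{Lip}(\mu_{t,s}^\pi)\leq C'$ for every $\varepsilon$-special partition $\pi$ of $J(s,t)$. One trivial case to mention separately is $\#(\pi)=1$ (so $\pi=\{s,t\}$ and $\mu_{t,s}^\pi=\mu_{t,s}$); here the hypothesis already applies when $|t-s|\leq\delta$, and for larger $|t-s|$ we simply view $\pi$ as a degenerate special partition covered by the same induction, since $\mu_{t,s}=\mu_{t,u}\circ\mu_{u,s}+O(|t-s|^\theta)$ is not being used, only $\operatorname{Lip}$ of the iterated composition.

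I do not expect any genuine obstacle: the Lipschitz constant is controlled purely by multiplicativity, so the argument is strictly simpler than that of Theorem \ref{thm.11.18}, where an additive error term forced the more delicate recursion $L_n=(2+K)L_{n-1}+c$. The only mild point of care is bookkeeping at the recursion base to ensure that the partitions arising from the splittings are themselves $\varepsilon$-special, which is immediate from Definition \ref{def.11.9}.
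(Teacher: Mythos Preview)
Your proposal is correct and follows essentially the same approach as the paper: an induction on the scale $\delta_n=(1-\varepsilon)^{-n}\delta$, splitting any $\varepsilon$-special partition $\pi$ at a point $u\in\pi\cap J_\varepsilon(s,t)$ and using $\operatorname{Lip}(\mu_{t,s}^\pi)\leq\operatorname{Lip}(\mu_{t,u}^{\pi})\operatorname{Lip}(\mu_{u,s}^{\pi})$ to obtain $C_n=C^{2^n}$, then choosing $n$ so that $\delta_n\geq T$. Your final paragraph on the $\#(\pi)=1$ case is a bit muddled (there is no interior $u$ to split at, so the induction does not literally cover it), but the paper's proof glosses over this case as well; it is harmless in the intended applications since there $\operatorname{Lip}(\mu_{t,s})$ is already bounded uniformly on $[0,T]^2$.
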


\begin{proof}
The proof is very similar to the proof of Theorem \ref{thm.11.18}. Let
$\delta_{1}=\left(  1-\varepsilon\right)  ^{-1}\delta$ and $\left(
s,t\right)  \in\left[  0,T\right]  ^{2}$ with $\left\vert t-s\right\vert
\leq\delta_{1}$ and $\pi$ be an $\varepsilon$-special partition of $J\left(
s,t\right)  .$ As in the proof of Theorem \ref{thm.11.18}, for $u\in
J_{\varepsilon}\left(  s,t\right)  \cap\pi$ (which exists as $\pi$ is
$\varepsilon$-special) we have both $\left\vert u-s\right\vert \leq\delta$ and
$\left\vert t-u\right\vert \leq\delta$ and hence%
\[
\operatorname{Lip}\left(  \mu_{t,s}^{\pi}\right)  =\operatorname{Lip}\left(
\mu_{t,u}^{\pi}\circ\mu_{u,s}^{\pi}\right)  \leq\operatorname{Lip}\left(
\mu_{t,u}^{\pi}\right)  \cdot\operatorname{Lip}\left(  \mu_{u,s}^{\pi}\right)
\leq C^{2}.
\]
We may now repeat this procedure with $\delta_{2}=\left(  1-\varepsilon
\right)  ^{-1}\delta_{1}=\left(  1-\varepsilon\right)  ^{-2}\delta$ and
$\left(  s,t\right)  \in\left[  0,T\right]  ^{2}$ with $\left\vert
t-s\right\vert \leq\delta_{2}$ and $\pi$ is an $\varepsilon$-special partition
of $J\left(  s,t\right)  $ in order to find $\operatorname{Lip}\left(
\mu_{t,s}^{\pi}\right)  \leq C^{4}.$ Continuing this way inductively, if
$\left\vert t-s\right\vert \leq\delta_{n}=\left(  1-\varepsilon\right)
^{-n}\delta$ and $\pi$ is an $\varepsilon$-special partition of $J\left(
s,t\right)  ,$ then
\[
\operatorname{Lip}\left(  \mu_{t,s}^{\pi}\right)  \leq C^{2^{n}}<\infty.
\]
It then follows that $\operatorname{Lip}\left(  \mu_{t,s}^{\pi}\right)  \leq
C^{\prime}$ for all $\left(  s,t\right)  \in\left[  0,T\right]  ^{2}$ where
$C^{\prime}:=C^{2^{n}}$ provided we choose $n\in\mathbb{N}$ so that
$\delta_{n}=\left(  1-\varepsilon\right)  ^{-n}\delta\geq T.$
\end{proof}

The next two corollaries are easy consequences of Proposition \ref{pro.11.5}.

\begin{corollary}
\label{cor.11.20}Suppose that $\mu_{t,s}$ and $\nu_{t,s}$ are two
pre-semigroups on $M,$ see Definition \ref{def.11.14}. If $\pi=\left\{
s_{k}\right\}  _{k=0}^{n}$ is a partition of $J\left(  s,t\right)  ,$ then
\[
d\left(  \mu_{t,s}^{\pi},\nu_{t,s}^{\pi}\right)  \leq\sum_{k=1}^{n}\left(
\operatorname{Lip}\left(  \mu_{t,s_{k}}^{\pi}\right)  \wedge\operatorname{Lip}%
\left(  \nu_{t,s_{k}}^{\pi}\right)  \right)  d\left(  \mu_{s_{k},s_{k-1}}%
,\nu_{s_{k},s_{k-1}}\right)  .
\]

\end{corollary}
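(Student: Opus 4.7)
The plan is to apply Proposition \ref{pro.11.5} directly, with a careful bookkeeping of what the composition factors are. Set $f_k := \mu_{s_k, s_{k-1}}$ and $g_k := \nu_{s_k, s_{k-1}}$ for $1 \leq k \leq n$. By the definition of $\mu_{t,s}^{\pi}$ in Notation \ref{not.11.15} (Eq.\ (\ref{e.11.5})), we have
\[
\mu_{t,s}^{\pi} = \mu_{s_n, s_{n-1}} \circ \dots \circ \mu_{s_1, s_0} = f_n \circ \dots \circ f_1,
\]
and analogously $\nu_{t,s}^{\pi} = g_n \circ \dots \circ g_1$.

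Next I would identify the ``tail'' compositions appearing in the Lipschitz factors of Proposition \ref{pro.11.5}. Using Eq.\ (\ref{e.11.4}), for each $1 \leq k \leq n$ we have
\[
\mu_{t, s_k}^{\pi} = \mu_{s_n, s_{n-1}} \circ \dots \circ \mu_{s_{k+1}, s_k} = f_n \circ \dots \circ f_{k+1},
\]
with the convention that the empty composition (the case $k = n$) equals $\mathrm{Id}_M$, whose Lipschitz constant is $1$; this matches the convention at the end of Proposition \ref{pro.11.5}. The same identification holds on the $\nu$-side, giving $\operatorname{Lip}(\nu_{t,s_k}^{\pi}) = \operatorname{Lip}(g_n \circ \dots \circ g_{k+1})$.

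Plugging these identifications into the conclusion of Proposition \ref{pro.11.5} yields
\[
d_M(\mu_{t,s}^{\pi}, \nu_{t,s}^{\pi}) \leq \sum_{k=1}^n \bigl[ \operatorname{Lip}(\mu_{t,s_k}^{\pi}) \wedge \operatorname{Lip}(\nu_{t,s_k}^{\pi}) \bigr] \, d_M(\mu_{s_k, s_{k-1}}, \nu_{s_k, s_{k-1}}),
\]
which is exactly the claimed inequality. There is no real obstacle here: the entire content is the translation of indices between the abstract statement of Proposition \ref{pro.11.5} and the pre-semigroup notation. The only small point to be careful about is the convention that an empty composition is the identity, so that the $k = n$ summand contributes $d_M(\mu_{s_n, s_{n-1}}, \nu_{s_n, s_{n-1}})$ with coefficient $1$, consistent with the stated convention.
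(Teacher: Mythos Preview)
Your proof is correct and is exactly the approach the paper intends: the corollary is stated as an easy consequence of Proposition~\ref{pro.11.5}, and you have carried out precisely the straightforward identification $f_k = \mu_{s_k,s_{k-1}}$, $g_k = \nu_{s_k,s_{k-1}}$, $f_n\circ\dots\circ f_{k+1}=\mu_{t,s_k}^{\pi}$ needed to read off the result.
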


\begin{corollary}
\label{cor.11.21}Let $\pi=\left(  s_{0},\dots,s_{n}\right)  \in\mathcal{P}%
\left(  s,t\right)  $ be a partition of $J\left(  s,t\right)  $ and to each
$1\leq k\leq n$ let $\Lambda_{k}\in\mathcal{P}\left(  s_{k-1},s_{k}\right)  $
be a partition of $J\left(  s_{k-1},s_{k}\right)  $ and let $\pi^{\ast}%
\in\mathcal{P}\left(  s,t\right)  $ be the unique oriented partition of
$J\left(  s,t\right)  $ such that $\left\{  \pi^{\ast}\right\}  =\cup
_{k=1}^{n}\left\{  \Lambda_{k}\right\}  .$ Then
\begin{equation}
d\left(  \mu_{t,s}^{\pi^{\ast}},\mu_{t,s}^{\pi}\right)  \leq\sum_{k=1}%
^{n}\operatorname{Lip}\left(  \mu_{t,s_{k}}^{\pi}\right)  d\left(  \mu
_{s_{k},s_{k-1}},\mu_{s_{k},s_{k-1}}^{\Lambda_{k}}\right)  . \label{e.11.11}%
\end{equation}

\end{corollary}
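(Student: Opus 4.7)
The plan is to observe that this is essentially a direct application of Proposition \ref{pro.11.5}. The key observation is a factorization of $\mu_{t,s}^{\pi^{\ast}}$: because $\pi^{\ast}$ is obtained from $\pi$ by refining each sub-interval $J(s_{k-1},s_k)$ according to $\Lambda_k$, the composition defining $\mu_{t,s}^{\pi^{\ast}}$ breaks into $n$ consecutive blocks, namely
\[
\mu_{t,s}^{\pi^{\ast}}=\mu_{s_n,s_{n-1}}^{\Lambda_n}\circ\mu_{s_{n-1},s_{n-2}}^{\Lambda_{n-1}}\circ\dots\circ\mu_{s_1,s_0}^{\Lambda_1}.
\]
This identity should be justified by a short induction on $n$, or simply by unraveling Notation \ref{not.11.15}: the $\Lambda_k$ parts of $\pi^{\ast}$ sit between consecutive points of $\pi$, so the associativity of composition regroups them block-by-block.

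With that factorization in hand, I would set $f_k:=\mu_{s_k,s_{k-1}}^{\Lambda_k}$ and $g_k:=\mu_{s_k,s_{k-1}}$ for $1\le k\le n$. Then $f_n\circ\dots\circ f_1=\mu_{t,s}^{\pi^{\ast}}$ and $g_n\circ\dots\circ g_1=\mu_{t,s}^{\pi}$, while by definition $g_n\circ\dots\circ g_{k+1}=\mu_{s_n,s_{n-1}}\circ\dots\circ\mu_{s_{k+1},s_k}=\mu_{t,s_k}^{\pi}$. Applying Proposition \ref{pro.11.5} and keeping only the Lipschitz bound on the $g$-side of the minimum yields
\[
d_M\bigl(\mu_{t,s}^{\pi^{\ast}},\mu_{t,s}^{\pi}\bigr)\le\sum_{k=1}^{n}\operatorname{Lip}\bigl(\mu_{t,s_k}^{\pi}\bigr)\,d_M\bigl(\mu_{s_k,s_{k-1}}^{\Lambda_k},\mu_{s_k,s_{k-1}}\bigr),
\]
which is precisely \eqref{e.11.11}.

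There is no real obstacle in this proof; the only point that requires a moment of care is the bookkeeping in the factorization step, since the partitions are oriented and one must distinguish the $s<t$ and $s>t$ cases. In both cases, however, the blocks $\Lambda_k$ appear in the same order as the intervals $(s_{k-1},s_k)$ of $\pi$, so the grouping goes through identically. Once the factorization is written down, the bound is an immediate specialization of Proposition \ref{pro.11.5}.
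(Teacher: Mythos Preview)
Your proof is correct and is essentially identical to the paper's own argument: the paper also notes the two factorizations $\mu_{t,s}^{\pi}=\mu_{s_n,s_{n-1}}\circ\dots\circ\mu_{s_1,s_0}$ and $\mu_{t,s}^{\pi^{\ast}}=\mu_{s_n,s_{n-1}}^{\Lambda_n}\circ\dots\circ\mu_{s_1,s_0}^{\Lambda_1}$ and then appeals directly to Proposition~\ref{pro.11.5}.
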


\begin{proof}
Again Eq. (\ref{e.11.11}) follows from Proposition \ref{pro.11.5} along with
the following two identities,%
\begin{align*}
\mu_{t,s}^{\pi}  &  =\mu_{s_{n},s_{n-1}}\circ\dots\circ\mu_{s_{2},s_{1}}%
\circ\mu_{s_{1},s_{0}}\text{ and }\\
\mu_{t,s}^{\pi^{\ast}}  &  =\mu_{s_{n},s_{n-1}}^{\Lambda_{n}}\circ\dots
\circ\mu_{s_{2},s_{1}}^{\Lambda_{2}}\circ\mu_{s_{1},s_{0}}^{\Lambda_{1}}.
\end{align*}

\end{proof}

\begin{corollary}
\label{cor.11.22}Let $\pi=\left(  s_{0},\dots,s_{n}\right)  \in\mathcal{P}%
\left(  s,t\right)  $ be an oriented partition of $J\left(  s,t\right)  $ and
\[
\pi^{\ast}=\left(  s_{0},s_{1}^{\ast},s_{1},s_{2}^{\ast},s_{2},\dots
,s_{n-1},s_{n}^{\ast},s_{n}\right)
\]
where $s_{k}^{\ast}$ is chosen arbitrarily to lie between $s_{k-1}$ and
$s_{k}$ for $1\leq k\leq n.$ Then
\[
d\left(  \mu_{t,s}^{\pi^{\ast}},\mu_{t,s}^{\pi}\right)  \leq\sum_{k=1}%
^{n}\operatorname{Lip}\left(  \mu_{t,s_{k}}^{\pi}\right)  d\left(  \mu
_{s_{k},s_{k-1}},\mu_{s_{k},s_{k}^{\ast}}\circ\mu_{s_{k}^{\ast},s_{k-1}%
}\right)  .
\]

\end{corollary}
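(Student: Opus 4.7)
The plan is to apply Corollary \ref{cor.11.21} directly, specializing each refining partition $\Lambda_{k}$ to the two-step partition that inserts the single intermediate point $s_{k}^{\ast}$.

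Concretely, for each $1\leq k\leq n$, I would define $\Lambda_{k}\in\mathcal{P}(s_{k-1},s_{k})$ to be the oriented partition $(s_{k-1},s_{k}^{\ast},s_{k})$. By hypothesis $s_{k}^{\ast}$ lies between $s_{k-1}$ and $s_{k}$, so this is a legitimate element of $\mathcal{P}(s_{k-1},s_{k})$. Unwinding Notation \ref{not.11.15} for this particular $\Lambda_{k}$, the composition along the partition is
\[
\mu_{s_{k},s_{k-1}}^{\Lambda_{k}}=\mu_{s_{k},s_{k}^{\ast}}\circ\mu_{s_{k}^{\ast},s_{k-1}}.
\]
With these choices, the refined partition $\pi^{\ast}\in\mathcal{P}(s,t)$ obtained by concatenating the $\Lambda_{k}$'s is exactly $(s_{0},s_{1}^{\ast},s_{1},s_{2}^{\ast},s_{2},\dots,s_{n-1},s_{n}^{\ast},s_{n})$, which matches the definition of $\pi^{\ast}$ in the statement.

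Now I would simply invoke Corollary \ref{cor.11.21}, which yields
\[
d\bigl(\mu_{t,s}^{\pi^{\ast}},\mu_{t,s}^{\pi}\bigr)\leq\sum_{k=1}^{n}\operatorname{Lip}\bigl(\mu_{t,s_{k}}^{\pi}\bigr)\,d\bigl(\mu_{s_{k},s_{k-1}},\mu_{s_{k},s_{k-1}}^{\Lambda_{k}}\bigr),
\]
and substituting the identity $\mu_{s_{k},s_{k-1}}^{\Lambda_{k}}=\mu_{s_{k},s_{k}^{\ast}}\circ\mu_{s_{k}^{\ast},s_{k-1}}$ gives precisely the claimed bound. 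There is no real obstacle here; the only thing to be careful about is verifying that the ordered concatenation of the $\Lambda_{k}$'s indeed produces the oriented partition $\pi^{\ast}$ (the orientations match because each $\Lambda_{k}$ inherits its orientation from $J(s_{k-1},s_{k})\subset J(s,t)$), so that Corollary \ref{cor.11.21} applies verbatim.
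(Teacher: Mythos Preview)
Your proposal is correct and is exactly the argument given in the paper: the proof there consists of the single observation that Corollary~\ref{cor.11.22} is the special case of Corollary~\ref{cor.11.21} with $\Lambda_{k}:=(s_{k-1},s_{k}^{\ast},s_{k})$ for $1\leq k\leq n$.
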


\begin{proof}
This is a very special case of Corollary \ref{cor.11.21} where $\Lambda
_{k}:=\left(  s_{k-1},s_{k}^{\ast},s_{k}\right)  $ for $1\leq k\leq n.$
\end{proof}

\begin{corollary}
\label{cor.11.23}Suppose $\mu\in C\left(  \left[  0,T\right]  ^{2}\times
M,M\right)  $ is a pre-flow and there exists $C<\infty$ such that
$\operatorname{Lip}\left(  \mu_{t,s}^{\pi}\right)  \leq C$ and and $L<\infty$
such that Eq. (\ref{e.11.10}) holds for all $\left(  s,t\right)  \in\left[
0,T\right]  ^{2}$ and all $\frac{1}{3}$-special partitions of $J\left(
s,t\right)  .$ Let $n\in\mathbb{N}$ and $\pi_{n}:=\pi^{n}\left(  s,t\right)  $
be an oriented uniform partition of $J\left(  s,t\right)  $ and $\pi_{n}%
^{\ast}:=\cup_{k=1}^{n}\Lambda_{k}$ where for each $1\leq k\leq n,$
$\Lambda_{k}=\pi^{n_{k}}\left(  s_{k-1},s_{k}\right)  \in\mathcal{P}\left(
s_{k-1},s_{k}\right)  $ is a uniform partition of $J\left(  s_{k-1}%
,s_{k}\right)  $ with any number of subdivisions, $n_{k},$ which can depend on
$k.$ Then%
\begin{equation}
d\left(  \mu_{t,s}^{\pi_{n}^{\ast}},\mu_{t,s}^{n}\right)  =d\left(  \mu
_{t,s}^{\pi_{n}^{\ast}},\mu_{t,s}^{\pi_{n}}\right)  \leq LC\left\vert
t-s\right\vert ^{\theta}\left(  \frac{1}{n}\right)  ^{\theta-1}
\label{e.11.12}%
\end{equation}

\end{corollary}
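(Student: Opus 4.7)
The plan is to reduce the statement to a direct application of Corollary \ref{cor.11.21}, taking $\pi = \pi_n$ and using the refinement $\pi_n^* = \cup_{k=1}^n \Lambda_k$. That corollary gives
\[
d\left(\mu_{t,s}^{\pi_n^{*}}, \mu_{t,s}^{\pi_n}\right) \leq \sum_{k=1}^{n} \operatorname{Lip}\left(\mu_{t,s_k}^{\pi_n}\right)\, d\left(\mu_{s_k,s_{k-1}},\mu_{s_k,s_{k-1}}^{\Lambda_k}\right),
\]
so the problem reduces to controlling each of the two factors in the sum using, respectively, the uniform Lipschitz bound and the approximation estimate of Eq. (\ref{e.11.10}).

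The key observation is that every partition appearing on the right-hand side is a uniform partition and therefore, by Lemma \ref{lem.11.12}, is $\frac{1}{3}$-special. Indeed, each $\Lambda_k = \pi^{n_k}(s_{k-1},s_k)$ is uniform by hypothesis, so Eq. (\ref{e.11.10}) gives
\[
d\left(\mu_{s_k,s_{k-1}},\mu_{s_k,s_{k-1}}^{\Lambda_k}\right) \leq L\,\lvert s_k-s_{k-1}\rvert^{\theta} = L\left(\frac{\lvert t-s\rvert}{n}\right)^{\theta}.
\]
Similarly, the partition of $J(s_k,t)$ defining $\mu_{t,s_k}^{\pi_n}$ is obtained by restricting $\pi_n$ to $J(s_k,t)$; since $\pi_n$ is the uniform partition of $J(s,t)$ with equal spacing $\lvert t-s\rvert/n$, this restriction is exactly the uniform partition of $J(s_k,t)$ with $n-k$ equal subintervals. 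By Lemma \ref{lem.11.12} it is $\frac{1}{3}$-special, so the assumed uniform Lipschitz bound yields $\operatorname{Lip}\left(\mu_{t,s_k}^{\pi_n}\right) \leq C$.

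Substituting both bounds into the sum gives $n$ identical terms, hence
\[
d\left(\mu_{t,s}^{\pi_n^{*}}, \mu_{t,s}^{\pi_n}\right) \leq \sum_{k=1}^{n} C\cdot L \left(\frac{\lvert t-s\rvert}{n}\right)^{\theta} = CL\,\lvert t-s\rvert^{\theta}\left(\frac{1}{n}\right)^{\theta-1},
\]
which is exactly Eq. (\ref{e.11.12}). There is no real obstacle; the only point requiring a moment's care is that sub-intervals of a uniform partition are themselves uniform partitions of their respective intervals, which is what allows Lemma \ref{lem.11.12} to be applied to both the restrictions of $\pi_n$ and to each $\Lambda_k$, and this in turn is what makes the two standing hypotheses on $\mu$ applicable term-by-term.
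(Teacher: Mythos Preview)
Your proof is correct and follows essentially the same route as the paper: apply Corollary~\ref{cor.11.21}, then bound each term using Eq.~(\ref{e.11.10}) and the Lipschitz hypothesis. You are slightly more explicit than the paper in justifying, via Lemma~\ref{lem.11.12}, why the restricted partitions $\pi_n|_{J(s_k,t)}$ and the $\Lambda_k$ are $\tfrac{1}{3}$-special so that the standing hypotheses actually apply; the paper's proof leaves this verification implicit.
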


\begin{proof}
By Corollary \ref{cor.11.21} along with Eq. (\ref{e.11.10}) and the assumption
that $\sup_{s,t\in\left[  0,1\right]  }\operatorname{Lip}\left(  \mu
_{t,s}^{\pi_{n}}\right)  \leq C,$ we find
\begin{align*}
d\left(  \mu_{t,s}^{\pi_{n}^{\ast}},\mu_{t,s}^{\pi_{n}}\right)   &  \leq
\sum_{k=1}^{n}\operatorname{Lip}\left(  \mu_{t,s_{k}}^{\pi_{n}}\right)
d\left(  \mu_{s_{k},s_{k-1}},\mu_{s_{k},s_{k-1}}^{\Lambda_{k}}\right) \\
&  \leq\sum_{k=1}^{n}\operatorname{Lip}\left(  \mu_{t,s_{k}}^{\pi_{n}}\right)
L\left\vert s_{k}-s_{k-1}\right\vert ^{\theta}\\
&  \leq LC\sum_{k=1}^{n}\left\vert \frac{t-s}{n}\right\vert ^{\theta
}=LC\left\vert t-s\right\vert ^{\theta}\left(  \frac{1}{n}\right)  ^{\theta
-1}.
\end{align*}

\end{proof}

\begin{corollary}
\label{cor.11.24}If $\mu\in C\left(  \left[  0,T\right]  ^{2}\times
M,M\right)  $ is a pre-flow satisfying the assumptions in Corollary
\ref{cor.11.23} and (as usual) $\mu_{t,s}^{\left(  n\right)  }:=\mu
_{t,s}^{2^{n}},$ then
\begin{equation}
\lim_{n,k\rightarrow\infty}\sup_{s,t\in\left[  0,T\right]  }d\left(  \mu
_{t,s}^{\left(  n\right)  },\mu_{t,s}^{\left(  k\right)  }\right)  =0,
\label{e.11.13}%
\end{equation}
i.e. $\left\{  \mu_{t,s}^{\left(  n\right)  }\left(  m\right)  \right\}
_{n=1}^{\infty}$ is Cauchy uniformly in $\left(  t,s,m\right)  \in\left[
0,T\right]  ^{2}\times M.$
\end{corollary}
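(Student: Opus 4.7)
The plan is to show that $\{\mu^{(n)}_{t,s}\}_{n=1}^\infty$ is uniformly Cauchy by exhibiting it as a telescoping sum with geometrically summable increments. The key observation is that $\pi^{(n+1)}(s,t) = \pi^{2^{n+1}}(s,t)$ is precisely the refinement of $\pi^{(n)}(s,t) = \pi^{2^n}(s,t)$ obtained by bisecting each of the $2^n$ subintervals of $\pi^{(n)}(s,t)$. This puts us in the exact setup of Corollary \ref{cor.11.23}, where $\Lambda_k$ is the uniform partition of $[s_{k-1}, s_k]$ into $n_k = 2$ equal subintervals for every $k$.

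First I would apply Corollary \ref{cor.11.23}, with $n$ there replaced by $2^n$ here (recall that $\pi^n(s,t)$ being $1/3$-special for all $n$ is Lemma \ref{lem.11.12}, so the uniform partitions at every dyadic scale satisfy the hypotheses assumed in Corollary \ref{cor.11.23}). This yields the single-step bound
\begin{equation*}
d_M\!\left(\mu^{(n+1)}_{t,s},\, \mu^{(n)}_{t,s}\right) \;\le\; LC\,|t-s|^{\theta}\,\bigl(2^{-n}\bigr)^{\theta-1}
\end{equation*}
for every $(s,t)\in[0,T]^2$ and every $n \in \mathbb{N}$.

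Next, for $k > n$, the triangle inequality gives
\begin{equation*}
d_M\!\left(\mu^{(k)}_{t,s},\, \mu^{(n)}_{t,s}\right) \;\le\; \sum_{j=n}^{k-1} d_M\!\left(\mu^{(j+1)}_{t,s},\, \mu^{(j)}_{t,s}\right) \;\le\; LC\,|t-s|^{\theta}\sum_{j=n}^{\infty} \bigl(2^{-j}\bigr)^{\theta-1}.
\end{equation*}
Because $\theta > 1$ by Assumption \ref{ass.2}, the geometric series $\sum_{j=0}^{\infty}\bigl(2^{-j}\bigr)^{\theta-1}$ converges, and its tail tends to $0$ as $n\to\infty$. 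Since $|t-s|^{\theta} \le T^{\theta}$ is bounded uniformly in $s,t\in[0,T]$, this yields Eq. (\ref{e.11.13}) upon taking suprema, with the pointwise estimate being uniform in $m\in M$ because $d_M$ already denotes the supremum over $m$.

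I do not anticipate any genuine obstacle; the content is entirely contained in Corollary \ref{cor.11.23} together with the elementary fact that consecutive dyadic uniform partitions stand in the refinement relation required by that corollary. The only point that needs a brief verbal check is confirming that $\pi^{(n+1)}(s,t)$ is indeed of the form $\pi^{*}_{2^n}$ in the sense of Corollary \ref{cor.11.23} — this is immediate from the definitions in Notation \ref{not.11.10}, since bisecting each of the $2^n$ equal subintervals of $\pi^{2^n}(s,t)$ produces the $2^{n+1}$ equal subintervals of $\pi^{2^{n+1}}(s,t)$.
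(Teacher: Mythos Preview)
Your proposal is correct and follows essentially the same approach as the paper: apply Corollary~\ref{cor.11.23} with $n$ replaced by $2^n$ and $\pi_{2^n}^* = \pi^{2^{n+1}}(s,t)$ to obtain the single-step bound $d_M(\mu^{(n+1)}_{t,s},\mu^{(n)}_{t,s}) \le LC\,T^\theta (2^{-n})^{\theta-1}$, then use summability of this geometric series (since $\theta>1$) to conclude the uniform Cauchy property. Your write-up is slightly more explicit about the telescoping step, but the argument is the same.
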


\begin{proof}
By applying Eq. (\ref{e.11.12}) with $n$ replaced by $2^{n}$ and $\pi_{2^{n}%
}^{\ast}:=\pi^{2^{n+1}}\left(  s,t\right)  \in\mathcal{P}\left(  s,t\right)  $
allows us to conclude,%
\[
\sup_{s,t\in\left[  0,T\right]  }d\left(  \mu_{t,s}^{\left(  n\right)  }%
,\mu_{t,s}^{\left(  n+1\right)  }\right)  \leq LCT^{\theta}\left(  \frac
{1}{2^{n}}\right)  ^{\theta-1}.
\]
As $\theta>1,$ the right member of this equation is summable over
$n\in\mathbb{N}$ which is sufficient to prove Eq. (\ref{e.11.13}).
\end{proof}

We now come to the main result of this section which can be viewed as a
generalization of \cite[Theorem 2.1]{Bailleul2015a} to complete metric spaces.
The history of such results in the context of rough paths starts with Lyon's
\textquotedblleft almost multiplicative function\textquotedblright\ theorem
(see \cite{Lyons1994} and \cite[Theorem 3.2.1, p. 41]{Lyons2002}). Successive
generalizations / alternative formulations of Lyon's result may be found in
Gubinelli'\cite{Gubinelli2004}, Feyel and de la Pradelle \cite{Feyel2006},
Feyel, de la Pradelle and Mokobodski \cite{Feyel2008}, and likely many other references.

\begin{theorem}
[AMF theorem -- Existence of $\varphi$]\label{thm.11.25}Suppose, as in
Corollary \ref{cor.11.23}, that $\mu\in C\left(  \left[  0,T\right]
^{2}\times M,M\right)  $ is a pre-flow and there exists $C<\infty$ such that
$\operatorname{Lip}\left(  \mu_{t,s}^{\pi}\right)  \leq C$ and $L<\infty$ such
that Eq. (\ref{e.11.10}) holds for all $\left(  s,t\right)  \in\left[
0,T\right]  ^{2}$ and all $\frac{1}{3}$-special partitions of $J\left(
s,t\right)  .$ If we further assume that $\left(  M,d\right)  $ is a complete
metric space, then there exists a unique $\varphi\in C\left(  \left[
0,T\right]  ^{2}\times M,M\right)  $ such that;

\begin{enumerate}
\item $\varphi_{t,t}=Id_{M}$ for all $t\in\left[  0,T\right]  ,$

\item $\varphi_{t,s}\circ\varphi_{s,r}=\varphi_{t,r}$ for all $s,t,r\in\left[
0,T\right]  ,$ and

\item $\varphi$ is close to $\mu$, i.e.%
\begin{equation}
d\left(  \varphi_{t,s},\mu_{t,s}\right)  \leq L\left\vert t-s\right\vert
^{\theta}\text{ for all }s,t\in\left[  0,T\right]  . \label{e.11.14}%
\end{equation}
Note that items 1. and 2. above implies that $\varphi_{t,s}\in
\operatorname{Homeo}\left(  M\right)  $ for all $s,t\in\left[  0,T\right]  .$
It is also true that $\varphi_{t,s}:M\rightarrow M$ is Lipschitz with%
\begin{equation}
\operatorname{Lip}\left(  \varphi_{t,s}\right)  \leq C\text{ for all }%
s,t\in\left[  0,T\right]  . \label{e.11.15}%
\end{equation}

\end{enumerate}
\end{theorem}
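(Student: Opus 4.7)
The plan is to define $\varphi_{t,s} := \lim_{n\to\infty} \mu_{t,s}^{(n)}$ and verify the required properties by passing to the limit, with the flow identity being the delicate step. First I would establish existence and continuity of the limit using Corollary~\ref{cor.11.24}, which provides that $\{\mu_{t,s}^{(n)}(m)\}$ is Cauchy uniformly in $(s,t,m) \in [0,T]^2 \times M$; combined with completeness of $(M,d)$ and Remark~\ref{rem.11.1}, this produces $\varphi \in C([0,T]^2 \times M, M)$. The identity $\varphi_{t,t} = \mathrm{Id}_M$ is immediate from $\mu_{t,t}^{(n)} = \mathrm{Id}_M$. The approximation bound \eqref{e.11.14} follows by noting that each dyadic partition $\pi^{(n)}(s,t)$ is $1/2$-special, hence $1/3$-special (Example~\ref{ex.11.11}), so \eqref{e.11.10} gives $d_M(\mu_{t,s}^{(n)}, \mu_{t,s}) \leq L|t-s|^\theta$ uniformly in $n$; pass $n \to \infty$. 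The Lipschitz estimate \eqref{e.11.15} follows from $\operatorname{Lip}(\mu_{t,s}^{(n)}) \leq C$ together with pointwise convergence, since Lipschitz constants are preserved under pointwise limits.

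For the flow property $\varphi_{t,s}\circ\varphi_{s,r} = \varphi_{t,r}$, fix $r<s<t$ (other orderings are symmetric) and introduce the split partition $\hat\pi_n := \pi^{(n)}(r,s)\cup\pi^{(n)}(s,t) \in \mathcal{P}(r,t)$, for which
\[
\mu_{t,r}^{\hat\pi_n} = \mu_{t,s}^{(n)}\circ\mu_{s,r}^{(n)}.
\]
On one side, the uniform Lipschitz bound on $\mu_{t,s}^{(n)}$ together with the uniform convergence $\mu^{(n)}\to\varphi$ yields $\mu_{t,r}^{\hat\pi_n}\to\varphi_{t,s}\circ\varphi_{s,r}$ uniformly in $m$. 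On the other side, I would show $\mu_{t,r}^{\hat\pi_n}\to\varphi_{t,r}$ by a common-refinement comparison with $\mu_{t,r}^{(n)}$ via Corollary~\ref{cor.11.21}: each subinterval of $\pi^{(n)}(r,t)$ contains at most one extra interior point of $\hat\pi_n$; after further uniform subdivision of each such block to restore $1/3$-specialness, \eqref{e.11.10} contributes an error of order $L\cdot(2^{-n}|t-r|)^\theta$ per block, and summing the $O(2^n)$ blocks (with Lipschitz prefactor bounded by $C$) yields a total of order $2^{-n(\theta-1)} \to 0$ since $\theta>1$.

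Uniqueness follows by a telescoping argument. Given $\tilde\varphi$ satisfying (1)--(3), the flow identity gives $\tilde\varphi_{t,s} = \tilde\varphi_{s_{2^n},s_{2^n-1}}\circ\cdots\circ\tilde\varphi_{s_1,s_0}$ along the grid $\pi^{(n)}(s,t) = (s_0,\dots,s_{2^n})$. Applying Proposition~\ref{pro.11.5} with the $\mu$-side Lipschitz bound (so no Lipschitz hypothesis on $\tilde\varphi$ is needed) and using \eqref{e.11.14} for $\tilde\varphi$ termwise, one gets
\[
d_M(\tilde\varphi_{t,s}, \mu_{t,s}^{(n)}) \;\leq\; C\sum_{k=1}^{2^n} L\,|s_k-s_{k-1}|^\theta \;\leq\; CL\,|t-s|^\theta\cdot 2^{-n(\theta-1)},
\]
which tends to $0$ as $n\to\infty$; hence $\tilde\varphi_{t,s} = \lim_n \mu_{t,s}^{(n)} = \varphi_{t,s}$.

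The main obstacle in this plan is the refinement step inside the flow-property argument: to invoke the quantitative estimate \eqref{e.11.10} on each piece of the common refinement $\hat\pi_n\cup\pi^{(n)}(r,t)$, one must arrange that every sub-partition used is $1/3$-special, which requires choosing the extra subdivisions carefully rather than simply reading off the restrictions of the two partitions. Once this technical book-keeping is handled, the $\theta>1$ exponent provides the summability that drives both the flow identity and the uniqueness proof.
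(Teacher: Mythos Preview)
Your construction of $\varphi$ as the uniform limit of $\mu^{(n)}_{t,s}$, along with the verification of items 1, 3, and the Lipschitz bound, matches the paper exactly. Your uniqueness argument is correct and in fact slightly cleaner than the paper's: you compare $\tilde\varphi$ directly with $\mu^{(n)}$ via Proposition~\ref{pro.11.5}, using that each tail $\mu^{\pi^{(n)}}_{t,s_k}$ is a uniform (hence $1/3$-special by Lemma~\ref{lem.11.12}) partition product, whereas the paper compares $\tilde\varphi$ with the already-constructed $\varphi$ using the Lipschitz bound on $\varphi$.

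The gap is in your flow-property argument. The claim ``each subinterval of $\pi^{(n)}(r,t)$ contains at most one extra interior point of $\hat\pi_n$'' is false: $\hat\pi_n$ has $2\cdot 2^n-1$ interior points while $\pi^{(n)}(r,t)$ has only $2^n$ subintervals, so by pigeonhole some subinterval must contain at least two. More concretely, if $s$ is close to $r$ the points of $\pi^{(n)}(r,s)$ cluster into the first few cells of $\pi^{(n)}(r,t)$. Even setting this aside, the restrictions $\Lambda_k$ of the common refinement to the cells of $\pi^{(n)}(r,t)$ need not be $1/3$-special (the first and last gaps can be arbitrarily small relative to the interior spacing), so \eqref{e.11.10} is unavailable. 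Your proposed fix of ``further uniform subdivision to restore $1/3$-specialness'' changes the partition and breaks the link back to $\hat\pi_n$; you would need a second comparison, with the same obstruction recurring.

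The paper circumvents this entirely by first taking $u=s+(t-s)a2^{-p}$ dyadic. Then subdividing each of the $2^n$ cells of $\pi^{(n)}(s,u)$ into $a$ equal pieces and each cell of $\pi^{(n)}(u,t)$ into $b=2^p-a$ equal pieces yields \emph{exactly} the uniform partition $\pi^{(n+p)}(s,t)$; Corollary~\ref{cor.11.23} (which is tailored to uniform-into-uniform refinements) then gives $d\bigl(\mu^{(n+p)}_{t,s},\,\mu^{(n)}_{t,u}\circ\mu^{(n)}_{u,s}\bigr)\to 0$, whence $\varphi_{t,s}=\varphi_{t,u}\circ\varphi_{u,s}$ for dyadic $u$ and, by continuity, for all $u\in J(s,t)$. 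Finally, your ``other orderings are symmetric'' does not cover the case where $s\notin J(r,t)$; the paper handles full multiplicativity by first showing $\varphi_{s,t}=\varphi_{t,s}^{-1}$ (immediate from $\mu^{(n)}_{s,t}=(\mu^{(n)}_{t,s})^{-1}$) and then writing $\varphi_{t,s}=\alpha_t\circ\alpha_s^{-1}$ with $\alpha_t:=\varphi_{t,0}$.
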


\begin{proof}
By Corollary \ref{cor.11.24} and the completeness of $M,$ there exists
$\varphi\in C\left(  \left[  0,T\right]  ^{2}\times M,M\right)  $ such that
\[
\lim_{n\rightarrow\infty}\sup_{s,t\in\left[  0,T\right]  }d\left(  \mu
_{t,s}^{\left(  n\right)  },\varphi_{t,s}\right)  =0.
\]
From Eq. (\ref{e.11.10}) we have
\[
d_{M}\left(  \mu_{t,s}^{\left(  n\right)  },\mu_{t,s}\right)  \leq L\left\vert
t-s\right\vert ^{\theta}.
\]
Passing to the limit as $n\rightarrow\infty$ in this inequality then gives Eq.
(\ref{e.11.14}). Similarly, if $x,y\in M,$ then%
\[
d\left(  \varphi_{t,s}\left(  y\right)  ,\varphi_{t,s}\left(  x\right)
\right)  =\lim_{n\rightarrow\infty}d\left(  \mu_{t,s}^{\left(  n\right)
}\left(  y\right)  ,\mu_{t,s}^{\left(  n\right)  }\left(  x\right)  \right)
\leq\liminf_{n\rightarrow\infty}\operatorname{Lip}\left(  \mu_{t,s}^{\left(
n\right)  }\right)  \leq C
\]
which gives Eq. (\ref{e.11.15}). To finish the proof we must show $\varphi$ is
multiplicative and $\varphi$ is unique which we do in four parts.

(\textbf{1) }$\varphi_{t,s}=\varphi_{t,u}\circ\varphi_{us}$ when $u$
is\textbf{ between }$s$\textbf{ }and $t.$ Let $s,t\in\left[  0,T\right]  $ and
suppose that $u=s_{a}:=s+\left(  t-s\right)  a2^{-p}$ for some $p\in
\mathbb{N}$ and $0<a<2^{p}.$ Let $b=2^{p}-a,$ $n\in\mathbb{N}$ and consider
the $2^{-n}$ uniform partitions
\begin{align*}
\pi_{<} &  =\pi^{2^{n}}\left(  s,u\right)  =\left(  s_{<}\left(  k\right)
\right)  _{k=0}^{n}\in\mathcal{P}\left(  s,u\right)  \text{ and}\\
\pi_{>} &  =\pi^{2^{n}}\left(  u,t\right)  =\left(  s_{>}\left(  k\right)
\right)  _{k=0}^{n}\in\mathcal{P}\left(  u,t\right)
\end{align*}
where
\begin{align*}
s_{<}\left(  k\right)   &  =s+\left(  u-s\right)  \frac{k}{2^{n}}=s+\left(
t-s\right)  \frac{ak}{2^{p+n}}\text{ and}\\
s_{>}\left(  k\right)   &  =u+\left(  t-u\right)  \frac{k}{2^{n}}=u+\left(
t-s\right)  \frac{bk}{2^{p+n}}.
\end{align*}
We further divide each subinterval of $\pi_{<}$ $\left(  \pi_{>}\right)  $
into $a$ $\left(  b\right)  $ equal pieces, that is for each $1\leq k\leq
2^{n},$ let
\begin{align*}
\Lambda_{k} &  :=\left(  s+\left(  t-s\right)  \frac{a\left(  k-1\right)
+j}{2^{p+n}}\right)  _{j=0}^{a}\in\mathcal{P}\left(  s_{<}\left(  k-1\right)
,s_{<}\left(  k\right)  \right)  \text{ and }\\
\Lambda_{k}^{\prime} &  :=\left(  u+\left(  t-s\right)  \frac{b\left(
k-1\right)  +j}{2^{p+n}}\right)  _{j=0}^{b}\in\mathcal{P}\left(  s_{>}\left(
k-1\right)  ,s_{>}\left(  k\right)  \right)  .
\end{align*}
If $\pi_{>}^{\ast}$ and $\pi_{<}^{\ast}$ are constructed as in Corollary
\ref{cor.11.23},\footnote{Another way to view this construction is to start
with the uniform partition, $\pi:=\pi^{2^{p}}\left(  s,t\right)  ,$ which is
the \textquotedblleft join\textquotedblright\ of the uniform partitions
$\pi_{<}=\pi^{a}\left(  s,u\right)  \in\mathcal{P}\left(  s,u\right)  $ and
$\pi_{>}=\pi^{b}\left(  u,t\right)  \in\mathcal{P}\left(  u,t\right)  .$ We
further subdivide each partition interval of $\pi_{<}$ and $\pi_{>}$ into
$2^{n}$ pieces in order to construct the uniform partitions, $\pi_{<}^{\ast
}=\pi^{a2^{n}}\left(  s,u\right)  $ and $\pi_{>}^{\ast}=\pi^{b2^{n}}\left(
u,t\right)  .$ The join of these two partition is then $\pi^{\ast}=\pi
^{2^{p}2^{n}}\left(  s,t\right)  =\pi^{2^{p+n}}\left(  s,t\right)  .$ We may
also commute the order of these construction and realize $\pi_{>}^{\ast}$ and
$\pi_{<}^{\ast}$ as refinements of $\pi^{2^{n}}\left(  s,u\right)  $ and
$\pi^{2^{n}}\left(  u,t\right)  $ where by we split each partition interval of
$\pi^{2^{n}}\left(  s,u\right)  $ ($\pi^{2^{n}}\left(  u,t\right)  )$ into $a$
$\left(  b=2^{p}-a\right)  $ oriented uniform partitions. Note that size of
each of these subdivisions are $\left(  t-s\right)  a2^{-p}/2^{n}/a=\left(
t-s\right)  2^{-\left(  p+n\right)  }$ and $\left(  t-s\right)  b2^{-p}%
/2^{n}/b=\left(  t-s\right)  2^{-\left(  p+n\right)  }$ respectively.} then
according to Corollary \ref{cor.11.23},%
\[
d\left(  \mu_{t,\mu}^{\pi_{>}^{\ast}},\mu_{t,u}^{\pi_{>}}\right)  \leq
CL\left(  \frac{1}{2^{n}}\right)  ^{\theta-1}\left\vert t-u\right\vert
^{\theta}%
\]
and
\[
d\left(  \mu_{u,s}^{\pi_{<}^{\ast}},\mu_{u,s}^{\pi_{<}}\right)  \leq CL\left(
\frac{1}{2^{n}}\right)  ^{\theta-1}\left\vert u-s\right\vert ^{\theta}.
\]
Upon noting that $\mu_{t,\mu}^{\pi_{>}^{\ast}}\circ\mu_{u,s}^{\pi_{<}^{\ast}%
}=\mu_{t,s}^{\left(  n+p\right)  },$ $\mu_{t,\mu}^{\pi_{>}}=\mu_{t,u}^{\left(
n\right)  },$ and $\mu_{u,s}^{\pi_{<}}=\mu_{u,s}^{\left(  n\right)  }$ we
find,
\begin{align*}
d\left(  \mu_{t,s}^{\left(  n+p\right)  },\mu_{t,u}^{\left(  n\right)  }%
\circ\mu_{u,s}^{\left(  n\right)  }\right)  = &  d\left(  \mu_{t,\mu}^{\pi
_{>}^{\ast}}\circ\mu_{u,s}^{\pi_{<}^{\ast}},\mu_{t,\mu}^{\pi_{>}}\circ
\mu_{u,s}^{\pi_{<}}\right)  \\
\leq &  \operatorname{Lip}\left(  \mu_{t,\mu}^{\pi_{>}}\right)  d\left(
\mu_{u,s}^{\pi_{<}^{\ast}},\mu_{u,s}^{\pi_{<}}\right)  +d\left(  \mu_{t,\mu
}^{\pi_{>}^{\ast}},\mu_{t,\mu}^{\pi_{>}}\right)  \\
\leq &  C^{2}L\left(  \frac{1}{2^{n}}\right)  ^{\theta-1}\left\vert
u-s\right\vert ^{\theta}+CL\left(  \frac{1}{2^{n}}\right)  ^{\theta
-1}\left\vert t-u\right\vert ^{\theta}\\
&  \longrightarrow0\text{ as }n\rightarrow\infty.
\end{align*}
Since also, $\lim_{n\rightarrow\infty}d\left(  \mu_{t,s}^{\left(  n+p\right)
},\varphi_{t,s}\right)  =0,$ and
\begin{align*}
d\left(  \mu_{t,u}^{\left(  n\right)  }\circ\mu_{u,s}^{\left(  n\right)
},\varphi_{t,u}\circ\varphi_{us}\right)   &  \leq\operatorname{Lip}\left(
\mu_{t,u}^{\left(  n\right)  }\right)  d\left(  \mu_{u,s}^{\left(  n\right)
},\varphi_{us}\right)  +d\left(  \mu_{t,u}^{\left(  n\right)  },\varphi
_{t,u}\right)  \\
&  \leq Cd\left(  \mu_{u,s}^{\left(  n\right)  },\varphi_{us}\right)
+d\left(  \mu_{t,u}^{\left(  n\right)  },\varphi_{t,u}\right)  \rightarrow
0\text{ as }n\rightarrow\infty,
\end{align*}
we conclude that
\begin{align*}
d\left(  \varphi_{t,s},\varphi_{t,u}\circ\varphi_{us}\right)   &  \leq
d\left(  \mu_{t,s}^{\left(  n+p\right)  },\varphi_{t,s}\right)  +d\left(
\mu_{t,s}^{\left(  n+p\right)  },\mu_{t,u}^{\left(  n\right)  }\circ\mu
_{u,s}^{\left(  n\right)  }\right)  \\
&  \quad+d\left(  \mu_{t,u}^{\left(  n\right)  }\circ\mu_{u,s}^{\left(
n\right)  },\varphi_{t,u}\circ\varphi_{us}\right)  \longrightarrow0\text{ as
}n\rightarrow\infty,
\end{align*}
i.e. $\varphi_{t,s}=\varphi_{t,u}\circ\varphi_{us}.$ As this identity holds
for a dense set of $u$ between $s$ and $t$ and since $\varphi$ is continuous
in all of its variables, we conclude that $\varphi_{t,s}=\varphi_{t,u}%
\circ\varphi_{us}$ for all $u$ between $s$ and $t.$%

(\textbf{2) }$\varphi_{t,s}\in\operatorname{Homeo}\left(  M\right)  $\textbf{
and }$\varphi_{t,s}^{-1}=\varphi_{s,t}$\textbf{ for all }$s,t\in\left[
0,T\right]  .$ For each $n\in\mathbb{N}$ it is easy to verify that
\[
\mu_{t,s}^{2^{n}}\circ\mu_{s,t}^{2^{n}}=Id_{M}=\mu_{s,t}^{2^{n}}\circ\mu
_{t,s}^{2^{n}}.
\]
Passing to the limit as $n\rightarrow\infty$ in this identity then shows
\[
\varphi_{t,s}\circ\varphi_{s,t}=Id_{M}=\varphi_{s,t}\circ\varphi_{t,s}%
\]
which completes the proof of this step.

(\textbf{3) }$\varphi$\textbf{ is multiplicative. }Let $\alpha_{t}%
:=\varphi_{0,t}\in\operatorname{Homeo}\left(  M\right)  $ for all $t\in\left[
0,T\right]  .$ Making use of items 1. and 2., if $0\leq s\leq t\leq\not T  ,$
then $\alpha_{t}=\varphi_{t,s}\circ\alpha_{s}$ and hence $\varphi_{t,s}%
=\alpha_{t}\circ\alpha_{s}^{-1}.$ By interchanging the roles of $s$ and $t,$
if $0\leq t\leq s\leq T,$ then $\varphi_{s,t}=\alpha_{s}\circ\alpha_{t}^{-1}.$
Taking the inverse of this equation shows (again using item 2.) that
$\varphi_{t,s}=\varphi_{s,t}^{-1}=\alpha_{t}\circ\alpha_{s}^{-1}.$ Thus for
all $s,t\in\left[  0,T\right]  $ we have $\varphi_{t,s}=\alpha_{t}\circ
\alpha_{s}^{-1}$ and from this identity, if $r,s,t$ are arbitrary points in
$\left[  0,T\right]  ,$ then
\[
\varphi_{t,s}\circ\varphi_{s,r}=\alpha_{t}\circ\alpha_{s}^{-1}\circ\alpha
_{s}\circ\alpha_{r}^{-1}=\alpha_{t}\circ\alpha_{r}^{-1}=\varphi_{t,r}.
\]

(\textbf{4) Uniqueness of }$\varphi.$ Suppose that $\psi\in C\left(  \left[
0,T\right]  ^{2}\times M,M\right)  $ also satisfies items 1.--3. in the
statement of the theorem. Then for $s,t\in\left[  0,T\right]  $ and $u$
between $s,t$ we have%
\begin{align}
d\left(  \psi_{t,s},\varphi_{t,s}\right)   &  =d\left(  \psi_{t,u}\circ
\psi_{u,s},\varphi_{t,u}\circ\varphi_{u,s}\right) \nonumber\\
&  \leq d\left(  \psi_{t,u}\circ\psi_{u,s},\varphi_{t,u}\circ\psi
_{u,s}\right)  +d\left(  \varphi_{t,u}\circ\psi_{u,s},\varphi_{t,u}%
\circ\varphi_{u,s}\right) \nonumber\\
&  \leq d\left(  \psi_{t,u},\varphi_{t,u}\right)  +\operatorname{Lip}\left(
\varphi_{t,u}\right)  d\left(  \psi_{u,s},\varphi_{u,s}\right) \nonumber\\
&  \leq d\left(  \psi_{t,u},\varphi_{t,u}\right)  +2LC\left\vert
s-u\right\vert ^{\theta}. \label{e.11.16}%
\end{align}
We now let $\pi_{n}=\pi^{n}\left(  s,t\right)  =\left(  s_{k}\right)
_{k=0}^{n}\in\mathcal{P}\left(  s,t\right)  $ be the oriented uniform
partition of $J\left(  s,t\right)  .$ Taking $u=s_{1}$ in Eq. (\ref{e.11.16})
shows,
\[
d\left(  \psi_{t,s},\varphi_{t,s}\right)  \leq d\left(  \psi_{t,s_{1}}%
,\varphi_{t,s_{1}}\right)  +2LC\left\vert \frac{t-s}{n}\right\vert ^{\theta}.
\]
Iterating this procedure then shows,
\begin{align*}
d\left(  \psi_{t,s},\varphi_{t,s}\right)   &  \leq d\left(  \psi_{t,s_{2}%
},\varphi_{t,s_{2}}\right)  +2LC\left\vert \frac{t-s}{n}\right\vert ^{\theta
}+2LC\left\vert \frac{t-s}{n}\right\vert ^{\theta}\\
&  \vdots\\
&  \leq n\cdot2LC\left\vert \frac{t-s}{n}\right\vert ^{\theta}=2LC\left\vert
t-s\right\vert ^{\theta}\cdot\frac{1}{n^{\theta-1}}\rightarrow0\text{ as
}n\rightarrow\infty
\end{align*}
which shows $\psi_{t,s}=\varphi_{t,s}.$
\end{proof}

We may improve on the uniqueness proof of $\varphi$ in order to give a
continuity statement of $\varphi$ relative to $\mu.$

\begin{theorem}
[Continuity theorem]\label{thm.11.26}Suppose that $\mu_{t,s},~\nu_{t,s}%
\in\operatorname{Homeo}\left(  M\right)  $ are two approximate flows
satisfying the assumptions of Theorem \ref{thm.11.25} and $\varphi_{t,s}%
,~\psi_{t,s}\in\operatorname{Homeo}\left(  M\right)  $ are the unique
multiplicative functions such that%
\[
d\left(  \varphi_{t,s},\mu_{t,s}\right)  \leq L\left\vert t-s\right\vert
^{\theta}\text{ and }d\left(  \psi_{t,s},\nu_{t,s}\right)  \leq L\left\vert
t-s\right\vert ^{\theta}\text{ for all }s,t\in\left[  0,T\right]  .
\]
If there exists $\varepsilon>0$ and $0<\alpha<1$ such that
\[
d\left(  \mu_{t,s},\nu_{t,s}\right)  \leq\varepsilon\left\vert t-s\right\vert
^{\alpha}\text{ for all }s,t\in\left[  0,T\right]  ,
\]
then
\begin{equation}
d\left(  \varphi_{t,s},\psi_{t,s}\right)  \leq\left\{
\begin{array}
[c]{ccc}%
3\varepsilon\cdot\left\vert t-s\right\vert ^{\alpha} & \text{if} & \left\vert
t-s\right\vert \leq\left(  \frac{\varepsilon}{L}\right)  ^{1/\beta}\\
6CL^{\frac{1-\alpha}{\beta}}\varepsilon^{\frac{\theta-1}{\theta-\alpha}%
}\left\vert t-s\right\vert  & \text{if} & \left\vert t-s\right\vert >\left(
\frac{\varepsilon}{L}\right)  ^{1/\beta}.
\end{array}
\right.  \label{e.11.17}%
\end{equation}
If we further assume that $0\leq\varepsilon<1,$ then
\begin{equation}
d\left(  \varphi_{t,s},\psi_{t,s}\right)  \leq\max\left(  6CL^{\frac{1-\alpha
}{\beta}}T^{1-\alpha},3\right)  \varepsilon^{\frac{\theta-1}{\theta-\alpha}%
}\left\vert t-s\right\vert ^{\alpha}\text{ }\forall~s,t\in\left[  0,T\right]
\label{e.11.18}%
\end{equation}
and in the special case where $\beta=1$ (i.e. $\theta=1+\alpha),$ this
inequality becomes,%
\begin{equation}
d\left(  \varphi_{t,s},\psi_{t,s}\right)  \leq\max\left(  6C\left(  LT\right)
^{1-\alpha},3\right)  \varepsilon^{\alpha}\left\vert t-s\right\vert ^{\alpha
}~\forall~s,t\in\left[  0,T\right]  . \label{e.11.19}%
\end{equation}

\end{theorem}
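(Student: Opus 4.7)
The plan is to follow the same splitting philosophy used in the uniqueness argument (Step 4) of Theorem \ref{thm.11.25}, but now carefully keeping track of how the $\varepsilon$-closeness of $\mu$ and $\nu$ propagates to $\varphi$ and $\psi$. The key idea is to first prove a ``short interval'' bound by a single application of the triangle inequality, and then to amplify this to all $s,t\in\left[0,T\right]$ by splitting $J(s,t)$ into enough pieces and using the multiplicative property of $\varphi$ and $\psi$ together with the uniform Lipschitz bound $\operatorname{Lip}(\varphi_{t,s})\vee\operatorname{Lip}(\psi_{t,s})\leq C$ from Theorem \ref{thm.11.25}.

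First, for any $s,t\in\left[0,T\right]$ the triangle inequality gives
\[
d(\varphi_{t,s},\psi_{t,s})\leq d(\varphi_{t,s},\mu_{t,s})+d(\mu_{t,s},\nu_{t,s})+d(\nu_{t,s},\psi_{t,s})\leq 2L\left\vert t-s\right\vert^{\theta}+\varepsilon\left\vert t-s\right\vert^{\alpha}.
\]
Setting $\beta:=\theta-\alpha>0$, the two terms on the right balance at $\left\vert t-s\right\vert=(\varepsilon/L)^{1/\beta}$. Thus whenever $\left\vert t-s\right\vert\leq(\varepsilon/L)^{1/\beta}$ one has $2L\left\vert t-s\right\vert^{\theta}\leq 2\varepsilon\left\vert t-s\right\vert^{\alpha}$, giving the first line of Eq. (\ref{e.11.17}):
\[
d(\varphi_{t,s},\psi_{t,s})\leq 3\varepsilon\left\vert t-s\right\vert^{\alpha}.
\]

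Next, for $\left\vert t-s\right\vert>(\varepsilon/L)^{1/\beta}$, choose the smallest integer $n\geq 1$ with $\left\vert t-s\right\vert/n\leq(\varepsilon/L)^{1/\beta}$, so that $n\leq 2\left\vert t-s\right\vert(L/\varepsilon)^{1/\beta}$. Partition $J(s,t)$ uniformly into $n$ subintervals with endpoints $s=s_0,s_1,\dots,s_n=t$. Using multiplicativity of both $\varphi$ and $\psi$, Proposition \ref{pro.11.5}, and the facts $\operatorname{Lip}(\varphi_{t,s_k})\leq C$ and (by the short-interval bound just proved) $d(\varphi_{s_k,s_{k-1}},\psi_{s_k,s_{k-1}})\leq 3\varepsilon(\left\vert t-s\right\vert/n)^{\alpha}$, I obtain
\[
d(\varphi_{t,s},\psi_{t,s})\leq \sum_{k=1}^{n}C\cdot 3\varepsilon\left(\frac{\left\vert t-s\right\vert}{n}\right)^{\alpha}=3C\varepsilon\left\vert t-s\right\vert^{\alpha}n^{1-\alpha}.
\]
Substituting the bound on $n$ and simplifying the resulting powers of $\varepsilon$ and $L$ (using $\beta-1+\alpha=\theta-1$) yields
\[
d(\varphi_{t,s},\psi_{t,s})\leq 6C\,L^{(1-\alpha)/\beta}\varepsilon^{(\theta-1)/\beta}\left\vert t-s\right\vert,
\]
which is the second line of Eq. (\ref{e.11.17}).

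Finally, to derive Eqs. (\ref{e.11.18}) and (\ref{e.11.19}), assume $0\leq\varepsilon<1$. Since $(\theta-1)/\beta<1$, we have $\varepsilon\leq\varepsilon^{(\theta-1)/\beta}$, so the short interval bound upgrades to $3\varepsilon^{(\theta-1)/\beta}\left\vert t-s\right\vert^{\alpha}$. For the long interval bound, write $\left\vert t-s\right\vert=\left\vert t-s\right\vert^{\alpha}\left\vert t-s\right\vert^{1-\alpha}\leq T^{1-\alpha}\left\vert t-s\right\vert^{\alpha}$. Taking the maximum of the two bounds gives Eq. (\ref{e.11.18}), and specializing to $\beta=1$ yields Eq. (\ref{e.11.19}).

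The routine algebra is the power-counting in the middle step; the only real subtlety is choosing $n$ optimally so that the $n$-dependence after summation produces exactly the exponent $(\theta-1)/(\theta-\alpha)$ on $\varepsilon$, which is where the balance $\left\vert t-s\right\vert=(\varepsilon/L)^{1/\beta}$ from the first step is essential.
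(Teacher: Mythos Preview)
Your proof is correct and follows essentially the same approach as the paper's. The only cosmetic difference is in the long-interval step: the paper partitions $J(s,t)$ into pieces of length exactly $\delta=(\varepsilon/L)^{1/\beta}$ plus one remainder piece of length $r<\delta$, whereas you use a uniform partition into $n=\lceil |t-s|/\delta\rceil$ equal pieces; both choices feed into Proposition~\ref{pro.11.5} (equivalently Corollary~\ref{cor.11.20}) in the same way and produce the same constant $6CL^{(1-\alpha)/\beta}$ after the identical power-counting $1-(1-\alpha)/\beta=(\theta-1)/(\theta-\alpha)$.
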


\begin{proof}
Let $\beta:=\theta-\alpha>0.$ From the triangle inequality and the given
estimates it follows that%
\begin{align*}
d\left(  \varphi_{t,s},\psi_{t,s}\right)   &  \leq d\left(  \varphi_{t,s}%
,\mu_{t,s}\right)  +d\left(  \mu_{t,s},\nu_{t,s}\right)  +\text{ }d\left(
\nu_{t,s},\psi_{t,s}\right) \\
&  \leq2L\left\vert t-s\right\vert ^{\theta}+\varepsilon\left\vert
t-s\right\vert ^{\alpha}=\left[  2L\left\vert t-s\right\vert ^{\beta
}+\varepsilon\right]  \left\vert t-s\right\vert ^{\alpha}.
\end{align*}
We now define $\delta>0$ so that $L\delta^{\beta}=\varepsilon,$ i.e.
$\delta=\left(  \varepsilon/L\right)  ^{1/\beta}.$ Then from the above
inequality,%
\begin{equation}
d\left(  \varphi_{t,s},\psi_{t,s}\right)  \leq3\varepsilon\cdot\left\vert
t-s\right\vert ^{\alpha}\text{ when }\left\vert t-s\right\vert \leq
\delta=\left(  \frac{\varepsilon}{L}\right)  ^{1/\beta}. \label{e.11.20}%
\end{equation}
If $0\leq s<t\leq T$ with $\left\vert t-s\right\vert >\delta,$ choose
$n\in\mathbb{N}$ and $0\leq r<\delta$ so that $\left\vert t-s\right\vert
=n\delta+r$ and then let $s_{k}=s+k\delta$ for $0\leq k\leq n$ and
$s_{n+1}=t=s_{n}+r.$ By the multiplicative property of $\varphi$ and $\psi$
and Corollary \ref{cor.11.20}, it follows that%
\begin{align*}
d\left(  \varphi_{t,s},\psi_{t,s}\right)   &  =d\left(  \varphi_{s_{n+1}%
,s_{n}}\circ\dots\varphi_{s_{1},s_{0}},\psi_{s_{n+1},s_{n}}\circ\dots
\psi_{s_{1},s_{0}}\right) \\
&  \leq C\sum_{k=1}^{n+1}\operatorname{Lip}\left(  \varphi_{s,s_{k}}\right)
d\left(  \varphi_{s_{k},s_{k-1}},\psi_{s_{k},s_{k-1}}\right)  \leq
3C\varepsilon\left[  n\cdot\delta^{\alpha}+r^{\alpha}\right] \\
&  \leq3C\left(  n+1\right)  \varepsilon\delta^{\alpha}.
\end{align*}
Since
\[
1\leq n=\frac{\left\vert t-s\right\vert -r}{\delta}\leq\frac{\left\vert
t-s\right\vert }{\delta}%
\]
we find,%
\begin{align}
d\left(  \varphi_{t,s},\psi_{t,s}\right)   &  \leq3C\left(  \frac{\left\vert
t-s\right\vert }{\delta}+1\right)  \varepsilon\delta^{\alpha}=3C\left(
\left\vert t-s\right\vert +\delta\right)  \varepsilon\delta^{\alpha
-1}\nonumber\\
&  \leq6CL^{\frac{1-\alpha}{\beta}}\left\vert t-s\right\vert \varepsilon
^{\frac{\alpha-1}{\beta}+1}=6CL^{\frac{1-\alpha}{\beta}}\left\vert
t-s\right\vert \varepsilon^{\frac{\alpha+\beta-1}{\beta}}\nonumber\\
&  =6CL^{\frac{1-\alpha}{\beta}}\left\vert t-s\right\vert \varepsilon^{\left(
\theta-1\right)  /\beta}=6CL^{\frac{1-\alpha}{\beta}}\left\vert t-s\right\vert
\varepsilon^{\frac{\theta-1}{\theta-\alpha}} \label{e.11.21}%
\end{align}
which along with Eq. (\ref{e.11.20}) completes the proof of Eq. (\ref{e.11.17}%
). Using
\[
\left\vert t-s\right\vert =\left\vert t-s\right\vert ^{1-\alpha}\left\vert
t-s\right\vert ^{\alpha}\leq T^{1-\alpha}\left\vert t-s\right\vert ^{\alpha},
\]
in Eq. (\ref{e.11.21}) implies,%
\begin{equation}
d\left(  \varphi_{t,s},\psi_{t,s}\right)  \leq6CL^{\frac{1-\alpha}{\beta}%
}T^{1-\alpha}\varepsilon^{\frac{\theta-1}{\theta-\alpha}}\left\vert
t-s\right\vert ^{\alpha}\text{ when }\left\vert t-s\right\vert >\delta.
\label{e.11.22}%
\end{equation}
Since $\left(  \theta-1\right)  /\left(  \theta-\alpha\right)  <1,$ if
$0\leq\varepsilon<1,$ then $\varepsilon\leq\varepsilon^{\left(  \theta
-1\right)  /\left(  \theta-\alpha\right)  }$ and so the estimates in Eq.
(\ref{e.11.18}) follows directly from Eqs. (\ref{e.11.20}) and (\ref{e.11.22}).
\end{proof}

\begin{notation}
[Mesh size]\label{not.11.27}If $s,t\in\left[  0,T\right]  $ and $\pi=\left(
s_{0},\dots,s_{n}\right)  \in\mathcal{P}\left(  s,t\right)  ,$ let $\left\vert
\pi\right\vert $ be the \textbf{mesh size }of $\pi$ defined by
\begin{equation}
\left\vert \pi\right\vert :=\max_{1\leq k\leq n}\left\vert s_{k}%
-s_{k-1}\right\vert . \label{e.11.23}%
\end{equation}

\end{notation}

\begin{proposition}
\label{pro.11.28}Continuing the assumptions and notation in Theorem
\ref{thm.11.25}, then for $s,t\in\left[  0,T\right]  $%
\[
\varphi_{t,s}=\lim_{\pi\in\mathcal{P}\left(  s,t\right)  :\left\vert
\pi\right\vert \rightarrow0}\mu_{t,s}^{\pi}\text{ uniformly in }\left(
s,t\right)  \in\left[  0,T\right]  ^{2}.
\]
More precisely, if $s,t\in\left[  0,T\right]  $ and $\pi\in\mathcal{P}\left(
s,t\right)  ,$ then
\begin{equation}
d\left(  \varphi_{t,s},\mu_{t,s}^{\pi}\right)  \leq CL\left\vert
\pi\right\vert ^{\theta-1}\left\vert t-s\right\vert . \label{e.11.24}%
\end{equation}

\end{proposition}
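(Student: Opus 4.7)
The plan is to exploit the multiplicativity of $\varphi$ to factorize $\varphi_{t,s}$ along the partition $\pi = (s_0,\dots,s_n) \in \mathcal{P}(s,t)$, compare it term-by-term with the analogous factorization of $\mu_{t,s}^{\pi}$, and then use the approximation estimate Eq. (\ref{e.11.14}) on each subinterval to control the local discrepancies.

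First, since $\varphi$ is multiplicative (item 2 of Theorem \ref{thm.11.25}), I write
\[
\varphi_{t,s} = \varphi_{s_n, s_{n-1}} \circ \varphi_{s_{n-1}, s_{n-2}} \circ \cdots \circ \varphi_{s_1, s_0},
\]
while by definition $\mu_{t,s}^{\pi} = \mu_{s_n, s_{n-1}} \circ \cdots \circ \mu_{s_1, s_0}$. Applying Corollary \ref{cor.11.20} (the telescoping estimate from Proposition \ref{pro.11.5}) to these two compositions yields
\[
d_M(\varphi_{t,s}, \mu_{t,s}^{\pi}) \leq \sum_{k=1}^n \operatorname{Lip}(\varphi_{s_n, s_k}) \, d_M(\varphi_{s_k, s_{k-1}}, \mu_{s_k, s_{k-1}}).
\]

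Next I bound the two factors on the right. The Lipschitz constant is handled by Eq. (\ref{e.11.15}) of Theorem \ref{thm.11.25}, which gives $\operatorname{Lip}(\varphi_{s_n, s_k}) \leq C$ (here, after multiplying out, $\varphi_{s_n, s_k}$ is a single homeomorphism, not a composition, so Eq. (\ref{e.11.15}) applies directly). The local discrepancy is controlled by Eq. (\ref{e.11.14}): $d_M(\varphi_{s_k, s_{k-1}}, \mu_{s_k, s_{k-1}}) \leq L |s_k - s_{k-1}|^\theta$. Combining,
\[
d_M(\varphi_{t,s}, \mu_{t,s}^{\pi}) \leq CL \sum_{k=1}^n |s_k - s_{k-1}|^\theta.
\]

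Finally, I extract one factor of $|\pi|^{\theta-1}$ from each summand using $|s_k - s_{k-1}|^\theta \leq |\pi|^{\theta-1} |s_k - s_{k-1}|$ and telescope:
\[
\sum_{k=1}^n |s_k - s_{k-1}|^\theta \leq |\pi|^{\theta-1} \sum_{k=1}^n |s_k - s_{k-1}| = |\pi|^{\theta-1} |t-s|,
\]
which yields Eq. (\ref{e.11.24}). There is no real obstacle here: the proof is essentially a clean application of Proposition \ref{pro.11.5} combined with the two outputs of Theorem \ref{thm.11.25}. The only minor point to verify is that Eq. (\ref{e.11.15}) is indeed applicable to each $\varphi_{s_n, s_k}$ rather than having to crudely bound $\operatorname{Lip}(\varphi_{s_n, s_{n-1}} \circ \cdots \circ \varphi_{s_{k+1}, s_k})$ by $C^{n-k}$ via Remark \ref{rem.11.4} (which would produce an exponential blowup and ruin the estimate); but multiplicativity collapses this composition back to the single map $\varphi_{s_n, s_k}$, so the uniform bound $C$ survives and the linear-in-$|t-s|$ factor emerges cleanly from the telescoping sum.
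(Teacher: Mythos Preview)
Your proof is correct and follows essentially the same approach as the paper's own proof. Both use multiplicativity to write $\varphi_{t,s}=\varphi_{t,s}^{\pi}$, apply Corollary~\ref{cor.11.20} together with Eqs.~(\ref{e.11.14}) and~(\ref{e.11.15}), and then extract $|\pi|^{\theta-1}$ from the sum; the paper likewise singles out the key identity $\varphi_{t,s_k}^{\pi}=\varphi_{t,s_k}$ (your collapsing-of-compositions observation) as the reason the uniform Lipschitz bound $C$ applies rather than $C^{n-k}$.
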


\begin{proof}
Let $\pi=\left(  s_{0},\dots,s_{n}\right)  \in\mathcal{P}\left(  s,t\right)  $
be an oriented partition of $J\left(  s,t\right)  .$ By Corollary
\ref{cor.11.20}, the estimates in Eqs. (\ref{e.11.14}) and (\ref{e.11.15}),
and the fact that $\varphi_{t,s_{k}}^{\pi}=\varphi_{t,s_{k}}$ we find,%
\begin{align*}
d\left(  \mu_{t,s}^{\pi},\varphi_{ts}\right)   &  =d\left(  \mu_{t,s}^{\pi
},\varphi_{ts}^{\pi}\right)  \leq\sum_{k=1}^{n}\operatorname{Lip}\left(
\varphi_{t,s_{k}}^{\pi}\right)  d\left(  \mu_{s_{k},s_{k-1}},\varphi
_{s_{k},s_{k-1}}\right) \\
&  =\sum_{k=1}^{n}\operatorname{Lip}\left(  \varphi_{t,s_{k}}\right)  d\left(
\mu_{s_{k},s_{k-1}},\varphi_{s_{k},s_{k-1}}\right)  \leq C\sum_{k=1}%
^{n}L\left\vert s_{k}-s_{k-1}\right\vert ^{\theta}\\
&  \leq CL\left\vert \pi\right\vert ^{\theta-1}\sum_{k=1}^{n}\left\vert
s_{k}-s_{k-1}\right\vert =CL\left\vert \pi\right\vert ^{\theta-1}\left\vert
t-s\right\vert
\end{align*}
which proves Eq. (\ref{e.11.24}).
\end{proof}

In order to apply the above results in our context we will need to construct
appropriate approximate flows for our rough ordinary differential equations
and we will need to verify the hypothesis above hold for these approximate
flows. The trickiest point is the verification of the hypothesis that
$\operatorname{Lip}\left(  \mu_{t,s}^{\pi}\right)  \leq C\left(
\varepsilon\right)  $ when $\pi$ is an $\varepsilon$-special partition of
$J\left(  s,t\right)  .$ In the next section, we pause to recall some results
from \cite{Driver2018} that are needed in the sequel. In the following Section
\ref{sec.13}, we will give the proof of the main theorem (Theorem
\ref{thm.10.27}) of this paper.

\section{Estimates from \cite{Driver2018}\label{sec.12}}

\begin{notation}
\label{not.12.1}Given two functions, $f\left(  x\right)  $ and $g\left(
x\right)  ,$ depending on some parameters indicated by $x,$ we write $f\left(
x\right)  \lesssim g\left(  x\right)  $ if there exists a constant, $C\left(
\kappa\right)  ,$ only possibly depending on $\kappa$ so that $f\left(
x\right)  \leq C\left(  \kappa\right)  g\left(  x\right)  $ for the allowed
values of $x.$ Similarly we write $f\left(  x\right)  \asymp g\left(
x\right)  $ if both $f\left(  x\right)  \lesssim g\left(  x\right)  $ and
$g\left(  x\right)  \lesssim f\left(  x\right)  $ hold.
\end{notation}

\begin{notation}
\label{not.12.2}For $\lambda\geq0$ and $m,n\in\mathbb{N}$ with $m<n,$ let
\begin{align*}
Q_{[m,n]}\left(  \lambda\right)   &  :=\max\left\{  \lambda^{k}:k\in
\mathbb{N}\cap\left[  m,n\right]  \right\}  =\max\left\{  \lambda^{m}%
,\lambda^{n}\right\}  \text{ and}\\
Q_{(m,n]}\left(  \lambda\right)   &  =Q_{[m+1,n]}\left(  \lambda\right)
:=\max\left\{  \lambda^{k}:k\in\mathbb{N}\cap(m,n]\right\}  =\max\left\{
\lambda^{m+1},\lambda^{n}\right\}  .
\end{align*}

\end{notation}

For $m,n\in\mathbb{N}$ with $m<n,$ it is not difficult to verify%
\[
Q_{(m,n]}\left(  \lambda+\mu\right)  \asymp Q_{(m,n]}\left(  \lambda\right)
+Q_{(m,n]}\left(  \mu\right)  \text{ for }\mu,\lambda\geq0.
\]

\begin{theorem}
[{\cite[Corollary 4.15]{Driver2018}}]\label{thm.12.3}If $A,B\in F^{\left(
\kappa\right)  }\left(  \mathbb{R}^{d}\right)  ,$ then (with $N\left(
\cdot\right)  $ as in Definition \ref{def.10.13})%
\[
d_{M}\left(  e^{V_{B}},Id_{M}\right)  \leq\left\vert V^{\left(  \kappa\right)
}\right\vert \left\vert B\right\vert \leq\left\vert V^{\left(  \kappa\right)
}\right\vert Q_{\left[  1,\kappa\right]  }\left(  N\left(  B\right)  \right)
\]
and%
\[
d_{M}\left(  e^{V_{B}}\circ e^{V_{A}},e^{V_{\log\left(  e^{A}e^{B}\right)  }%
}\right)  \leq\mathcal{K}_{0}\cdot N\left(  A\right)  N\left(  B\right)
Q_{\left[  \kappa-1,2\left(  \kappa-1\right)  \right]  }\left(  N\left(
A\right)  +N\left(  B\right)  \right)
\]%
\[
d_{M}\left(  e^{V_{B}}\circ e^{V_{A}},e^{V_{\log\left(  e^{A}e^{B}\right)  }%
}\right)  \leq\mathcal{K}_{0}\cdot Q_{(\kappa,2\kappa]}\left(  N\left(
A\right)  +N\left(  B\right)  \right)  .
\]
where for a suitable, $k<\infty,$
\[
\mathcal{K}_{0}=ke^{k\left\vert \nabla V^{\left(  \kappa\right)  }\right\vert
Q_{\left[  1,\kappa\right]  }\left(  N\left(  A\right)  \vee N\left(
B\right)  \right)  }\cdot\left\vert V^{\left(  \kappa\right)  }\right\vert
_{M}\left\vert \nabla V^{\left(  \kappa\right)  }\right\vert _{M}%
\]

\end{theorem}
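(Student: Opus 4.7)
My plan is to prove the three estimates in order of increasing difficulty.

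For the first estimate, I would use that $t \mapsto e^{tV_B}(m)$ is a smooth path from $m$ to $e^{V_B}(m)$ whose speed at time $t$ is $|V_B(e^{tV_B}(m))|_g$, and this is at most $|V^{(\kappa)}|_M|B|$ by Definition \ref{def.10.18}. Hence $d(e^{V_B}(m), m)$ is bounded by the length of this curve, namely $|V^{(\kappa)}|_M|B|$. The replacement of $|B|$ by $Q_{[1,\kappa]}(N(B))$ is elementary: since $|B|^2 = \sum_{k=1}^\kappa |B_k|^2 \leq \sum_{k=1}^\kappa N(B)^{2k}$, we have $|B| \lesssim \max_{1 \leq k \leq \kappa} N(B)^k$.

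For the two BCH-like estimates, I would compare the two curves
\[
\phi(s) := e^{sV_B} \circ e^{V_A}, \qquad \psi(s) := e^{V_{C(s)}^{(\kappa)}}, \qquad C(s) := \log(e^A e^{sB}) \in F^{(\kappa)}(\mathbb{R}^d),
\]
which both start at $\phi(0) = \psi(0) = e^{V_A}$ and are to be compared at $s = 1$. Proposition \ref{pro.10.26} (based on Corollary 4.7 of \cite{Driver2018}) tells us that $\phi(1) = \psi(1)$ exactly when $V$ generates a step-$\kappa$ nilpotent Lie subalgebra. In general I would derive
\[
\tfrac{d}{ds}\psi(s)(m) = V_B(\psi(s)(m)) + R(s,m),
\]
where the remainder $R(s,m)$ is a finite sum of iterated Lie derivatives $L_{V_{a_1}} \cdots L_{V_{a_j}} V_{a_0}$ of depth exceeding $\kappa$, evaluated at $\psi(s)(m)$. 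This is forced by the derivative-of-exponential formula combined with the graded BCH identity: the terms of depth $\leq \kappa$ reconstruct $V_B$ exactly, and what survives is only what would vanish if $V^{(\kappa)}$ were a Lie homomorphism.

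A Gronwall comparison on $d(\phi(s)(m),\psi(s)(m))$ then closes the argument. The Lipschitz constant of the flow $r \mapsto e^{rV_{C(s)}^{(\kappa)}}$ for $r \in [0,1]$ is controlled by $\exp(|\nabla V^{(\kappa)}|_M \cdot |C(s)|)$ by the standard variational equation for vector-field flows, and $|C(s)| \lesssim Q_{[1,\kappa]}(N(A) \vee N(B))$ by BCH applied inside $F^{(\kappa)}$; this produces the prefactor $\mathcal{K}_0$. The pointwise size of $R(s,m)$ is controlled by $|V^{(\kappa)}|_M|\nabla V^{(\kappa)}|_M$ times a polynomial in $N(A), N(B)$ whose non-zero homogeneous components live in degrees $\kappa+1$ through $2\kappa$; this delivers the $Q_{(\kappa,2\kappa]}(N(A)+N(B))$ bound. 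The finer form with an extracted factor $N(A)N(B)$ uses that every summand in $C(s) - A - sB$ begins with a bracket involving both $A$ and $B$, so each non-vanishing contribution to $R$ carries at least one factor of each; the residual polynomial then has degrees $\kappa-1$ through $2(\kappa-1)$, giving $Q_{[\kappa-1,2(\kappa-1)]}$.

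The main obstacle is the combinatorial bookkeeping that pins down the precise $Q_{[\cdot,\cdot]}$ ranges on the remainder after truncation at level $\kappa$. Since this theorem is stated as a direct citation of Corollary 4.15 of \cite{Driver2018}, in practice I would simply invoke that reference; the outline above is how one would rederive the result independently from Proposition \ref{pro.10.26} and the BCH formula.
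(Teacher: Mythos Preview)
Your proposal is correct and matches the paper's treatment: the paper does not give a proof of this theorem but simply imports it verbatim from \cite[Corollary 4.15]{Driver2018}, exactly as you note in your final paragraph. Your sketch of the first estimate via the length of the integral curve $t\mapsto e^{tV_B}(m)$ and of the BCH-type estimates via a Gronwall comparison between $e^{sV_B}\circ e^{V_A}$ and $e^{V_{\log(e^A e^{sB})}}$ is a faithful outline of how the cited result is established in \cite{Driver2018}.
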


We further will make use the following Riemannian metric on $TM.$

\begin{definition}
[Riemannian metric on $TM$]\label{def.12.4}Given a smooth curve, $v\left(
t\right)  \in TM$ with $v\left(  t\right)  \in T_{\sigma\left(  t\right)  }M$
where $\sigma=\pi\circ v$ is a smooth curve on $M,$ let
\[
g^{TM}\left(  \dot{v}\left(  0\right)  ,\dot{v}\left(  0\right)  \right)
:=\left\vert \dot{\sigma}\left(  0\right)  \right\vert _{g}^{2}+\left\vert
\nabla_{t}v\left(  t\right)  |_{t=0}\right\vert _{g}^{2}%
\]
where $\nabla_{t}v\left(  t\right)  $ is the covariant derivative of $v$ along
$\sigma.$ We further let $d^{TM}:TM\times TM\rightarrow\lbrack0,\infty)$ be
the associated length metric associate to $g^{TM}.$ [See \cite[Section
5]{Driver2018} for a more detailed discussion of the definitions and
properties of $g^{TM}$ and $d^{TM}.]$
\end{definition}

\begin{notation}
\label{not.12.5}For $f,g\in C^{\infty}\left(  M,M\right)  $ and a subset
$U\subset M,$ let
\[
d_{U}^{TM}\left(  f_{\ast},g_{\ast}\right)  =\sup_{m\in U}\sup\left\{
d^{TM}\left(  f_{\ast}v_{m},g_{\ast}v_{m}\right)  :v_{m}\in T_{m}M\text{ with
}\left\vert v_{m}\right\vert =1\right\}  ,
\]
where $f_{\ast}:TM\rightarrow TM$ denotes the differential of $f,$ i.e.
\[
f_{\ast}\dot{\sigma}\left(  0\right)  :=\frac{d}{dt}|_{0}f\left(
\sigma\left(  t\right)  \right)  \in T_{\sigma\left(  0\right)  }M.
\]

\end{notation}

The relationship between $\operatorname{Lip}\left(  f\right)  $ and $f_{\ast}$
is (see \cite[Lemma 2.9]{Driver2018})
\begin{equation}
\operatorname{Lip}\left(  f\right)  =\left\vert f_{\ast}\right\vert _{M}%
:=\sup\left\{  \left\vert f_{\ast}v\right\vert :v\in TM\text{ with }\left\vert
v\right\vert =1\right\}  . \label{e.12.1}%
\end{equation}

\begin{theorem}
[{\cite[Corollary 8.5]{Driver2018}}]\label{thm.12.6}If $A,B\in F^{\left(
\kappa\right)  }\left(  \mathbb{R}^{d}\right)  ,$ then%
\[
d_{M}^{TM}\left(  e_{\ast}^{V_{B}},Id_{TM}\right)  \leq\left[  \left\vert
V^{\left(  \kappa\right)  }\right\vert _{M}+\left\vert \nabla V^{\left(
\kappa\right)  }\right\vert _{M}e^{\left\vert \nabla V^{\left(  \kappa\right)
}\right\vert _{M}\left\vert B\right\vert }\right]  \left\vert B\right\vert ,
\]
and there exists $\mathcal{K}_{1}$ such that%
\[
d_{M}^{TM}\left(  \left[  e^{V_{B}}\circ e^{V_{A}}\right]  _{\ast},e_{\ast
}^{V_{\log\left(  e^{A}e^{B}\right)  }}\right)  \leq\mathcal{K}_{1}\cdot
N\left(  A\right)  N\left(  B\right)  Q_{(\kappa-1,2\left(  \kappa-1\right)
]}\left(  N\left(  A\right)  +N\left(  B\right)  \right)  .
\]
where
\[
\mathcal{K}_{1}=\mathcal{K}_{1}\left(  \left\vert V^{\left(  \kappa\right)
}\right\vert _{M},\left\vert \nabla V^{\left(  \kappa\right)  }\right\vert
_{M},H_{M}\left(  V^{\left(  \kappa\right)  }\right)  ,N\left(  A\right)  \vee
N\left(  B\right)  \right)  .
\]

\end{theorem}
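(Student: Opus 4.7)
My plan is to work on the tangent bundle $(TM, g^{TM})$ throughout, exploiting two standard facts: for any $W \in \Gamma(TM)$, the differential of the flow $e^{tW}$ viewed as a map $TM \to TM$ is itself the flow of a canonical vector field $\tilde W$ on $TM$, the complete lift of $W$; and the complete lift is a Lie algebra homomorphism, so $W \mapsto \tilde V_W$ is again a step-$\kappa$ dynamical system on $TM$.

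For the first inequality, fix a unit vector $v_m \in T_m M$ and consider the smooth curve $\gamma(t) := e_\ast^{tV_B} v_m \in TM$ for $t \in [0,1]$, whose base curve is $\sigma(t) := e^{tV_B}(m)$. Then $\dot\sigma(t) = V_B(\sigma(t))$, and a standard Jacobi-type identity (differentiating $(\phi_t)_\ast c'(0)$ in $s$ and swapping $\nabla_s$ and $\nabla_t$ by torsion-freeness) gives $\nabla_t \gamma(t) = \nabla_{\gamma(t)} V_B$. Using $\sqrt{a^2+b^2}\le a+b$ in the definition of $g^{TM}$-length yields
\[
d^{TM}(e_\ast^{V_B} v_m, v_m) \leq \int_0^1 |V_B(\sigma(t))|_g \, dt + \int_0^1 |\nabla_{\gamma(t)} V_B|_g \, dt.
\]
The first integral is at most $|V_B|_M \leq |V^{(\kappa)}|_M |B|$. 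For the second, $|\nabla_\gamma V_B|_g \leq |\nabla V_B|_M |\gamma(t)|_g$, and Gronwall applied to the same inequality yields $|\gamma(t)|_g \leq e^{|\nabla V_B|_M}$ for $t \in [0,1]$. Since $|\nabla V_B|_M \leq |\nabla V^{(\kappa)}|_M |B|$, combining produces the stated bound.

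For the second inequality, because the complete lift intertwines flows and preserves brackets, we have the identities of maps $TM \to TM$:
\[
[e^{V_B} \circ e^{V_A}]_\ast = e^{\tilde V_B} \circ e^{\tilde V_A} \quad\text{and}\quad e_\ast^{V_{\log(e^A e^B)}} = e^{\tilde V_{\log(e^A e^B)}}.
\]
Thus the desired estimate is precisely the analogue of the second bound of Theorem \ref{thm.12.3} applied to the dynamical system $\tilde V$ on $(TM, g^{TM})$. Since starting from a unit $v_m$ the first inequality keeps $|e_\ast^{V_A} v_m|_g \leq \rho$ with $\rho$ depending only on $|A|$ and $|\nabla V^{(\kappa)}|_M$, only the behaviour of $\tilde V$ within the tangent ball of radius $\rho$ enters.

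The main obstacle is bounding $|\tilde V^{(\kappa)}|$ and $|\nabla^{TM} \tilde V^{(\kappa)}|$, restricted to that ball, by $M$-level data. In Sasaki coordinates, $\tilde V_W(v)$ has horizontal part $V_W(\pi v)$ and vertical part $\nabla_v V_W$, so it grows linearly in $|v|$ (which is why a global version of Theorem \ref{thm.12.3} on $TM$ would be useless and only a localized one is available). A longer standard calculation expresses $\nabla^{TM} \tilde V_W$ in terms of $V_W$, $\nabla V_W$, $\nabla^2 V_W$, and a curvature contraction $R(V_W, \cdot)\,\cdot$: this is precisely why $H_M(V^{(\kappa)}) = |\nabla^2 V^{(\kappa)}|_M + |R(V^{(\kappa)}, \cdot)|_M$ must appear in $\mathcal{K}_1$. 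Once these translations are verified on the ball of radius $\rho$, plugging them into the constant $\mathcal{K}_0$ of Theorem \ref{thm.12.3} (with $\tilde V$ in place of $V$) delivers a $\mathcal{K}_1$ with exactly the listed dependencies, while the homogeneous-norm factor $N(A) N(B) Q_{(\kappa-1, 2(\kappa-1)]}(N(A)+N(B))$ is inherited from the lifted Lie structure, which is unchanged because $\tilde V$ is an isomorphic copy of $V$ at the $F^{(\kappa)}(\mathbb{R}^d)$ level.
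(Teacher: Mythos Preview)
The paper does not contain a proof of this statement; it is imported verbatim as Corollary~8.5 of \cite{Driver2018}. Your argument for the first inequality is correct and self-contained: the curve $\gamma(t)=e^{tV_B}_\ast v_m$ has base velocity $V_B$ and covariant velocity $\nabla_\gamma V_B$, and the Gronwall bound on $|\gamma(t)|$ gives exactly the stated constant.

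For the second inequality, your strategy---pass to the complete lift $\tilde V$ on $TM$ and invoke the $d_M$-estimate of Theorem~\ref{thm.12.3} there---is the natural one and very likely the route taken in \cite{Driver2018}. But there is a real gap in your reduction. Theorem~\ref{thm.12.3} is stated with \emph{global} hypotheses $|V^{(\kappa)}|_M,|\nabla V^{(\kappa)}|_M<\infty$, and on $(TM,g^{TM})$ the lifted fields $\tilde V_W$ grow linearly in the fibre, so those global norms are infinite. You propose to ``restrict to a ball of radius $\rho$'' in $TM$, but this is not an operation one can perform on a black-box theorem: either you must cut off $\tilde V$ (which destroys the bracket relations $[\tilde V_a,\tilde V_b]=\widetilde{[V_a,V_b]}$ that make $\tilde V^{(\kappa)}_C=\widetilde{V^{(\kappa)}_C}$ for $C\in F^{(\kappa)}(\mathbb{R}^d)$), or you must reopen the proof of Theorem~\ref{thm.12.3} and verify that every intermediate flow appearing there, applied to a unit vector, remains inside a fixed tangent ball---a substantial argument you have not supplied. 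In addition, the identification of $\nabla^{TM}\tilde V_W$ with $M$-level data $(V_W,\nabla V_W,\nabla^2 V_W,R(V_W,\cdot))$ is only asserted; this Sasaki-metric computation is exactly where the dependence on $H_M(V^{(\kappa)})$ enters, and it is not a one-liner. As written, the second half is a plausible outline rather than a proof.
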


In the proof of the next Corollary \ref{cor.13.5} we will use the following
two properties of $d^{TM};$%
\begin{align}
\left\vert \left\vert v\right\vert -\left\vert w\right\vert \right\vert  &
\leq d^{TM}\left(  v,w\right)  ~\forall\text{ }v,w\in TM,\text{ and}%
\label{e.12.2}\\
d^{TM}\left(  \lambda v,\lambda w\right)   &  \leq\left(  \lambda\vee1\right)
d^{TM}\left(  v,w\right)  \text{ }\forall\text{ }\lambda\geq0\text{ and
}v,w\in TM, \label{e.12.3}%
\end{align}
see \cite[Proposition 5.9]{Driver2018} and \cite[Theorem 5.11]{Driver2018}
respectively. We also need the following basic estimates for complete vector
field, $X\in\Gamma\left(  TM\right)  .$

\begin{proposition}
[{\cite[Corollary 2.27]{Driver2018}}]\label{pro.12.7}If $X\in\Gamma\left(
TM\right)  $ is complete, then%
\begin{equation}
\operatorname{Lip}\left(  e^{X}\right)  =\left\vert e_{\ast}^{X}\right\vert
_{M}\leq e^{\left\vert \nabla X\right\vert _{M}}. \label{e.12.4}%
\end{equation}

\end{proposition}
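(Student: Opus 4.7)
The equality $\operatorname{Lip}\left(e^{X}\right)=\left\vert e_{\ast}^{X}\right\vert _{M}$ is Eq.~(\ref{e.12.1}), which the excerpt has already cited from \cite[Lemma 2.9]{Driver2018}, so the real task is to establish the bound $\left\vert e_{\ast}^{X}\right\vert _{M}\leq e^{\left\vert \nabla X\right\vert _{M}}$. The plan is a standard Jacobi-field/Gr\"onwall argument applied to the push-forward of a tangent vector along the flow $\varphi_{t}:=e^{tX}$, which is globally defined on $[0,1]$ by completeness of $X$.

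First I would fix $m\in M$ and $v_{m}\in T_{m}M$ with $\left\vert v_{m}\right\vert =1$, choose a smooth curve $\sigma:(-\varepsilon,\varepsilon)\to M$ with $\sigma(0)=m$ and $\dot{\sigma}(0)=v_{m}$, and set
\[
V(t,s):=\varphi_{t}\left(\sigma(s)\right),\qquad W(t):=\partial_{s}V(t,s)\big|_{s=0}=\varphi_{t\ast}v_{m}\in T_{\varphi_{t}(m)}M.
\]
From $\partial_{t}V=X\circ V$ and the fact that $\nabla$ is torsion-free (so mixed covariant derivatives of a variation commute) one computes
\[
\nabla_{t}W=\nabla_{t}\partial_{s}V\big|_{s=0}=\nabla_{s}\partial_{t}V\big|_{s=0}=\nabla_{s}(X\circ V)\big|_{s=0}=\nabla_{W}X,
\]
so $W$ satisfies the linear ODE $\nabla_{t}W=\nabla_{W}X$ along the integral curve $t\mapsto\varphi_{t}(m)$.

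Next, using metric compatibility of $\nabla$ and Cauchy--Schwarz together with the definition of $\left\vert \nabla X\right\vert _{M}$ in Notation~\ref{not.10.17}, I get
\[
\frac{d}{dt}\left\vert W(t)\right\vert _{g}^{2}=2\,g\!\left(\nabla_{t}W,W\right)=2\,g\!\left(\nabla_{W}X,W\right)\leq 2\left\vert \nabla X\right\vert _{M}\left\vert W(t)\right\vert _{g}^{2}.
\]
Gr\"onwall's inequality then gives $\left\vert W(t)\right\vert _{g}\leq\left\vert W(0)\right\vert _{g}\,e^{\left\vert \nabla X\right\vert _{M}t}=e^{\left\vert \nabla X\right\vert _{M}t}$, and evaluating at $t=1$ yields $\left\vert e_{\ast}^{X}v_{m}\right\vert _{g}\leq e^{\left\vert \nabla X\right\vert _{M}}$. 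Taking the supremum over unit $v_{m}$ and $m\in M$ produces the desired estimate $\left\vert e_{\ast}^{X}\right\vert _{M}\leq e^{\left\vert \nabla X\right\vert _{M}}$.

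The only subtle point I expect is the commutation step $\nabla_{t}\partial_{s}V=\nabla_{s}\partial_{t}V$, which requires $\nabla$ to be the Levi-Civita connection so that torsion vanishes; this is exactly why $\nabla X$ (and not some mixed quantity involving curvature) controls the growth rate. Everything else---global existence of $W(t)$ on $[0,1]$, differentiability in the parameter $s$, and the passage from the pointwise derivative bound to the Lipschitz bound---is handled by completeness of $X$ and the cited identity~(\ref{e.12.1}).
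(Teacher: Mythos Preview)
The paper does not actually prove this proposition; it is stated as a citation to \cite[Corollary~2.27]{Driver2018} with no proof given in the present text. Your argument is correct and is the standard Jacobi-field/Gr\"onwall computation one expects behind such a result: differentiate the flow in a transverse parameter, use the torsion-free property to get $\nabla_{t}W=\nabla_{W}X$, and apply Gr\"onwall to $\left\vert W\right\vert^{2}$. There is nothing to correct.
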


\begin{proposition}
[{\cite[Corollary 6.4]{Driver2018}}]\label{pro.12.8}If $X\in\Gamma\left(
TM\right)  $ is complete, then
\begin{equation}
d^{TM}\left(  e_{\ast}^{X}v_{m},e_{\ast}^{X}w_{p}\right)  \leq e^{2\left\vert
\nabla X\right\vert _{M}}\left[  1+H_{M}\left(  X\right)  \left\vert
w_{p}\right\vert \right]  d^{TM}\left(  v_{m},w_{p}\right)  \label{e.12.5}%
\end{equation}
where $H_{M}\left(  X\right)  $ is as in Notation \ref{not.10.17}.
\end{proposition}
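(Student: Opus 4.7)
The plan is to realize the metric $d^{TM}(v_m,w_p)$ by a nearly length-minimizing piecewise smooth curve $\gamma:[0,1]\to TM$ with $\gamma(0)=v_m$, $\gamma(1)=w_p$, push it forward to $\tilde\gamma:=e_*^X\circ\gamma$, and estimate the $g^{TM}$-length of $\tilde\gamma$; the bound on the length then upper bounds $d^{TM}(e_*^X v_m,e_*^X w_p)$ by the definition of length metric. Write $\gamma(s)=v(s)\in T_{\sigma(s)}M$ with $\sigma=\pi\circ\gamma$, so $|\dot\gamma(s)|_{g^{TM}}^2=|\dot\sigma(s)|^2+|\nabla_s v(s)|^2$, and we may take $\gamma$ so that $\sup_s|v(s)|\le|w_p|+d^{TM}(v_m,w_p)+\epsilon$ for any prescribed $\epsilon>0$ (by Eq.~\eqref{e.12.2} applied along $\gamma$).

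The main step is to analyze the two-parameter variation $F(t,s):=\phi_t(\sigma(s))$, where $\phi_t$ is the flow of $X$, and the vector field $W(t,s):=(\phi_t)_*v(s)$ along $F$. By the classical variational formulas for flows one has
\[
\nabla_t J=\nabla_J X\quad\text{and}\quad\nabla_t W=\nabla_W X,
\]
with $J:=\partial_sF$. Commuting $\nabla_t$ with $\nabla_s$ and expanding the covariant derivative of $\nabla_W X$ yields the linear ODE in $t$
\[
\nabla_t(\nabla_s W)=\nabla X(\nabla_s W)+\nabla^2X(J,W)-R(J,X)W.
\]
Gronwall applied to the two scalar inequalities $\tfrac{d}{dt}|J|\le|\nabla X|_M|J|$ and $\tfrac{d}{dt}|W|\le|\nabla X|_M|W|$ gives $|J(t,s)|\le e^{|\nabla X|_M t}|\dot\sigma(s)|$ and $|W(t,s)|\le e^{|\nabla X|_M t}|v(s)|$. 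Feeding these into Gronwall applied to the $\nabla_s W$ equation produces a bound of the form
\[
|\nabla_s W(1,s)|\le e^{|\nabla X|_M}|\nabla_s v(s)|+ e^{2|\nabla X|_M}H_M(X)\,|\dot\sigma(s)|\,|v(s)|.
\]

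Combining this with the simpler horizontal estimate $|\partial_s\pi(\tilde\gamma(s))|=|J(1,s)|\le e^{|\nabla X|_M}|\dot\sigma(s)|$ and using $\sqrt{a^2+b^2}\le a+b$ on the $g^{TM}$ integrand, we integrate in $s$ to obtain
\[
\mathrm{length}_{g^{TM}}(\tilde\gamma)\le e^{2|\nabla X|_M}\bigl[1+H_M(X)\sup_s|v(s)|\bigr]\,\mathrm{length}_{g^{TM}}(\gamma).
\]
Finally, replacing $\gamma$ by a sequence of curves whose lengths approach $d^{TM}(v_m,w_p)$ and whose values $\sup_s|v(s)|$ approach $|w_p|$ (achievable by concentrating most of the variation near the endpoint $w_p$, or iterating on a chain of short sub-curves) yields Eq.~\eqref{e.12.5}. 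The principal obstacle is the interplay between the curvature/$\nabla^2X$ forcing term and the bound on $\sup_s|v(s)|$: naively this contributes a factor proportional to $|v_m|+|w_p|+d^{TM}(v_m,w_p)$, and one must argue, via reparametrizing the admissible curves or a continuity-in-$w_p$ argument, that only $|w_p|$ survives (up to constants absorbed into $e^{2|\nabla X|_M}$). Everything else is a careful but routine Gronwall calculation combined with the definition of $g^{TM}$.
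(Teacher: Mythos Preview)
The paper does not prove this proposition; it is quoted verbatim from \cite[Corollary 6.4]{Driver2018}, so there is no in-paper proof to compare against. Your variational approach is the standard one and the core computations are correct: the linearized flow equations $\nabla_t J=\nabla_J X$, $\nabla_t W=\nabla_W X$, and the derived second-variation equation
\[
\nabla_t(\nabla_s W)=\nabla_{\nabla_s W}X+\nabla^2_{J\otimes W}X+R(X,J)W
\]
are right, and the Gronwall estimates that follow give the pointwise bound
\[
|\dot{\tilde\gamma}(s)|_{g^{TM}}\le e^{2|\nabla X|_M}\bigl[1+H_M(X)\,|v(s)|\bigr]\,|\dot\gamma(s)|_{g^{TM}}.
\]
Integrating this yields $\mathrm{length}_{g^{TM}}(\tilde\gamma)\le e^{2|\nabla X|_M}\bigl[1+H_M(X)\sup_s|v(s)|\bigr]\,\mathrm{length}_{g^{TM}}(\gamma)$, exactly as you wrote.

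The genuine gap is the one you flag yourself, and your proposed repairs do not close it. Your argument produces the factor $1+H_M(X)\sup_s|v(s)|$, and along \emph{any} curve $\gamma$ from $v_m$ to $w_p$ continuity of $s\mapsto|v(s)|$ forces $\sup_s|v(s)|\ge\max(|v_m|,|w_p|)$. In particular, when $|v_m|>|w_p|$ there is no choice of $\gamma$ (however ``concentrated near $w_p$'') for which $\sup_s|v(s)|$ is close to $|w_p|$; the best you can get from Eq.~\eqref{e.12.2} is $\sup_s|v(s)|\le|w_p|+d^{TM}(v_m,w_p)$. The chain-of-short-subcurves idea fares no better: summing the local estimates gives
\[
d^{TM}(e_*^X v_m,e_*^X w_p)\le e^{2|\nabla X|_M}\Bigl(\bigl[1+H_M(X)|w_p|\bigr]d+\tfrac{1}{2}H_M(X)\,d^{2}\Bigr),\qquad d:=d^{TM}(v_m,w_p),
\]
and the quadratic term does not iterate away. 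So your method, as written, proves only the weaker inequality with $|w_p|$ replaced by $|w_p|+d^{TM}(v_m,w_p)$. That weaker bound is, however, entirely adequate for every use of Proposition~\ref{pro.12.8} in this paper: in Corollary~\ref{cor.13.5} it is applied with $w_p=\mu_{u,s\ast}v_m$ and the extra $d^{TM}$-term is $\le L|u-s|^\theta$ by the induction hypothesis, which is absorbed harmlessly into the existing $L|u-s|^\theta$ terms. To obtain the constant exactly as stated you would need to consult the argument in \cite{Driver2018}.
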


\section{Proof of Theorem \ref{thm.10.27}\label{sec.13}}

In this section we fix $\alpha\in(\frac{1}{\kappa+1},\frac{1}{\kappa}]$,
$0<T<\infty,$ an $\alpha$-H\"{o}lder geometric rough path $X_{s,t}\in
G_{geo}^{\left(  \kappa\right)  }\left(  \mathbb{R}^{d}\right)  ,$ and
$\mu_{t,s}:=e^{V_{\log\left(  X_{st}\right)  }}\in\mathrm{Diff}\left(
M\right)  .$ Note that by definition of a $\alpha$-H\"{o}lder rough path,
there exists $c<\infty$ such that
\[
N\left(  X_{s,t}\right)  \leq c\left\vert t-s\right\vert ^{\alpha}\text{ for
all }\left(  s,t\right)  \in\left[  0,T\right]  ^{2}.
\]
We also let $\theta:=\alpha\left(  \kappa+1\right)  >1$ as in Eq.
(\ref{e.10.14}).

\begin{theorem}
\label{thm.13.1}Suppose that $V:\mathbb{R}^{d}\rightarrow\Gamma\left(
TM\right)  $ is a dynamical system satisfying Assumption \ref{ass.3} and $g$
is a (not necessarily complete) metric on $M$ such that $\left\vert V^{\left(
\kappa\right)  }\right\vert _{M}+\left\vert \nabla V^{\left(  \kappa\right)
}\right\vert _{M}<\infty$ (see Definition \ref{def.10.18}). Then
\[
d_{M}\left(  \mu_{t,s},Id_{M}\right)  \leq\left\vert V^{\left(  \kappa\right)
}\right\vert _{M}\left\vert \log\left(  X_{s,t}\right)  \right\vert \leq
c\left\vert V^{\left(  \kappa\right)  }\right\vert _{M}Q_{\left[
1,\kappa\right]  }\left(  c\left\vert t-s\right\vert ^{\alpha}\right)
\]
and there exists a constant, $k=k\left(  \kappa\right)  <\infty$ such that%
\begin{align*}
&  d_{M}\left(  \mu_{t,s}\circ\mu_{s,r},\mu_{t,r}\right) \\
&  \leq ke^{k\left\vert \nabla V^{\left(  \kappa\right)  }\right\vert
Q_{\left[  1,\kappa\right]  }\left(  2kc_{\kappa}\left\vert t-r\right\vert
^{\alpha}\right)  }\cdot\left\vert V^{\left(  \kappa\right)  }\right\vert
_{M}\left\vert \nabla V^{\left(  \kappa\right)  }\right\vert _{M}%
Q_{(\kappa,2\kappa]}\left(  2kc_{\kappa}\left\vert t-r\right\vert ^{\alpha
}\right)  .
\end{align*}
\textbf{Note well: }curvature does not enter these bounds!
\end{theorem}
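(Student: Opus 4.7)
The plan is to recognize that both inequalities are essentially direct translations of Theorem \ref{thm.12.3} into the rough path language. The bridge between the two settings is the observation that the group multiplication in $G_{\text{geo}}^{(\kappa)}(\mathbb{R}^d)$ together with the multiplicative property of $X$ yields
\[
e^{A}e^{B} = X_{r,s}\,X_{s,t} = X_{r,t} \quad\text{when } A=\log(X_{r,s}),\ B=\log(X_{s,t}),
\]
so that $e^{V_{\log(e^{A}e^{B})}^{(\kappa)}} = e^{V_{\log(X_{r,t})}^{(\kappa)}} = \mu_{t,r}$. This identifies the left-hand side of the second bound of Theorem \ref{thm.12.3} with $d_{M}(\mu_{t,s}\circ\mu_{s,r},\mu_{t,r})$, which is exactly the quantity we wish to estimate.

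First I would prove the distance-to-identity bound. Apply the first estimate of Theorem \ref{thm.12.3} with $B=\log(X_{s,t})$ to get
\[
d_{M}(\mu_{t,s},\mathrm{Id}_{M}) \leq |V^{(\kappa)}|_{M}\,|\log(X_{s,t})| \leq |V^{(\kappa)}|_{M}\, Q_{[1,\kappa]}\bigl(N(\log X_{s,t})\bigr),
\]
where the rightmost step uses that $|B|^{2} = \sum_{k=1}^{\kappa}|B_{k}|^{2} \leq \sum_{k=1}^{\kappa}N(B)^{2k}$ for $B\in\mathfrak{g}^{(\kappa)}$, so $|B|\lesssim Q_{[1,\kappa]}(N(B))$. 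It then remains to transfer the rough-path bound $N(X_{s,t})\leq c|t-s|^{\alpha}$ to a bound on $N(\log X_{s,t})$. Writing $X_{s,t}=1+\xi$ with $\xi\in\mathfrak{g}^{(\kappa)}$ and $N(\xi)=N(X_{s,t})$, the truncated logarithm series
\[
\log(1+\xi) = \sum_{k=1}^{\kappa}\frac{(-1)^{k+1}}{k}\xi^{k}
\]
expresses each graded component $(\log X_{s,t})_{j}$ as a polynomial in the $\xi_{i}$'s that is homogeneous of weight $j$ in the grading. A term-by-term estimate using $|\xi_{i}|\leq N(\xi)^{i}$ gives $|(\log X_{s,t})_{j}|\leq C_{\kappa}\,N(\xi)^{j}$ and hence $N(\log X_{s,t})\lesssim N(X_{s,t})\leq c|t-s|^{\alpha}$, which is enough (after absorbing the $\kappa$-dependent constant into $c_{\kappa}$) to conclude the first chain of inequalities.

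Next I would prove the almost-multiplicative estimate. With $A=\log(X_{r,s})$ and $B=\log(X_{s,t})$ the second bound of Theorem \ref{thm.12.3} gives
\[
d_{M}(\mu_{t,s}\circ\mu_{s,r},\mu_{t,r}) \leq \mathcal{K}_{0}\cdot Q_{(\kappa,2\kappa]}\bigl(N(A)+N(B)\bigr),
\]
with the explicit prefactor
\[
\mathcal{K}_{0} = k\,e^{k|\nabla V^{(\kappa)}|\,Q_{[1,\kappa]}(N(A)\vee N(B))}\cdot|V^{(\kappa)}|_{M}\,|\nabla V^{(\kappa)}|_{M}.
\]
Applying the same $N(\log\cdot)\lesssim N(\cdot)$ bound from the first part to both $A$ and $B$ and using $|s-r|,|t-s|\leq|t-r|$ yields $N(A)+N(B)\leq 2kc_{\kappa}|t-r|^{\alpha}$ and $N(A)\vee N(B)\leq kc_{\kappa}|t-r|^{\alpha}$ for a suitable $\kappa$-dependent $k$. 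Inserting these bounds into the preceding display produces exactly the stated inequality.

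The only delicate step is the passage $N(\log X)\lesssim N(X)$, which is a purely algebraic consequence of the structure of the truncated logarithm on $G_{\text{geo}}^{(\kappa)}$; no curvature appears anywhere, which is why the statement emphasizes that the curvature of $g$ does not enter the bounds. Everything else is a mechanical substitution into Theorem \ref{thm.12.3}.
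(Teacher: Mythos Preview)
Your proposal is correct and follows essentially the same approach as the paper: set $A=\log(X_{r,s})$, $B=\log(X_{s,t})$, observe that $\log(e^{A}e^{B})=\log(X_{r,t})$ so the three maps in Theorem~\ref{thm.12.3} are precisely $\mu_{s,r}$, $\mu_{t,s}$, $\mu_{t,r}$, and then feed the rough-path H\"older bound through the inequality $N(\log X)\leq c_{\kappa}N(X)$. The paper's proof is terser in that it simply quotes $N(\log(X_{r,s}))\leq c_{\kappa}N(X_{r,s})$ without spelling out the graded-polynomial argument you give, but the substance is identical.
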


\begin{proof}
Let $A=\log\left(  X_{r,s}\right)  $ and $B=\log\left(  X_{s,t}\right)  $ in
which case
\[
\Gamma\left(  A,B\right)  =\log\left(  e^{A}e^{B}\right)  =\log\left(
X_{r,s}X_{st}\right)  =\log\left(  X_{r,t}\right)  ,
\]%
\[
N\left(  A\right)  =N\left(  \log\left(  X_{r,s}\right)  \right)  \leq
c_{\kappa}\cdot N\left(  X_{r,s}\right)  \leq c_{\kappa}\cdot k\left\vert
s-r\right\vert ^{\alpha},
\]
and%
\[
N\left(  A\right)  \vee N\left(  B\right)  \leq kc_{\kappa}\left\vert
t-r\right\vert ^{\alpha}.
\]
Thus as an application of Theorem \ref{thm.12.3},%
\begin{align*}
d_{M}\left(  \mu_{t,s},Id_{M}\right)   &  \leq\left\vert V^{\left(
\kappa\right)  }\right\vert \left\vert \log\left(  X_{s,t}\right)  \right\vert
\leq\left\vert V^{\left(  \kappa\right)  }\right\vert Q_{\left[
1,\kappa\right]  }\left(  N\left(  \log\left(  X_{s,t}\right)  \right)
\right) \\
&  \leq\left\vert V^{\left(  \kappa\right)  }\right\vert Q_{\left[
1,\kappa\right]  }\left(  c_{\kappa}N\left(  X_{s,t}\right)  \right)
\leq\left\vert V^{\left(  \kappa\right)  }\right\vert Q_{\left[
1,\kappa\right]  }\left(  c_{\kappa}k\left\vert t-s\right\vert ^{\alpha
}\right)
\end{align*}
and%
\begin{align*}
d_{M}  &  \left(  \mu_{t,s}\circ\mu_{s,r},\mu_{t,r}\right)  =d_{M}\left(
e^{V_{B}}\circ e^{V_{A}},e^{V_{\Gamma\left(  A,B\right)  }}\right) \\
&  \leq ke^{k\left\vert \nabla V^{\left(  \kappa\right)  }\right\vert
Q_{\left[  1,\kappa\right]  }\left(  N\left(  A\right)  \vee N\left(
B\right)  \right)  }\cdot\left\vert V^{\left(  \kappa\right)  }\right\vert
_{M}\left\vert \nabla V^{\left(  \kappa\right)  }\right\vert _{M}%
Q_{(\kappa,2\kappa]}\left(  N\left(  A\right)  \vee N\left(  B\right)  \right)
\\
&  \leq ke^{k\left\vert \nabla V^{\left(  \kappa\right)  }\right\vert
Q_{\left[  1,\kappa\right]  }\left(  2kc_{\kappa}\left\vert t-r\right\vert
^{\alpha}\right)  }\cdot\left\vert V^{\left(  \kappa\right)  }\right\vert
_{M}\left\vert \nabla V^{\left(  \kappa\right)  }\right\vert _{M}%
Q_{(\kappa,2\kappa]}\left(  2kc_{\kappa}\left\vert t-r\right\vert ^{\alpha
}\right)  .
\end{align*}

\end{proof}

\begin{notation}
\label{not.13.2}Let $\Gamma_{c}\left(  TM\right)  $ denote the smooth
compactly supported vector fields on $M.$
\end{notation}

\begin{theorem}
[Localizing Approximates]\label{thm.13.3}Let us continue the notation and
assumptions in Theorem \ref{thm.13.1}. Assume that $g$ is a complete
Riemannian metric on $M$ (i.e. $\left(  M,d\right)  $ is complete) and $K$ is
a compact subset of $M,$ i.e. $K$ is closed and bounded. Then there exists
$\tilde{V}:\mathbb{R}^{d}\rightarrow\Gamma_{c}\left(  TM\right)  $ such that
$\tilde{\mu}_{t,s}^{n}=\mu_{t,s}^{n}$ on $K$ for all $n\in\mathbb{N}$ and
$\left(  s,t\right)  \in\left[  0,T\right]  ^{2}.$
\end{theorem}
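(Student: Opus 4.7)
The plan is to produce $\tilde V$ by multiplying $V$ with a smooth bump function that equals $1$ on a compact set $K^\ast$ large enough to contain every point visited by every iterated flow $\mu_{t,s}^n$ starting in $K$, uniformly in $n \in \mathbb{N}$ and $(s,t)\in[0,T]^2$. On $K^\ast$ the systems $\tilde V$ and $V$ will coincide together with all their iterated Lie brackets, so the associated flows will agree there, yielding $\tilde\mu_{t,s}^n = \mu_{t,s}^n$ on $K$.

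The crucial step is constructing such a uniform $K^\ast$, and this is where the AMF theory of Section~\ref{sec.11} is used. First I would check that $\mu_{t,s}$ satisfies the hypotheses of Lemma~\ref{lem.11.17}: the local Trotter bound $d_M(\mu_{t,u}\circ\mu_{u,s},\mu_{t,s}) \leq c\left\vert t-s\right\vert^{\theta}$ is a direct consequence of Theorem~\ref{thm.13.1} (using $Q_{(\kappa,2\kappa]}(c'\left\vert t-s\right\vert^{\alpha}) \lesssim \left\vert t-s\right\vert^{\theta}$ on $[-T,T]$), while $\operatorname{Lip}(\mu_{t,s})\leq 1+k(t-s)$ with $k(0)=0$ follows from Proposition~\ref{pro.12.7} together with $|\log X_{s,t}|\to 0$ as $t\to s$. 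Since uniform partitions are $1/3$-special by Lemma~\ref{lem.11.12}, Theorem~\ref{thm.11.18} then supplies $L<\infty$ with $d_M(\mu_{t,s}^{\pi},\mu_{t,s})\leq L\left\vert t-s\right\vert^{\theta}$ for every $1/3$-special $\pi$. The key observation is that, with $s_k := s+k(t-s)/n$, the restriction of $\pi^n(s,t)$ to $J(s,s_k)$ is exactly $\pi^k(s,s_k)$, so
\[
d(\mu_{s_k,s}^n(m),m) \;\leq\; d(\mu_{s_k,s}^n(m),\mu_{s_k,s}(m)) + d(\mu_{s_k,s}(m),m) \;\leq\; LT^{\theta}+R_0,
\]
where $R_0$ is the uniform bound supplied by Theorem~\ref{thm.13.1}. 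Adding a uniform bound on the displacement during a single exponential step $e^{V_{\log X_{s_k,s_{k+1}}}}$ produces a constant $R$ such that every continuous trajectory of the iteration remains in $K^\ast:=\{p\in M:d(p,K)\leq R\}$. Completeness of $g$ together with Hopf--Rinow then guarantees that $K^\ast$ is compact.

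With $K^\ast$ in hand I would choose a smooth $\chi\colon M\to[0,1]$ of compact support with $\chi\equiv 1$ on an open neighborhood $W$ of $K^\ast$ and set $\tilde V_w := \chi V_w$. This is manifestly a dynamical system taking values in $\Gamma_c(TM)$. Since $\chi$ is \emph{constant} on $W$, all its derivatives vanish there; as Lie brackets are local operations depending only on vector fields and their derivatives, a straightforward induction on bracket length gives $\tilde V_A=V_A$ on $W$ for every $A\in F^{(\kappa)}(\mathbb{R}^d)$. Consequently $e^{\tilde V_B}$ and $e^{V_B}$ coincide on any trajectory that stays in $W$, and a second induction on $k$ --- using the orbit containment in $K^\ast\subset W$ --- delivers $\tilde\mu_{s_k,s}^n(m)=\mu_{s_k,s}^n(m)$ for every $m\in K$ and $0\leq k\leq n$, which is the assertion.

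The main obstacle is the uniform-in-$n$ orbit bound. The naive step-by-step estimate $|V^{(\kappa)}|_M\sum_{j}|\log X_{s_j,s_{j+1}}|$ diverges like $n^{1-\alpha}$ --- a genuine reflection of the fact that $\alpha$-H\"older paths have infinite $1$-variation --- so only the AMF-type bound of Theorem~\ref{thm.11.18}, which exploits $\theta=\alpha(\kappa+1)>1$, is strong enough to yield a uniform $R$. A second, more technical point is that $\chi$ must equal $1$ on an \emph{open} neighborhood of $K^\ast$ and not merely on $K^\ast$ itself, so that the equality $\tilde V_A=V_A$ genuinely propagates through all the Lie brackets and holds on a neighborhood of every trajectory.
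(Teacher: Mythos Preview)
Your proposal is correct and follows essentially the same approach as the paper: use the AMF bound from Theorem~\ref{thm.11.18} (applicable because uniform partitions are $1/3$-special) to obtain a uniform-in-$n$ orbit bound, enlarge by one exponential step to capture the continuous trajectories, cut off by a bump equal to $1$ on a neighborhood of the resulting compact set, and finish with an induction on the step index. The only cosmetic difference is that the paper organizes the containment via three nested compacta $K_1\subset K_2\subset K_3$ (single-step image, then AMF enlargement, then one further flow step) rather than a single $K^\ast$, and runs the induction on $n$ via $\mu_{t,s}^{n+1}=\mu_{t,u}\circ\mu_{u,s}^n$; your explicit remark that $\tilde V_A=V_A$ on $W$ for all $A\in F^{(\kappa)}(\mathbb{R}^d)$ is exactly what the paper uses implicitly.
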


\begin{proof}
Let $\theta:=\left(  \kappa+1\right)  \alpha>1$ as in Assumption \ref{ass.2}.
Let $K_{1}$ be the compact subset of $M$ containing $K$ defined by
\[
K_{1}:=\left\{  \mu_{t,s}\left(  m\right)  :\left(  s,t\right)  \in\left[
0,T\right]  ^{2}\text{ and }m\in K\right\}  .
\]
By Theorem \ref{thm.13.1}, and Theorem \ref{thm.11.18} there exists $C<\infty$
such that
\[
d_{M}\left(  \mu_{t,s}^{n},\mu_{t,s}\right)  \leq C\left\vert t-s\right\vert
^{\theta}\text{ for all }\left(  s,t\right)  \in\left[  0,T\right]  ^{2}.
\]
Now let
\[
K_{2}:=\left\{  m\in M:d\left(  m,K_{1}\right)  \leq C\left[  T^{\theta}%
\vee1\right]  \right\}
\]
which is again closed and bounded and hence compact. Moreover we have
$\mu_{t,s}^{n}\left(  m\right)  \in K_{2}$ for all $m\in K,$ $\left(
s,t\right)  \in\left[  0,T\right]  ^{2},$ and $n\in\mathbb{N}.$ Lastly let
\[
R:=\max_{\left(  s,t\right)  \in\left[  0,T\right]  ^{2}}\left\vert
V_{\log\left(  X_{s,t}\right)  }\right\vert _{M}\text{ and }K_{3}:=\left\{
m\in M:d\left(  m,K_{2}\right)  \leq R\right\}  .
\]
We then have $\mu_{t,s}\left(  K_{2}\right)  \subset K_{3}$ for all $\left(
s,t\right)  \in\left[  0,T\right]  ^{2}.$ Let $\varphi\in C_{c}^{\infty
}\left(  M,\left[  0,1\right]  \right)  $ such that $\varphi=1$ on a
neighborhood of $K_{3}$ and define $\tilde{V}:=\varphi V.$ We will finish the
proof by showing $\tilde{\mu}_{t,s}^{n}=\mu_{t,s}^{n}$ on $K$ for all
$n\in\mathbb{N}$ and $\left(  s,t\right)  \in\left[  0,T\right]  ^{2}.$

If $m\in K_{2}$ and $0\leq\tau\leq1,$ then
\[
d\left(  e^{\tau V_{\log\left(  X_{st}\right)  }}\left(  m\right)  ,m\right)
\leq\tau\left\vert V_{\log\left(  X_{s,t}\right)  }\right\vert _{M}\leq R
\]
and so $e^{\tau V_{\log\left(  X_{st}\right)  }}\left(  m\right)  \in K_{3}$
for all $0\leq\tau\leq1.$ Therefore,%
\begin{align*}
\frac{d}{d\tau}e^{\tau V_{\log\left(  X_{st}\right)  }}\left(  m\right)   &
=V_{\log\left(  X_{st}\right)  }\left(  e^{\tau V_{\log\left(  X_{st}\right)
}}\left(  m\right)  \right) \\
&  =\tilde{V}_{\log\left(  X_{st}\right)  }\left(  e^{\tau V_{\log\left(
X_{st}\right)  }}\left(  m\right)  \right)  \text{ for }0\leq\tau\leq1.
\end{align*}
From this we conclude that
\[
e^{\tau V_{\log\left(  X_{st}\right)  }}\left(  m\right)  =e^{\tau\tilde
{V}_{\log\left(  X_{st}\right)  }}\left(  m\right)  \text{ }\forall~m\in
K_{2}\text{ and }0\leq\tau\leq1
\]
and in particular this implies that $\tilde{\mu}_{t,s}=\mu_{t,s}$ on $K_{2}$
for all $\left(  s,t\right)  \in\left[  0,T\right]  ^{2}.$ This shows
$\tilde{\mu}_{t,s}^{n}=\mu_{t,s}^{n}$ on $K$ for all $\left(  s,t\right)
\in\left[  0,T\right]  ^{2}$ when $n=1.$ We now finish proof by induction on
$n.$ If $m\in K,$ $n\in\mathbb{N},$ and
\[
u:=t-\frac{1}{n+1}\left(  t-s\right)  ,
\]
then $\mu_{u,s}^{n}\left(  m\right)  \in K_{2}$ and so using the induction
hypothesis and the fact that $\tilde{\mu}_{t,s}=\mu_{t,s}$ on $K_{2}$ we find,%
\begin{align*}
\tilde{\mu}_{t,s}^{n+1}\left(  m\right)   &  =\tilde{\mu}_{t,u}\circ\tilde
{\mu}_{u,s}^{n}\left(  m\right)  =\tilde{\mu}_{t,u}\circ\mu_{u,s}^{n}\left(
m\right) \\
&  =\mu_{t,u}\circ\mu_{u,s}^{n}\left(  m\right)  =\mu_{t,s}^{n+1}\left(
m\right)  .
\end{align*}

\end{proof}

To prove Theorem \ref{thm.10.27}, we will (after \textquotedblleft
localization\textquotedblright) apply Theorem \ref{thm.11.25} with $\mu_{t,s}$
as in Definition \ref{def.10.25}. Hence to carry out this proof we must verify
that $\mu_{t,s}$ in Definition \ref{def.10.25} satisfies the hypotheses in
Theorem \ref{thm.10.27}. The most challenging aspect of the proof is to
control the size of $\operatorname{Lip}\left(  \mu_{t,s}^{\pi}\right)  $ over
all $1/3$ - special partitions $\pi$ of $J\left(  s,t\right)  .$ Following
Ballieul, we will do this by proving a version of Lemma \ref{lem.11.17} with
$\mu_{t,s}$ replaced by $\left(  \mu_{t,s}\right)  _{\ast}.$

\begin{theorem}
\label{thm.13.4}Let $V:\mathbb{R}^{d}\rightarrow\Gamma\left(  TM\right)  $ be
a dynamical system such that
\[
\left\vert V^{\left(  \kappa\right)  }\right\vert _{M}+\left\vert \nabla
V^{\left(  \kappa\right)  }\right\vert _{M}+H_{M}\left(  V^{\left(
\kappa\right)  }\right)  <\infty,
\]
let $1/\left(  \kappa+1\right)  <\alpha\leq1/\kappa,$ $X_{s,t}\in
G_{geo}^{\left(  \kappa\right)  }\left(  \mathbb{R}^{d}\right)  $ be an
$\alpha$-H\"{o}lder geometric rough path in $\mathbb{R}^{d},$ and for $\left(
s,t\right)  \in\left[  0,T\right]  ^{2},$ let
\[
\mu_{t,s}:=e^{V_{\log\left(  X_{st}\right)  }}\in\mathrm{Diff}\left(
M\right)
\]
as in Definition \ref{def.10.25}. Then $\mu_{t,s}$ is a good approximate flow,
i.e.%
\begin{align*}
d_{M}\left(  \mu_{t,s\ast},Id_{TM}\right)   &  \leq C\left(  \left\vert
V^{\left(  \kappa\right)  }\right\vert _{M},\left\vert \nabla V^{\left(
\kappa\right)  }\right\vert _{M}\right)  \left\vert t-s\right\vert ^{\alpha
}\text{ and}\\
d_{M}^{TM}\left(  \mu_{t,s\ast}\mu_{s,r\ast},\mu_{t,r\ast}\right)   &
\leq\mathcal{K}\left(  \left\vert V^{\left(  \kappa\right)  }\right\vert
_{M},\left\vert \nabla V^{\left(  \kappa\right)  }\right\vert _{M}%
,H_{M}\left(  V^{\left(  \kappa\right)  }\right)  \right)  \left\vert
t-r\right\vert ^{\theta},
\end{align*}
where $\theta=\left(  \kappa+1\right)  \alpha$ as Assumption \ref{ass.2}. The
constant $C$ and $\mathcal{K}$ also depend on $\sup_{\left(  s,t\right)
\in\left[  0,T\right]  ^{2}}\left\vert \log\left(  X_{s,t}\right)  \right\vert
.$ We suppress this dependence as we view the geometric rough path, $X,$ as
being fixed.
\end{theorem}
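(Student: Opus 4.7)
The plan is to mirror the proof of Theorem 13.1 exactly, but at the level of differentials, by invoking Theorem 12.6 in place of Theorem 12.3. The key algebraic input is the multiplicativity of the geometric rough path: setting $A := \log(X_{r,s})$ and $B := \log(X_{s,t})$, one has
\[
\log(e^{A} e^{B}) \;=\; \log(X_{r,s} X_{s,t}) \;=\; \log(X_{r,t}),
\]
so that $e_*^{V_{\log(e^A e^B)}} = \mu_{t,r*}$, while $(e^{V_B} \circ e^{V_A})_* = \mu_{t,s*} \circ \mu_{s,r*}$. Thus both inequalities in Theorem 12.6 translate directly into statements about the approximate flow $\mu$.

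As a preliminary step I would record the Hölder control on $A$ and $B$ in the homogeneous norm $N$, exactly as in the proof of Theorem 13.1. Using that $X$ is an $\alpha$-Hölder geometric rough path together with the standard comparison $N(\log(g)) \le c_\kappa N(g)$ for $g \in G_{\text{geo}}^{(\kappa)}(\mathbb{R}^{d})$, one gets
\[
N(A) \le c_\kappa c\,|s-r|^{\alpha}, \qquad N(B) \le c_\kappa c\,|t-s|^{\alpha}, \qquad N(A) + N(B) \le 2 c_\kappa c\,|t-r|^{\alpha},
\]
and consequently $|A|, |B|$ are bounded uniformly on $[0,T]^{2}$ in terms of $T$, $c$, and $\kappa$.

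For the first estimate, substitute $B = \log(X_{s,t})$ into the first inequality of Theorem 12.6 to obtain
\[
d_M^{TM}(\mu_{t,s*}, Id_{TM}) \le \Bigl[\,|V^{(\kappa)}|_M + |\nabla V^{(\kappa)}|_M\, e^{|\nabla V^{(\kappa)}|_M |B|}\Bigr] |B|.
\]
Since $|B| \le Q_{[1,\kappa]}(N(B)) \lesssim |t-s|^\alpha$ on $[0,T]^{2}$ (the higher-order term $N(B)^{\kappa}$ being absorbed by a factor of $T^{(\kappa-1)\alpha}$) and the exponential factor is bounded by a constant depending on the same data, this yields the desired bound with $C = C(|V^{(\kappa)}|_M, |\nabla V^{(\kappa)}|_M)$ (with implicit $T$-dependence as noted in the statement).

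For the second estimate, apply the second inequality of Theorem 12.6 to the same $A,B$:
\[
d_M^{TM}\bigl(\mu_{t,s*}\mu_{s,r*}, \mu_{t,r*}\bigr) \le \mathcal{K}_1 \cdot N(A)\,N(B)\, Q_{(\kappa-1, 2(\kappa-1)]}\bigl(N(A)+N(B)\bigr).
\]
Plugging in the Hölder bounds gives a right-hand side of order $|t-r|^{2\alpha} \bigl(|t-r|^{\alpha\kappa} + |t-r|^{2\alpha(\kappa-1)}\bigr) = |t-r|^{\alpha(\kappa+2)} + |t-r|^{2\alpha\kappa}$. Since $\kappa \ge 1$, both exponents are $\ge \alpha(\kappa+1) = \theta$, so on $[0,T]^{2}$ the excess factors $|t-r|^{\alpha}$ and $|t-r|^{\alpha(\kappa-1)}$ can be absorbed into a constant $\mathcal{K}$ depending on $|V^{(\kappa)}|_M$, $|\nabla V^{(\kappa)}|_M$, $H_M(V^{(\kappa)})$ (via the constant $\mathcal{K}_1$ from Theorem 12.6) and on $T$ and $\sup_{s,t}|\log X_{s,t}|$, as permitted by the statement. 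There is no real obstacle; the only points to track are the algebraic identity $\log(e^A e^B) = \log(X_{r,t})$ (immediate from multiplicativity) and the elementary comparisons between the homogeneous norm and the Euclidean norm on $\mathfrak{g}^{(\kappa)}$, both of which appeared in Theorem 13.1.
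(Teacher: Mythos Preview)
Your proposal is correct and follows essentially the same route as the paper: set $A=\log(X_{r,s})$, $B=\log(X_{s,t})$, use multiplicativity of $X$ to identify $\log(e^{A}e^{B})=\log(X_{r,t})$, and then read off both estimates directly from Theorem~\ref{thm.12.6} together with the H\"older control $N(A),N(B)\lesssim |t-r|^{\alpha}$. The paper's proof is slightly terser in the second estimate (it bounds the right-hand side by $\mathcal{K}\,(N(A)\vee N(B))^{\kappa+1}$ rather than tracking the two exponents $\alpha(\kappa+2)$ and $2\alpha\kappa$ separately), but this is only a difference in bookkeeping, not in strategy.
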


\begin{proof}
As in the proof of Theorem \ref{thm.13.1}, let $A=\log\left(  X_{r,s}\right)
$ and $B=\log\left(  X_{s,t}\right)  ,$ so that $\Gamma\left(  A,B\right)
=\log\left(  X_{r,t}\right)  ,$
\[
\mu_{t,s\ast}=e_{\ast}^{V_{B}},\quad\mu_{s,r\ast}=e_{\ast}^{V_{A}}%
,\quad\text{and }\mu_{t,r\ast}=e_{\ast}^{V_{\Gamma\left(  A,B\right)  }}.
\]
Thus by Theorem \ref{thm.12.6},
\begin{align*}
d_{M}^{TM}\left(  \mu_{t,s\ast},Id_{TM}\right)   &  \leq\left(  \left\vert
V^{\left(  \kappa\right)  }\right\vert _{M}+\left\vert \nabla V^{\left(
\kappa\right)  }\right\vert _{M}e^{\left\vert \nabla V^{\left(  \kappa\right)
}\right\vert _{M}^{2}\left\vert \log\left(  X_{s,t}\right)  \right\vert
}\right)  \cdot\left\vert \log\left(  X_{s,t}\right)  \right\vert \\
&  \leq C\left(  \left\vert V^{\left(  \kappa\right)  }\right\vert
_{M},\left\vert \nabla V^{\left(  \kappa\right)  }\right\vert _{M}\right)
\left\vert t-s\right\vert ^{\alpha}%
\end{align*}
and
\[
d_{M}^{TM}\left(  \mu_{t,s\ast}\mu_{s,r\ast},\mu_{t,r\ast}\right)
\leq\mathcal{K}\cdot\left(  N\left(  A\right)  \vee N\left(  B\right)
\right)  ^{\kappa+1}\leq\mathcal{K}\left\vert t-r\right\vert ^{\theta}.
\]

\end{proof}

\begin{corollary}
\label{cor.13.5}Under the same assumptions of Theorem \ref{thm.13.4}, if
$\varepsilon\in(0,1/2]$ is given, there exists a $\delta>0$ and $L<\infty$
such that for any $\left(  s,t\right)  \in\left[  0,T\right]  ^{2}$ with
$\left\vert t-s\right\vert \leq\delta$ and any $\varepsilon$-special
partitions, $\pi,$ of $J\left(  s,t\right)  ,$%
\begin{equation}
d_{M}^{TM}\left(  \mu_{t,s\ast}^{\pi},\mu_{t,s\ast}\right)  \leq L\left\vert
t-s\right\vert ^{\theta}. \label{e.13.1}%
\end{equation}
where $\theta=\left(  \kappa+1\right)  \alpha>1$ as in Eq. (\ref{e.10.14}).
\end{corollary}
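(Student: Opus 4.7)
The plan is to mimic the induction carried out in Lemma \ref{lem.11.17}, but working on $TM$ with the metric $d^{TM}$ instead of on $M$. The bounds supplied by Theorem \ref{thm.13.4} play the role of Eq. (\ref{e.11.6}), and the Lipschitz estimate on composition is replaced by the fibrewise linearity of differentials together with Propositions \ref{pro.12.7} and \ref{pro.12.8}.

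The induction is on $\#(\pi)$. When $\#(\pi)=1$ there is nothing to prove. Assuming the bound holds for every $\varepsilon$-special partition with at most $n$ intervals, consider $\pi\in\mathcal{P}(s,t)$ with $\#(\pi)=n+1$. By the definition of $\varepsilon$-special, there is a $u\in\pi\cap J_{\varepsilon}(s,t)$ for which $\pi_{\leq u}$ and $\pi_{\geq u}$ are $\varepsilon$-special partitions of $J(s,u)$ and $J(u,t)$ respectively. Since differentiation commutes with composition, $\mu_{t,s\ast}^{\pi}=\mu_{t,u\ast}^{\pi_{\geq u}}\circ\mu_{u,s\ast}^{\pi_{\leq u}}$, and the triangle inequality for $d^{TM}$ gives
\begin{align*}
d_{M}^{TM}\left(\mu_{t,s\ast}^{\pi},\mu_{t,s\ast}\right)
&\leq d_{M}^{TM}\!\left(\mu_{t,u\ast}^{\pi_{\geq u}}\mu_{u,s\ast}^{\pi_{\leq u}},\,\mu_{t,u\ast}\mu_{u,s\ast}^{\pi_{\leq u}}\right) \\
&\quad+d_{M}^{TM}\!\left(\mu_{t,u\ast}\mu_{u,s\ast}^{\pi_{\leq u}},\,\mu_{t,u\ast}\mu_{u,s\ast}\right) \\
&\quad+d_{M}^{TM}\!\left(\mu_{t,u\ast}\mu_{u,s\ast},\,\mu_{t,s\ast}\right).
\end{align*}
The third term is controlled by Theorem \ref{thm.13.4} by $\mathcal{K}\lvert t-s\rvert^{\theta}$.

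For the second and first terms I need to evaluate $d^{TM}$ on vectors $w=\mu_{u,s\ast}^{\pi_{\leq u}}v$ (respectively $w'=\mu_{u,s\ast}v$) arising from a unit $v\in T_{m}M$, which are no longer unit. Their magnitudes are controlled by Eq.\ (\ref{e.12.2}) together with the induction hypothesis and Theorem \ref{thm.13.4}:
\[
\lvert w\rvert\leq 1+d_{M}^{TM}(\mu_{u,s\ast}^{\pi_{\leq u}},Id_{TM})\leq 1+L\lvert u-s\rvert^{\theta}+C\lvert u-s\rvert^{\alpha},
\]
and similarly for $\lvert w'\rvert$. With such a uniform bound on $\lvert w'\rvert$ in hand, Proposition \ref{pro.12.8} applied to $X=V_{\log X_{u,t}}$ yields
\[
d^{TM}(\mu_{t,u\ast}w,\mu_{t,u\ast}w')\leq \bigl(1+k(t-u)\bigr)\, d^{TM}(w,w'),
\]
where $k(\tau):=e^{2|\nabla V^{(\kappa)}|_{M}\cdot c_{\kappa}|\tau|^{\alpha}}\bigl[1+H_{M}(V^{(\kappa)})\,c_{\kappa}|\tau|^{\alpha}\bigr](1+\mathrm{const}\cdot|\tau|^{\alpha})-1$ tends to $0$ as $\tau\to 0$. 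Taking the supremum over unit $v$ and using the induction hypothesis on $J(s,u)$ bounds the second term by $\bigl(1+k(t-u)\bigr)L\lvert u-s\rvert^{\theta}$. For the first term I use fibrewise linearity of $\mu_{t,u\ast}^{\pi_{\geq u}}$ and $\mu_{t,u\ast}$ together with Eq.\ (\ref{e.12.3}) to reduce to unit vectors: writing $w=\lvert w\rvert\hat{w}$,
\[
d^{TM}(\mu_{t,u\ast}^{\pi_{\geq u}}w,\mu_{t,u\ast}w)\leq (\lvert w\rvert\vee 1)\, d^{TM}(\mu_{t,u\ast}^{\pi_{\geq u}}\hat{w},\mu_{t,u\ast}\hat{w})\leq (\lvert w\rvert\vee 1)\,L\lvert t-u\rvert^{\theta},
\]
by the induction hypothesis on $J(u,t)$.

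Combining the three estimates produces exactly the form of Eq.\ (\ref{e.11.9}),
\[
d_{M}^{TM}(\mu_{t,s\ast}^{\pi},\mu_{t,s\ast})\leq \bigl[\gamma(\theta,\varepsilon)+\tilde{k}(t-s)\bigr]L\lvert t-s\rvert^{\theta}+\mathcal{K}\lvert t-s\rvert^{\theta},
\]
for a continuous increasing function $\tilde{k}$ with $\tilde{k}(0)=0$ absorbing both the $k(t-u)$ factor and the $(\lvert w\rvert\vee 1)-1$ factor (both vanish as $\lvert t-s\rvert\to 0$ by Theorem \ref{thm.13.4}). Choose $\delta>0$ so that $\gamma(\theta,\varepsilon)+\tilde{k}^{\ast}(\delta)<1$, then take $L\geq\mathcal{K}/(1-\gamma(\theta,\varepsilon)-\tilde{k}^{\ast}(\delta))$. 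The induction closes and Eq.\ (\ref{e.13.1}) follows.

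The main obstacle is the \emph{nonlinear, non-isometric} nature of $d^{TM}$: composition no longer satisfies a clean submultiplicative Lipschitz inequality, so I must (i) pass to non-unit vectors $w=\mu_{u,s\ast}^{\pi_{\leq u}}v$, (ii) bound $\lvert w\rvert$ \emph{inductively} via Theorem \ref{thm.13.4} so the constant in Proposition \ref{pro.12.8} stays uniformly close to $1$, and (iii) verify that the resulting error function $\tilde{k}$ still vanishes at $0$ so that the contraction factor $\gamma(\theta,\varepsilon)+\tilde{k}^{\ast}(\delta)$ can be made $<1$. Once this bookkeeping is done, the proof is structurally identical to Lemma \ref{lem.11.17}.
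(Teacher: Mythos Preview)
Your approach is the same as the paper's: induct on $\#(\pi)$, split by the triangle inequality at an $\varepsilon$-special point $u$, control the three pieces respectively by Eq.~(\ref{e.12.3}) plus the induction hypothesis, by Proposition~\ref{pro.12.8}, and by Theorem~\ref{thm.13.4}. All of that is correct.

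There is, however, a genuine bookkeeping issue in your closing step. You write the recursion as
\[
d_{M}^{TM}(\mu_{t,s\ast}^{\pi},\mu_{t,s\ast})\leq \bigl[\gamma(\theta,\varepsilon)+\tilde{k}(t-s)\bigr]L\lvert t-s\rvert^{\theta}+\mathcal{K}\lvert t-s\rvert^{\theta}
\]
and then propose to choose $\delta$ so that $\gamma+\tilde{k}^{\ast}(\delta)<1$, \emph{then} choose $L$. But your own bound $\lvert w\rvert\leq 1+L\lvert u-s\rvert^{\theta}+C\lvert u-s\rvert^{\alpha}$ shows that the ``$(\lvert w\rvert\vee 1)-1$'' contribution to $\tilde{k}$ contains the term $L\lvert u-s\rvert^{\theta}$; after multiplying by $L\lvert t-u\rvert^{\theta}$ this produces a term of order $L^{2}\delta^{\theta}$. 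So $\tilde{k}$ depends on $L$, and the scheme ``pick $\delta$, then pick $L$'' is circular as written.

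The paper resolves this exactly as one would hope, but it does require an extra move: first choose $\delta$ so that the $L$-independent part satisfies $\gamma(\theta,\varepsilon)+C'\delta^{\alpha}<1$, then fix $L$ proportional to $\mathcal{K}/(1-\gamma-C'\delta^{\alpha})$, and \emph{finally} shrink $\delta$ once more so that $\delta^{\theta}L^{2}\leq \mathcal{K}$. With that two-stage choice the quadratic term is absorbed and the induction closes. Your sketch has all the right ingredients; you just need to acknowledge the $L^{2}$ term explicitly and close the loop as above rather than hiding it inside an $L$-independent $\tilde{k}$.
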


\begin{proof}
When $\pi=\left\{  s<u<t\right\}  ,$ Eq. (\ref{e.13.1}) holds with $L=C$ as in
Theorem \ref{thm.13.4}. Thus we will need to take $L\geq c.$ The proof will
now be completed by induction on $n=\#\left(  \pi\right)  .$ Let $\pi$ be and
$\varepsilon$-special partition of $J\left(  s,t\right)  $ and $u\in
J_{\varepsilon}\left(  s,t\right)  \cap\pi$ which exists as $\pi$ is
$\varepsilon$-special. Then for a unit vector, $v_{m}\in T_{m}M,$
\begin{align}
d^{TM}  &  \left(  \mu_{t,s\ast}^{\pi}v_{m},\mu_{t,s\ast}v_{m}\right)
\nonumber\\
&  =d^{TM}\left(  \mu_{t,u\ast}^{\pi}\mu_{u,s\ast}^{\pi}v_{m},\mu_{t,s\ast
}v_{m}\right) \nonumber\\
&  \leq d^{TM}\left(  \mu_{t,u\ast}^{\pi}\mu_{u,s\ast}^{\pi}v_{m},\mu
_{t,u\ast}\mu_{u,s\ast}^{\pi}v_{m}\right) \nonumber\\
&  \quad+d^{TM}\left(  \mu_{t,u\ast}\mu_{u,s\ast}^{\pi}v_{m},\mu_{t,u\ast}%
\mu_{u,s\ast}v_{m}\right)  +d^{TM}\left(  \mu_{t,u\ast}\mu_{u,s\ast}v_{m}%
,\mu_{t,s\ast}v_{m}\right)  . \label{e.13.2}%
\end{align}
By Theorem \ref{thm.13.4},
\[
d_{M}^{TM}\left(  \mu_{t,u\ast}\mu_{u,s\ast}v_{m},\mu_{t,s\ast}v_{m}\right)
\leq C\left\vert t-s\right\vert ^{\theta}.
\]
By Eq. (\ref{e.12.3}) and the induction hypothesis,
\begin{align*}
d_{M}^{TM}\left(  \mu_{t,u\ast}^{\pi}\mu_{u,s\ast}^{\pi}v_{m},\mu_{t,u\ast}%
\mu_{u,s\ast}^{\pi}v_{m}\right)   &  \leq\left(  1\vee\left\vert \mu_{u,s\ast
}^{\pi}v_{m}\right\vert \right)  d_{M}^{TM}\left(  \mu_{t,u\ast}^{\pi}%
,\mu_{t,u\ast}\right) \\
&  \leq\left(  1\vee\left\vert \mu_{u,s\ast}^{\pi}v_{m}\right\vert \right)
L\left\vert t-u\right\vert ^{\theta}.
\end{align*}
Now by Eq. (\ref{e.12.2}), Theorem \ref{thm.13.4}, and the induction
hypothesis,
\begin{align}
\left\vert \mu_{u,s\ast}^{\pi}v_{m}\right\vert  &  \leq\left\vert
v_{m}\right\vert +d^{TM}\left(  \mu_{u,s\ast}^{\pi}v_{m},v_{m}\right)
\nonumber\\
&  \leq1+d^{TM}\left(  \mu_{u,s\ast}^{\pi}v_{m},\mu_{u,s\ast}v_{m}\right)
+d^{TM}\left(  \mu_{u,s\ast}v_{m},v_{m}\right) \nonumber\\
&  \leq1+L\left\vert u-s\right\vert ^{\theta}+\tilde{C}\left\vert
u-s\right\vert ^{\alpha} \label{e.13.3}%
\end{align}
which combined with the previous estimate shows%
\[
d^{TM}\left(  \mu_{t,u\ast}^{\pi}\mu_{u,s\ast}^{\pi}v_{m},\mu_{t,u\ast}%
\mu_{u,s\ast}^{\pi}v_{m}\right)  \leq\left(  1+\tilde{C}\left\vert
u-s\right\vert ^{\alpha}+L\left\vert u-s\right\vert ^{\theta}\right)
L\left\vert t-u\right\vert ^{\theta}.
\]
Thus we have shown so far that%
\begin{align}
d^{TM}\left(  \mu_{t,s\ast}^{\pi}v_{m},\mu_{t,s\ast}v_{m}\right)  \leq &
\left(  1+\tilde{C}\left\vert u-s\right\vert ^{\alpha}+L\left\vert
u-s\right\vert ^{\theta}\right)  L\left\vert t-u\right\vert ^{\theta
}+C\left\vert t-s\right\vert ^{\theta}\nonumber\\
&  +d^{TM}\left(  \mu_{t,u\ast}\mu_{u,s\ast}^{\pi}v_{m},\mu_{t,u\ast}%
\mu_{u,s\ast}v_{m}\right)  . \label{e.13.4}%
\end{align}
By Proposition \ref{pro.12.8} with $X=V_{\log\left(  X_{u,t}\right)  },$
$v_{m}\rightarrow\mu_{u,s\ast}^{\pi}v_{m}$ and $w_{p}\rightarrow\mu_{u,s\ast
}v_{m}$ we have%
\begin{align*}
d^{TM}  &  \left(  \mu_{t,u\ast}\mu_{u,s\ast}^{\pi}v_{m},\mu_{t,u\ast}%
\mu_{u,s\ast}v_{m}\right) \\
&  \leq e^{2\left\vert \nabla V_{\log\left(  X_{u,t}\right)  }\right\vert
_{M}}\left[  1+H_{M}\left(  V_{\log\left(  X_{u,t}\right)  }\right)
\left\vert \mu_{u,s\ast}v_{m}\right\vert \right]  d^{TM}\left(  \mu_{u,s\ast
}^{\pi}v_{m},\mu_{u,s\ast}v_{m}\right) \\
&  \leq e^{c\left\vert u-t\right\vert ^{\alpha}}\left[  1+c\left\vert
u-t\right\vert ^{\alpha}\left\vert \mu_{u,s\ast}v_{m}\right\vert \right]
d^{TM}\left(  \mu_{u,s\ast}^{\pi}v_{m},\mu_{u,s\ast}v_{m}\right) \\
&  \leq e^{c\left\vert u-t\right\vert ^{\alpha}}\left[  1+c\left\vert
u-t\right\vert ^{\alpha}\left\vert \mu_{u,s\ast}v_{m}\right\vert \right]
L\left\vert u-s\right\vert ^{\theta}\\
&  \leq e^{c\left\vert u-t\right\vert ^{\alpha}}\left[  1+c\left\vert
u-t\right\vert ^{\alpha}\left(  1+\tilde{C}\left\vert u-s\right\vert ^{\alpha
}\right)  \right]  L\left\vert u-s\right\vert ^{\theta}\\
&  \leq\left(  1+\hat{c}\left\vert t-s\right\vert ^{\alpha}\right)
L\left\vert u-s\right\vert ^{\theta}%
\end{align*}
wherein we have used Eq. (\ref{e.12.2}) along with Theorem \ref{thm.13.4} to
conclude%
\[
\left\vert \mu_{u,s\ast}v_{m}\right\vert \leq\left\vert v_{m}\right\vert
+d\left(  \mu_{u,s\ast}v_{m},v_{m}\right)  =1+\tilde{C}\left\vert
u-s\right\vert ^{\alpha}.
\]
Combining this result with Eq. (\ref{e.13.4}) shows
\begin{align*}
d^{TM}  &  \left(  \mu_{t,s\ast}^{\pi}v_{m},\mu_{t,s\ast}v_{m}\right) \\
\leq &  \left(  1+\tilde{C}\left\vert u-s\right\vert ^{\alpha}+L\left\vert
u-s\right\vert ^{\theta}\right)  L\left\vert t-u\right\vert ^{\theta}\\
&  +C\left\vert t-s\right\vert ^{\theta}+\left(  1+\hat{c}\left\vert
t-s\right\vert ^{\alpha}\right)  L\left\vert u-s\right\vert ^{\theta}\\
\leq &  \gamma\left(  \varepsilon,\theta\right)  L\left\vert t-s\right\vert
^{\theta}+\left(  \tilde{C}\left\vert u-s\right\vert ^{\alpha}+L\left\vert
u-s\right\vert ^{\theta}\right)  L\left\vert t-u\right\vert ^{\theta}\\
&  +C\left\vert t-s\right\vert ^{\theta}+\hat{c}\left\vert t-s\right\vert
^{\alpha}L\left\vert u-s\right\vert ^{\theta}\\
\leq &  \left(  \left[  \gamma\left(  \varepsilon,\theta\right)  +\left(
\tilde{C}+\hat{c}\right)  \delta^{\alpha}+\delta^{\theta}L\right]  L+C\right)
\left\vert t-s\right\vert ^{\theta}\\
=  &  \left(  \left[  \gamma\left(  \varepsilon,\theta\right)  +C^{\prime
}\delta^{\alpha}+\delta^{\theta}L\right]  L+C\right)  \left\vert
t-s\right\vert ^{\theta}.
\end{align*}

We now choose $\delta>0$ so small that $\gamma\left(  \varepsilon
,\theta\right)  +C^{\prime}\delta^{\alpha}<1,$ for example, choose $\delta$ so
that%
\[
\gamma\left(  \varepsilon,\theta\right)  +C^{\prime}\delta^{\alpha}\leq
\frac{1+\gamma\left(  \varepsilon,\theta\right)  }{2}:=\alpha\left(
\varepsilon,\theta\right)  <1.
\]
Then we want to choose $L$ so that%
\[
\alpha\left(  \varepsilon,\theta\right)  L+2\delta^{\theta}L^{2}+C\leq L\iff
L\geq\frac{1}{1-\alpha\left(  \varepsilon,\theta\right)  }\left[
C+2\delta^{\theta}L^{2}\right]  .
\]
We do this by requiring
\[
L=\frac{2}{1-\alpha\left(  \varepsilon,\theta\right)  }C
\]
and then shrink $\delta$ so that $2\delta^{\theta}L^{2}\leq C.$ For these
choices we have
\[
\alpha\left(  \varepsilon,\theta\right)  L_{1}+2\delta^{\theta}L^{2}+C\leq L
\]
and the induction step is complete.
\end{proof}

\begin{corollary}
[Lip-bounds]\label{cor.13.6}Under the same assumptions of Theorem
\ref{thm.13.4}, to each $\varepsilon\in(0,1/2],$ there exists $C=C\left(
\varepsilon\right)  <\infty$ such that $\operatorname{Lip}\left(  \mu
_{t,s}^{\pi}\right)  \leq C<\infty$ for all $\left(  s,t\right)  \in\left[
0,T\right]  ^{2}$ and all $\varepsilon$-special partitions, $\pi,$ of
$J\left(  s,t\right)  .$
\end{corollary}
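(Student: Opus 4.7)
The plan is to combine Corollary \ref{cor.13.5} with Theorem \ref{thm.13.4} to get a uniform bound on the tangent map $\mu_{t,s\ast}^{\pi}$ for short time intervals, and then use Lemma \ref{lem.11.19} to remove the restriction on $|t-s|$. The key link is Eq. (\ref{e.12.1}), which states $\operatorname{Lip}(f) = |f_\ast|_M$, so bounding $\operatorname{Lip}(\mu_{t,s}^\pi)$ reduces to bounding $|\mu_{t,s\ast}^\pi v_m|$ uniformly over unit vectors $v_m \in TM$.

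First I would fix $\varepsilon \in (0, 1/2]$ and let $\delta > 0$ and $L < \infty$ be the constants provided by Corollary \ref{cor.13.5}, and let $C$ be the constant from Theorem \ref{thm.13.4} controlling $d_M^{TM}(\mu_{t,s\ast}, Id_{TM})$. For any unit vector $v_m \in TM$, any $(s,t) \in [0,T]^2$ with $|t-s| \leq \delta$, and any $\varepsilon$-special partition $\pi$ of $J(s,t)$, the triangle inequality for $d^{TM}$ together with these two estimates gives
\[
d^{TM}\left(\mu_{t,s\ast}^{\pi} v_m, v_m\right) \leq d^{TM}\left(\mu_{t,s\ast}^{\pi} v_m, \mu_{t,s\ast} v_m\right) + d^{TM}\left(\mu_{t,s\ast} v_m, v_m\right) \leq L\delta^{\theta} + C\delta^{\alpha}.
\]
Applying the property $\bigl||w| - |v|\bigr| \leq d^{TM}(v,w)$ from Eq. (\ref{e.12.2}) with $v = v_m$ (of unit length) and $w = \mu_{t,s\ast}^{\pi} v_m$ yields
\[
\left|\mu_{t,s\ast}^{\pi} v_m\right| \leq 1 + L\delta^{\theta} + C\delta^{\alpha} =: C_0.
\]
Taking the supremum over unit $v_m \in TM$ and invoking Eq. (\ref{e.12.1}) gives $\operatorname{Lip}(\mu_{t,s}^{\pi}) \leq C_0$ for every $\varepsilon$-special partition $\pi$ of $J(s,t)$ whenever $|t-s| \leq \delta$.

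Finally, I would apply Lemma \ref{lem.11.19} with this constant $C_0$ and this $\delta$ to conclude that there is a single constant $C = C(\varepsilon) < \infty$ such that $\operatorname{Lip}(\mu_{t,s}^{\pi}) \leq C$ holds for all $(s,t) \in [0,T]^2$ and all $\varepsilon$-special partitions $\pi$ of $J(s,t)$, without the restriction $|t-s| \leq \delta$. The only real content is the short-time bound; the main obstacle was already overcome in proving Corollary \ref{cor.13.5}, and the passage from $d^{TM}$-closeness to a norm bound on $\mu_{t,s\ast}^{\pi} v_m$ is immediate from Eq. (\ref{e.12.2}). The extension step via Lemma \ref{lem.11.19} is essentially formal, relying on the flow-like identity $\mu_{t,s}^{\pi} = \mu_{t,u}^{\pi} \circ \mu_{u,s}^{\pi}$ for $u \in J_\varepsilon(s,t) \cap \pi$ together with sub-multiplicativity of $\operatorname{Lip}$.
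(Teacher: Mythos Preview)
Your proposal is correct and follows essentially the same approach as the paper: bound $|\mu_{t,s\ast}^{\pi}v_m|$ for $|t-s|\le\delta$ via Eq.~(\ref{e.12.2}) and Corollary~\ref{cor.13.5}, then invoke Lemma~\ref{lem.11.19} to remove the short-time restriction. The only cosmetic difference is that the paper bounds $|\mu_{t,s\ast}v_m|$ directly using Proposition~\ref{pro.12.7} rather than routing through $v_m$ and Theorem~\ref{thm.13.4}, but the resulting estimate and the structure of the argument are the same.
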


\begin{proof}
By Eq. (\ref{e.12.2}) and Corollary \ref{cor.13.5} if we assume that $\left(
t-s\right)  \leq\delta,$ then (using Eq. (\ref{e.12.1}))
\begin{align*}
\operatorname{Lip}\left(  \mu_{t,s}^{\pi}\right)   &  =\left\vert \mu
_{t,s\ast}^{\pi}\right\vert _{M}\leq\left\vert \mu_{t,s\ast}\right\vert
_{M}+d_{M}^{TM}\left(  \mu_{t,s\ast}^{\pi},\mu_{t,s\ast}\right) \\
&  \leq\left\vert \mu_{t,s\ast}\right\vert _{M}+L\left\vert t-s\right\vert
^{\theta}.
\end{align*}
Moreover by Proposition \ref{pro.12.7},
\[
\left\vert \mu_{t,s\ast}\right\vert _{M}\leq e^{\left\vert \nabla
V_{\log\left(  X_{s,t}\right)  }\right\vert _{M}}\leq e^{C\left\vert
t-s\right\vert ^{\alpha}}.
\]
Thus it follows that
\[
\operatorname{Lip}\left(  \mu_{t,s}^{\pi}\right)  \leq e^{C\delta^{\alpha}%
}+L\delta^{\theta}<\infty.
\]
We may now remove the restriction that $\left(  t-s\right)  \leq\delta$ by an
application of Lemma \ref{lem.11.19}.
\end{proof}

We are now ready to prove Theorem \ref{thm.10.27} which we split into two
parts; 1) existence is subsection \ref{sec.13.1} and 2) uniqueness in
subsection \ref{sec.13.2}.

\begin{theorem}
[Global Existence]\label{thm.13.8}Suppose that $V:\mathbb{R}^{d}%
\rightarrow\Gamma\left(  TM\right)  $ is a dynamical system satisfying
Assumption \ref{ass.3} and there exists a complete metric, $g,$ on $M$ such
that $\left\vert V^{\left(  \kappa\right)  }\right\vert _{M}+\left\vert \nabla
V^{\left(  \kappa\right)  }\right\vert _{M}<\infty.$ Then there exists a
unique function $\varphi\in C\left(  \left[  0,T\right]  ^{2}\times
M,M\right)  $ such that;

\begin{enumerate}
\item $\varphi_{t,t}=Id$ for all $t\in\left[  0,T\right]  ,$

\item $\varphi_{t,s}\circ\varphi_{s,r}=\varphi_{t,r}$ $\forall~0\leq r\leq
s\leq t\leq T,$ and

\item there exists a constant $C<\infty$ such that
\[
d_{M}\left(  \varphi_{t,s},\mu_{t,s}\right)  \leq C\left\vert t-s\right\vert
^{\theta}\text{ ~}\forall~\left(  s,t\right)  \in\left[  0,T\right]  ^{2}.
\]
Moreover, $C\left(  K\right)  :=\sup_{\left(  s,t\right)  \in\left[
0,T\right]  ^{2}}\operatorname{Lip}_{K}\left(  \varphi_{t,s}\right)  <\infty$
for all compact subsets, $K,$ of $M.$
\end{enumerate}
\end{theorem}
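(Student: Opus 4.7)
The strategy is to reduce to a compactly supported setting and apply the abstract almost‐multiplicative function theorem (Theorem \ref{thm.11.25}) to the pre‐flow $\mu_{t,s}=e^{V_{\log X_{s,t}}}$. The obstruction to a direct application is that the hypothesis of Theorem \ref{thm.10.27} controls only $|V^{(\kappa)}|_M$ and $|\nabla V^{(\kappa)}|_M$, whereas the Lipschitz bound $\operatorname{Lip}(\mu^{\pi}_{t,s})\le C$ furnished by Corollary \ref{cor.13.6} requires the stronger hypothesis $H_M(V^{(\kappa)})<\infty$ used in Theorem \ref{thm.13.4}. The localization result (Theorem \ref{thm.13.3}) is precisely what bridges this gap.

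Fix a compact set $K\subset M$. By Theorem \ref{thm.13.3}, there exists $\tilde V:\mathbb R^d\to\Gamma_c(TM)$ such that $\tilde\mu^{\,n}_{t,s}=\mu^n_{t,s}$ on $K$ for every $n\in\mathbb N$ and $(s,t)\in[0,T]^2$, and hence (using the multiplicative structure of the iterated compositions) $\tilde\mu^{\,\pi}_{t,s}=\mu^{\pi}_{t,s}$ on $K$ for every uniform partition $\pi$. Because $\tilde V_A$ has compact support, all three quantities $|\tilde V^{(\kappa)}|_M$, $|\nabla\tilde V^{(\kappa)}|_M$ and $H_M(\tilde V^{(\kappa)})$ are finite. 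Thus Theorem \ref{thm.13.1} applied to $\tilde V$ gives the almost multiplicative estimate $d_M(\tilde\mu_{t,u}\circ\tilde\mu_{u,s},\tilde\mu_{t,s})\le c|t-s|^\theta$; Proposition \ref{pro.12.7} together with the bound on $|\log X_{s,t}|$ gives a Lipschitz modulus $\operatorname{Lip}(\tilde\mu_{t,s})\le 1+k(t-s)$ with $k(0^+)=0$; and Corollary \ref{cor.13.6} gives a uniform bound $\operatorname{Lip}(\tilde\mu^{\,\pi}_{t,s})\le C$ over all $1/3$‐special partitions. Hence the hypotheses of Theorem \ref{thm.11.25} are satisfied, producing a unique $\tilde\varphi\in C([0,T]^2\times M,M)$ that is multiplicative, has $\tilde\varphi_{t,t}=\mathrm{Id}$, is locally Lipschitz with constant bounded by $C$, and satisfies $d_M(\tilde\varphi_{t,s},\tilde\mu_{t,s})\le L|t-s|^\theta$.

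I next define $\varphi$ by consistency. For $m\in K$, Proposition \ref{pro.11.28} identifies $\tilde\varphi_{t,s}(m)=\lim_n\tilde\mu^{\,(n)}_{t,s}(m)=\lim_n\mu^{(n)}_{t,s}(m)$, the second equality by Theorem \ref{thm.13.3}. The right hand side depends only on $V$ and $m$, not on the localization $\tilde V$, so setting $\varphi_{t,s}(m):=\tilde\varphi_{t,s}(m)$ for any localization whose $K$ contains $m$ gives an unambiguous element of $C([0,T]^2\times M,M)$. Properties (1) and (2) transfer from $\tilde\varphi$ to $\varphi$ by exhausting $M$ with compacta (any finite orbit under the flow identity lies in some $K$). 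For (3), I observe that the constant $L$ produced by Lemma \ref{lem.11.17} and Theorem \ref{thm.11.18} depends only on the pair $(|V^{(\kappa)}|_M,|\nabla V^{(\kappa)}|_M)$ entering the almost multiplicative estimate of Theorem \ref{thm.13.1} and on the Lipschitz modulus of $\tilde\mu_{t,s}$ bounded via Proposition \ref{pro.12.7}; none of these quantities sees the curvature term $H_M$, and each equals its counterpart for $V$ on $K$. Consequently $L$ may be chosen independently of $K$, yielding the uniform bound $d_M(\varphi_{t,s},\mu_{t,s})\le C|t-s|^\theta$. The local Lipschitz estimate $\operatorname{Lip}_K(\varphi_{t,s})\le C(K)$ is inherited from $\operatorname{Lip}(\tilde\varphi_{t,s})$ in Theorem \ref{thm.11.25}.

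Uniqueness of $\varphi$ is handled last. Any $\psi\in C([0,T]^2\times M,M)$ satisfying (1)–(3) restricts, on each compact $K$, to a multiplicative family $\psi_{t,s}$ with $d_K(\psi_{t,s},\tilde\mu_{t,s})=d_K(\psi_{t,s},\mu_{t,s})\le C|t-s|^\theta$; Theorem \ref{thm.11.25} applied to the corresponding localization forces $\psi_{t,s}=\tilde\varphi_{t,s}=\varphi_{t,s}$ on $K$, and $K$ was arbitrary. The main obstacle in this program is keeping the constant $L$ in (3) genuinely uniform in $m\in M$, i.e.\ disentangling the role of $H_M(V^{(\kappa)})$ (which is used only to obtain the Lipschitz control of $\mu^{\pi}$ and is naturally localization‐dependent) from the role of $|V^{(\kappa)}|_M$ and $|\nabla V^{(\kappa)}|_M$ (which govern the almost‐multiplicative defect and carry no curvature); this is precisely what the splitting of Theorem \ref{thm.13.1} versus Corollary \ref{cor.13.6} was designed to allow.
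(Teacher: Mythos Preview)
Your existence argument follows the paper's strategy closely, but the justification for the uniformity of $L$ in item (3) is not quite right. When you apply Theorem \ref{thm.13.1} to the localized system $\tilde V=\chi V$ (with $\chi$ a cutoff), the resulting constants depend on $|\tilde V^{(\kappa)}|_M$ and $|\nabla\tilde V^{(\kappa)}|_M$; these involve iterated Lie brackets and covariant derivatives of $\chi V_a$, hence derivatives of $\chi$, and are \emph{not} simply equal to $|V^{(\kappa)}|_M$ and $|\nabla V^{(\kappa)}|_M$ ``on $K$''. The paper avoids this by applying Theorem \ref{thm.13.1}, Lemma \ref{lem.11.17} and Theorem \ref{thm.11.18} directly to the unlocalized $\mu$ to obtain $d_M(\mu^{\pi}_{t,s},\mu_{t,s})\le L_{1/3}|t-s|^\theta$ with $L_{1/3}$ depending only on $|V^{(\kappa)}|_M$ and $|\nabla V^{(\kappa)}|_M$; the localization is then used \emph{only} to establish that $\mu^{(n)}_{t,s}$ converges (this is where Corollary \ref{cor.13.6}, and hence $H_M$, is needed), after which one passes to the limit in the already-uniform estimate. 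You correctly flagged this as the main obstacle; the resolution is to decouple the uniform product bound from the localization rather than to claim the localized constants coincide with the global ones.

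Your uniqueness argument has a more serious gap. Theorem \ref{thm.11.25} asserts uniqueness among $\psi$ satisfying $d_M(\psi_{t,s},\tilde\mu_{t,s})\le L|t-s|^\theta$ \emph{globally}, but your candidate $\psi$ only satisfies $d_K(\psi_{t,s},\tilde\mu_{t,s})\le C|t-s|^\theta$, since $\tilde\mu$ and $\mu$ agree only on (a neighbourhood of) $K$. You therefore cannot invoke that uniqueness statement on the localized problem. The paper instead runs a direct telescoping argument: given compact $K$, set $K':=\bigcup_{s,t}\psi_{t,s}(K)\cup\bigcup_{s,t}\varphi_{t,s}(K)$, let $c:=\sup_{s,t}\operatorname{Lip}_{K'}(\varphi_{t,s})$ (finite by the local Lipschitz bound already established), and for a uniform partition $(s_k)$ of $J(s,t)$ estimate
\[
d_K(\psi_{t,s},\varphi_{t,s})\le d_{\psi_{s_1,s}(K)}(\psi_{t,s_1},\varphi_{t,s_1})+c\,d_K(\psi_{s_1,s},\varphi_{s_1,s})\le d_{\psi_{s_1,s}(K)}(\psi_{t,s_1},\varphi_{t,s_1})+2cC|s_1-s|^\theta,
\]
iterate along the partition, and let the mesh go to zero. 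This uses only the local Lipschitz constant of $\varphi$ on $K'$ and the global estimate $d_M(\psi_{t,s},\varphi_{t,s})\le 2C|t-s|^\theta$, neither of which requires global comparison with $\tilde\mu$.
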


\subsection{Existence proof for Theorem \ref{thm.10.27}\label{sec.13.1}}

Suppose that $V:\mathbb{R}^{d}\rightarrow\Gamma\left(  TM\right)  $ is
dynamical system of complete vector fields and $g$ is a Riemannian metric on
$M$ so that $\left\vert V^{\left(  \kappa\right)  }\right\vert _{M}+\left\vert
\nabla V^{\left(  \kappa\right)  }\right\vert _{M}<\infty.$ As usual let
\[
\mu_{t,s}:=e^{V_{\log\left(  X_{s,t}\right)  }}.
\]
By Proposition \ref{pro.12.7} we know that
\begin{align*}
\operatorname{Lip}\left(  \mu_{t,s}\right)   &  =\left\vert e_{\ast}%
^{V_{\log\left(  X_{s,t}\right)  }}\right\vert _{M}\leq e^{\left\vert \nabla
V_{\log\left(  X_{s,t}\right)  }\right\vert _{M}}\leq e^{\left\vert \nabla
V^{\left(  \kappa\right)  }\right\vert _{M}\left\vert \log\left(
X_{s,t}\right)  \right\vert }\\
&  \leq e^{c\left\vert \nabla V^{\left(  \kappa\right)  }\right\vert
_{M}\left\vert t-s\right\vert ^{\alpha}}=1+O\left(  \left\vert t-s\right\vert
^{\alpha}\right)  .
\end{align*}
From Theorem \ref{thm.13.1} we know%
\[
d_{M}\left(  \mu_{t,s}\circ\mu_{s,r},\mu_{t,r}\right)  \leq\mathcal{K}%
\left\vert t-r\right\vert ^{\alpha}\text{ for all }0\leq r\leq s\leq t\leq T.
\]
where $\mathcal{K=K}\left(  \left\vert V^{\left(  \kappa\right)  }\right\vert
_{M},\left\vert \nabla V^{\left(  \kappa\right)  }\right\vert _{M}\right)  .$
We can now apply the Trotter approximation bounds in Lemma \ref{lem.11.17}
along with the large-time extension Theorem \ref{thm.11.18} to conclude, for
ever $\varepsilon\in(0,\frac{1}{2}],$ there exists and $L_{\varepsilon}%
<\infty$ such that%
\begin{equation}
d\left(  \mu_{t,s}^{\pi},\mu_{t,s}\right)  \leq L_{\varepsilon}\left\vert
t-s\right\vert ^{\theta}\text{ for all }\left(  s,t\right)  \in\left[
0,T\right]  ^{2}. \label{e.13.5}%
\end{equation}

Let us now further assume that $g$ is a complete metric and choose compact
subsets, $\left\{  K_{N}\right\}  _{N=1}^{\infty},$ of $M$ so that
$K_{N}\subset K_{N+1}^{o}$ for all $N\in\mathbb{N}$ and $K_{N}\uparrow M$ as
$N\uparrow\infty.$ By Theorem \ref{thm.13.3} there exists $V^{N}\in\Gamma
_{c}\left(  TM\right)  $ so that if $\mu_{N,t,s}:=e^{V_{\log\left(
X_{s,t}\right)  }^{N}},$ then
\[
\mu_{N,t,s}^{n}=\mu_{t,s}^{n}\text{ on }K_{N}\text{ for all }n\in\mathbb{N}.
\]
We further have by Corollary \ref{cor.13.6} that to each $\varepsilon
\in(0,1/2],$ there exists $C=C_{N}\left(  \varepsilon\right)  <\infty$ such
that $\operatorname{Lip}\left(  \mu_{N,t,s}^{\pi}\right)  \leq C<\infty$ for
all $\left(  s,t\right)  \in\left[  0,T\right]  ^{2}$ and all $\varepsilon
$-special partitions, $\pi,$ of $J\left(  s,t\right)  .$ These assertions
verify the assumptions of Theorem \ref{thm.11.25} from which it follows that
$\varphi_{N,t,s}=\lim_{n\rightarrow\infty}\mu_{N,t,s}^{2^{n}}$ exists
uniformly on $M$ and moreover, $\varphi_{N,t,s}\in C\left(  M,M\right)  $ is a
semi-group such that%
\[
d\left(  \varphi_{N,t,s},\mu_{N,t,s}\right)  \leq L_{1/3}\left(  N\right)
\left\vert t-s\right\vert ^{\theta}.
\]
If $\hat{N}>N,$ then we will have
\[
\varphi_{\hat{N},t,s}=\lim_{n\rightarrow\infty}\mu_{\hat{N},t,s}^{2^{n}}%
=\lim_{n\rightarrow\infty}\mu_{t,s}^{2^{n}}=\lim_{n\rightarrow\infty}%
\mu_{N,t,s}^{2^{n}}=\varphi_{N,t,s}\text{ on }K_{N}.
\]
This shows that it is well defined to put $\varphi_{t,s}:=\varphi_{N,t,s}$ on
$K_{N}$ and moreover we have in fact shown that $\varphi_{t,s}=\lim
_{n\rightarrow\infty}\mu_{t,s}^{2^{n}}$ uniformly on compact subsets of $M.$
Passing to the limit in Eq. (\ref{e.13.5}) shows that
\[
d\left(  \varphi_{t,s},\mu_{t,s}\right)  \leq L_{1/3}\left\vert t-s\right\vert
^{\theta}\text{ for all }\left(  s,t\right)  \in\left[  0,T\right]  ^{2}.
\]
Choose $\hat{N}$ sufficiently large such that $\cup_{0\leq s,t\leq T}%
\varphi_{N,t,s}\left(  K_{N}\right)  $ is a compact subset of $K_{\hat{N}}%
^{o}$ so that if $m\in K_{N},$ then
\begin{align*}
\varphi_{t,s}\circ\varphi_{s,r}\left(  m\right)   &  =\varphi_{t,s}%
\circ\varphi_{N,s,r}\left(  m\right)  =\varphi_{\hat{N},t,s}\circ
\varphi_{N,s,r}\left(  m\right) \\
&  =\varphi_{\hat{N},t,s}\circ\varphi_{\hat{N},s,r}\left(  m\right)
=\varphi_{\hat{N},t,r}\left(  m\right)  =\varphi_{t,r}\left(  m\right)  .
\end{align*}
As $N\in\mathbb{N}$ was arbitrary and $K_{N}\uparrow M$ as $N\uparrow\infty$
we conclude that $\varphi_{t,s}\circ\varphi_{s,r}=\varphi_{t,r}.$ Thus we have
proved the existence assertion in Theorem \ref{thm.10.27}.

\subsection{Uniqueness proof for Theorem \ref{thm.10.27}\label{sec.13.2}}

One way to prove uniqueness is to show, with $\psi_{t,s}$ satisfies items
1.-3. of Theorem \ref{thm.10.27}, for $m\in M$ that $x_{t}:=\psi_{t,0}\left(
m\right)  $ (or more generally $x_{t}:=\psi_{t,s}\left(  m\right)  $ for
$t\geq s)$ satisfies the RDE as described in Proposition \ref{pro.10.28} and
that solutions in this sense are unique. This would then show that $\psi
_{t,s}\left(  m\right)  =\varphi_{t,s}\left(  m\right)  $ where $\psi$ is any
other solution as described in Theorem \ref{thm.10.27}. We wish to avoid
developing the path-wise notion of solutions in this paper and so we will try
to use a variant of proof given in Theorem \ref{thm.11.25} instead.

Let $\left(  s,t,m\right)  \rightarrow\psi_{t,s}\left(  m\right)  $ be a
continuous function which satisfies 1.-3. of Theorem \ref{thm.10.27} and
$\varphi_{t,s}$ be the solution we have already constructed. Then by the
triangle inequality we know that
\[
d_{M}\left(  \varphi_{t,s},\psi_{t,s}\right)  \leq2C\left\vert t-s\right\vert
^{\theta}\text{ ~}\forall~\left(  s,t\right)  \in\left[  0,T\right]  ^{2}.
\]
Given a compact subset, $K\subset M,$ let
\[
K^{\prime}=\left[  \cup_{\left(  s,t\right)  \in\left[  0,T\right]  ^{2}}%
\psi_{t,s}\left(  K\right)  \right]  \cup\left[  \cup_{\left(  s,t\right)
\in\left[  0,T\right]  ^{2}}\varphi_{t,s}\left(  K\right)  \right]
\]
which is still compact since it is the union of two compact sets. Let
$c:=c\left(  K^{\prime}\right)  :=\sup_{\left(  s,t\right)  \in\left[
0,T\right]  ^{2}}\operatorname{Lip}_{K^{\prime}}\left(  \varphi_{s,t}\right)
<\infty.$ Next suppose $\left(  s,t\right)  \in\left[  0,T\right]  ^{2},$
$n\in\mathbb{N},$ and $s_{k}:=s+k\frac{t-s}{n}$ for $0\leq k\leq n.$ We then
find, using the notation in Eq. (\ref{e.10.15}) with $U=K$ and $U=\psi
_{s_{1},s_{0}}\left(  K\right)  ),$ that%
\begin{align*}
d_{K}\left(  \psi_{t,s},\varphi_{t,s}\right)   &  =d_{K}\left(  \psi_{t,s_{1}%
}\circ\psi_{s_{1},s_{0}},\varphi_{t,s_{1}}\circ\varphi_{s_{1},s_{0}}\right) \\
&  \leq d_{K}\left(  \psi_{t,s_{1}}\circ\psi_{s_{1},s_{0}},\varphi_{t,s_{1}%
}\circ\psi_{s_{1},s_{0}}\right)  +d_{K}\left(  \varphi_{t,s_{1}}\circ
\psi_{s_{1},s_{0}},\varphi_{t,s_{1}}\circ\varphi_{s_{1},s_{0}}\right) \\
&  \leq d_{\psi_{s_{1},s_{0}}\left(  K\right)  }\left(  \psi_{t,s_{1}}%
,\varphi_{t,s_{1}}\right)  +cd_{K}\left(  \psi_{s_{1},s_{0}},\varphi
_{s_{1},s_{0}}\right) \\
&  \leq d_{\psi_{s_{1},s_{0}}\left(  K\right)  }\left(  \psi_{t,s_{1}}%
,\varphi_{t,s_{1}}\right)  +c2C\left\vert s_{1}-s_{0}\right\vert ^{\theta}.
\end{align*}
Similarly,%
\begin{align*}
d  &  _{\psi_{s_{1},s_{0}}\left(  K\right)  }\left(  \psi_{t,s_{1}}%
,\varphi_{t,s_{1}}\right) \\
&  =d_{\psi_{s_{1},s_{0}}\left(  K\right)  }\left(  \psi_{t,s_{2}}\circ
\psi_{s_{2},s_{1}},\varphi_{t,s_{2}}\circ\varphi_{s_{2},s_{1}}\right) \\
&  \leq d_{\psi_{s_{1},s_{0}}\left(  K\right)  }\left(  \psi_{t,s_{2}}%
\circ\psi_{s_{2},s_{1}},\varphi_{t,s_{2}}\circ\psi_{s_{2},s_{1}}\right)
+d_{\psi_{s_{1},s_{0}}\left(  K\right)  }\left(  \varphi_{t,s_{2}}\circ
\psi_{s_{2},s_{1}},\varphi_{t,s_{2}}\circ\varphi_{s_{2},s_{1}}\right) \\
&  \leq d_{\psi_{s_{2},s_{1}}\left(  \psi_{s_{1},s_{0}}\left(  K\right)
\right)  }\left(  \psi_{t,s_{2}},\varphi_{t,s_{2}}\right)  +cd_{K}\left(
\psi_{s_{2},s_{1}},\varphi_{s_{2},s_{1}}\right) \\
&  \leq d_{\psi_{s_{2},s_{0}}\left(  K\right)  }\left(  \psi_{t,s_{2}}%
,\varphi_{t,s_{2}}\right)  +c2C\left\vert s_{2}-s_{1}\right\vert ^{\theta}%
\end{align*}
and continuing in this way, we may show by induction that
\[
d_{K}\left(  \psi_{t,s},\varphi_{t,s}\right)  \leq d_{\psi_{s_{m},s_{0}%
}\left(  K\right)  }\left(  \psi_{t,s_{m}},\varphi_{t,s_{m}}\right)
+c2C\sum_{k=1}^{m}\left\vert s_{k}-s_{k-1}\right\vert ^{\theta}.
\]
Taking $m=n$ and then letting $n\rightarrow\infty$ in the previous equation
shows,%
\begin{align*}
d_{K}\left(  \psi_{t,s},\varphi_{t,s}\right)  \leq &  c2C\sum_{k=1}%
^{n}\left\vert s_{k}-s_{k-1}\right\vert ^{\theta}\\
&  =c\left(  K^{\prime}\right)  2Cn\cdot\left\vert \frac{t-s}{n}\right\vert
^{\theta}\rightarrow0\text{ as }n\rightarrow\infty.
\end{align*}
This shows $\psi_{t,s}=\varphi_{t,s}$ on $K$ and as $K$ was arbitrary, the
uniqueness proof is complete.

\section{Proofs of Proposition \ref{pro.10.35} and Corollary \ref{cor.10.37}%
\label{sec.14}}

Recall that if $\left(  M,g\right)  $ is a Riemannian manifold, then the
Koszul formula for the Levi-Civita Covariant derivative is;%
\begin{align}
2g\left(  \nabla_{X}Y,Z\right)   &  =X\left[  g\left(  Y,Z\right)  \right]
+Y\left[  g\left(  X,Z\right)  \right]  -Z\left[  g\left(  X,Y\right)  \right]
\nonumber\\
&  +g\left(  \left[  X,Y\right]  ,Z\right)  -g\left(  \left[  X,Z\right]
,Y\right)  -g\left(  \left[  Y,Z\right]  ,X\right)  . \label{e.14.1}%
\end{align}

\begin{corollary}
\label{cor.14.1}Suppose that $\left(  M^{d},g\right)  $ is a Riemannian
manifold such that $TM$ is parallelizable and $V:\mathbb{R}^{d}\rightarrow
\Gamma\left(  TM\right)  $ is a dynamical system satisfying Eq. (\ref{e.10.21}%
) as in Proposition \ref{pro.10.35} and $Q\left(  a,b\right)  \in C^{\infty
}\left(  M,\mathbb{R}^{d}\right)  $ is determined by $\left[  V_{a}%
,V_{b}\right]  =V_{Q\left(  a,b\right)  }$ for all $a,b\in\mathbb{R}^{d},$ see
Eq. (\ref{e.10.22}). Then the Levi-Civita covariant derivative, $\nabla,$ on
$TM$ satisfies
\begin{equation}
\nabla_{V_{a}}V_{b}=\frac{1}{2}V_{Q\left(  a,b\right)  -Q_{a}%
^{\operatorname{tr}}b-Q_{b}^{\operatorname{tr}}a}, \label{e.14.2}%
\end{equation}
where $Q_{a}\in C^{\infty}\left(  M,\operatorname*{End}\left(  \mathbb{R}%
^{d}\right)  \right)  $ is defined by $Q_{a}b=Q\left(  a,b\right)  $ and
$Q_{a}^{\operatorname{tr}}$ denotes the matrix transpose of $Q_{a}.$
\end{corollary}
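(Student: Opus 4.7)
The plan is to apply the Koszul formula (\ref{e.14.1}) with $X = V_a$, $Y = V_b$, and $Z = V_c$ for an arbitrary test direction $c \in \mathbb{R}^d$, and then extract the claimed identity by varying $c$. The key observation that makes the computation short is that the hypothesis $g(V_a, V_b) = a \cdot b$ means the inner products $g(V_x, V_y)$ are \emph{constant} on $M$ whenever $x,y$ are constant vectors in $\mathbb{R}^d$. Consequently all three directional-derivative terms of the Koszul formula drop out:
\[
V_a[g(V_b,V_c)] \;=\; V_b[g(V_a,V_c)] \;=\; V_c[g(V_a,V_b)] \;=\; 0.
\]

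Once these terms are gone, the Koszul formula reduces to
\[
2\,g(\nabla_{V_a}V_b, V_c) \;=\; g([V_a,V_b],V_c) - g([V_a,V_c],V_b) - g([V_b,V_c],V_a).
\]
Now I would substitute the bracket identity $[V_x,V_y] = V_{Q(x,y)}$ from Eq. (\ref{e.10.22}) and use orthonormality once more to convert each of these three terms into an ordinary dot product in $\mathbb{R}^d$: the right-hand side becomes
\[
Q(a,b)\cdot c \;-\; Q(a,c)\cdot b \;-\; Q(b,c)\cdot a.
\]

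The only bookkeeping step is to move $c$ out of the last two terms using the definition $Q_a c := Q(a,c)$ and the adjoint identity $(Q_a c)\cdot b = c\cdot Q_a^{\operatorname{tr}} b$, and similarly for $Q_b$. This rewrites the right-hand side as $\bigl(Q(a,b) - Q_a^{\operatorname{tr}} b - Q_b^{\operatorname{tr}} a\bigr)\cdot c$, which by orthonormality equals $g\bigl(V_{Q(a,b) - Q_a^{\operatorname{tr}} b - Q_b^{\operatorname{tr}} a},\, V_c\bigr)$. Since $\{V_c(m) : c \in \mathbb{R}^d\}$ is an orthonormal frame of $T_m M$ at every $m$, the equality of $g(\nabla_{V_a} V_b, V_c)$ with $\tfrac12 g(V_{Q(a,b) - Q_a^{\operatorname{tr}} b - Q_b^{\operatorname{tr}} a}, V_c)$ for all $c$ forces Eq. (\ref{e.14.2}).

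There is no real obstacle here; the argument is entirely algebraic once the Koszul formula is invoked. The only mild care needed is in the transpose bookkeeping to factor out the test vector $c$, and in recording that the dynamical system hypothesis $g(V_a,V_b) = a\cdot b$ is simultaneously doing two jobs — killing the derivative terms in Koszul and letting inner products of brackets be read off as dot products in $\mathbb{R}^d$.
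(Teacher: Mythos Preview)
Your proposal is correct and follows essentially the same approach as the paper's own proof: apply the Koszul formula with $X=V_a$, $Y=V_b$, $Z=V_c$, observe that the directional-derivative terms vanish because $g(V_a,V_b)=a\cdot b$ is constant, substitute the bracket identity to get dot products, and use the transpose to factor out $c$. The paper's proof is line-for-line the same computation.
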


\begin{proof}
Since $g\left(  V_{a},V_{b}\right)  =a\cdot b$ is constant on $M$ for each
$a,b\in\mathbb{R}^{d},$ the Koszul formula in Eq. (\ref{e.14.1}) shows for
$a,b,c\in\mathbb{R}^{d}$ that,%
\begin{align*}
2g\left(  \nabla_{V_{a}}V_{b},V_{c}\right)   &  =g\left(  \left[  V_{a}%
,V_{b}\right]  ,V_{c}\right)  -g\left(  \left[  V_{a},V_{c}\right]
,V_{b}\right)  -g\left(  \left[  V_{b},V_{c}\right]  ,V_{a}\right) \\
&  =g\left(  V_{Q\left(  a,b\right)  },V_{c}\right)  -g\left(  V_{Q\left(
a,c\right)  },V_{b}\right)  -g\left(  V_{Q\left(  b,c\right)  },V_{a}\right)
\\
&  =Q\left(  a,b\right)  \cdot c-Q\left(  a,c\right)  \cdot b-Q\left(
b,c\right)  \cdot a\\
&  =Q\left(  a,b\right)  \cdot c-c\cdot Q_{a}^{\operatorname{tr}}b-c\cdot
Q_{b}^{\operatorname{tr}}a\\
&  =g\left(  V_{Q\left(  a,b\right)  -Q_{a}^{\operatorname{tr}}b-Q_{b}%
^{\operatorname{tr}}a},V_{c}\right)
\end{align*}
from which Eq. (\ref{e.14.2}) follows.
\end{proof}

\begin{proof}
[Proof of Proposition \ref{pro.10.35}]If $f\in C^{\infty}\left(
M,\mathbb{R}^{d}\right)  $ is a bounded function such that $V_{a_{1}}\dots
V_{a_{k}}f$ is bounded for all $a_{j}\in\mathbb{R}^{d}$ and $k\in
\mathbb{N}_{0},$ then for $a\in\mathbb{R}^{d}$ we have
\[
\left[  V_{a},V_{f}\right]  =V_{Q\left(  a,f\right)  }+V_{V_{a}f}%
=V_{V_{a}f+Q\left(  a,f\right)  }.
\]
The assumptions given in Proposition \ref{pro.10.35} imply that $g:=V_{a}%
f+Q\left(  a,f\right)  \in C^{\infty}\left(  M,\mathbb{R}^{d}\right)  $ is a
bounded function such that $V_{a_{1}}\dots V_{a_{k}}g$ is bounded for all
$a_{j}\in\mathbb{R}^{d}$ and $1\leq k\leq\kappa-1.$ We also have from
Corollary \ref{cor.14.1} that
\[
\nabla_{V_{a}}V_{f}=V_{V_{a}f}+\frac{1}{2}V_{Q\left(  a,f\right)
-Q_{a}^{\operatorname{tr}}f-Q_{f}^{\operatorname{tr}}a}=V_{g}%
\]
where
\[
g:=V_{a}f+\frac{1}{2}\left(  Q\left(  a,f\right)  -Q_{a}^{\operatorname{tr}%
}f-Q_{f}^{\operatorname{tr}}a\right)
\]
is again a bounded function such that $V_{a_{1}}\dots V_{a_{k}}g$ is bounded
for all $a_{j}\in\mathbb{R}^{d}$ and $1\leq k\leq\kappa-1.$ Thus it follows by
induction that $L_{V_{a_{1}}}L_{V_{a_{2}}}\dots L_{V_{a_{k-1}}}V_{a_{k}}$ is a
bounded vector field such that $\nabla_{V_{a}}\left(  L_{V_{a_{1}}}%
L_{V_{a_{2}}}\dots L_{V_{a_{k-1}}}V_{a_{k}}\right)  $ is bounded for all
$1\leq k\leq\kappa$ and $a,a_{j}\in\mathbb{R}^{d}.$
\end{proof}

\begin{proof}
[Proof of Corollary \ref{cor.10.37}]We begin by recalling some frame bundle
basics which may be found in \cite[Chapter III]{Kobayashi1963}. Associate to a
tensor field, $G\in\Gamma\left(  T^{\ast}\left(  M\right)  ^{\otimes k}%
\otimes\operatorname*{End}\left(  TM\right)  \right)  ,$ is the function
\[
\tilde{G}:O\left(  M\right)  \rightarrow Hom\left(  \left[  \mathbb{R}%
^{d}\right]  ^{\otimes k},\operatorname*{End}\left(  \mathbb{R}^{d}\right)
\right)
\]
defined by%
\[
\tilde{G}\left(  u\right)  \left[  a_{1}\otimes\dots\otimes a_{k}\right]
=u^{-1}G\left(  ua_{1}\otimes\dots\otimes ua_{k}\right)  u.
\]
If $A\in so\left(  d\right)  ,$ then a chain rule computation shows,
\begin{equation}
\left(  A^{\ast}\tilde{G}\right)  \left(  u\right)  =-\operatorname{ad}%
_{A}\tilde{G}\left(  u\right)  +\tilde{G}\left(  u\right)  \mathcal{D}_{A}
\label{e.14.3}%
\end{equation}
where%
\begin{equation}
\mathcal{D}_{A}\left[  a_{1}\otimes\dots\otimes a_{k}\right]  :=\sum_{j=1}%
^{k}a_{1}\otimes\dots\otimes a_{j-1}\otimes Aa_{j}\otimes a_{j+1}\dots\otimes
a_{k}. \label{e.14.4}%
\end{equation}
Similarly using the definition of $\nabla G$ one also shows $B_{\left(
\cdot\right)  }\tilde{G}=\widetilde{\nabla G}.$ The dynamical system,
$V:\mathbb{R}^{d}\times so\left(  d\right)  \rightarrow\Gamma\left(  TO\left(
M\right)  \right)  $ as in Eq. (\ref{e.10.25}), trivializes $TO\left(
M\right)  $ and we let $g^{O\left(  M\right)  }$ to be the unique Riemannian
metric on $O\left(  M\right)  $ such that $V$ is isometric when $so\left(
d\right)  $ is equipped with the Hilbert-Schmidt inner product defined by
\[
A\cdot C:=\operatorname{tr}\left(  A^{\ast}C\right)  =-\operatorname{tr}%
\left(  AC\right)  .
\]

Again referring to \cite[Chapter III]{Kobayashi1963}, if $A,C\in so\left(
d\right)  $ and $a,c\in\mathbb{R}^{d},$ then the following commutator formulas
hold;%
\[
\left[  A^{\ast},C^{\ast}\right]  =\left[  A,C\right]  ^{\ast},\text{ }\left[
A^{\ast},B_{a}\right]  =B_{Aa},\text{ and }\left[  B_{a},B_{c}\right]
=-\tilde{R}_{a,c}^{\ast},
\]
where as above, $\tilde{R}_{a,c}\left(  u\right)  :=\tilde{R}\left(  u\right)
a\otimes c=u^{-1}R\left(  ua,uc\right)  u.$ Using these commutator formulas it
follows that $\left[  V_{\left(  a,A\right)  },V_{\left(  c,C\right)
}\right]  =V_{Q_{\left(  \left(  a,A\right)  ,\left(  c,C\right)  \right)  }}$
where
\[
Q_{\left(  \left(  a,A\right)  ,\left(  c,C\right)  \right)  }:=\left(
Ac-Ca,\left[  A,C\right]  \right)  -\left(  0,\tilde{R}_{a,c}\right)  .
\]
Moreover, from the derivative formulas in Eqs. (\ref{e.14.3}) and
(\ref{e.14.4}), it follows that%
\begin{equation}
V_{\left(  a_{1},A_{1}\right)  }\dots V_{\left(  a_{k},A_{k}\right)
}Q_{\left(  \left(  a,A\right)  ,\left(  c,C\right)  \right)  } \label{e.14.5}%
\end{equation}
depends linearly on $\left(  \widetilde{R},\widetilde{\nabla R},\dots
,\widetilde{\nabla^{k}R}\right)  .$ Hence the expressions in Eq.
(\ref{e.14.5}) are bounded functions on $O\left(  M\right)  $ for $0\leq
k\leq\kappa-1$ provided $\left\vert \nabla^{j}R\right\vert _{M}<\infty$ for
$0\leq j\leq\kappa-1.$ The proof is then complete by an application of
Proposition \ref{pro.10.35} and Theorem \ref{thm.10.27}.
\end{proof}

\section{Appendix: Variational Estimates\label{sec.15}}

\begin{lemma}
\label{lem.15.1}If $\sigma\in AC\left(  \left[  0,T\right]  ,M\right)  $ and
$o\in M$ is fixed, then $\left[  0,T\right]  \ni t\rightarrow d\left(
o,\sigma\left(  t\right)  \right)  $ is absolutely continuous and moreover,%
\[
\left\vert \frac{d}{dt}d\left(  o,\sigma\left(  t\right)  \right)  \right\vert
\leq\left\vert \dot{\sigma}\left(  t\right)  \right\vert \text{ for a.e. }t.
\]

\end{lemma}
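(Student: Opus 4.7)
The plan is to reduce everything to the triangle inequality for $d$ together with the standard length bound $d(\sigma(s),\sigma(t)) \le \int_s^t |\dot\sigma(r)|\,dr$ for absolutely continuous curves on a Riemannian manifold, and then invoke Lebesgue's differentiation theorem.

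First I would note that by the triangle inequality for the length metric $d$,
\[
\bigl|d(o,\sigma(t)) - d(o,\sigma(s))\bigr| \le d(\sigma(s),\sigma(t)) \qquad \forall\,s,t \in [0,T],
\]
so the map $t \mapsto d(o,\sigma(t))$ is $1$-Lipschitz relative to the pullback pseudo-metric $(s,t) \mapsto d(\sigma(s),\sigma(t))$. Next, since $\sigma \in AC([0,T],M)$, its speed $|\dot\sigma|$ is integrable and $\sigma$ has finite length on every subinterval with $\operatorname{length}(\sigma|_{[s,t]}) = \int_s^t |\dot\sigma(r)|\,dr$; because $d$ is the infimum of lengths of connecting paths, this yields
\[
d(\sigma(s),\sigma(t)) \le \int_s^t |\dot\sigma(r)|\,dr \qquad \forall\,0 \le s \le t \le T.
\]

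Combining the two inequalities shows that for any finite family of non-overlapping subintervals $\{(s_i,t_i)\}_{i=1}^n$ of $[0,T]$,
\[
\sum_{i=1}^n \bigl|d(o,\sigma(t_i)) - d(o,\sigma(s_i))\bigr| \le \sum_{i=1}^n \int_{s_i}^{t_i} |\dot\sigma(r)|\,dr = \int_{\cup_i (s_i,t_i)} |\dot\sigma(r)|\,dr.
\]
Since $|\dot\sigma| \in L^1([0,T])$, its integral is absolutely continuous as a set function, and absolute continuity of $t \mapsto d(o,\sigma(t))$ follows immediately. By Lebesgue's theorem, this real-valued AC function is differentiable a.e.\ with
\[
\left|\tfrac{d}{dt} d(o,\sigma(t))\right| = \lim_{h \to 0} \frac{|d(o,\sigma(t+h)) - d(o,\sigma(t))|}{|h|} \le \lim_{h \to 0} \frac{1}{|h|} \left|\int_t^{t+h} |\dot\sigma(r)|\,dr\right| = |\dot\sigma(t)|
\]
at every Lebesgue point $t$ of $|\dot\sigma|$, which is a.e.\ $t \in [0,T]$.

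The only subtle step is the length bound $d(\sigma(s),\sigma(t)) \le \int_s^t |\dot\sigma(r)|\,dr$, which requires the definition of $AC([0,T],M)$ to be compatible with the metric structure of $(M,g)$; this is standard but worth citing (e.g.\ via local coordinates and using that $\sigma$ is locally rectifiable with length equal to $\int |\dot\sigma|\,dr$). Everything else is a direct application of the triangle inequality and the Lebesgue differentiation theorem.
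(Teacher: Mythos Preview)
Your proof is correct and follows essentially the same approach as the paper: both combine the reverse triangle inequality $|d(o,\sigma(t)) - d(o,\sigma(s))| \le d(\sigma(s),\sigma(t))$ with the length bound $d(\sigma(s),\sigma(t)) \le \int_s^t |\dot\sigma(r)|\,dr$ to conclude. The paper simply states that this inequality ``suffices to prove the claimed results,'' whereas you have spelled out the absolute continuity verification and the Lebesgue differentiation argument explicitly.
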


\begin{proof}
By the (reverse) triangle equality for metrics and the definition of $d$ in
terms of minimization over absolutely continuous paths we find,%
\[
\left\vert d\left(  o,\sigma\left(  t\right)  \right)  -d\left(
o,\sigma\left(  s\right)  \right)  \right\vert \leq d\left(  \sigma\left(
s\right)  ,\sigma\left(  t\right)  \right)  \leq\ell_{g}\left(  \sigma
|_{J\left(  s,t\right)  }\right)  \leq\int_{s}^{t}\left\vert \dot{\sigma
}\left(  \tau\right)  \right\vert d\tau.
\]
This suffices to prove the claimed results in the lemma.
\end{proof}

\begin{corollary}
\label{cor.15.2}Suppose that $\rho:[0,\infty)\rightarrow\left(  0,\infty
\right)  $ is a continuous function,
\[
G\left(  s\right)  :=\int_{0}^{s}\frac{1}{\rho\left(  \sigma\right)  }%
d\sigma,
\]
$o\in M$ is fixed, and $\sigma\in AC\left(  \left[  0,T\right]  ,M\right)  $
satisfies
\begin{equation}
\left\vert \dot{\sigma}\left(  t\right)  \right\vert \leq\rho\left(  d\left(
o,\sigma\left(  t\right)  \right)  \right)  \text{ for a.e. }t\in\left[
0,T\right]  , \label{e.15.1}%
\end{equation}
then
\begin{equation}
G\left(  d\left(  o,\sigma\left(  t\right)  \right)  \right)  \leq G\left(
d\left(  o,\sigma\left(  0\right)  \right)  \right)  +t. \label{e.15.2}%
\end{equation}

\end{corollary}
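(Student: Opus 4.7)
The plan is to reduce the assertion to a one-dimensional differential inequality for the scalar function $r(t):=d(o,\sigma(t))$. By Lemma \ref{lem.15.1}, $r\in AC\left(\left[0,T\right],[0,\infty)\right)$ and for a.e. $t$,
\[
\left\vert \dot{r}\left(t\right)\right\vert \leq\left\vert \dot{\sigma}\left(t\right)\right\vert \leq\rho\left(r\left(t\right)\right),
\]
where the second inequality is the hypothesis (\ref{e.15.1}).

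Next I would compose with $G$. Since $\rho$ is continuous and strictly positive, $1/\rho$ is continuous on $[0,\infty)$, so $G$ is $C^{1}$ with $G^{\prime}\left(s\right)=1/\rho\left(s\right)$. Because $r$ is continuous on the compact interval $\left[0,T\right]$, the range $r\left(\left[0,T\right]\right)$ is a compact subset of $[0,\infty)$ on which $G$ is Lipschitz. Therefore $G\circ r$ is absolutely continuous on $\left[0,T\right]$, and by the chain rule for absolutely continuous compositions with a $C^{1}$ outer function,
\[
\frac{d}{dt}G\left(r\left(t\right)\right)=\frac{\dot{r}\left(t\right)}{\rho\left(r\left(t\right)\right)}\leq\frac{\left\vert \dot{r}\left(t\right)\right\vert }{\rho\left(r\left(t\right)\right)}\leq1\text{ for a.e. }t\in\left[0,T\right].
\]

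Finally I would integrate this inequality from $0$ to $t$ to obtain
\[
G\left(r\left(t\right)\right)-G\left(r\left(0\right)\right)\leq t,
\]
which is exactly (\ref{e.15.2}). The only (mild) obstacle is justifying the chain rule in the $AC$-with-$C^{1}$-outer-function setting, but this is standard once one observes that $G$ is Lipschitz on the compact range of $r$, so $G\circ r$ inherits absolute continuity from $r$ and its pointwise derivative is $G^{\prime}\left(r\right)\dot{r}$ almost everywhere. No additional hypotheses on $\rho$ (such as monotonicity) are needed for this comparison.
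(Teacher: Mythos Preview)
Your proof is correct and follows essentially the same route as the paper: reduce to the scalar function $r(t)=d(o,\sigma(t))$ via Lemma~\ref{lem.15.1}, use that $G$ is locally Lipschitz (you phrase this as Lipschitz on the compact range of $r$) to conclude $G\circ r$ is absolutely continuous, apply the chain rule to get $\frac{d}{dt}G(r(t))\leq 1$ a.e., and integrate. The paper's argument is identical in structure and substance.
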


\begin{proof}
By Lemma \ref{lem.15.1} and the assumption in Eq. (\ref{e.15.1}) we have,%
\begin{equation}
\frac{d}{d\tau}d\left(  o,\sigma\left(  \tau\right)  \right)  \leq\left\vert
\frac{d}{d\tau}d\left(  o,\sigma\left(  \tau\right)  \right)  \right\vert
\leq\left\vert \dot{\sigma}\left(  \tau\right)  \right\vert \leq\rho\left(
d\left(  o,\sigma\left(  \tau\right)  \right)  \right)  \text{ for a.e. }\tau.
\label{e.15.3}%
\end{equation}
Since $G$ is locally Lipschitz, $\tau\rightarrow G\left(  d\left(
o,\sigma\left(  \tau\right)  \right)  \right)  $ is still absolutely
continuous and
\begin{align*}
\frac{d}{d\tau}G\left(  d\left(  o,\sigma\left(  \tau\right)  \right)
\right)   &  =G^{\prime}\left(  d\left(  o,\sigma\left(  \tau\right)  \right)
\right)  \frac{d}{d\tau}d\left(  o,\sigma\left(  \tau\right)  \right) \\
&  =\frac{1}{\rho\left(  d\left(  o,\sigma\left(  \tau\right)  \right)
\right)  }\frac{d}{d\tau}d\left(  o,\sigma\left(  \tau\right)  \right)  \text{
for a.e. }\tau.
\end{align*}
Thus dividing Eq. (\ref{e.15.3}) by $\rho\left(  d\left(  o,\sigma\left(
\tau\right)  \right)  \right)  $ and integrating the result shows%
\begin{align*}
G\left(  d\left(  o,\sigma\left(  t\right)  \right)  \right)   &  -G\left(
d\left(  o,\sigma\left(  0\right)  \right)  \right) \\
&  =\int_{0}^{t}\frac{1}{\rho\left(  d\left(  o,\sigma\left(  \tau\right)
\right)  \right)  }\frac{d}{d\tau}d\left(  o,\sigma\left(  \tau\right)
\right)  d\tau\leq\int_{0}^{t}1d\tau=t
\end{align*}
from which the result follows.
\end{proof}

\begin{lemma}
\label{lem.15.3}If $\left(  M,g\right)  $ is a complete Riemannian manifold
and $\rho$ and $G$ are as in Corollary \ref{cor.15.2} such that
\[
\lim_{s~\uparrow\infty}G\left(  s\right)  =\int_{0}^{\infty}\frac{1}%
{\rho\left(  \sigma\right)  }d\sigma=\infty
\]
which guarantees that $G:[0,\infty)\rightarrow\lbrack0,\infty)$ is bijective.
If $Y\in\Gamma\left(  TM\right)  $ satisfies the bound $\left\vert Y\left(
m\right)  \right\vert _{g}\leq\rho\left(  d\left(  o,m\right)  \right)  $ for
all $m\in M,$ then $Y$ is complete and we have the estimate,%
\begin{equation}
d\left(  o,e^{tY}\left(  m\right)  \right)  \leq G^{-1}\left(  G\left(
d\left(  o,m\right)  \right)  +t\right)  \text{ }\forall~t\geq0.
\label{e.15.4}%
\end{equation}

\end{lemma}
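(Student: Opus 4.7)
The plan is to combine the a priori growth bound from Corollary \ref{cor.15.2} with the standard maximal-interval escape criterion for ODE flows, exploiting completeness of $g$ via Hopf--Rinow.

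First I would invoke local ODE existence on the manifold: since $Y$ is smooth, for any $m \in M$ there is a unique maximal solution $\sigma \in C^1([0,T_{\max}),M)$ to $\dot\sigma(t)=Y(\sigma(t))$ with $\sigma(0)=m$, where $0<T_{\max}\le\infty$. On any sub-interval $[0,T]$ with $T<T_{\max}$, the curve $\sigma$ is (more than) absolutely continuous and
\[
|\dot\sigma(t)|_g = |Y(\sigma(t))|_g \le \rho\bigl(d(o,\sigma(t))\bigr)\quad\text{for all }t\in[0,T],
\]
so Corollary \ref{cor.15.2} applies and yields
\[
G\bigl(d(o,\sigma(t))\bigr) \le G(d(o,m)) + t \qquad\forall\, t\in[0,T].
\]
Because $G:[0,\infty)\to[0,\infty)$ is, by hypothesis, a continuous bijection (strictly increasing with limit $\infty$), $G^{-1}$ is well defined on $[0,\infty)$ and monotone, so
\[
d(o,\sigma(t)) \le G^{-1}\bigl(G(d(o,m))+t\bigr)\qquad\forall\, t\in[0,T]. \tag{$\ast$}
\]

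Next I would upgrade $T_{\max}=\infty$ by the standard escape-from-compacts argument. Suppose for contradiction that $T_{\max}<\infty$. Set $R := G^{-1}(G(d(o,m))+T_{\max}) < \infty$. The bound $(\ast)$ forces $\sigma([0,T_{\max}))\subset \overline{B}(o,R)$. Since $(M,g)$ is complete, Hopf--Rinow gives that $\overline{B}(o,R)$ is compact. But the maximality of $T_{\max}$ implies $\sigma$ must eventually leave every compact subset of $M$ as $t\uparrow T_{\max}$; this contradicts confinement to $\overline{B}(o,R)$. Hence $T_{\max}=\infty$, so $Y$ is complete and $\varphi_t^Y := e^{tY}$ is defined for all $t\ge0$. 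Passing $T\uparrow\infty$ in $(\ast)$ with $\sigma(t)=e^{tY}(m)$ then yields the asserted estimate (\ref{e.15.4}).

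The only non-routine step is the escape argument; the rest is just feeding $\sigma(t)=e^{tY}(m)$ into Corollary \ref{cor.15.2} and inverting $G$. The mild subtlety to watch is that the escape criterion on a non-compact manifold says $\sigma$ leaves every compact subset — it is precisely completeness of $g$ (through Hopf--Rinow, giving compactness of closed metric balls) that converts the metric-distance bound $(\ast)$ into the required confinement to a compact set, producing the contradiction. No separate argument for the estimate is needed once completeness of the flow is established.
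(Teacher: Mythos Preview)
Your proof is correct and follows essentially the same approach as the paper: apply Corollary~\ref{cor.15.2} to the integral curve to get the a~priori bound, invert $G$, and use completeness (via Hopf--Rinow) to rule out finite-time explosion. The paper's proof is terser but identical in substance; you have simply made the escape-from-compacts step explicit.
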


\begin{proof}
Let $\sigma:[0,T)\rightarrow M$ denote a solution to $\dot{\sigma}\left(
t\right)  =Y\left(  \sigma\left(  t\right)  \right)  .$ Then by Corollary
\ref{cor.15.2},%
\[
G\left(  d\left(  o,\sigma\left(  t\right)  \right)  \right)  \leq G\left(
d\left(  o,\sigma\left(  0\right)  \right)  \right)  +t
\]
which is equivalent to Eq. (\ref{e.15.4}) since $G$ is bijective and
increasing and bijective. From this expression we see that $\sigma\left(
t\right)  $ can not explode and this shows $Y$ is complete. The completeness
of $\left(  M,g\right)  $ is used here to imply closed bounded sets are compact.
\end{proof}

There are many other possible examples for $\rho.$ We will give two here.

\begin{example}
[Gronwall Estimates]\label{ex.15.4}The standard example for $\rho$ is
$\rho\left(  \sigma\right)  =C\left(  1+\sigma\right)  $ for some $C<\infty$
in which case
\begin{align*}
G\left(  s\right)   &  =C^{-1}\ln\left(  1+s\right)  ,\\
G^{-1}\left(  u\right)   &  =e^{Cu}-1,
\end{align*}
and%
\begin{align*}
G^{-1}\left(  G\left(  s\right)  +t\right)   &  =G^{-1}\left(  C^{-1}%
\ln\left(  1+s\right)  +t\right) \\
&  =e^{C\left(  C^{-1}\ln\left(  1+s\right)  +t\right)  }-1=\left(
1+s\right)  e^{Ct}-1.
\end{align*}
Thus the estimate in Eq. (\ref{e.15.4}) becomes the usual Bellman-Gronwall
type estimate,%
\[
d\left(  o,e^{tY}\left(  m\right)  \right)  \leq\left(  1+d\left(  o,m\right)
\right)  e^{Ct}-1\text{ }\forall~t\geq0.
\]

\end{example}

\begin{example}
[Double Exponential Growth Estiamtes]\label{ex.15.5}In this example let us
suppose that%
\[
\rho\left(  \sigma\right)  =C\left(  1+\sigma\right)  \left(  1+\ln\left(
1+\sigma\right)  \right)  .
\]
After making the change of variables, $u=\ln\left(  1+\sigma\right)  ,$ one
shows
\begin{align*}
G\left(  s\right)   &  =C^{-1}\ln\left(  1+\ln\left(  1+s\right)  \right)
,\text{ and}\\
G^{-1}\left(  u\right)   &  =\exp\left(  \exp\left(  Cu\right)  -1\right)  -1.
\end{align*}
Using these expressions the estimate in Eq. (\ref{e.15.4}) becomes,
\[
d\left(  o,e^{tY}\left(  m\right)  \right)  \leq e^{\left[  e^{Ct}-1\right]
}\left(  1+d\left(  o,m\right)  \right)  ^{e^{Ct}}-1.
\]

\end{example}

Our next goal is to understand the previous estimates in terms of conformal
changes of the Riemannian metric $g.$ We begin with the following basic lemma.

\begin{lemma}
\label{lem.15.6}Suppose $\left(  M,g,o\right)  $ is a pointed Riemannian
manifold, $\rho\in C^{1}\left(  [0,\infty),\left(  0,\infty\right)  \right)
,$ and $\tilde{g}$ is the continuous metric on $M$ defined by%
\[
\tilde{g}\left(  v,w\right)  =\rho^{2}\left(  d\left(  o,m\right)  \right)
g\left(  v,w\right)  \text{ for all }v,w\in T_{m}M\text{ and }m\in M.
\]
If $p\in M$ and there exists a $g$-length minimizing geodesic, $\sigma:\left[
0,1\right]  \rightarrow M,$ joining $o$ to $p,$ then%
\[
d_{\tilde{g}}\left(  o,p\right)  =\int_{0}^{d\left(  o,p\right)  }\rho\left(
u\right)  du
\]
and moreover, $d_{\tilde{g}}\left(  o,p\right)  =\ell_{\tilde{g}}\left(
\sigma\right)  ,$ where for any $\gamma\in AC\left(  \left[  0,1\right]
,M\right)  ,$%
\[
\ell_{\tilde{g}}\left(  \gamma\right)  :=\int_{0}^{1}\sqrt{\tilde{g}\left(
\dot{\gamma}\left(  t\right)  ,\dot{\gamma}\left(  t\right)  \right)  }%
dt=\int_{0}^{1}\rho\left(  d\left(  o,\gamma\left(  t\right)  \right)
\right)  \left\vert \dot{\gamma}\left(  t\right)  \right\vert _{g}dt.
\]

\end{lemma}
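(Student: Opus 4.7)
The plan is to prove the identity $d_{\tilde{g}}(o,p) = \int_0^{d(o,p)} \rho(u)\,du$ by establishing matching upper and lower bounds, with the upper bound realized explicitly by the $g$-geodesic $\sigma$ (which simultaneously gives the ``moreover'' statement).

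For the upper bound, I would reparametrize the $g$-length minimizing geodesic to have constant $g$-speed, so that $|\dot\sigma(t)|_g = d(o,p)$ and, using that $\sigma$ is length-minimizing, $d(o,\sigma(t)) = t\,d(o,p)$ for all $t \in [0,1]$. Substituting directly into the formula for $\ell_{\tilde g}$ and applying the change of variables $u = t\,d(o,p)$ gives
\[
\ell_{\tilde g}(\sigma) = \int_0^1 \rho\bigl(t\,d(o,p)\bigr)\,d(o,p)\,dt = \int_0^{d(o,p)} \rho(u)\,du,
\]
which provides an admissible competitor for the infimum defining $d_{\tilde g}(o,p)$.

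For the lower bound, let $\gamma \in AC([0,1],M)$ be an arbitrary path from $o$ to $p$ and set $r(t) := d(o,\gamma(t))$. By Lemma \ref{lem.15.1}, $r$ is absolutely continuous with $|\dot r(t)| \le |\dot\gamma(t)|_g$ a.e. Since $\rho > 0$, we can estimate
\[
\ell_{\tilde g}(\gamma) = \int_0^1 \rho(r(t))\,|\dot\gamma(t)|_g\,dt \ge \int_0^1 \rho(r(t))\,|\dot r(t)|\,dt \ge \int_0^1 \rho(r(t))\,\dot r(t)\,dt.
\]
Setting $R(s) := \int_0^s \rho(u)\,du$, the function $R$ is $C^1$ so $R \circ r$ is absolutely continuous; by the fundamental theorem of calculus the last integral equals $R(r(1)) - R(r(0)) = R(d(o,p)) - R(0) = \int_0^{d(o,p)} \rho(u)\,du$. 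Taking the infimum over all such $\gamma$ yields $d_{\tilde g}(o,p) \ge \int_0^{d(o,p)} \rho(u)\,du$, which combined with the upper bound closes the proof and shows $\sigma$ is $\tilde g$-length minimizing.

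There is no genuine obstacle here; the mild technical point is making sure the chain rule / fundamental theorem of calculus is applied to the composition $R \circ r$ in the AC category, which is immediate from $R \in C^1$ and $r \in AC$. The rest is bookkeeping with the conformal factor and a direct change of variables.
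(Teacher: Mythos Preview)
Your proposal is correct and follows essentially the same approach as the paper's proof: both establish the lower bound for an arbitrary $\gamma$ by invoking Lemma~\ref{lem.15.1} to replace $|\dot\gamma(t)|_g$ by $\frac{d}{dt}d(o,\gamma(t))$ and then integrate, and both compute $\ell_{\tilde g}(\sigma)$ for the constant-speed $g$-geodesic via the same change of variables $u = t\,d(o,p)$. Your version is slightly more explicit in naming the antiderivative $R$ and justifying the AC chain rule, but the argument is the same.
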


\begin{proof}
Let $AC_{o,p}\left(  \left[  0,1\right]  ,M\right)  $ denote those paths,
$\gamma\in AC\left(  \left[  0,1\right]  ,M\right)  ,$ such that
$\gamma\left(  0\right)  =o$ and $\gamma\left(  1\right)  =p.$ For any
$\gamma\in AC_{o,p}\left(  \left[  0,1\right]  ,M\right)  ,$ we have
\begin{align*}
\ell_{\tilde{g}}\left(  \gamma\right)   &  =\int_{0}^{1}\rho\left(  d\left(
o,\gamma\left(  t\right)  \right)  \right)  \left\vert \dot{\gamma}\left(
t\right)  \right\vert _{g}dt\\
&  \geq\int_{0}^{1}\rho\left(  d\left(  o,\gamma\left(  t\right)  \right)
\right)  \frac{d}{dt}d\left(  o,\gamma\left(  t\right)  \right)  dt=\int%
_{0}^{d\left(  o,p\right)  }\rho\left(  u\right)  du,
\end{align*}
wherein the inequality is a result of Lemma \ref{lem.15.1}. Thus we conclude
that%
\[
\tilde{d}\left(  o,p\right)  =\inf\left\{  \ell_{\tilde{g}}\left(
\gamma\right)  :\gamma\in AC_{o,p}\left(  \left[  0,1\right]  ,M\right)
\right\}  \geq\int_{0}^{d\left(  o,p\right)  }\rho\left(  u\right)  du.
\]
If we now further assume that there exists a $g$-length minimizing geodesic,
$\sigma\left(  t\right)  ,$ joining $o$ to $p,$ then $\left\vert \dot{\sigma
}\left(  t\right)  \right\vert _{g}=d\left(  o,p\right)  $ and $d\left(
o,\sigma\left(  t\right)  \right)  =td\left(  o,p\right)  $ for $0\leq t\leq1$
and so for this $\sigma,$%
\begin{align*}
\ell_{\tilde{g}}\left(  \sigma\right)   &  =\int_{0}^{1}\rho\left(  d\left(
o,\sigma\left(  t\right)  \right)  \right)  \left\vert \dot{\sigma}\left(
t\right)  \right\vert _{g}dt\\
&  =\int_{0}^{1}\rho\left(  td\left(  o,p\right)  \right)  d\left(
o,p\right)  dt=\int_{0}^{d\left(  o,p\right)  }\rho\left(  u\right)  du
\end{align*}
which completes the proof.
\end{proof}

Our next goal is to smooth out the conformal factor, $\rho^{2}\left(  d\left(
o,m\right)  \right)  ,$ used Lemma \ref{lem.15.6} by replacing $d\left(
o,\cdot\right)  $ by an appropriate smooth approximation. The next lemma
explains how one can do this in Euclidean space.

\begin{lemma}
\label{lem.15.7}For every $\varepsilon>0,$ let $\left\vert x\right\vert
_{\varepsilon}:=\sqrt{\left\vert x\right\vert ^{2}+\varepsilon^{2}}$ for
$x\in\mathbb{R}^{d}.$ Then $\left\vert \cdot\right\vert _{\varepsilon}\in
C^{\infty}\left(  \mathbb{R}^{d}\rightarrow\lbrack0,\infty)\right)  $
satisfies, for all $x,y\in\mathbb{R}^{d},$ the estimates;%
\begin{align}
&  \left\vert x\right\vert \leq\left\vert x\right\vert _{\varepsilon}%
\leq\left\vert x\right\vert +\varepsilon,\text{ and}\label{e.15.5}\\
&  \left\vert \left\vert x\right\vert _{\varepsilon}-\left\vert y\right\vert
_{\varepsilon}\right\vert \leq\left\vert x-y\right\vert \text{.}
\label{e.15.6}%
\end{align}

\begin{proof}
Equation (\ref{e.15.5}) easily follows from the fact that $\sqrt{a+b}\leq
\sqrt{a}+\sqrt{b}$ for $a,b\geq0.$ If we let $g_{\varepsilon}\left(  t\right)
:=\sqrt{t^{2}+\varepsilon^{2}}$ for $t\in\mathbb{R},$ then%
\[
\left\vert g_{\varepsilon}\left(  t\right)  -g_{\varepsilon}\left(  s\right)
\right\vert \leq\left\vert \int_{s}^{t}\left\vert \dot{g}_{\varepsilon}\left(
\tau\right)  \right\vert d\tau\right\vert =\left\vert \int_{s}^{t}\left\vert
\frac{\tau}{\sqrt{\tau^{2}+\varepsilon^{2}}}\right\vert d\tau\right\vert
\leq\left\vert t-s\right\vert
\]
for all $s,t\in\lbrack0,\infty).$ Taking $t=\left\vert x\right\vert $ and
$s=\left\vert y\right\vert $ in this inequality shows
\[
\left\vert \left\vert x\right\vert _{\varepsilon}-\left\vert y\right\vert
_{\varepsilon}\right\vert \leq\left\vert \left\vert x\right\vert -\left\vert
y\right\vert \right\vert \leq\left\vert x-y\right\vert
\]
which verifies Eq. (\ref{e.15.6}).
\end{proof}
\end{lemma}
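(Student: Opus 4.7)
The plan is to verify each of the three claims (smoothness plus the two inequalities) separately using entirely elementary estimates. No deep tools from the rest of the paper are needed; this is a calculation lemma that will be applied later to smooth out $d(o,\cdot)$ via composition.

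For the smoothness of $|\cdot|_\varepsilon$, I would note that $x \mapsto |x|^2+\varepsilon^2$ is a polynomial in the coordinates of $x$ and is bounded below by $\varepsilon^2 > 0$, so composing with $t\mapsto\sqrt{t}$, which is $C^\infty$ on $(0,\infty)$, gives $|\cdot|_\varepsilon \in C^\infty(\mathbb{R}^d,[0,\infty))$. This step is purely formal.

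For the two-sided bound in \eqref{e.15.5}, the lower inequality $|x|\le |x|_\varepsilon$ is immediate from monotonicity of $\sqrt{\cdot}$ applied to $|x|^2 \le |x|^2+\varepsilon^2$. The upper inequality $|x|_\varepsilon \le |x|+\varepsilon$ reduces to the subadditivity of the square root, $\sqrt{a+b} \le \sqrt{a}+\sqrt{b}$ for $a,b\ge 0$ (itself a consequence of squaring both sides), applied with $a=|x|^2$ and $b=\varepsilon^2$.

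For the Lipschitz estimate \eqref{e.15.6}, the most efficient route is to factor the function through the Euclidean norm: first invoke the reverse triangle inequality $\bigl||x|-|y|\bigr| \le |x-y|$, and then show that the one-variable function $g_\varepsilon(t) := \sqrt{t^2+\varepsilon^2}$ is $1$-Lipschitz on $[0,\infty)$. The latter follows because $g_\varepsilon'(t) = t/\sqrt{t^2+\varepsilon^2}$ has $|g_\varepsilon'(t)| \le 1$ for all $t$, so the fundamental theorem of calculus yields $|g_\varepsilon(t)-g_\varepsilon(s)| \le |t-s|$; composing with $t=|x|$, $s=|y|$ completes the proof. I do not anticipate any real obstacle here: the one place to be slightly careful is the ordering of the composition, since one must establish $1$-Lipschitz-ness of $g_\varepsilon$ on $[0,\infty)$ rather than on all of $\mathbb{R}$, but this is automatic from the same derivative bound.
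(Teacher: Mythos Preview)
Your proposal is correct and follows essentially the same approach as the paper: both use subadditivity of the square root for \eqref{e.15.5}, and both prove \eqref{e.15.6} by factoring through $g_\varepsilon(t)=\sqrt{t^2+\varepsilon^2}$, bounding $|g_\varepsilon'|\le 1$, and composing with the reverse triangle inequality. You add a brief justification of smoothness and of the lower bound in \eqref{e.15.5}, which the paper leaves implicit, but otherwise the arguments coincide.
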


According to Greene and Wu \cite[Section 2]{Greene1979}, \cite{Matveev2013},
and \cite[Theorem 1]{Azagra2007a} the analogue of Lemma \ref{lem.15.7} is
valid on any Riemannian manifold $\left(  M,g\right)  .$ What is shown in
these references is that for any $r>0$ and $\delta\in C\left(  M,\left(
0,\infty\right)  \right)  ,$ there exists $f\in C^{\infty}\left(
M,\mathbb{R}\right)  $ such that $\operatorname{Lip}\left(  f\right)  \leq1+r$
and
\[
\left\vert d\left(  o,p\right)  -f\left(  p\right)  \right\vert \leq
\delta\left(  p\right)  \text{ for all }p\in M.
\]
If we now fix $\varepsilon>0$ and take $\delta\left(  p\right)  =\frac{1}%
{2}\varepsilon$ and $h\left(  p\right)  :=f\left(  p\right)  +\frac{1}%
{2}\varepsilon,$ then $\operatorname{Lip}\left(  h\right)  =\operatorname{Lip}%
\left(  f\right)  \leq1+r$ and the above displayed inequality is equivalent
to
\[
d\left(  o,p\right)  \leq h\left(  p\right)  \leq d\left(  o,p\right)
+\varepsilon\text{ for all }p\in M.
\]
We now fix $\varepsilon,r>0$ and $h\in C^{\infty}\left(  M,\mathbb{R}\right)
$ so that%
\begin{equation}
d\left(  o,p\right)  \leq h\left(  p\right)  \leq d\left(  o,p\right)
+\varepsilon\text{ and }\operatorname{Lip}\left(  h\right)  \leq1+r.
\label{e.15.7}%
\end{equation}

\begin{proposition}
[Conformal change of metrics]\label{pro.15.8}If $\left(  M,g\right)  $ is a
Riemannian manifold with Levi-Civita derivative, $\nabla,$ and $\bar
{g}=e^{2\varphi}g,$ then the associated Levi-Civita covariant derivative,
$\bar{\nabla},$ is given by
\begin{equation}
\bar{\nabla}_{X}Y=\nabla_{X}Y+X\varphi\cdot Y+Y\varphi\cdot X-g\left(
X,Y\right)  \nabla\varphi\label{e.15.8}%
\end{equation}
where $\nabla\varphi$ is being used to denote the gradient of $\varphi$
relative to the metric $g.$
\end{proposition}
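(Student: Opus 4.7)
The plan is to apply the Koszul formula (\ref{e.14.1}) with $g$ replaced by $\bar{g}=e^{2\varphi}g$ and then compare the result with the Koszul formula for $g$ itself. First I would write
\[
2\bar{g}(\bar{\nabla}_X Y, Z) = X[\bar{g}(Y,Z)] + Y[\bar{g}(X,Z)] - Z[\bar{g}(X,Y)] + \bar{g}([X,Y],Z) - \bar{g}([X,Z],Y) - \bar{g}([Y,Z],X).
\]
For each directional-derivative term, the Leibniz rule gives an identity of the form
\[
X[\bar{g}(Y,Z)] = 2\,X\varphi\cdot e^{2\varphi} g(Y,Z) + e^{2\varphi} X[g(Y,Z)],
\]
while each bracket term is simply $\bar{g}([\,\cdot\,,\,\cdot\,],\,\cdot\,) = e^{2\varphi} g([\,\cdot\,,\,\cdot\,],\,\cdot\,)$.

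Next I would substitute these expansions into the Koszul formula for $\bar{g}$ and factor out the common $e^{2\varphi}$. The $e^{2\varphi} X[g(Y,Z)]$ pieces, together with the bracket pieces, reassemble into $2 e^{2\varphi} g(\nabla_X Y, Z)$ by Koszul for $g$, while the remaining six terms collapse (two identical pairs cancel beautifully because of the signs on $Z[\bar{g}(X,Y)]$) into
\[
2 e^{2\varphi}\bigl[X\varphi\cdot g(Y,Z) + Y\varphi\cdot g(X,Z) - Z\varphi\cdot g(X,Y)\bigr].
\]
Dividing through by $2 e^{2\varphi}$ yields
\[
g(\bar{\nabla}_X Y, Z) = g(\nabla_X Y, Z) + X\varphi\cdot g(Y,Z) + Y\varphi\cdot g(X,Z) - Z\varphi\cdot g(X,Y).
\]

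Finally, using the defining identity $Z\varphi = g(\nabla\varphi, Z)$ for the $g$-gradient, the right-hand side becomes $g(\nabla_X Y + X\varphi\cdot Y + Y\varphi\cdot X - g(X,Y)\nabla\varphi,\,Z)$. Since this holds for every $Z\in\Gamma(TM)$ and $g$ is non-degenerate, Eq. (\ref{e.15.8}) follows. The only subtle point — and the one I would double-check carefully rather than describe as a genuine obstacle — is the sign bookkeeping on the $Z[\bar{g}(X,Y)]$ term, which is what produces the minus sign in front of the $g(X,Y)\nabla\varphi$ contribution.
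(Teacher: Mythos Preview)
Your proposal is correct and follows essentially the same approach as the paper: apply the Koszul formula to $\bar g=e^{2\varphi}g$, expand the three directional-derivative terms via the Leibniz rule, recognize the $e^{2\varphi}$-multiple of the Koszul formula for $g$, and convert $Z\varphi$ into $g(\nabla\varphi,Z)$ to read off Eq.~(\ref{e.15.8}). One small quibble: your phrase ``the remaining six terms collapse (two identical pairs cancel beautifully)'' overstates the algebra---there are only three extra terms (one from each of $X[\bar g(Y,Z)]$, $Y[\bar g(X,Z)]$, $-Z[\bar g(X,Y)]$) and no cancellation is needed---but this does not affect the correctness of your argument.
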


\begin{proof}
Using
\[
X\left[  \bar{g}\left(  Y,Z\right)  \right]  =X\left[  e^{2\varphi}g\left(
X,Y\right)  \right]  =2\left[  X\varphi\right]  \bar{g}\left(  Y,Z\right)
+e^{2\varphi}X\left[  g\left(  Y,Z\right)  \right]
\]
and similar expressions for $Y\left[  \bar{g}(X,Z)\right]  $ and $Z\left[
\bar{g}(X,Y)\right]  ,$ the Koszul formula $\bar{\nabla}_{X}Y$ and $\nabla
_{X}Y$ may be used to show,%
\begin{align*}
2\bar{g}\left(  \bar{\nabla}_{X}Y,Z\right)   &  =e^{2\varphi}2g\left(
\nabla_{X}Y,Z\right)  +2\left[  X\varphi\right]  \bar{g}\left(  Y,Z\right)
+2\left[  Y\varphi\right]  \bar{g}\left(  X,Z\right)  -\left[  Z\varphi
\right]  \bar{g}\left(  X,Y\right) \\
&  =2\bar{g}\left(  \nabla_{X}Y,Z\right)  +2\left[  X\varphi\right]  \bar
{g}\left(  Y,Z\right)  +2\left[  Y\varphi\right]  \bar{g}\left(  X,Z\right)
-g\left(  Z,\nabla\varphi\right)  e^{2\varphi}g\left(  X,Y\right) \\
&  =2\bar{g}\left(  \nabla_{X}Y,Z\right)  +2\left[  X\varphi\right]  \bar
{g}\left(  Y,Z\right)  +2\left[  Y\varphi\right]  \bar{g}\left(  X,Z\right)
-g\left(  X,Y\right)  \bar{g}\left(  \nabla\varphi,Z\right)  .
\end{align*}
which easily implies Eq. (\ref{e.15.8}).
\end{proof}

\begin{corollary}
\label{cor.15.9}Let $\varepsilon,r>0$ be given and choose $h\in C^{\infty
}\left(  M,[0,\infty)\right)  $ such that Eq. (\ref{e.15.7}) holds and define
$\bar{g}$ to be the metric on $M$ given by%
\[
\bar{g}\left(  v,w\right)  =\left[  \frac{1}{1+h\left(  m\right)  }\right]
^{2}g\left(  v,w\right)  \text{ for }v,w\in T_{m}M\text{ and }m\in M.
\]
If $\left(  M,g\right)  $ is a complete Riemannian manifold, then;

\begin{enumerate}
\item for all $p\in M,$%
\begin{equation}
\frac{1}{1+\varepsilon}\ln\left(  1+d\left(  o,p\right)  \right)  \leq\bar
{d}\left(  o,p\right)  \leq\ln\left(  1+d\left(  o,p\right)  \right)  ,
\label{e.15.11}%
\end{equation}

\item and $\left(  M,\bar{g}\right)  $ is still a complete Riemannian manifold.
\end{enumerate}
\end{corollary}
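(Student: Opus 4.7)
The plan is to establish both inequalities in \eqref{e.15.11} by direct length-integral estimates and then deduce completeness of $(M,\bar{g})$ from a comparison between $\bar{d}$ and $d$. The essential inputs from \eqref{e.15.7} are the pointwise sandwich $d(o,\cdot)\leq h\leq d(o,\cdot)+\varepsilon$ and the global Lipschitz bound $\operatorname{Lip}(h)\leq 1+r$; completeness of $(M,g)$ is used via Hopf--Rinow to produce minimizing geodesics.

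For the upper bound, I would pick a unit-speed $g$-minimizing geodesic $\sigma\colon[0,1]\to M$ from $o$ to $p$, so $|\dot\sigma(t)|_g\equiv d(o,p)$ and $d(o,\sigma(t))=t\,d(o,p)$. Substituting $1+h(\sigma(t))\geq 1+d(o,\sigma(t))=1+t\,d(o,p)$ yields
\begin{equation*}
\bar{d}(o,p)\leq\ell_{\bar{g}}(\sigma)=\int_{0}^{1}\frac{|\dot\sigma(t)|_{g}}{1+h(\sigma(t))}\,dt\leq\int_{0}^{1}\frac{d(o,p)}{1+t\,d(o,p)}\,dt=\ln(1+d(o,p)),
\end{equation*}
mirroring the proof of Lemma~\ref{lem.15.6} with $\rho(u)=1/(1+u)$. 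For the lower bound, given $\gamma\in AC_{o,p}([0,1],M)$, I would introduce the scalar $H(t):=h(\gamma(t))$, which is absolutely continuous with $|H'(t)|\leq(1+r)|\dot\gamma(t)|_{g}$. Then
\begin{equation*}
\ell_{\bar{g}}(\gamma)=\int_{0}^{1}\frac{|\dot\gamma(t)|_{g}}{1+h(\gamma(t))}\,dt\geq\frac{1}{1+r}\int_{0}^{1}\frac{|H'(t)|}{1+H(t)}\,dt\geq\frac{1}{1+r}\left|\ln\frac{1+h(p)}{1+h(o)}\right|.
\end{equation*}
Using $H(0)=h(o)\in[0,\varepsilon]$ and $H(1)=h(p)\geq d(o,p)$ and matching the parameters $r,\varepsilon$ appropriately produces the advertised factor $(1+\varepsilon)^{-1}$ in front of $\ln(1+d(o,p))$.

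For item~2, I would observe that since $h\geq 0$, pointwise $\bar{g}\leq g$, hence $\bar{d}\leq d$, while \eqref{e.15.11} shows that $\bar{d}(o,\cdot)$ is proper in the sense that $\bar{d}(o,p_n)$ bounded forces $d(o,p_n)$ bounded. Given a $\bar{d}$-Cauchy sequence $\{p_n\}$, its $\bar{d}$-boundedness therefore implies $d$-boundedness, whence Hopf--Rinow applied to the complete $(M,g)$ places the sequence in a $d$-compact set. A $d$-convergent subsequence is automatically $\bar{d}$-convergent because $\bar{d}\leq d$, and together with the $\bar{d}$-Cauchy property this forces $\bar{d}$-convergence of the full sequence.

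The main obstacle I anticipate is the bookkeeping of constants in the lower bound: the naive computation yields $\frac{1}{1+r}\ln\frac{1+d(o,p)}{1+\varepsilon}$, and converting this cleanly to the stated $\frac{1}{1+\varepsilon}\ln(1+d(o,p))$ requires either a short case split (trivial when $d(o,p)\leq\varepsilon$, algebra when $d(o,p)$ is large) or a suitable identification of $r$ in terms of $\varepsilon$. The remaining ingredients are elementary once the correct monotonicity comparisons have been selected (upper bound via $h\geq d(o,\cdot)$, lower bound via the Lipschitz control on $h$).
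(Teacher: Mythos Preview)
Your upper bound and your argument for item~2 are both correct and essentially match the paper's (the paper phrases item~2 via Hopf--Rinow through ``closed bounded $\Leftrightarrow$ compact'', but your Cauchy-sequence version is equivalent).

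The gap is in the lower bound. You track $H(t)=h(\gamma(t))$ and invoke $\operatorname{Lip}(h)\leq 1+r$, obtaining
\[
\ell_{\bar g}(\gamma)\geq\frac{1}{1+r}\ln\frac{1+h(p)}{1+h(o)}\geq\frac{1}{1+r}\ln\frac{1+d(o,p)}{1+\varepsilon}.
\]
This depends on $r$, and since $r$ and $\varepsilon$ are independent inputs in the hypothesis, there is no legitimate ``identification of $r$ in terms of $\varepsilon$'' that converts this into $\frac{1}{1+\varepsilon}\ln(1+d(o,p))$; a case split does not help either, because for large $d(o,p)$ the prefactor you produce is $\frac{1}{1+r}$, not $\frac{1}{1+\varepsilon}$. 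In short, the Lipschitz bound on $h$ is the wrong piece of \eqref{e.15.7} to use here.

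The paper instead uses only the pointwise sandwich $d(o,m)\leq h(m)\leq d(o,m)+\varepsilon$. From the elementary inequality $1+d(o,m)+\varepsilon\leq(1+\varepsilon)(1+d(o,m))$ one gets
\[
\frac{1}{(1+\varepsilon)\bigl(1+d(o,m)\bigr)}\leq\frac{1}{1+h(m)}\leq\frac{1}{1+d(o,m)},
\]
so $\frac{1}{1+\varepsilon}\sqrt{\tilde g}\leq\sqrt{\bar g}\leq\sqrt{\tilde g}$ where $\tilde g:=(1+d(o,\cdot))^{-2}g$. Lemma~\ref{lem.15.6} (with $\rho(u)=(1+u)^{-1}$, whose lower bound uses Lemma~\ref{lem.15.1}: $|\frac{d}{dt}d(o,\gamma(t))|\leq|\dot\gamma(t)|_g$) computes $\tilde d(o,p)=\ln(1+d(o,p))$ \emph{exactly}, and the metric comparison then yields \eqref{e.15.11} with the correct constant. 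The moral: run your own change-of-variables trick with $d(o,\gamma(t))$ in place of $h(\gamma(t))$; the former has Lipschitz constant exactly~$1$, which is what makes the $\frac{1}{1+\varepsilon}$ come out cleanly. The Lipschitz bound on $h$ is needed only later, in Proposition~\ref{pro.15.10}.
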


\begin{proof}
Let
\[
\tilde{g}\left(  v,w\right)  =\left[  \frac{1}{1+d\left(  o,p\right)
}\right]  ^{2}g\left(  v,w\right)  ~\text{ for }v,w\in T_{p}M\text{ and }p\in
M.
\]
From Eq. (\ref{e.15.7}) it follows that%
\[
\frac{1}{1+d\left(  o,p\right)  }\frac{1}{1+\varepsilon}\leq\frac
{1}{1+d\left(  o,p\right)  +\varepsilon}\leq\frac{1}{1+h\left(  p\right)
}\leq\frac{1}{1+d\left(  o,p\right)  }\text{ }\forall~p\in M
\]
and therefore%
\[
\frac{1}{1+\varepsilon}\sqrt{\tilde{g}}\leq\sqrt{\bar{g}}\leq\tilde{g}%
\]
and hence
\[
\frac{1}{\sqrt{1+\varepsilon}}\tilde{d}\leq\bar{d}\leq\tilde{d}%
\]
where $\tilde{d}\left(  o,p\right)  =\ln\left(  1+d\left(  0,p\right)
\right)  $ in this case. This proves item (1).

For the proof of item (2), we observe that Eq. (\ref{e.15.11}) shows that $d$
and $\bar{d}$ have the same notions of bounded sets. As $d$ and $\bar{d}$
generate the same topology, $d$ and $\bar{d}$ also have the same notions of
closed sets. Thus a subset, $A\subset M$ is $\bar{d}$-closed and bounded iff
$A$ is $d$-closed and bounded iff $A$ is compact since $\left(  M,g\right)  $
is complete. Combining all of these facts shows $\left(  M,\bar{g}\right)  $
is still complete.
\end{proof}

\begin{proposition}
\label{pro.15.10}Let us continue the notation used in Corollary \ref{cor.15.9}
and let $Y\in\Gamma\left(  TM\right)  .$ Then $Y$ is $\bar{g}$ bounded iff
there exists $\bar{C}<\infty$ such that
\begin{equation}
\left\vert Y\left(  m\right)  \right\vert _{g}\leq\bar{C}\left(  1+d\left(
o,m\right)  \right)  \text{ }\forall~m\in M. \label{e.15.12}%
\end{equation}
Let us now suppose that $Y$ satisfies the estimate in Eq. (\ref{e.15.12}).
Under this assumption, we have
\begin{equation}
\left\vert \bar{\nabla}Y\right\vert _{\bar{g},M}:=\sup_{\left\vert
v\right\vert _{\bar{g}}=1}\left\vert \nabla_{v}Y\right\vert _{\bar{g}}%
<\infty\label{e.15.13}%
\end{equation}
iff
\begin{equation}
\left\vert \nabla Y\right\vert _{g,M}:=\sup_{\left\vert u\right\vert _{g}%
=1}\left\vert \nabla_{u}Y\right\vert _{g}<\infty. \label{e.15.14}%
\end{equation}
Moreover, if $\left\vert \nabla Y\right\vert _{g,M}<\infty,$ then both
estimates in Eq. (\ref{e.15.12}) and (\ref{e.15.13}) hold.
\end{proposition}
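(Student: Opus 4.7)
The plan is to reduce everything to the conformal-change formula in Proposition \ref{pro.15.8} applied with $\varphi = -\log(1+h)$, since then $\bar{g} = e^{2\varphi} g$. A direct computation will give $\nabla\varphi = -\nabla h/(1+h)$, and the hypothesis $\operatorname{Lip}(h) \leq 1+r$ translates to $|\nabla h|_g \leq 1+r$ and therefore $|\nabla\varphi|_g \leq (1+r)/(1+h)$, while $|X\varphi| \leq (1+r)|X|_g/(1+h)$ for any vector $X$. The fundamental identity I will use throughout is
\[
|Y(m)|_{\bar{g}} = \frac{|Y(m)|_g}{1+h(m)},
\]
and the comparison $1 + d(o,m) \leq 1 + h(m) \leq (1+\varepsilon)(1+d(o,m))$ coming from Eq. (\ref{e.15.7}).

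For the first assertion, $\bar{g}$-boundedness of $Y$ is exactly $\sup_{m} |Y(m)|_g/(1+h(m)) < \infty$, which by the above comparison is equivalent to Eq. (\ref{e.15.12}). For the equivalence of Eqs. (\ref{e.15.13}) and (\ref{e.15.14}), I would parametrize unit $\bar{g}$-vectors as $v = (1+h)\tilde{v}$ with $|\tilde{v}|_g = 1$ and expand Proposition \ref{pro.15.8}:
\[
\bar{\nabla}_v Y = (1+h)\nabla_{\tilde{v}} Y - (\tilde{v} h)\, Y - (Y h)\, \tilde{v} + g(\tilde{v},Y)\,\nabla h,
\]
after which dividing by $1+h$ and estimating each correction term by $(1+r)|Y|_{\bar{g}}$ gives
\[
\bigl| |\bar{\nabla}_v Y|_{\bar{g}} - |\nabla_{\tilde{v}} Y|_g \bigr| \leq 3(1+r)\,|Y|_{\bar{g}}.
\]
This two-sided bound shows that once $Y$ satisfies Eq. (\ref{e.15.12}) (so $|Y|_{\bar{g}}$ is bounded), the suprema in Eqs. (\ref{e.15.13}) and (\ref{e.15.14}) are finite together.

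For the final claim, I would invoke the growth estimate already cited in the introduction just below Eq. (\ref{e.10.16}) (\cite[Lemma 2.10]{Driver2018}, with $X=Y$ and $p=o$), which gives $|Y(m)|_g \leq |Y(o)|_g + |\nabla Y|_{g,M}\,d(o,m)$; this immediately produces Eq. (\ref{e.15.12}) with $\bar{C} = \max(|Y(o)|_g,|\nabla Y|_{g,M})$, and then Eq. (\ref{e.15.13}) follows from the equivalence just proved.

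The main obstacle is purely bookkeeping: keeping the factors of $1+h$ straight between $g$-norms and $\bar{g}$-norms while expanding the four terms of the conformal connection formula. The only conceptual input is the Lipschitz-to-gradient-bound translation $\operatorname{Lip}(h) \leq 1+r \Longrightarrow |\nabla h|_g \leq 1+r$, which is standard on any Riemannian manifold via $dh(v)/|v|_g \leq \operatorname{Lip}(h)$.
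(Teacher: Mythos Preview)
Your proposal is correct and follows essentially the same route as the paper: both arguments reduce to Proposition~\ref{pro.15.8} with $\varphi=-\ln(1+h)$, parametrize $\bar g$-unit vectors as $(1+h)$ times $g$-unit vectors, and bound the three correction terms by $3(1+r)\,|Y|_{\bar g}$ to obtain the two-sided estimate $\bigl||\bar\nabla_v Y|_{\bar g}-|\nabla_{\tilde v}Y|_g\bigr|\leq 3(1+r)|Y|_{\bar g}$. The only cosmetic difference is that for the final implication the paper writes out the parallel-translation argument explicitly whereas you cite \cite[Lemma~2.10]{Driver2018}; since the paper itself invokes that lemma elsewhere (Lemma~\ref{lem.10.29}), your shortcut is entirely legitimate.
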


\begin{proof}
Suppose that $Y\in\Gamma\left(  TM\right)  ,$ then $Y$ is $\bar{g}$ bounded
iff
\begin{equation}
\infty>C:=\sup_{m\in M}\left\vert Y\left(  m\right)  \right\vert _{\bar{g}%
}=\sup_{m\in M}\left(  \frac{1}{1+h\left(  m\right)  }\left\vert Y\left(
m\right)  \right\vert _{g}\right)  , \label{e.15.15}%
\end{equation}
i.e. iff there exists $C<\infty$ such that
\begin{equation}
\left\vert Y\left(  m\right)  \right\vert _{g}\leq C\left(  1+h\left(
m\right)  \right)  \text{ }\forall~m\in M \label{e.15.16}%
\end{equation}
and it is easily seen that this is equivalent to the existence of $\bar
{C}<\infty$ such that Eq. (\ref{e.15.12}) holds.

Now lets assume $Y\in\Gamma\left(  TM\right)  $ satisfies Eq. (\ref{e.15.12})
or equivalently Eq. (\ref{e.15.16}) for some $C<\infty.$ Let us note for
$v_{m}\in T_{m}M$ that
\[
1=\left\vert v\right\vert _{\bar{g}}=\frac{1}{1+h\left(  m\right)  }\left\vert
v_{m}\right\vert _{g}\iff\left\vert v_{m}\right\vert _{g}=1+h\left(  m\right)
.
\]
Thus if we let
\[
u_{m}:=\frac{1}{1+h\left(  m\right)  }v_{m},
\]
then $\left\vert u_{m}\right\vert _{g}=1$ and%
\[
\left\vert \bar{\nabla}_{v_{m}}Y\right\vert _{\bar{g}}=\frac{1}{1+h\left(
m\right)  }\left\vert \bar{\nabla}_{v_{m}}Y\right\vert _{g}=\left\vert
\bar{\nabla}_{u_{m}}Y\right\vert _{g}%
\]
from which it follows that
\[
\left\vert \bar{\nabla}Y\right\vert _{\bar{g},M}=\sup_{\left\vert u\right\vert
_{g}=1}\left\vert \bar{\nabla}_{u_{m}}Y\right\vert _{g}.
\]

Next we use Proposition \ref{pro.15.8} with
\[
\varphi=\ln\left(  \frac{1}{1+h}\right)  =-\ln\left(  1+h\right)
\]
and
\[
w_{m}\varphi=-\frac{1}{1+h\left(  m\right)  }w_{m}h=-\frac{1}{1+h\left(
m\right)  }g\left(  \nabla h\left(  m\right)  ,w_{m}\right)
\]
in order to see that
\begin{align*}
\bar{\nabla}_{u_{m}}Y  &  =\nabla_{u_{m}}Y+u_{m}\varphi\cdot Y\left(
m\right)  +\left(  Y\varphi\right)  \left(  m\right)  \cdot u_{m}-g\left(
u_{m},Y\left(  m\right)  \right)  \nabla\varphi\left(  m\right) \\
&  =\nabla_{u_{m}}Y-K\left(  u_{m},Y\left(  m\right)  \right)
\end{align*}
where
\[
K\left(  u_{m},Y\left(  m\right)  \right)  =\frac{1}{1+h\left(  m\right)
}\left[  \left(  u_{m}h\right)  Y\left(  m\right)  +\left(  Yh\right)  \left(
m\right)  \cdot u_{m}-g\left(  u_{m},Y\left(  m\right)  \right)  \nabla
h\left(  m\right)  \right]
\]
which satisfies,%
\[
\left\vert K\left(  u_{m},Y\left(  m\right)  \right)  \right\vert \leq\left(
1+r\right)  3C,
\]
with $C$ as in Eq. (\ref{e.15.15}). Thus we see, under the condition that
$C<\infty,$ that%
\[
\infty>\left\vert \bar{\nabla}Y\right\vert _{\bar{g},M}=\sup_{\left\vert
u\right\vert _{g}=1}\left\vert \bar{\nabla}_{u_{m}}Y\right\vert _{g}\iff
\sup_{\left\vert u\right\vert _{g}=1}\left\vert \nabla_{u_{m}}Y\right\vert
<\infty.
\]

Conversely if $\left\vert \nabla Y\right\vert _{g,M}:=\sup_{\left\vert
u\right\vert _{g}=1}\left\vert \nabla_{u_{m}}Y\right\vert <\infty,$ then for
$m\in M$ and $\sigma\left(  t\right)  $ an absolutely continuous curve joining
$o$ to $m$ we have%
\begin{align*}
\left\vert Y\left(  m\right)  \right\vert _{g}  &  =\left\vert \pt_{1}\left(
\sigma\right)  ^{-1}Y\left(  m\right)  \right\vert _{g}=\left\vert
\pt_{1}\left(  \sigma\right)  ^{-1}Y\left(  \sigma\left(  1\right)  \right)
\right\vert _{g}\\
&  =\left\vert Y\left(  o\right)  +\int_{0}^{1}\pt_{1}\left(  \sigma\right)
^{-1}\nabla_{\dot{\sigma}\left(  t\right)  }Y~dt\right\vert _{g}\leq\left\vert
Y\left(  o\right)  \right\vert _{g}+\int_{0}^{1}\left\vert \nabla_{\dot
{\sigma}\left(  t\right)  }Y\right\vert _{g}~dt\\
&  \leq\left\vert Y\left(  o\right)  \right\vert _{g}+\left\vert \nabla
Y\right\vert _{g,M}\int_{0}^{1}\left\vert \dot{\sigma}\left(  t\right)
\right\vert ~dt=\left\vert Y\left(  o\right)  \right\vert _{g}+\left\vert
\nabla Y\right\vert _{g,M}\ell_{g}\left(  \sigma\right)  .
\end{align*}
Taking the infimum over all such paths shows that
\[
\left\vert Y\left(  m\right)  \right\vert _{g}\leq\left\vert Y\left(
o\right)  \right\vert _{g}+\left\vert \nabla Y\right\vert _{g,M}d_{g}\left(
o,m\right)  .
\]
and we have shown that in fact $\left\vert \nabla Y\right\vert _{g,M}<\infty$
implies both Eq. (\ref{e.15.12}) and (\ref{e.15.13}) hold.
\end{proof}

\section{Appendix: Rough Path Basics\label{sec.16}}

Let $0<T<\infty,$ $x\left(  \cdot\right)  \in C^{1}\left(  \left[  0,T\right]
,\mathbb{R}^{d}\right)  ,$ and for $s,t\in\left[  0,T\right]  $ let
$X_{s,t}\in G_{\text{geo}}^{\left(  \kappa\right)  }\left(  \mathbb{R}%
^{d}\right)  $ denote the solution to the ODE,%
\begin{equation}
\frac{d}{dt}X_{s,t}=X_{s,t}\dot{x}\left(  t\right)  \text{ with }X_{s,s}=1.
\label{e.16.1}%
\end{equation}
If we let $g\left(  t\right)  :=X_{0,t},$ then
\[
\dot{g}\left(  t\right)  =g\left(  t\right)  \dot{x}\left(  t\right)  \text{
with }g\left(  0\right)  =1
\]
and we easily see that $X_{s,t}=g\left(  s\right)  ^{-1}g\left(  t\right)  $
and in particular, if $s,t,u\in\left[  0,T\right]  ,$ then
\[
X_{s,t}X_{t,u}=g\left(  s\right)  ^{-1}g\left(  t\right)  g\left(  t\right)
^{-1}g\left(  u\right)  =g\left(  s\right)  ^{-1}g\left(  u\right)  =X_{s,u}.
\]

\begin{proposition}
\label{pro.16.1}If $X_{s,t}^{\left(  \kappa\right)  }\in G_{\text{geo}%
}^{\left(  \kappa\right)  }\left(  \mathbb{R}^{d}\right)  $ is as in Eq.
(\ref{e.16.1}) and for all $k\in\mathbb{N}$ we let
\begin{equation}
X_{s,t}^{k}:=\int_{s}^{t}dt_{k}\int_{s}^{t_{k}}dt_{k-1}\dots\int_{s}^{t_{2}%
}dt_{1}~\dot{x}\left(  t_{1}\right)  \otimes\dots\otimes\dot{x}\left(
t_{k}\right)  , \label{e.16.2}%
\end{equation}
then
\begin{equation}
X_{s,t}^{\left(  \kappa\right)  }=1+\sum_{k=1}^{\kappa}X_{s,t}^{k}.
\label{e.16.3}%
\end{equation}
We may also write $X_{st}^{k}$ as%
\begin{equation}
X_{st}^{k}=\int_{s\leq t_{1}\leq t_{2}\leq\dots\leq t_{k}\leq t}dx\left(
t_{1}\right)  dx\left(  t_{2}\right)  \dots dx\left(  t_{k}\right)  \text{ if
}t\geq s \label{e.16.4}%
\end{equation}
and%
\begin{equation}
X_{st}^{k}=\left(  -1\right)  ^{k}\int_{t\leq t_{1}\leq t_{2}\leq\dots\leq
t_{k-1}\leq t_{k}\leq s}dx\left(  t_{k}\right)  dx\left(  t_{k-1}\right)
\dots dx\left(  t_{1}\right)  \text{ if }t\leq s. \label{e.16.5}%
\end{equation}

\end{proposition}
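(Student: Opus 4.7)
The plan is to recognize that the explicit sum $Y_{s,t} := 1 + \sum_{k=1}^{\kappa} X_{s,t}^k$ solves the same linear initial value problem as $X_{s,t}^{(\kappa)}$ inside the finite-dimensional truncated tensor algebra $T^{(\kappa)}\left(\mathbb{R}^d\right)$, and then to conclude equality by ODE uniqueness.

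First I would differentiate the explicit iterated integral in Eq.~(\ref{e.16.2}) with respect to the outer upper limit $t$. Setting $X_{s,t}^0 := 1$ and applying the fundamental theorem of calculus gives, for each $k \geq 1$,
\[
\frac{d}{dt} X_{s,t}^k = X_{s,t}^{k-1} \otimes \dot{x}(t),
\]
together with the obvious initial condition $X_{s,s}^k = 0$. Summing from $k = 1$ to $\kappa$ yields
\[
\frac{d}{dt} Y_{s,t} = \sum_{k=0}^{\kappa-1} X_{s,t}^k \otimes \dot{x}(t).
\]

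Next, unpacking the definition of multiplication in the truncated algebra, $AB = \pi_{\leq \kappa}(A \otimes B)$, and using that $\dot{x}(t) \in \mathbb{R}^d$ has homogeneous degree one, the only component of $Y_{s,t} \otimes \dot{x}(t)$ that is killed by $\pi_{\leq \kappa}$ is the $k = \kappa$ piece $X_{s,t}^\kappa \otimes \dot{x}(t)$, which sits in degree $\kappa + 1$. Therefore
\[
Y_{s,t} \cdot \dot{x}(t) = \sum_{k=0}^{\kappa-1} X_{s,t}^k \otimes \dot{x}(t) = \frac{d}{dt} Y_{s,t},
\]
and $Y_{s,s} = 1$. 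Since Eq.~(\ref{e.16.1}) is a linear ODE with continuous coefficients on the finite-dimensional space $T^{(\kappa)}\left(\mathbb{R}^d\right)$, uniqueness forces $Y_{s,t} = X_{s,t}^{(\kappa)}$, giving Eq.~(\ref{e.16.3}).

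To obtain the alternative representations (\ref{e.16.4}) and (\ref{e.16.5}), I would rewrite $X_{s,t}^k$ as an integral over an oriented $k$-simplex. When $t \geq s$, a direct application of Fubini to (\ref{e.16.2}) identifies the region of integration as $\{s \leq t_1 \leq t_2 \leq \cdots \leq t_k \leq t\}$, yielding Eq.~(\ref{e.16.4}) after inserting $dx(t_j) = \dot{x}(t_j)\, dt_j$. When $t \leq s$, each of the $k$ nested integrals in (\ref{e.16.2}) has its limits in decreasing order (the outer one goes from $s$ down to $t$, and each inner one from $s$ down to $t_j$), so flipping every limit produces an overall $(-1)^k$ prefactor. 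Applying the relabeling $s_j := t_{k+1-j}$ then reverses both the chain of inequalities and the ordering of the tensor factors, producing Eq.~(\ref{e.16.5}).

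The point requiring the most care, rather than any genuine obstacle, is the bookkeeping for the case $t \leq s$: one must track the $k$ sign reversals in concert with the reversal of tensor-factor ordering under the index substitution. Everything else is routine once one observes the crucial structural fact that the naively infinite Picard expansion of the group-valued ODE (\ref{e.16.1}) terminates at degree $\kappa$ because all higher degrees are truncated to zero in $T^{(\kappa)}\left(\mathbb{R}^d\right)$.
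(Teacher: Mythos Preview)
Your argument is correct. The paper takes a slightly different route: rather than writing down $Y_{s,t}=1+\sum_{k=1}^{\kappa}X_{s,t}^{k}$ and verifying it solves Eq.~(\ref{e.16.1}), the paper runs the Picard iteration forward. Starting from the integral form $X_{s,t}=1+\int_{s}^{t}X_{s,t_{1}}\dot{x}(t_{1})\,dt_{1}$ and substituting repeatedly, it proves by induction on $m$ that $X_{s,t}$ equals the partial sum of the first $m$ iterated integrals plus an explicit remainder $R_{m}(s,t)$ of tensor degree $\geq m+1$; at $m=\kappa$ the remainder lands in degree $\kappa+1$ and is annihilated in $T^{(\kappa)}(\mathbb{R}^{d})$. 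Your verification-plus-uniqueness argument is cleaner and avoids tracking the remainder, while the paper's iteration is constructive in that it derives the formula rather than checking a guessed one. Both ultimately hinge on the same structural point you identify at the end: truncation kills the would-be $(\kappa+1)$st term. Your treatment of the simplex rewriting for Eqs.~(\ref{e.16.4}) and (\ref{e.16.5}) matches the paper's almost verbatim, including the $(-1)^{k}$ from reversing the $k$ limits and the index relabeling $t_{j}\mapsto t_{k+1-j}$.
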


\begin{proof}
We will show, for all $0\leq m\leq\kappa,$ that%
\begin{equation}
X_{s,t}=1+\sum_{k=1}^{m}\int_{s}^{t}dt_{1}\int_{s}^{t_{1}}dt_{2}\dots\int%
_{s}^{t_{k-1}}dt_{k}~\dot{x}\left(  t_{k}\right)  \dots\dot{x}\left(
t_{1}\right)  +R_{m}\left(  s,t\right)  \label{e.16.6}%
\end{equation}
where%
\begin{equation}
R_{m}\left(  s,t\right)  =\int_{s}^{t}dt_{1}\int_{s}^{t_{1}}dt_{2}\dots
\int_{s}^{t_{m-1}}dt_{m}\int_{s}^{t_{m}}dt_{m+1}X_{s,t_{m+1}}\dot{x}\left(
t_{m+1}\right)  \dot{x}\left(  t_{m}\right)  \dots\dot{x}\left(  t_{1}\right)
\label{e.16.7}%
\end{equation}
and by convention, $\sum_{k=1}^{0}\left(  \dots\right)  \equiv0.$ For $m=0,$
Eq. (\ref{e.16.6}) reads,%
\[
X_{s,t}=1+R_{0}\left(  s,t\right)  =1+\int_{s}^{t}dt_{1}~X_{s,t_{1}}\dot
{x}\left(  t_{1}\right)
\]
which holds true because the fundamental theorem of calculus combined with the
ODE in Eq. (\ref{e.16.1}). To complete the inductive proof we use the
identity,%
\[
X_{s,t_{m+1}}=1+\int_{s}^{t_{m+1}}dt_{m+2}~X_{s,t_{m+2}}\dot{x}\left(
t_{m+2}\right)
\]
in Eq. (\ref{e.16.7}) to find,%
\begin{align*}
R_{m}\left(  s,t\right)  =  &  \int_{s}^{t}dt_{1}\int_{s}^{t_{1}}dt_{2}%
\dots\int_{s}^{t_{m-1}}dt_{m}\int_{s}^{t_{m}}dt_{m+1}\dot{x}\left(
t_{m+1}\right)  \dot{x}\left(  t_{m}\right)  \dots\dot{x}\left(  t_{1}\right)
\\
&  +R_{m+1}\left(  s,t\right)  .
\end{align*}

Taking $m=\kappa$ in Eq. (\ref{e.16.6}) while using $R_{\kappa}\left(
s,t\right)  \equiv0$ in $T^{\left(  \kappa\right)  }\left(  \mathbb{R}%
^{d}\right)  ,$ gives,%
\[
X_{s,t}=1+\sum_{k=1}^{\kappa}\int_{s}^{t}dt_{1}\int_{s}^{t_{1}}dt_{2}\dots
\int_{s}^{t_{k-1}}dt_{k}~\dot{x}\left(  t_{k}\right)  \dots\dot{x}\left(
t_{1}\right)
\]
and then relabeling the $\left(  t_{1},\dots,t_{k}\right)  $ to $\left(
t_{k},\dots,t_{1}\right)  $ in each term gives Eq. (\ref{e.16.3}). The
identities in Eqs. (\ref{e.16.4}) and (\ref{e.16.5}) are fairly simple
rewrites of Eq. (\ref{e.16.2}) For example if $t\leq s,$ the limits in each of
the iterated integrals go from larger times to smaller time and so switching
each of these limits gives rise to the factor $\left(  -1\right)  ^{k}.$ The
relationship between all of times $\left(  t_{1},\dots,t_{k}\right)  $ when
$t\leq s$ are $t\leq t_{k}\leq t_{k-1}\leq\dots\leq t_{1}\leq s$ and so%
\[
X_{s,t}^{k}=\left(  -1\right)  ^{k}\int_{t\leq t_{k}\leq t_{k-1}\leq\dots\leq
t_{2}\leq t_{1}\leq s}dx\left(  t_{1}\right)  dx\left(  t_{2}\right)  \dots
dx\left(  t_{k}\right)  .
\]
Lastly relabeling the $\left(  t_{1},\dots,t_{k}\right)  $ to $\left(
t_{k},\dots,t_{1}\right)  $ in this identity gives the second identity in Eq.
(\ref{e.16.4}).
\end{proof}

\begin{notation}
\label{not.16.2}For $1\leq k<\infty,$ let $\sigma_{k}:\left[  \mathbb{R}%
^{d}\right]  ^{\otimes k}\rightarrow\left[  \mathbb{R}^{d}\right]  ^{\otimes
k}$ be the isometric isomorphism uniquely determined by
\[
\sigma_{k}\left[  v_{1}\otimes v_{2}\otimes\dots\otimes v_{k}\right]
=v_{k}\otimes\dots\otimes v_{2}\otimes v_{1}\text{ }\forall~v_{1},v_{2}%
,\dots,v_{k}\in\mathbb{R}^{d}.
\]

\end{notation}

\begin{corollary}
\label{cor.16.3}If $0\leq s\leq t\leq T$ and $1\leq k\leq\kappa,$ then
$X_{t,s}^{k}=\left(  -1\right)  ^{k}\sigma_{k}X_{s,t}^{k}$ or equivalently
stated,
\[
X_{t,s}=1+\sum_{k=1}^{\kappa}\left(  -1\right)  ^{k}\sigma_{k}X_{s,t}^{k}.
\]

\end{corollary}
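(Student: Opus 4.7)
The plan is to derive Corollary 16.3 directly from the two integral representations already established in Proposition 16.1, namely the formulas (16.4) for $X_{s,t}^{k}$ (used when the first index is smaller) and (16.5) for $X_{s,t}^{k}$ (used when the first index is larger). Since $0\leq s\leq t$, formula (16.4) applies to $X_{s,t}^{k}$ and formula (16.5) applies, after interchanging the variable names, to $X_{t,s}^{k}$. So the entire content of the corollary should reduce to a side-by-side comparison of these two integral expressions.

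Concretely, I first invoke (16.4) to write
\[
X_{s,t}^{k}=\int_{s\leq t_{1}\leq\dots\leq t_{k}\leq t}dx(t_{1})\otimes dx(t_{2})\otimes\dots\otimes dx(t_{k}).
\]
Because $\sigma_{k}$ is a continuous linear map on $[\mathbb{R}^{d}]^{\otimes k}$, it commutes with the integral, and the definition of $\sigma_{k}$ in Notation 16.2 yields
\[
\sigma_{k}X_{s,t}^{k}=\int_{s\leq t_{1}\leq\dots\leq t_{k}\leq t}dx(t_{k})\otimes dx(t_{k-1})\otimes\dots\otimes dx(t_{1}).
\]
Next I apply (16.5) to $X_{t,s}^{k}$ (in (16.5) the first subscript is the larger one, so after renaming to match the present ordering $s\leq t$) to obtain
\[
X_{t,s}^{k}=(-1)^{k}\int_{s\leq t_{1}\leq\dots\leq t_{k}\leq t}dx(t_{k})\otimes dx(t_{k-1})\otimes\dots\otimes dx(t_{1}).
\]
Comparing the last two displays gives the pointwise identity $X_{t,s}^{k}=(-1)^{k}\sigma_{k}X_{s,t}^{k}$, and summing over $1\le k\le\kappa$ while keeping the scalar term equal to $1$ produces the equivalent global formula in the corollary.

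There is really no significant obstacle here: once Proposition 16.1 is available, the corollary is a direct bookkeeping identity about iterated integrals over the ordered simplex. The only points requiring care are (i) the convention on the orientation of the integration region (the simplex $\{s\leq t_{1}\leq\dots\leq t_{k}\leq t\}$ is the same in both formulas after the renaming, which is what forces the factor $(-1)^{k}$ to appear purely from the reversal, not from a change in the integration domain), and (ii) the observation that the reversal of tensor factors in (16.5) is exactly implemented by the operator $\sigma_{k}$ of Notation 16.2. Alternatively, one could organize the argument group-theoretically by noting $X_{t,s}=X_{s,t}^{-1}$ in $G_{\text{geo}}^{(\kappa)}(\mathbb{R}^{d})$ and verifying that $\bigl(1+\sum_{k}X_{s,t}^{k}\bigr)\bigl(1+\sum_{k}(-1)^{k}\sigma_{k}X_{s,t}^{k}\bigr)=1$ in $T^{(\kappa)}(\mathbb{R}^{d})$ via an integration-by-parts / shuffle computation, but the direct comparison above is shorter and avoids any algebraic detour.
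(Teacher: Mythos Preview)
Your proof is correct and follows essentially the same approach as the paper: both apply the integral representations (16.4) and (16.5) from Proposition~\ref{pro.16.1} to $X_{s,t}^{k}$ and $X_{t,s}^{k}$ respectively, and then identify the reversal of tensor factors with the operator $\sigma_{k}$. The only cosmetic difference is that the paper first relabels the integration variables $(t_{1},\dots,t_{k})\mapsto(t_{k},\dots,t_{1})$ in both integrals before comparing, whereas you push $\sigma_{k}$ through the integral directly; the content is identical.
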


\begin{proof}
From Eqs. (\ref{e.16.4}) and (\ref{e.16.5}),%
\[
X_{s,t}=1+\sum_{k=1}^{\kappa}\int_{s\leq t_{k}\leq t_{k-1}\leq\dots\leq
t_{1}\leq t}dx\left(  t_{k}\right)  \dots dx\left(  t_{2}\right)  dx\left(
t_{1}\right)
\]
and
\begin{align*}
X_{t,s}  &  =1+\sum_{k=1}^{\kappa}\left(  -1\right)  ^{k}\int_{s\leq t_{1}\leq
t_{2}\leq\dots\leq t_{k}\leq t}dx\left(  t_{k}\right)  \dots dx\left(
t_{2}\right)  dx\left(  t_{1}\right) \\
&  =1+\sum_{k=1}^{\kappa}\left(  -1\right)  ^{k}\int_{s\leq t_{k}\leq
t_{k-1}\leq\dots\leq t_{1}\leq t}dx\left(  t_{1}\right)  dx\left(
t_{2}\right)  \dots dx\left(  t_{k}\right) \\
&  =1+\sum_{k=1}^{\kappa}\left(  -1\right)  ^{k}\sigma_{k}X_{s,t}^{\left(
k\right)  }.
\end{align*}

\end{proof}

\begin{corollary}
\label{cor.16.4}If $g=1+\sum_{k=1}^{\kappa}g_{k}\in G_{\text{geo}}^{\left(
\kappa\right)  }\left(  \mathbb{R}^{d}\right)  ,$ then
\begin{equation}
g^{-1}=1+\sum_{k=1}^{\kappa}\left(  -1\right)  ^{k}\sigma_{k}g_{k}.
\label{e.16.8}%
\end{equation}

\end{corollary}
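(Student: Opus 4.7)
The plan is to reduce the statement to Corollary \ref{cor.16.3} by showing that every element of $G_{\text{geo}}^{(\kappa)}(\mathbb{R}^d)$ can be realized as the signature $X_{0,1}$ of some $C^1$-path $x:[0,1]\to\mathbb{R}^d$. Granted that, if $g = X_{0,1} = 1 + \sum_{k=1}^{\kappa} X_{0,1}^k$, then by the multiplicative property $X_{1,0} = X_{0,1}^{-1} = g^{-1}$, and Corollary \ref{cor.16.3} immediately yields
\[
g^{-1} = X_{1,0} = 1 + \sum_{k=1}^{\kappa} (-1)^k \sigma_k X_{0,1}^k = 1 + \sum_{k=1}^{\kappa} (-1)^k \sigma_k g_k.
\]

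For the realization step, I would first check directly from Eq.~(\ref{e.16.2}) that the linear path $x(t) = ta$, $a\in\mathbb{R}^d$, has signature $X_{0,1}^k = a^{\otimes k}/k!$, so that $X_{0,1} = \sum_{k=0}^{\kappa} a^{\otimes k}/k! = e^{a}$. Concatenating linear segments of directions $a_1,\dots,a_n\in\mathbb{R}^d$ and invoking the multiplicative property (which for smooth paths is immediate from Eq.~(\ref{e.16.1})), the signature of the resulting piecewise-linear path is the product $e^{a_1} e^{a_2}\cdots e^{a_n}\in G_{\text{geo}}^{(\kappa)}(\mathbb{R}^d)$.

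The crux is then to show that such products exhaust $G_{\text{geo}}^{(\kappa)}(\mathbb{R}^d)$. This is a standard Lie-theoretic fact: the analytic subgroup $H$ generated by $\{e^{a}:a\in\mathbb{R}^d\}\subset G_{\text{geo}}^{(\kappa)}$ has Lie algebra equal to the Lie subalgebra of $F^{(\kappa)}(\mathbb{R}^d)$ generated by $\mathbb{R}^d$, which by Definition \ref{not.10.14} is all of $F^{(\kappa)}(\mathbb{R}^d)$. Since $G_{\text{geo}}^{(\kappa)}$ is connected with Lie algebra $F^{(\kappa)}(\mathbb{R}^d)$ and $H\subset G_{\text{geo}}^{(\kappa)}$ shares this Lie algebra, $H = G_{\text{geo}}^{(\kappa)}$. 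Equivalently, by the BCH formula $H$ contains a neighborhood of $1$ and hence, being a subgroup of the connected group $G_{\text{geo}}^{(\kappa)}$, must coincide with it.

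The main obstacle is this last surjectivity/generation statement; once settled, the result is purely a matter of unwinding Corollary \ref{cor.16.3}. An equally valid alternative, which avoids any appeal to smooth paths, is to observe that the map $\iota: T^{(\kappa)}(\mathbb{R}^d)\to T^{(\kappa)}(\mathbb{R}^d)$ defined by $\iota(1+A) = 1 + \sum_k (-1)^k \sigma_k A_k$ is the degree-$\le\kappa$ truncation of the unique graded anti-homomorphism of the tensor algebra extending $v\mapsto -v$ on $\mathbb{R}^d$ (i.e., the antipode of the tensor Hopf algebra); since the elements of $G_{\text{geo}}^{(\kappa)}$ are precisely the group-like elements of this (truncated) Hopf algebra, the antipode coincides with the group inverse on $G_{\text{geo}}^{(\kappa)}$, giving Eq.~(\ref{e.16.8}) directly.
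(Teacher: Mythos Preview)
Your primary argument is essentially the paper's own proof: the paper simply invokes Chow's theorem to assert that every $g\in G_{\text{geo}}^{(\kappa)}(\mathbb{R}^d)$ arises as $X_{0,1}$ for some $C^1$ path, and then applies Corollary~\ref{cor.16.3} exactly as you do. What you have written for the ``realization step'' (piecewise-linear paths giving products $e^{a_1}\cdots e^{a_n}$, plus the Lie-theoretic generation argument) is precisely a proof of Chow's theorem in this nilpotent setting, so the two arguments coincide in substance.

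Your alternative via the tensor Hopf algebra antipode is a genuinely different route that the paper does not take. It has the advantage of being purely algebraic and bypassing both Corollary~\ref{cor.16.3} and any path-realization statement, at the cost of importing the fact that elements of $G_{\text{geo}}^{(\kappa)}$ are group-like (which is true but is not developed in the paper). The paper's approach, by contrast, stays entirely within the iterated-integral framework already set up in Section~\ref{sec.16}.
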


\begin{proof}
By Chow's theorem, to each $g\in G_{\text{geo}}^{\left(  \kappa\right)
}\left(  \mathbb{R}^{d}\right)  ,$ there exists a path $x\left(  \cdot\right)
\in C^{1}\left(  \left[  0,1\right]  ,\mathbb{R}^{d}\right)  $ such that
$g=X_{0,1}.$ Therefore,
\[
g^{-1}=X_{1,0}=1+\sum_{k=1}^{\kappa}\left(  -1\right)  ^{k}\sigma_{k}%
X_{0,1}^{\left(  k\right)  }=1+\sum_{k=1}^{\kappa}\left(  -1\right)
^{k}\sigma_{k}g_{k}.
\]

\end{proof}

\begin{corollary}
\label{cor.16.5}If $g\in G_{\text{geo}}^{\left(  \kappa\right)  }\left(
\mathbb{R}^{d}\right)  ,$ then $\left\vert g_{k}\right\vert =\left\vert
\left[  g^{-1}\right]  _{k}\right\vert $ for $1\leq k\leq\kappa$ and in
particular $N\left(  g\right)  =N\left(  g^{-1}\right)  .$
\end{corollary}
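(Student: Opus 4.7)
The plan is to read off the result directly from Corollary \ref{cor.16.4} together with the fact that the reversal maps $\sigma_k$ are isometries. First I would apply Corollary \ref{cor.16.4} to get the explicit formula
\[
g^{-1} = 1 + \sum_{k=1}^{\kappa} (-1)^k \sigma_k g_k,
\]
which identifies the degree-$k$ component as $[g^{-1}]_k = (-1)^k \sigma_k g_k$ for each $1 \le k \le \kappa$.

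Next I would invoke Notation \ref{not.16.2}, where $\sigma_k \colon [\mathbb{R}^d]^{\otimes k} \to [\mathbb{R}^d]^{\otimes k}$ is declared to be an isometric isomorphism (this is immediate from the definition of the inner product on $[\mathbb{R}^d]^{\otimes k}$, since reversing the order of a pure tensor $v_1 \otimes \cdots \otimes v_k$ just permutes the factors of $\langle v_1, w_1\rangle \cdots \langle v_k, w_k\rangle$). Combining isometry of $\sigma_k$ with the fact that the scalar $(-1)^k$ has absolute value one gives
\[
|[g^{-1}]_k| = |(-1)^k \sigma_k g_k| = |\sigma_k g_k| = |g_k|.
\]

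Finally, the assertion about the homogeneous norm is just a maximum over $k$: by Definition \ref{def.10.13},
\[
N(g^{-1}) = \max_{1 \le k \le \kappa} |[g^{-1}]_k|^{1/k} = \max_{1 \le k \le \kappa} |g_k|^{1/k} = N(g).
\]
There is essentially no obstacle here; the corollary is a bookkeeping consequence of Corollary \ref{cor.16.4}, and the only point worth being explicit about is the isometry property of $\sigma_k$, which is built into the definition of the inner product on the tensor algebra.
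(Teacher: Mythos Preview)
Your proof is correct and is exactly the argument the paper intends: the corollary is stated without proof immediately after Corollary \ref{cor.16.4}, and the isometry of $\sigma_k$ (declared in Notation \ref{not.16.2}) together with that formula gives $|[g^{-1}]_k| = |(-1)^k \sigma_k g_k| = |g_k|$, from which the $N$-identity follows by Definition \ref{def.10.13}.
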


The next result provides motivation for the constructions used in this paper.
Let $V:\mathbb{R}^{d}\rightarrow\Gamma\left(  TM\right)  $ be a dynamical
system, $x\left(  \cdot\right)  \in C^{1}\left(  \left[  0,T\right]
,\mathbb{R}^{d}\right)  ,$ and $\varphi_{t,s}\in\mathrm{Diff}\left(  M\right)
$ denote the solution to
\begin{equation}
\dot{\varphi}_{t,s}=V_{\dot{x}\left(  t\right)  }\circ\varphi_{t,s}\text{ with
}\varphi_{s,s}=Id_{M}. \label{e.16.11}%
\end{equation}
In the proof below we will make use of the simple observation that
\begin{equation}
\frac{d}{d\sigma}X_{\sigma,t}^{\kappa}=-\dot{x}\left(  \sigma\right)
X_{\sigma,t}^{\kappa-1}\text{ for any }\kappa\in\mathbb{N}. \label{e.16.12}%
\end{equation}

\begin{theorem}
\label{thm.16.6}If $f\in C^{\infty}\left(  M\right)  ,$ then for any
$s,t\in\left[  0,T\right]  $ and $\kappa\in\mathbb{N}_{0},$
\begin{align}
f\circ\varphi_{t,s}  &  =V_{X_{s,t}^{\left(  \kappa\right)  }}f-\int_{s}%
^{t}\left(  V_{\frac{d}{d\sigma}X_{\sigma,t}^{\kappa+1}}f\right)  \circ
\varphi_{\sigma,s}d\sigma\label{e.16.13}\\
&  =V_{X_{s,t}^{\left(  \kappa\right)  }}f+\int_{s}^{t}\left(  V_{\dot
{x}\left(  \sigma\right)  \otimes X_{\sigma,t}^{\kappa}}f\right)  \circ
\varphi_{\sigma,s}d\sigma. \label{e.16.14}%
\end{align}

\end{theorem}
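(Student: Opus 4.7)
The plan is to induct on $\kappa\in\mathbb{N}_{0}$, proving (\ref{e.16.13}) directly and then deducing (\ref{e.16.14}) from (\ref{e.16.13}) using only the trivial identity $\frac{d}{d\sigma}X_{\sigma,t}^{\kappa+1}=-\dot{x}\left(\sigma\right)\otimes X_{\sigma,t}^{\kappa}$ from (\ref{e.16.12}). I use the standard extension of $V:\mathbb{R}^{d}\rightarrow\Gamma\left(TM\right)$ to a linear map on the tensor algebra acting as a differential operator on $C^{\infty}\left(M\right)$ via $V_{1}f=f$ and $V_{a\otimes A}f:=V_{a}\left(V_{A}f\right)$; with this convention $V_{\dot{x}\left(\sigma\right)\otimes X_{\sigma,t}^{k}}f=V_{\dot{x}\left(\sigma\right)}V_{X_{\sigma,t}^{k}}f$, which is exactly what the chain rule along $\varphi_{\sigma,s}$ will produce.

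For the base case $\kappa=0$: since $X_{s,t}^{\left(0\right)}=1$ and $\frac{d}{d\sigma}X_{\sigma,t}^{1}=-\dot{x}\left(\sigma\right)$, equation (\ref{e.16.13}) reduces to
\[
f\circ\varphi_{t,s}=f+\int_{s}^{t}\left(V_{\dot{x}\left(\sigma\right)}f\right)\circ\varphi_{\sigma,s}\,d\sigma,
\]
which is just the fundamental theorem of calculus applied to $\sigma\mapsto f\circ\varphi_{\sigma,s}$, using the flow ODE (\ref{e.16.11}) and $\varphi_{s,s}=Id_{M}$.

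For the induction step, assume (\ref{e.16.13}) at level $\kappa$ and denote the remainder integral by $R_{j}:=\int_{s}^{t}\left(V_{\dot{x}\left(\sigma\right)\otimes X_{\sigma,t}^{j}}f\right)\circ\varphi_{\sigma,s}\,d\sigma$, which (by (\ref{e.16.12})) is the $-\int_{s}^{t}\left(V_{\frac{d}{d\sigma}X_{\sigma,t}^{j+1}}f\right)\circ\varphi_{\sigma,s}\,d\sigma$ appearing in (\ref{e.16.13}) at level $j$. I then apply the fundamental theorem of calculus to $\sigma\mapsto\left(V_{X_{\sigma,t}^{\kappa+1}}f\right)\circ\varphi_{\sigma,s}$; by (\ref{e.16.11}), (\ref{e.16.12}), and the chain rule, its $\sigma$-derivative equals
\[
\left(-V_{\dot{x}\left(\sigma\right)\otimes X_{\sigma,t}^{\kappa}}f+V_{\dot{x}\left(\sigma\right)\otimes X_{\sigma,t}^{\kappa+1}}f\right)\circ\varphi_{\sigma,s},
\]
so that, using $X_{t,t}^{\kappa+1}=0$ and $\varphi_{s,s}=Id_{M}$, integration from $s$ to $t$ yields $-V_{X_{s,t}^{\kappa+1}}f=-R_{\kappa}+R_{\kappa+1}$, i.e.\ $R_{\kappa}=V_{X_{s,t}^{\kappa+1}}f+R_{\kappa+1}$. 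Substituting into the induction hypothesis $f\circ\varphi_{t,s}=V_{X_{s,t}^{\left(\kappa\right)}}f+R_{\kappa}$ and using $X_{s,t}^{\left(\kappa+1\right)}=X_{s,t}^{\left(\kappa\right)}+X_{s,t}^{\kappa+1}$ gives (\ref{e.16.13}) at level $\kappa+1$. Finally, (\ref{e.16.14}) is (\ref{e.16.13}) with $\frac{d}{d\sigma}X_{\sigma,t}^{\kappa+1}$ rewritten via (\ref{e.16.12}).

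There is no substantive obstacle; this is in essence a single integration by parts on the tensor algebra. The only delicate points are bookkeeping: fixing the convention $V_{a\otimes A}f=V_{a}\left(V_{A}f\right)$ consistently with the chain rule (so the negative $\sigma$-derivative of $V_{X_{\sigma,t}^{\kappa+1}}$ telescopes against the flow-induced term $V_{\dot{x}\left(\sigma\right)}V_{X_{\sigma,t}^{\kappa+1}}$), and correctly tracking the minus sign in (\ref{e.16.12}) when converting between the two displayed forms of the remainder.
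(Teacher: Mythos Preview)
Your proof is correct and is essentially identical to the paper's own argument: both induct on $\kappa$, use the fundamental theorem of calculus on $\sigma\mapsto f\circ\varphi_{\sigma,s}$ for the base case, and for the inductive step apply the fundamental theorem of calculus to $\sigma\mapsto\left(V_{X_{\sigma,t}^{\kappa+1}}f\right)\circ\varphi_{\sigma,s}$ together with (\ref{e.16.12}) to telescope the remainder. The only cosmetic difference is that you package the remainder as $R_{j}$ and write everything in the $\dot{x}\left(\sigma\right)\otimes X_{\sigma,t}^{j}$ form, whereas the paper keeps the $\frac{d}{d\sigma}X_{\sigma,t}^{j+1}$ form until the end.
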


\begin{proof}
When $\kappa=0,$ Eq. (\ref{e.16.14}) states
\[
f\circ\varphi_{t,s}=f+\int_{s}^{t}\left(  V_{\dot{x}\left(  \sigma\right)
}f\right)  \circ\varphi_{\sigma,s}d\sigma
\]
which holds by the fundamental theorem of calculus applied to the differential
identity,%
\[
\frac{d}{dt}f\circ\varphi_{t,s}=\left(  V_{\dot{x}\left(  t\right)  }f\right)
\circ\varphi_{t,s}\text{ with }f\circ\varphi_{s,s}=f.\text{ }%
\]
For the inductive step we use the fundamental theorem of calculus along with
the chain rule to conclude,
\begin{align*}
-V_{X_{s,t}^{\kappa+1}}f  &  =\left(  V_{X_{\sigma,t}^{\kappa+1}}f\right)
\circ\varphi_{\sigma,s}|_{\sigma=s}^{\sigma=t}=\int_{s}^{t}\frac{d}{d\sigma
}\left(  \left(  V_{X_{\sigma,t}^{\kappa+1}}f\right)  \circ\varphi_{\sigma
,s}\right)  d\sigma\\
&  =\int_{s}^{t}\left(  V_{\frac{d}{d\sigma}X_{\sigma,t}^{\kappa+1}}f\right)
\circ\varphi_{\sigma,s}d\sigma+\int_{s}^{t}\left(  V_{\dot{x}\left(
\sigma\right)  }V_{X_{\sigma,t}^{\kappa+1}}f\right)  \circ\varphi_{\sigma
,s}d\sigma\\
&  =\int_{s}^{t}\left(  V_{\frac{d}{d\sigma}X_{\sigma,t}^{\kappa+1}}f\right)
\circ\varphi_{\sigma,s}d\sigma-\int_{s}^{t}\left(  V_{\frac{d}{d\sigma
}X_{\sigma,t}^{\kappa+2}}f\right)  \circ\varphi_{\sigma,s}d\sigma.
\end{align*}
Putting this identity into Eq. (\ref{e.16.13}) gives,%
\begin{align*}
f\circ\varphi_{t,s}  &  =V_{X_{s,t}^{\left(  \kappa\right)  }}f+V_{X_{s,t}%
^{\kappa+1}}f-\int_{s}^{t}\left(  V_{\frac{d}{d\sigma}X_{\sigma,t}^{\kappa+2}%
}f\right)  \circ\varphi_{\sigma,s}d\sigma\\
&  =V_{X_{s,t}^{\left(  \kappa+1\right)  }}f-\int_{s}^{t}\left(  V_{\frac
{d}{d\sigma}X_{\sigma,t}^{\kappa+2}}f\right)  \circ\varphi_{\sigma,s}d\sigma
\end{align*}
which completes the inductive proof.
\end{proof}

\providecommand{\bysame}{\leavevmode\hbox to3em{\hrulefill}\thinspace}
\providecommand{\MR}{\relax\ifhmode\unskip\space\fi MR }
\providecommand{\MRhref}[2]{%
  \href{http://www.ams.org/mathscinet-getitem?mr=#1}{#2}
}
\providecommand{\href}[2]{#2}


\end{document}